\newtheorem{theorem}{Theorem}[section]
\newtheorem{lemma}[theorem]{Lemma}
\newtheorem{proposition}[theorem]{Proposition}
\newtheorem{corollary}[theorem]{Corollary}
{ \theoremstyle{definition}
\newtheorem{definition}[theorem]{Definition}}
{ \theoremstyle{remark}
\newtheorem{remark}[theorem]{Remark}}
\numberwithin{equation}{section}
\newcommand{\im}{\mathsf{i}}
\newcommand{\Real}{\mathsf{Re}}
\newcommand{\Imag}{\mathsf{Im}}
\newcommand{\Arg}{\mathsf{Arg}}
\title{Airy wanderer line ensembles}
\date{\today}
\author{Evgeni Dimitrov}
\begin{document}

\maketitle 

\begin{abstract}
In (J. Stat. Phys. 132, 275-290, 2008) Borodin and P{\'e}ch{\'e} introduced a generalization of the extended Airy kernel based on two infinite sets of parameters. For an arbitrary choice of parameters we construct determinantal point processes on $\mathbb{R}^2$ for these generalized kernels. In addition, for a subset of the parameter space we show that the point processes can be lifted to line ensembles on $\mathbb{R}$, which satisfy the Brownian Gibbs property. Our ensembles generalize the wanderer line ensembles introduced by Corwin and Hammond in (Invent. Math. 195, 441-508, 2014).
\end{abstract}

\tableofcontents

%
%
\section{Introduction and main results}\label{Section1}

%
%
\subsection{Introduction}\label{Section1.1} The {\em Airy line ensemble} $\mathcal{L}^{\mathrm{Airy}}$ is a sequence of real-valued random continuous functions $\{\mathcal{L}^{\mathrm{Airy}}_{i}\}_{i \geq 1}$, which are all defined on $\mathbb{R}$ and are strictly ordered in the sense that $\mathcal{L}^{\mathrm{Airy}}_{i}(t) > \mathcal{L}^{\mathrm{Airy}}_{i+1}(t)$ for all $i \geq 1$ and $t \in \mathbb{R}$. The Airy line ensemble arises as the edge scaling limit of Wigner matrices \cite{Sod15}, lozenge tilings \cite{AH21}, avoiding Brownian bridges (also known as {\em Brownian watermelons}) \cite{CorHamA}, as well as various integrable models of non-intersecting random walkers and last passage percolation \cite{DNV19}. The list of models becomes quite vast if one further includes those that converge to the various projections of $\mathcal{L}^{\mathrm{Airy}}$ such as $(\mathcal{L}^{\mathrm{Airy}}_{1}(t): t \in \mathbb{R})$ (known as the {\em Airy process}), $(\mathcal{L}^{\mathrm{Airy}}_{i}(t_0): i \geq 1)$ for a fixed $t_0 \in \mathbb{R}$ (known as the {\em Airy point process}) and $\mathcal{L}^{\mathrm{Airy}}_{1}(t_0)$ for a fixed $t_0$ (known as the {\em Tracy-Widom distribution}). Due to its appearance as a universal scaling limit, and its role in the construction of the {\em Airy sheet} in \cite{DOV22}, the Airy line ensemble has become a central object in the {\em Kardar-Parisi-Zhang (KPZ)} universality class \cite{CU2}.

One of the salient features of the Airy line ensemble is that it has the structure of a {\em determinantal point process}, see Section \ref{Section2} for more background on these objects. More specifically, if one fixes a finite set $\mathcal{A} = \{s_1, \dots, s_m\} \subset \mathbb{R}$ with $s_1 < \cdots < s_m$, then the random measure on $\mathbb{R}^2$, defined by
\begin{equation}\label{RMS1}
M(A) = \sum_{i \geq 1} \sum_{j = 1}^m {\bf 1} \left\{\left(s_j, \mathcal{L}^{\mathrm{Airy}}_{i}(s_j) \right) \in A \right\},
\end{equation}
is a determinantal point process on $\mathbb{R}^2$ with reference measure $\mu_{\mathcal{A}} \times \lambda$, where $\mu_{\mathcal{A}}$ is the counting measure on $\mathcal{A}$, and $\lambda$ is the usual Lebesgue measure on $\mathbb{R}$, and whose correlation kernel is given by the {\em extended Airy kernel}, defined for $x_1, x_2 \in \mathbb{R}$ and $t_1, t_2 \in \mathcal{A}$ by 
\begin{equation}\label{S1EAK}
\begin{split}
K^{\mathrm{Airy}}(t_1,x_1; t_2,x_2) = & -  \frac{{\bf 1}\{ t_2 > t_1\} }{\sqrt{4\pi (t_2 - t_1)}} \cdot e^{ - \frac{(x_2 - x_1)^2}{4(t_2 - t_1)} - \frac{(t_2 - t_1)(x_2 + x_1)}{2} + \frac{(t_2 - t_1)^3}{12} } \\
& + \frac{1}{(2\pi \im)^2} \int_{\Gamma_{\alpha}^+} d z \int_{\Gamma_{\beta}^-} dw \frac{e^{z^3/3 -x_1z - w^3/3 + x_2w}}{z + t_1 - w - t_2}.
\end{split}
\end{equation}
In (\ref{S1EAK}) the contours $\Gamma_{\alpha}^+$ and $\Gamma_{\beta}^-$ consist of two rays starting from $\alpha$ and $\beta$ and departing at angles $\pi/4$, where $\alpha, \beta \in \mathbb{R}$ are arbitrary subject to $\alpha + t_1 > \beta + t_2$. The contours are formally introduced in Definition \ref{DefContInf} and we refer to Figure \ref{S11} for their depiction. We mention that there are various formulas for the extended Airy kernel, and the one in (\ref{S1EAK}) comes from \cite[Proposition 4.7 and (11)]{BK08} under the change of variables $u \rightarrow z + t_1$ and $w \rightarrow w + t_2$. We also mention that usually one takes the contours to depart at angles $\pi/3$ from the real line; however, one can deform those to $\Gamma_{\alpha}^+$ and $\Gamma_{\beta}^-$ without affecting the value of the integral by Cauchy's theorem, with the decay required to perform the deformation near infinity coming from the cubic terms in the exponential. 

The introduction of the extended Airy kernel is frequently attributed to \cite{Spohn} where it arises in the context of the polynuclear growth model, although it appeared earlier in \cite{Mac94} and \cite{FNH99}. In \cite{Spohn} and in more detail in \cite{J03} it was shown that the Airy process $(\mathcal{L}^{\mathrm{Airy}}_{1}(t): t \in \mathbb{R})$ has a continuous version by estimating certain fourth moments and working directly with the fourth order correlation functions. The fact that the Airy line ensemble $\mathcal{L}^{\mathrm{Airy}}$ has a continuous version was proved in \cite{CorHamA} by utilizing the locally avoiding Brownian bridge structure of the Brownian watermelons whose edge limit gives rise to $\mathcal{L}^{\mathrm{Airy}}$. This local description of the Brownian watermelons is now referred to as the {\em Brownian Gibbs property}, and it is enjoyed by the {\em parabolic Airy line ensemble} $\mathcal{L}^{\mathrm{pAiry}}$ whose relationship to $\mathcal{L}^{\mathrm{Airy}}$ is explicitly given by
\begin{equation}\label{PALE}
\mathcal{L}^{\mathrm{pAiry}}_i(t) = 2^{-1/2} \cdot \mathcal{L}^{\mathrm{Airy}}(t) - 2^{-1/2} \cdot t^2 \mbox{ for $i \geq 1$ and $t \in \mathbb{R}$}.
\end{equation}

\begin{figure}[h]
    \centering
     \begin{tikzpicture}[scale=2.7]

        \def\tra{3} 
        \draw[->, thick, gray] (-1.4,0)--(1.4,0) node[right]{$\Real$};
        \draw[->, thick, gray] (0,-1.2)--(0,1.2) node[above]{$\Imag$};

        \draw[-,thick][black] (-0.2,0) -- (-0.6,-0.4);
        \draw[->,thick][black]  (-1.2,-1) -- (-0.6,-0.4);
        \draw[black, fill = black] (-0.2,0) circle (0.02);
        \draw (-0.2,-0.275) node{$\beta + t_2$};
        \draw[->,very thin][black] (-0.2,-0.2) -- (-0.2, -0.05);
        \draw[->,thick][black] (-0.2,0) -- (-0.6,0.4);
        \draw[-,thick][black]  (-1.2,1) -- (-0.6,0.4);

        \draw[-,thick][black] (0.25, 0) -- (0.75,-0.5);
        \draw[->,thick][black] (1.25, -1) -- (0.75, -0.5);
        \draw[black, fill = black] (0.25,0) circle (0.02);
        \draw (0.25,-0.275) node{$\alpha + t_1$};
        \draw[->,very thin][black] (0.25,-0.2) -- (0.25, -0.05);
        \draw[->,thick][black] (0.25,0) -- (0.75,0.5);
        \draw[-,thick][black]  (0.75,0.5) -- (1.25,1);

        \draw[black, fill = black] (0.50,0) circle (0.02);
        \draw[black, fill = black] (0.70,0) circle (0.02);
        \draw[black, fill = black] (0.90,0) circle (0.02);
        \draw[black, fill = black] (1.1,0) circle (0.02);
        \draw (0.50,-0.1) node{$x_1$};
        \draw (0.70,-0.1) node{$x_2$};
        \draw (0.92,-0.1) node{$\cdots$};
        \draw (1.2,-0.12) node{$x_{J_1}$};
        \draw (1.35,-0.3) node{(or $1/a_i^+$'s and $a_i^-$'s)};
        
        \draw[black, fill = black] (-.50,0) circle (0.02);
        \draw[black, fill = black] (-0.70,0) circle (0.02);
        \draw[black, fill = black] (-0.90,0) circle (0.02);
        \draw[black, fill = black] (-1.1,0) circle (0.02);
        \draw (-0.50,-0.12) node{$y_{J_2}$};
        \draw (-0.70,-0.1) node{$\cdots$};
        \draw (-0.92,-0.1) node{$y_2$};
        \draw (-1.1,-0.1) node{$y_{1}$};
        \draw (-1.35,-0.3) node{(or $-1/b_i^+$'s and $-b_i^-$'s)};

        \draw (0.65,0.825) node{$t_1 + \Gamma^+_{\alpha}$};
        \draw (-1.4,0.825) node{$t_2 + \Gamma^-_{\beta}$};

        \draw[-,thick][black] (0.45,0) arc (0:45:0.2);
        \draw[-,thick][black] (-0.4,0) arc (180:135:0.2);
        \draw (0.6,0.12) node{$\pi/4$};
        \draw (-0.55,0.12) node{$\pi/4$};

    \end{tikzpicture} 
    \caption{The figure depicts the contours $\Gamma_{\alpha}^+$ and $\Gamma_{\beta}^-$ shifted by $t_1$ and $t_2$, respectively. The figure also shows the relative location of the poles in (\ref{S1EAKS}) and (\ref{DefPhiS11}) when shifted by $t_1$ (for the poles in $z$) and by $t_2$ (for the poles in $w$).}
    \label{S11}
\end{figure}
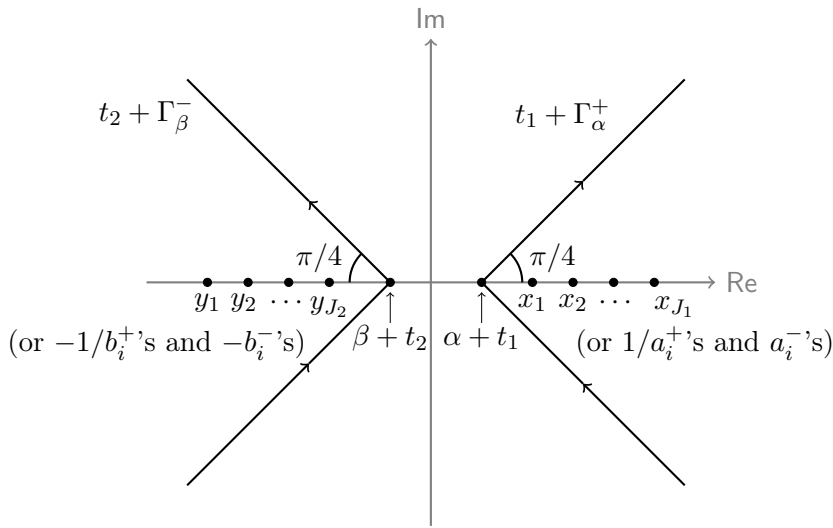

In \cite{BP08} the authors introduced a many-parameter generalization of the extended Airy kernel. Specifically, given finite $J_1, J_2 \in \mathbb{Z}_{\geq 0}$ and parameters $X = \{x_i\}_{i = 1}^{J_1}$, $Y = \{y_j\}_{j = 1}^{J_2}$ such that 
$$\underline{x} := \min(x_i:i = 1, \dots, J_1) > \max (y_j: j = 1, \dots, J_2) =: \overline{y}$$ 
they defined
\begin{equation}\label{S1EAKS}
\begin{split}
&K^{\mathrm{Airy}}_{X,Y}(t_1,x_1; t_2,x_2) =  -  \frac{{\bf 1}\{ t_2 > t_1\} }{\sqrt{4\pi (t_2 - t_1)}} \cdot e^{ - \frac{(x_2 - x_1)^2}{4(t_2 - t_1)} - \frac{(t_2 - t_1)(x_2 + x_1)}{2} + \frac{(t_2 - t_1)^3}{12} } \\
& + \frac{1}{(2\pi \im)^2} \int_{\Gamma_{\alpha}^+} d z \int_{\Gamma_{\beta}^-} dw \frac{e^{z^3/3 -x_1z - w^3/3 + x_2w}}{z + t_1 - w - t_2} \cdot \prod_{i = 1}^{J_1} \frac{w + t_2 - x_i}{z + t_1 - x_i} \cdot \prod_{j = 1}^{J_2} \frac{z + t_1 - y_j}{w + t_2- y_j},
\end{split}
\end{equation}
where as before $\alpha, \beta \in \mathbb{R}$ should satisfy $\alpha + t_1 > \beta + t_2$, but also $\alpha + t_1 < \underline{x}$ and $\beta + t_2 > \overline{y}$. In words, the last condition requires that $x_i - t_1$ to be to the right of $\Gamma_{\alpha}^+$ and $y_j - t_2$ to be to the left of $\Gamma_{\beta}^-$, see Figure \ref{S11}. The kernel in (\ref{S1EAKS}) arose in \cite{BP08} by considering the scaling limit of an exponential last passage percolation model with defective rows and columns. In the case of $J_1 = J_2 \in \mathbb{Z}_{\geq 0}$ the kernel was also obtained in \cite{AFM10} as the scaling limit of a Brownian watermelon model with finitely many outliers. For special cases (especially $t_1 = t_2$) the kernel in (\ref{S1EAKS}) arises in the asymptotics of sample covariance matrices \cite{BBP05}, finite rank perturbations of random matrices \cite{Peche06}, asymmetric exclusion processes \cite{BFS09,IS07} and directed random polymers \cite{BCD21, TV20}.

If one replaces $t_1, t_2$ with $t_1 + \Delta , t_2 + \Delta$ in (\ref{S1EAKS}), then by choosing a suitable $\Delta$ one can obtain an analogous kernel with $x_i > 0$ for $i = 1, \dots, J_1$ and $y_j < 0$ for $j = 1, \dots, J_2$. Using the identities
$$\frac{z - y}{w-y} = \frac{1 - z/y}{1-w/y}, \hspace{2mm}\frac{w - x}{z-x} = \frac{1 - w/x}{1 - z/x} \mbox{ and } \frac{(w-x)(z-y)}{(w-y)(z-x)} = \frac{(1-x/w)(1-y/z)}{(1-y/w)(1-x/z)},$$
one observes that an appropriate limit transition of the integrand in (\ref{S1EAKS}), when $J_1, J_2 \rightarrow \infty$ and possibly some of the $x,y$ parameters go to zero or infinity yields the family of functions
\begin{equation}\label{DefPhiS11}
\begin{split}
&\frac{e^{z^3/3 -x_1z - w^3/3 + x_2w}}{z + t_1 - w - t_2} \cdot \frac{\Phi_{a,b,c}(z + t_1) }{\Phi_{a,b,c}(w + t_2)}, \mbox{ where } \Phi_{a,b,c}(z) = e^{c^+z + c^-/ z} \cdot \prod_{i = 1}^{\infty} \frac{(1 + b_i^+ z) (1 + b_i^- /z)}{(1 - a_i^+ z) ( 1 - a_i^{-}/z)}.
\end{split}
\end{equation}
In (\ref{DefPhiS11}) we have $c^{\pm} \geq 0$ and the four non-negative sequences $\{a_i^+\}_{ i \geq 1}$, $\{a_i^-\}_{ i \geq 1}$, $\{b_i^+\}_{ i \geq 1}$, $\{b_i^-\}_{ i \geq 1}$ satisfy $\sum_{i \geq 1}(a_i^{\pm} + b_i^{\pm}) < \infty$. The infinite-parameter generalization in (\ref{DefPhiS11}) was discussed briefly in \cite[Remark 2]{BP08}, where it was pointed out that the functions $\Phi_{a,b,c}$ from (\ref{DefPhiS11}) arise in stationary extensions of the discrete sine kernel \cite{Bor07}, as generating functions of totally positive doubly infinite sequences \cite{Edrei53}, and as indecomposable characters of the infinite-dimensional unitary group, see \cite{OO98} and the references therein.

As mentioned in \cite[Remark 2]{BP08}, in order to define a kernel as in (\ref{S1EAKS}) with the integrand replaced with (\ref{DefPhiS11}) one needs to choose the contours $\Gamma_{\alpha}^+$ and $\Gamma_{\beta}^-$ so that the poles at $1/a_i^+ - t_1$ and $a_i^- - t_1$ are to the right of $\Gamma_{\alpha}^+$ and the poles at $-1/b_i^+ - t_2$ and $-b_i^- - t_2$ are to the left of $\Gamma_{\beta}^-$, see Figure \ref{S11}. The latter choice of contours is problematic when infinitely many $a_i^-$ and $b_i^-$ are positive. Indeed, in this case the summability of these parameters means they need to converge to zero, so that if for simplicity we take $t_1 = t_2 = 0$, the origin needs to be to the left of $\Gamma_{\beta}^-$ and to the right of $\Gamma_{\alpha}^+$. In particular, such a choice of contours is not possible without forcing $\Gamma_{\beta}^-$ and $\Gamma_{\alpha}^+$ to cross each other, which in turn produces extra residues due to the $z + t_1 - w - t_2$ term in the denominator in (\ref{DefPhiS11}). A similar issue arises if $c^- > 0$. This point appears to have been overlooked in \cite{BP08}. \\

In the present paper we resolve the issue of the previous paragraph and formally construct determinantal point processes corresponding to the infinite-parameter generalization of the extended Airy kernel proposed in \cite{BP08}. We give a precise formulation of this kernel, denoted $K_{a,b,c}$, in equation (\ref{3BPKer}) in the next section, and call the corresponding point processes the {\em Airy wanderer point processes}. We do this for arbitrary choices of $c^{\pm} \geq 0$ and summable non-negative sequences $\{a_i^+\}_{ i \geq 1}$, $\{a_i^-\}_{ i \geq 1}$, $\{b_i^+\}_{ i \geq 1}$, $\{b_i^-\}_{ i \geq 1}$. Under the assumption that $c^- = 0$ and all but finitely many of the parameters $a_i^-, b_i^-$ are equal to zero, we also show that there is a sequence of processes $\{Y_{i}\}_{i \geq 1}$ on $\mathbb{R}$ that are strictly ordered and whose corresponding random measures as in (\ref{RMS1}) but with $\mathcal{L}^{\mathrm{Airy}}_i(s_j)$ replaced with $Y_i(s_j)$ are determinantal with correlation kernel $K_{a,b,c}$. In this case, we also show that the processes $\{Y_{i}\}_{i \geq 1}$ have a continuous version, called {\em Airy wanderer line ensembles}, such that the rescaled and parabolically shifted line ensembles $\mathcal{L}^{a,b,c}$ given by
\begin{equation}
\mathcal{L}^{a,b,c}_i(t) = 2^{-1/2} \cdot Y_i(t) - 2^{-1/2} \cdot t^2 \mbox{ for $i \geq 1$ and $t \in \mathbb{R}$}
\end{equation}
enjoy the Brownian Gibbs property. Our line ensemble construction generalizes the one from \cite[Proposition 3.12]{CorHamA}, which was performed for the kernels in (\ref{S1EAKS}) for the special case of $J_1 = 0$, or $J_2= 0$, or $J_1 = J_2 \in \mathbb{Z}_{\geq 0}$.

Overall, the goals of the paper are to construct the Airy wanderer point processes for general parameters, and lift them to line ensembles in the case of finitely many ``minus'' parameters. A detailed discussion of our approach and the key ideas that go into it is given in Section \ref{Section1.3}, after we have introduced some notation and formulated our main results in Section \ref{Section1.2}. In the remainder of this section we give an informal description of the ensembles $\mathcal{L}^{a,b,c}$ and explain how one should view their parameters. We mention that the following discussion is quite informal and rigorously establishing the description of $\mathcal{L}^{a,b,c}$ below will be the subject of research efforts in the near future.  \\

\begin{figure}[h]
    \centering
    \begin{tikzpicture}[scale=0.8]

  \def\r{0}
  \def\s{0.04}

    \foreach \x [remember=\r as \rlast (initially 0)] in {-200,...,83}
    {
        \tikzmath{\r = 5*rand*\s; \y = \x*\s; \z = (\x+1)*\s;}
        \ifthenelse{\x < -25}
        {
            \draw[-,thin][black] (\y, \y + \rlast) -- (\z, \z + \r);
        }
        {
            \ifthenelse{\x < 25}{
                \draw[-,thin][black] (\y, -\y*\y*0.25 + \rlast - 0.75) -- (\z, -\z*\z*0.25 -0.75 + \r);
            }
            {
                \draw[-,thin][black] (\y, -0.7*\y*\y + \rlast -0.3) -- (\z, -0.7*\z*\z + \r - 0.3);
            }
        }
    }
    
    \foreach \x [remember=\r as \rlast (initially 0)] in {-200,...,200}
    {
        \tikzmath{\r = 5*rand*\s; \y = \x*\s; \z = (\x+1)*\s;}
        \ifthenelse{\x < -25}
        {
            \draw[-,thin][black] (\y, 0.5*\y + \rlast) -- (\z, 0.5*\z + \r);
        }
        {
            \ifthenelse{\x < 25}{
                \draw[-,thin][black] (\y, -\y*\y*0.25 + \rlast - 0.25) -- (\z, -\z*\z*0.25 -0.25 + \r);
            }
            {
                \draw[-,thin][black] (\y, -\y*0.7 + \rlast + 0.2 ) -- (\z, -\z*0.7 + \r +0.2);
            }
        }
    }

    \foreach \x [remember=\r as \rlast (initially 0)] in {-200,...,200}
    {
        \tikzmath{\r = 5*rand*\s; \y = \x*\s; \z = (\x+1)*\s;}
        \ifthenelse{\x < -25}
        {
            \draw[-,thin][black] (\y, 0.2*\y + \rlast+0.25) -- (\z, 0.2*\z + \r +0.25);
        }
        {
            \ifthenelse{\x < 25}{
                \draw[-,thin][black] (\y, -\y*\y*0.25 + \rlast + 0.35) -- (\z, -\z*\z*0.25 + 0.35 + \r);
            }
            {
                \draw[-,thin][black] (\y, -\y*0.3 + \rlast + 0.4 ) -- (\z, -\z*0.3 + \r +0.4);
            }
        }
    }

    \draw (-6, -0.25) node{$\mathcal{L}^{a,b,c}_1$};
    \draw (-6, -2.25) node{$\mathcal{L}^{a,b,c}_2$};
    \draw (-6, -5) node{$\mathcal{L}^{a,b,c}_3$};

    \draw (-9, -0.75) node{slope $\beta_1$};
    \draw (-9, -3.5) node{slope $\beta_2$};
    \draw (-9, -7.5) node{slope $\beta_3$}; 

    \draw (8, -1.25) node{slope $\alpha_1$};
    \draw (8, -4.25) node{slope $\alpha_2$};
    \draw (5.25, -8) node{slope $\alpha_3 \equiv -\infty$}; 
  
\end{tikzpicture}
\caption{The figure depicts the top three curves in $\mathcal{L}^{a,b,c}$ from Theorem \ref{T3}. The slopes $\beta_i$ are the sorted in ascending order numbers $\{\sqrt{2} (b_i^+)^{-1}\}_{ i\geq 1}$ and the non-zero elements in $\{\sqrt{2} b_i^-\}_{i \geq 1}$ (counted with multiplicities and with $+\infty$ allowed). The slopes $\alpha_i$ are the sorted in descending order numbers $\{-\sqrt{2} (a_i^+)^{-1}\}_{ i\geq 1}$ and the non-zero elements in $\{- \sqrt{2} a_i^-\}_{i \geq 1}$ (counted with multiplicities and with $-\infty$ allowed).}
    \label{S12}
\end{figure}
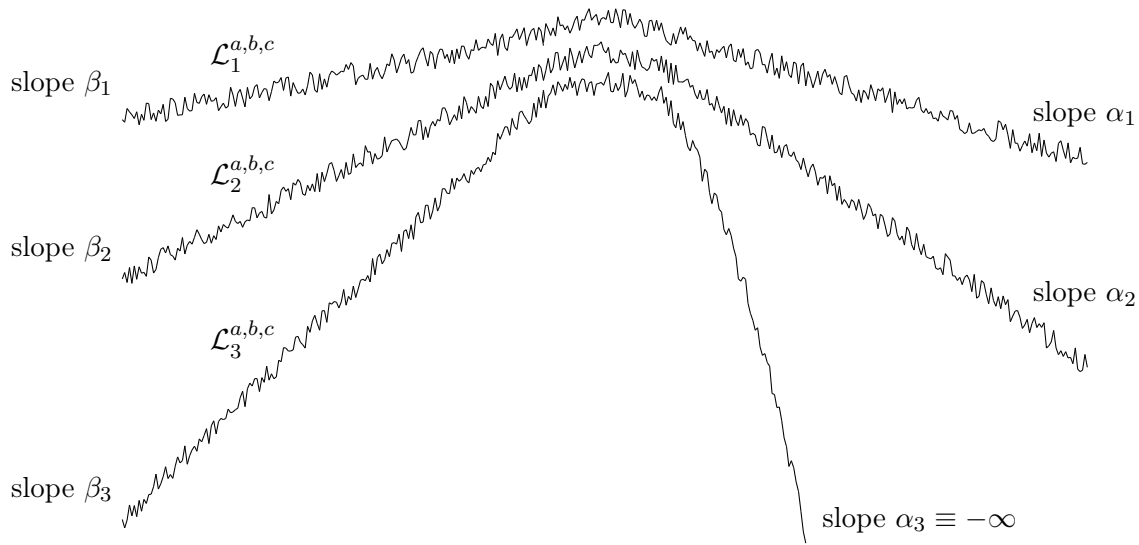

When all parameters are equal to zero, the ensemble $\mathcal{L}^{0,0,0}$ is precisely the parabolic Airy line ensemble from (\ref{PALE}). Since $\mathcal{L}^{\mathrm{Airy}}$ is stationary, we see that $\mathcal{L}^{0,0,0}$ globally looks like the inverted parabola $-2^{-1/2} t^2$. Once we start adding $a$ and $b$ parameters the behavior of $\mathcal{L}^{a,b,c}$ near $\pm \infty$ changes from parabolic to {\em linear}. Based on informal computations involving the kernel $K_{a,b,c}$ we expect that adding an extra $a_i^+> 0$ parameter creates an additional line whose slope for large $t$ is $-\sqrt{2} (a_i^+)^{-1}$, while adding an extra $a_i^- > 0$ creates an additional line with slope $-\sqrt{2}a_i^-$ for large $t$. Similarly, adding an extra $b_i^+$ or $b_i^-$ parameter creates an additional line of slope $\sqrt{2} (b_i^+)^{-1}$ or $\sqrt{2}b_i^-$, respectively, for small negative $t$. If we have only finitely many non-zero parameters, only the top few curves are linear away from the origin, and the others should look like the inverted parabola $-2^{-1/2} t^2$. We also think that it is possible to have curves that on one side of the origin are linear and on the other are parabolic, formally corresponding to a slope of $\pm \infty$, see Figure \ref{S12}. 

We mention that recently it was established in \cite{AH23} that $\mathcal{L}^{0,0,0}$ is (up to an independent affine shift) the unique Brownian Gibbsian line ensemble whose top curve globally looks like $-2^{-1/2} t^2$. Our line ensembles $\mathcal{L}^{a,b,c}$ for general parameters obviously violate this condition, but it would be interesting to see if they too can be characterized by some global feature, with asymptotic slopes of the curves being an obvious candidate.

We lastly discuss the case when $c^- > 0$ or infinitely many of the parameters $a_i^-, b_i^-$ are positive. While the point processes exist in this case, it is not clear whether one can lift them to line ensembles. As we add more ``minus'' parameters, we are creating more and more curves whose asymptotic slopes $\sqrt{2}b_i^-$ and $-\sqrt{2}a_i^-$ get closer and closer to zero. Since the curves avoid each other this causes a repulsion in the bulk, which we believe asymptotically will force the top curves to go to infinity. In this case, we think it possible that there is still a line ensemble, although it is not indexed by $\mathbb{N}$, but rather $\mathbb{Z}$. I.e. there is no longer a ``top'' curve but instead the ensemble has infinitely many curves in both vertical directions. On the level of formulas, having $c^- > 0$ or infinitely many infinitely positive $a_i^-, b_i^-$ causes the function $\Phi_{a,b,c}$ in (\ref{DefPhiS11}) to have an {\em essential} singularity at the origin, which would support a qualitative change in the nature of the line ensembles.

%
%
\subsection{Main results}\label{Section1.2}
We begin by fixing the parameters of the model and some notation.
\begin{definition}\label{DLP} 
We assume that we are given four sequences of non-negative real numbers $\{a_i^+\}_{ i \geq 1}$, $\{a_i^-\}_{ i \geq 1}$, $\{b_i^+\}_{ i \geq 1}$, $\{b_i^-\}_{ i \geq 1}$ such that 
\begin{equation}\label{ParProp}
\sum_{i = 1}^{\infty} (a_i^+ + a_i^- + b_i^+ + b_i^-) < \infty \mbox{ and } a_{i}^{\pm} \geq a_{i+1}^{\pm},  b_{i}^{\pm} \geq b_{i+1}^{\pm} \mbox{ for all } i \geq 1,
\end{equation}
as well as two non-negative parameters $c^+, c^-$. We let $J_a^{\pm} = \inf \{ k \geq 1: a_{k}^{\pm} = 0\} - 1$ and $J_b^{\pm} = \inf \{ k \geq 1: b_{k}^{\pm} = 0\} - 1$. In words, $J_a^{\pm}$ is the largest index $k$ such that $a_{k}^{\pm} > 0$, with the convention that $J_a^{\pm} = 0$ if all $a_k^{\pm} = 0$ and $J_a^{\pm} = \infty$ if all $a_k^{\pm} > 0$, and analogously for $J_b^{\pm}$.

Define 
$$\underline{a} = \begin{cases} 0  &\hspace{-3.5mm}  \mbox{ if } a_1^- + b_1^- + c^- > 0, \\   \infty & \hspace{-3.5mm} \mbox{ if }   a_1^- + b_1^- + c^- = 0\mbox{ and }  a_1^+ = 0, \\ 1/a_1^+ & \hspace{-3.5mm}  \mbox{ if }a_1^- + b_1^- + c^- =  0 \mbox{ and } a_1^+ > 0, \end{cases} \hspace{1mm} \mbox{ and }\hspace{1mm} \underline{b} = \begin{cases} 0 &\hspace{-3.5mm}  \mbox{ if }  a_1^- + b_1^- + c^- > 0, \\  -\infty & \hspace{-3.5mm} \mbox{ if } a_1^- + b_1^- + c^- =  0 \mbox{ and } b_1^+ = 0, \\ -1/b_1^+ & \hspace{-3.5mm} \mbox{ if } a_1^- + b_1^- + c^- =  0 \mbox{ and } b_1^+ > 0. \end{cases}$$
 Observe that $\underline{a} \in [0, \infty]$ and $\underline{b} \in [- \infty, 0]$. 
\end{definition}

For $z \in \mathbb{C} \setminus \{0\} $ we define the function
\begin{equation}\label{DefPhi}
\Phi_{a,b,c}(z) = e^{c^+z + c^-/ z} \cdot \prod_{i = 1}^{\infty} \frac{(1 + b_i^+ z) (1 + b_i^- /z)}{(1 - a_i^+ z) ( 1 - a_i^{-}/z)}.
\end{equation}
From (\ref{ParProp}) and \cite[Chapter 5, Proposition 3.2]{Stein}, we have that the above defines a meromorphic function on $\mathbb{C} \setminus \{0\}$ whose zeros are at $\{-(b_i^+)^{-1}\}_{i =1}^{J_b^+}$ and $\{- b_i^-\}_{i =1}^{J_b^-}$, while its poles are at $\{(a_i^+)^{-1}\}_{i =1}^{J_a^+}$ and $\{ a_i^-\}_{i =1}^{J_a^-}$. We also observe that $\Phi_{a,b,c}(z) $ is analytic in $\mathbb{C} \setminus [\underline{a}, \infty)$, and its inverse is analytic in $\mathbb{C} \setminus (-\infty, \underline{b}]$, where $\underline{a}, \underline{b}$ are as in Definition \ref{DLP}.

The following definitions present the Airy wanderer kernel, introduced in \cite{BP08}, starting with the contours that appear in it. 
\begin{definition}\label{DefContInf}  Fix $a \in \mathbb{R}$. We let $\Gamma_a^+$ denote the union of the contours $\{a + y e^{\pi \im /4}\}_{y \in \mathbb{R}_+}$ and $\{a + y e^{-\pi \im/4}\}_{y \in \mathbb{R}_+}$, and $\Gamma_a^-$ the union of the contours $\{a + y e^{ 3\pi \im /4}\}_{y \in \mathbb{R}_+}$ and $\{a + y e^{-3 \pi \im/4}\}_{y \in \mathbb{R}_+}$. Both contours are oriented in the direction of increasing imaginary part. See Figure \ref{S11}.
\end{definition}

\begin{definition}\label{3BPKernelDef} Assume the same notation as in Definition \ref{DLP}. For $t_1, t_2,x_1,x_2 \in \mathbb{R}$ we define 
\begin{equation}\label{3BPKer}
\begin{split}
&K_{a,b,c} (t_1, x_1; t_2, x_2) = K^1_{a,b,c} (t_1, x_1; t_2, x_2) +  K^2_{a,b,c} (t_1, x_1; t_2, x_2) +  K^3_{a,b,c} (t_1, x_1; t_2, x_2), \mbox{ with } \\
& K^1_{a,b,c} (t_1, x_1; t_2, x_2) = \frac{1}{2\pi \im} \int_{\gamma}dw \cdot  e^{(t_2 - t_1)w^2 + (t_1^2 - t_2^2) w + w (x_2-x_1) + x_1 t_1 - x_2 t_2 - t_1^3/3 + t_2^3/3}    \\
&K^2_{a,b,c} (t_1, x_1; t_2, x_2)  = -  \frac{{\bf 1}\{ t_2 > t_1\} }{\sqrt{4\pi (t_2 - t_1)}} \cdot e^{ - \frac{(x_2 - x_1)^2}{4(t_2 - t_1)} - \frac{(t_2 - t_1)(x_2 + x_1)}{2} + \frac{(t_2 - t_1)^3}{12} }; \\
& K^3_{a,b,c} (t_1, x_1; t_2, x_2) = \frac{1}{(2\pi \im)^2} \int_{\Gamma_{\alpha }^+} d z \int_{\Gamma_{\beta}^-} dw \frac{e^{z^3/3 -x_1z - w^3/3 + x_2w}}{z + t_1 - w - t_2} \cdot \frac{\Phi_{a,b,c}(z + t_1) }{\Phi_{a,b,c}(w + t_2)}.
\end{split}
\end{equation}
In (\ref{3BPKer}) $\alpha, \beta  \in \mathbb{R}$ are such that $\alpha + t_1 < \underline{a}$ and $\beta + t_2 > \underline{b}$, the function $\Phi_{a,b,c}$ is as in (\ref{DefPhi}) and the contours of integration in $K^3_{a,b,c}$ are as in Definition \ref{DefContInf}. If $\Gamma^+_{\alpha + t_1} (=t_1 + \Gamma^+_{\alpha})$ and $\Gamma^-_{\beta + t_2} (= t_2 + \Gamma^+_{\beta} )$ have zero or one intersection points, i.e. they look as in Figure \ref{S11}, we take $\gamma = \emptyset$ and then $K^1_{a,b,c} \equiv 0$. Otherwise, $\Gamma^+_{\alpha + t_1} $ and $\Gamma^-_{\beta + t_2}$ have exactly two intersection points, which are complex conjugates, and $\gamma$ is the straight vertical segment that connects them with the orientation of increasing imaginary part. See Figure \ref{S12}.
\end{definition}
\begin{figure}[h]
    \centering
     \begin{tikzpicture}[scale=2.7]

        \def\tra{3} 
        \draw[->, thick, gray] (-1.2,0)--(1.2,0) node[right]{$\Real$};
        \draw[->, thick, gray] (0,-1.2)--(0,1.2) node[above]{$\Imag$};

        \draw[-,thick][black] (0.6,0) -- (0.2,-0.4);
        \draw[->,thick][black] (-0.4,-1) -- (0.2,-0.4);
        \draw[black, fill = black] (0.6,0) circle (0.02);
        \draw (0.6,-0.3) node{$\beta + t_2$};
        \draw[->,very thin][black] (0.6,-0.2) -- (0.6, -0.05);
        \draw[->,thick][black] (0.6,0) -- (0.2,0.4);
        \draw[-,thick][black]  (-0.4,1) -- (0.2,0.4);       

        \draw[-,thick][black] (-0.75, 0) -- (-0.25,-0.5);
        \draw[->,thick][black] (0.25, -1) -- (-0.25, -0.5);
        \draw[black, fill = black] (-0.75,0) circle (0.02);
        \draw (-0.75,-0.275) node{$\alpha + t_1$};
        \draw[->,very thin][black] (-0.75,-0.2) -- (-0.75, -0.05);
        \draw[->,thick][black] (-0.75,0) -- (-0.25,0.5);
        \draw[-,thick][black]  (-0.25,0.5) -- (0.25,1);

        \draw[->,thick][black] (-0.075, -0.675) -- (-0.075,0.2);
        \draw[-,thick][black] (-0.075, 0.2) -- (-0.075,0.675);

        \draw[black, fill = black] (-0.075,0.675) circle (0.02);
        \draw[black, fill = black] (-0.075,-0.675) circle (0.02);
        \draw (-0.075,0.8) node{$u_+$};
        \draw (-0.075,-0.825) node{$u_-$};

        \draw (0.35,0.825) node{$\Gamma^+_{\alpha + t_1}$};
        \draw (-0.4,0.825) node{$\Gamma^-_{\beta + t_2}$};
        \draw (-0.15,0.225) node{$\gamma$};

    \end{tikzpicture} 
    \caption{The figure depicts the contours $\Gamma_{\alpha + t_1}^+, \Gamma_{\beta + t_2}^-$ when they have two intersection points, denoted by $u_-$ and $u_+$. The contour $\gamma$ is the segment from $u_-$ to $u_+$.}
    \label{S12}
\end{figure}
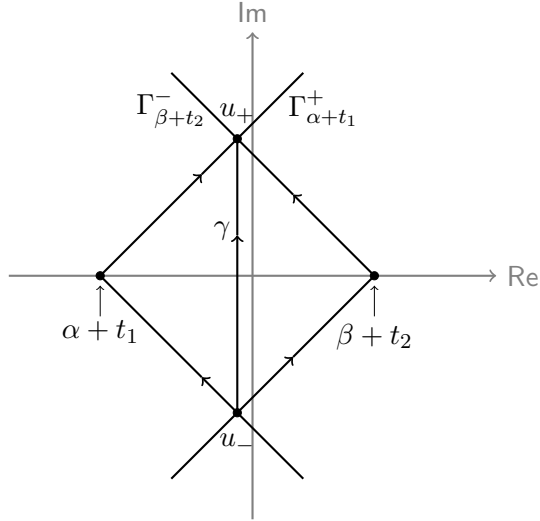

The following lemma summarizes some basic properties of the kernel $K_{a,b,c}$ in Definition \ref{3BPKernelDef}, and in particular shows that it is well-defined. It is proved in Section \ref{Section4.1}.
\begin{lemma}\label{WellDefKer} Assume the same notation as in Definition \ref{DLP}. For each $t_1, t_2,x_1,x_2 \in \mathbb{R}$ we have that the double integral in the definition of $K^3_{a,b,c}$ in (\ref{3BPKer}) is convergent. The value of $K_{a,b,c}(t_1, x_1; t_2, x_2) $ does not depend on the choice of $\alpha$ and $\beta$ as long as $\alpha + t_1 < \underline{a}$ and $\beta + t_2 > \underline{b}$. Moreover, for each fixed $t_1, t_2 \in \mathbb{R}$ we have that $K_{a,b,c}(t_1, \cdot; t_2, \cdot)$ is continuous in $(x_1, x_2) \in \mathbb{R}^2$.
\end{lemma}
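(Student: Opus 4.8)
The plan is to establish the three assertions in the order stated, all of them resting on a single decay estimate for the integrand of $K^3_{a,b,c}$. It helps to first change variables $\tilde z = z + t_1$, $\tilde w = w + t_2$, so that the double integral in $K^3_{a,b,c}$ becomes $\frac{1}{(2\pi\im)^2}\int_{\Gamma^+_{\alpha+t_1}}\int_{\Gamma^-_{\beta+t_2}} F(\tilde z,\tilde w)\, d\tilde w\, d\tilde z$ with
\[
F(\tilde z,\tilde w) = \frac{1}{\tilde z - \tilde w}\, e^{(\tilde z - t_1)^3/3 - x_1(\tilde z - t_1) - (\tilde w - t_2)^3/3 + x_2(\tilde w - t_2)}\cdot \frac{\Phi_{a,b,c}(\tilde z)}{\Phi_{a,b,c}(\tilde w)}.
\]
This puts the problematic denominator into the clean form $\tilde z - \tilde w$, so the relevant pole locus is simply $\{\tilde z = \tilde w\}$, which meets the product contour $\Gamma^+_{\alpha+t_1}\times\Gamma^-_{\beta+t_2}$ exactly at the (zero, one, or two) points where these two contours intersect.

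For convergence I would first note that, since $\Gamma^+_{\alpha+t_1}$ meets $\R$ only at $\alpha+t_1<\underline{a}$ while $\Phi_{a,b,c}$ is analytic on $\mathbb{C}\setminus[\underline{a},\infty)$, the factor $\Phi_{a,b,c}(\tilde z)$ is analytic on a neighbourhood of $\Gamma^+_{\alpha+t_1}$, and symmetrically $1/\Phi_{a,b,c}(\tilde w)$ is analytic on a neighbourhood of $\Gamma^-_{\beta+t_2}$ because $\Gamma^-_{\beta+t_2}$ meets $\R$ only at $\beta+t_2>\underline{b}$; hence the sole singularity of $F$ on the product contour is the simple pole on $\{\tilde z=\tilde w\}$. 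As each ray of $\Gamma^+_{\alpha+t_1}$ meets each ray of $\Gamma^-_{\beta+t_2}$ at most once, this pole lies over finitely many — at most two — points, near each of which the two contours cross transversally, so $1/|\tilde z-\tilde w|$ is locally integrable for the product arclength measure (because $r^{-1}$ is integrable in $\R^2$). Away from those points, parametrising $\Gamma^+_{\alpha+t_1}$ by $\tilde z=(\alpha+t_1)+ye^{\pm\pi\im/4}$ gives $\Real\big((\tilde z)^3/3\big)\le -c|\tilde z|^3+C$, and likewise $\Real\big(-(\tilde w)^3/3\big)\le -c|\tilde w|^3+C$ on $\Gamma^-_{\beta+t_2}$; combined with the elementary estimates $|\Phi_{a,b,c}(\tilde z)|\le e^{C|\tilde z|}$ on $\Gamma^+_{\alpha+t_1}$ and $1/|\Phi_{a,b,c}(\tilde w)|\le e^{C|\tilde w|}$ on $\Gamma^-_{\beta+t_2}$ (which follow from $\sum_i(a_i^\pm+b_i^\pm)<\infty$ and $\prod_i(1+s_i|\tilde z|)\le e^{|\tilde z|\sum_i s_i}$, the reciprocal factors being bounded on the relevant contour since $\tilde z$ stays away from $0$ there), and the at-most-exponential contribution of $e^{-x_1\tilde z+x_2\tilde w}$, one obtains an integrable majorant of $|F|$ on the product contour, uniform for $(x_1,x_2)$ in a compact set. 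Absolute convergence follows.

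The main work, and the step I expect to be the obstacle, is the independence of $\alpha$ and $\beta$. Since the admissible sets $\{\alpha:\alpha+t_1<\underline{a}\}$ and $\{\beta:\beta+t_2>\underline{b}\}$ are intervals and any change of $(\alpha,\beta)$ factors through a change of $\alpha$ alone followed by a change of $\beta$ alone, I would reduce to moving $\alpha\ra\alpha'$ with $\beta$ fixed, the $\beta$-case being symmetric. For fixed $\tilde w\in\Gamma^-_{\beta+t_2}$ I would deform the $\tilde z$-contour from $\Gamma^+_{\alpha+t_1}$ to $\Gamma^+_{\alpha'+t_1}$ by sliding its vertex along $\R$; the region swept out is the ``diagonal strip'' $\{\tilde z:\Real(\tilde z)-|\Imag(\tilde z)|\ \text{between}\ \alpha+t_1\ \text{and}\ \alpha'+t_1\}$, which meets $\R$ inside $(-\infty,\underline{a})$ and hence contains no pole of $\Phi_{a,b,c}$, so the only pole of $F(\cdot,\tilde w)$ crossed is $\tilde z=\tilde w$. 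Its residue is $e^{(\tilde w-t_1)^3/3-(\tilde w-t_2)^3/3-x_1(\tilde w-t_1)+x_2(\tilde w-t_2)}$, the $\Phi$-ratio trivialising on $\{\tilde z=\tilde w\}$, and a short expansion using $(t_2-t_1)(t_1^2+t_1t_2+t_2^2)=t_2^3-t_1^3$ identifies this residue with the integrand $G(\tilde w)$ of $K^1_{a,b,c}$, which is entire in $\tilde w$. Applying Cauchy's theorem to a truncation of the strip at radius $R$ (the connecting arcs contribute nothing as $R\ra\infty$, since $\Gamma^+_{\alpha+t_1}$ and $\Gamma^+_{\alpha'+t_1}$ share asymptotic directions and the cubic decay applies there too), and then integrating over $\tilde w$ — legitimate by the absolute convergence above — the difference $K^3_{a,b,c}\big|_{\alpha'}-K^3_{a,b,c}\big|_{\alpha}$ becomes $\pm\frac{1}{2\pi\im}$ times the integral of the entire function $G$ over the sub-arc of $\Gamma^-_{\beta+t_2}$ contained in the strip; since $G$ is entire, this depends only on the endpoints of that sub-arc, which are precisely the intersection points of $\Gamma^-_{\beta+t_2}$ with $\Gamma^+_{\alpha+t_1}$ and with $\Gamma^+_{\alpha'+t_1}$ (together with the real vertex $\tilde w=\beta+t_2$ when the strip covers it). Comparing these endpoints and orientations with the definition of the segment $\gamma$ in Definition \ref{3BPKernelDef}, and checking the bookkeeping across the $0$-, $1$-, and $2$-intersection regimes of the two configurations, I expect the endpoint contribution to come out to exactly $K^1_{a,b,c}\big|_{\alpha}-K^1_{a,b,c}\big|_{\alpha'}$; as $K^2_{a,b,c}$ does not depend on $\alpha$ or $\beta$ at all, this gives that $K_{a,b,c}$ is unchanged. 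The delicate points are getting every sign and orientation right and handling the passage through the tangent ($1$-intersection) configuration, where the swept sub-arc shrinks to the vertex and both the residue contribution and $K^1_{a,b,c}$ vary continuously.

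Finally, continuity of $K_{a,b,c}(t_1,\cdot;t_2,\cdot)$ in $(x_1,x_2)$ follows term by term from dominated convergence. The integrand $F$ is jointly continuous in $(\tilde z,\tilde w,x_1,x_2)$ off $\{\tilde z=\tilde w\}$ and is dominated on the product contour, uniformly for $(x_1,x_2)$ in a compact set, by the majorant constructed in the convergence step, so $K^3_{a,b,c}$ is continuous. The contour $\gamma$ depends only on $t_1,t_2,\alpha,\beta$ (not on $x_1,x_2$) and is compact, and the integrand of $K^1_{a,b,c}$ is entire in $w$ and jointly continuous in $(w,x_1,x_2)$, so $K^1_{a,b,c}$ is continuous — indeed analytic — in $(x_1,x_2)$ (and is identically $0$ when $\gamma=\emptyset$). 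And $K^2_{a,b,c}$ is an explicit smooth function of $(x_1,x_2)$ when $t_1\ne t_2$, and is identically $0$ when $t_1=t_2$. Summing the three contributions gives the claim.
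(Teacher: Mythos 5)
Your proposal is correct and takes essentially the same route as the paper: integrability near the (at most two) transversal crossings of the contours plus cubic decay at infinity gives convergence; deforming one contour and collecting the residue at the simple pole, then using that the residue is the entire integrand of $K^1_{a,b,c}$ so Cauchy's theorem lets the resulting arc integral be replaced by the segment $\gamma$, gives $\alpha,\beta$-independence; and dominated convergence gives continuity. The only differences are cosmetic (you change variables $\tilde z = z+t_1$, $\tilde w = w+t_2$ up front and deform the $\tilde z$-contour with $\tilde w$ fixed, whereas the paper keeps the original variables, splits the $z$-contour, and deforms the $w$-contour first, fixing a common reference $(\alpha_3,\beta_3)$ to reduce the bookkeeping).
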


If $c^+ = c^- = 0$, $a_i^{-} = b_i^- = 0$ for all $i \geq 1$, and $J_a^+ = J_1 \in \mathbb{Z}_{\geq 0}$, $J_b^+ = J_2 \in \mathbb{Z}_{\geq 0}$, a straightforward calculation gives 
$$\frac{\Phi_{a,b,c}(z ) }{\Phi_{a,b,c}(w )} = \prod_{i = 1}^{J_2} \frac{1 + b_i^+ z}{ 1 + b_i^+ w} \cdot \prod_{i = 1}^{J_1} \frac{1 - a_i^+ w}{ 1 - a_i^+ z} = \prod_{i = 1}^{J_1} \frac{w - x_i}{z - x_i} \cdot \prod_{i = 1}^{J_2} \frac{z - y_i}{w - y_i},$$
where $x_i = (a_i^+)^{-1}$ for $i = 1, \dots, J_1$ and $y_j = - (b_j^+)^{-1}$ for $j = 1, \dots, J_2$. In this case, we have $\underline{a} = \min(x_i:i = 1, \dots, J_1) \in (0,\infty)$ and $\underline{b} = \max (y_j: j = 1, \dots, J_2) \in (-\infty, 0)$. In particular, we can find $\alpha, \beta$ such that $\underline{b} < \beta + t_2 < \alpha + t_1 < \underline{a}$. We see that $\Gamma_{\alpha + t_1}^+$ and $\Gamma_{\beta + t_2}^-$ do not intersect, and so $K^1_{a,b,c} = 0$, which shows $K_{a,b,c}$ agrees with the kernel $K^{\mathrm{Airy}}_{X,Y}$ from (\ref{S1EAKS}). If one further replaces $t_1, t_2$ with $t_1 + \Delta , t_2 + \Delta$ in the above formula, one would obtain the same kernel $K^{\mathrm{Airy}}_{X,Y}$ but with $x$'s and $y$'s being arbitrary reals such that $\min(x_i:i = 1, \dots, J_1) > \max (y_j: j = 1, \dots, J_2)$.

The discussion in the previous paragraph shows that our kernels $K_{a,b,c}$ indeed generalize $K^{\mathrm{Airy}}_{X,Y}$. When the contours $\Gamma^+_{\alpha + t_1}$ and $\Gamma^-_{\beta + t_2}$ are disjoint, which occurs when all ``minus'' parameters are zero, they take the form proposed in \cite[Remark 2]{BP08}. When some of the ``minus'' parameters are positive, the function $\Phi_{a,b,c}(z)$ from (\ref{DefPhi}) has a (possibly essential) singularity at the origin. In this case the contours $\Gamma^+_{\alpha + t_1}$ and $\Gamma^-_{\beta + t_2}$ are deformed past each other to the correct side of the origin they need to pass through. As the contours cross, they give rise to a residue coming from $z + t_1 - w - t_2$, which is what produces the term $K^1_{a,b,c}$ in (\ref{3BPKer}). Our particular choice of contours and insertion of the extra term $K^1_{a,b,c}$ resolves the issue mentioned in Section \ref{Section1.1} and Lemma \ref{WellDefKer} shows that the kernel $K_{a,b,c}$ is well-defined and behaved.\\

The first main result of the paper is that there exists a determinantal point process whose correlation kernel is given by $K_{a,b,c}$ as in Definition \ref{3BPKernelDef}. The precise statement is given as Theorem \ref{T1} below. We refer the reader to Section \ref{Section2} for background on point processes, and specifically to Definition \ref{DPP} for the definition of a determinantal point process.
\begin{theorem}\label{T1} Assume the same notation as in Definition \ref{DLP}. Fix $m \in \mathbb{N}$, $s_1, \dots, s_m \in \mathbb{R}$ with $s_1 < s_2 < \cdots < s_m$, and set $\mathcal{A} = \{s_1, \dots, s_m\}$. There exists a determinantal point process $M$ on $\mathbb{R}^2$, whose correlation kernel is $K_{a,b,c}$ as in (\ref{3BPKer}), and with reference measure given by $\mu_{\mathcal{A}} \times \lambda$, where $\mu_{\mathcal{A}}$ is the counting measure on $\mathcal{A}$ and $\lambda$ is the Lebesgue measure on $\mathbb{R}$.  
\end{theorem}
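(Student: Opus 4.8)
The plan is to realize $M$ as the limit of a sequence of determinantal point processes $M^{(n)}$ associated to the finite-parameter Borodin--P\'ech\'e kernels $K^{\mathrm{Airy}}_{X^{(n)},Y^{(n)}}$ from (\ref{S1EAKS}), which are already known to be determinantal (they arise from genuine last passage percolation limits, as recalled in Section \ref{Section1.1}). For this one must first approximate the data $(a,b,c)$: truncate each of the four sequences after the first $n$ terms and set all remaining entries to $0$, and replace the exponential factor $e^{c^+ z + c^-/z}$ by a product of linearly many factors $\prod (1 - c^+ z /n)^{-n}\cdot(\dots)$ (or simply absorb $c^{\pm}$ by adding $n$ parameters each equal to $c^{\pm}/n$), so that each $\Phi_{a^{(n)},b^{(n)},c^{(n)}}$ is a ratio of polynomials of the form treated in (\ref{S1EAKS}). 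One checks that under these approximations the ``minus'' parameters stay finite in number and strictly positive, so the relevant contours can be taken disjoint and $K^1 \equiv 0$ for the approximants; the subtlety of the crossing contours only appears in the limit.

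The core analytic step is then a convergence statement: for every fixed $t_1,t_2,x_1,x_2$ one has $K^{\mathrm{Airy}}_{X^{(n)},Y^{(n)}}(t_1,x_1;t_2,x_2) \to K_{a,b,c}(t_1,x_1;t_2,x_2)$, and moreover this convergence is locally uniform in $(x_1,x_2)$ and the kernels are uniformly bounded on compacts. The $K^2$ term is identical for all $n$, so the work is in $K^3$ (and explaining the appearance of $K^1$). Here I would argue as follows: fix the contours $\Gamma^+_{\alpha+t_1}$, $\Gamma^-_{\beta+t_2}$ as in Definition \ref{3BPKernelDef} for the limiting object — possibly crossing — and deform the contours in the $n$-th approximant to these same curves. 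For $n$ large the only poles crossed in this deformation are the ones coming from $z+t_1 = w+t_2$, which produces exactly the residue integral $K^1_{a,b,c}$ (this is where the identity $\tfrac{z-y}{w-y} = \tfrac{1-z/y}{1-w/y}$ etc.\ from Section \ref{Section1.1} is used to see that no spurious poles of $\Phi^{(n)}/\Phi^{(n)}$ lie in the swept region once $n$ is large); on the deformed (fixed) contours the integrand converges pointwise by the infinite-product convergence of $\Phi^{(n)}_{a,b,c} \to \Phi_{a,b,c}$ guaranteed by (\ref{ParProp}), and it is dominated by an integrable function uniformly in $n$ because the cubic terms $e^{z^3/3 - w^3/3}$ provide Gaussian decay along the rays while $|\Phi^{(n)}(z+t_1)/\Phi^{(n)}(w+t_2)|$ is bounded along the contours uniformly in $n$ (each factor $|1-a z|^{\pm 1}$, $|1+bz|^{\pm 1}$ is $\leq e^{|a\,\Real z|}$-type and the total exponent is controlled by $\sum(a_i^{\pm}+b_i^{\pm})$, which is where summability enters). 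Dominated convergence then gives the claimed pointwise (and, by the same domination, locally uniform) convergence, and Lemma \ref{WellDefKer} guarantees the limit is the well-defined object $K_{a,b,c}$, independent of the contour choice.

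With the convergence in hand, the point-process part is soft. Each $M^{(n)}$ is a determinantal point process on $\mathcal{A}\times\mathbb{R}$ with reference measure $\mu_{\mathcal{A}}\times\lambda$ and kernel $K^{\mathrm{Airy}}_{X^{(n)},Y^{(n)}}$; its correlation functions are the determinants $\det[K^{\mathrm{Airy}}_{X^{(n)},Y^{(n)}}(\cdot,\cdot)]$. By continuity of the determinant and the locally uniform convergence of the kernels, these correlation functions converge locally uniformly to $\det[K_{a,b,c}(\cdot,\cdot)]$. One then wants to conclude that (i) the sequence $M^{(n)}$ is tight in the vague topology on locally finite point configurations, and (ii) any subsequential limit has correlation functions $\det[K_{a,b,c}]$, hence is the desired $M$ and is unique. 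Tightness follows from a uniform bound on the expected number of points in any compact box, i.e.\ on $\int_B K^{\mathrm{Airy}}_{X^{(n)},Y^{(n)}}(t,x;t,x)\,dx$ for $B$ compact, which is exactly the $n\to\infty$ control established above; convergence of the correlation functions then pins down the limit via the standard fact (cf.\ Section \ref{Section2}, Lemma on moment problems / Lenard's conditions) that a point process on a nice space is determined by its correlation functions provided these do not grow too fast — and here they satisfy the determinantal bound $\rho_k \leq \prod \|K(\cdot)\|$ which is more than enough. Finally one verifies that the limiting correlation kernel, as a function on $\mathcal{A}\times\mathbb{R}$, really is $K_{a,b,c}$ restricted to $\mathcal{A}$, which is immediate from the construction.

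The main obstacle I expect is the uniform domination of the double contour integral $K^3$ along the rays of $\Gamma^{\pm}$ simultaneously in $n$ and in $(x_1,x_2)$ on compacts, while correctly bookkeeping which poles are swept as the approximating contours are deformed to the (possibly self-crossing) limiting contours — in particular, confirming that for all large $n$ the only residue picked up is the $K^1$ term and that the poles at $(a_i^{\pm})^{-1}$-type and $-(b_i^{\pm})^{-1}$-type locations never obstruct the deformation. A secondary technical point is handling the $c^{\pm}$ parameters: approximating $e^{c^+z+c^-/z}$ by rational functions must be done so that the approximants still satisfy $\underline{x}^{(n)} > \overline{y}^{(n)}$ and so that the extra poles introduced escape to $0$ or $\infty$ at a controlled rate, not interfering with the fixed limiting contours.
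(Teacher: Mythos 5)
Your approach is genuinely different from the paper's. You approximate $K_{a,b,c}$ by the finite-parameter Borodin--P\'ech\'e kernels $K^{\mathrm{Airy}}_{X^{(n)},Y^{(n)}}$ from (\ref{S1EAKS}) and then pass to the limit via the soft point-process machinery; the paper instead takes as its prelimit the (discrete) Schur process of Section~\ref{Section3} with the parameter scaling of Definition~\ref{ParScale}, uses the double-contour Schur kernel of Lemma~\ref{PrelimitKernel}, proves uniform-on-compacts kernel convergence in Proposition~\ref{LimitKernelProp}, and then invokes Proposition~\ref{PropWC1}. The soft second half of your argument is exactly the content of Propositions~\ref{PPConv} and~\ref{PropWC1} together with Lemma~\ref{Cons}, and your deformation/residue bookkeeping (move the approximant contours to the fixed, possibly crossing, limiting contours, picking up $K^1$) is essentially the argument the paper runs inside the proof of Lemma~\ref{WellDefKer} to show $\alpha,\beta$-independence, so those pieces are sound and well-matched to the paper's own tools. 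A structural advantage of your route is that the reference measure is Lebesgue throughout, so the vague convergence $\lambda_N\to\lambda$ is trivial; the paper has to contend with rescaled lattice measures. The paper's reason for going through Schur processes is that they are needed anyway in Theorem~\ref{T2} (the RSK monotone coupling of Proposition~\ref{MonCoup} and the lower-tail tightness argument), so the discrete prelimit is ``free.''

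The one genuine gap is your base case: you assert that the approximants $M^{(n)}$ with kernels $K^{\mathrm{Airy}}_{X^{(n)},Y^{(n)}}$ ``are already known to be determinantal,'' citing the LPP origin recalled in Section~\ref{Section1.1}. But the paper treats exactly that statement as something to be proved rather than quoted: it is a special case of Theorem~\ref{T1} itself, and the paper establishes it via the Schur process (and in fact remarks in Section~\ref{Section1.3} that even the [AFM10] route does not obviously scale to $J_1\neq J_2$, let alone $J_1+J_2\to\infty$). If you want to keep your approach self-contained, you would still need to give a construction of the finite-parameter processes — e.g.\ via a discrete LPP/Schur model and the same weak-convergence lemma — at which point you have redone most of Sections~\ref{Section3}--\ref{Section4} anyway, just specialized to finitely many parameters. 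A second, smaller point to tighten: after absorbing $c^-$ into $n$ copies of $b^-=c^-/n$ (and similarly for the tail of the minus sequences when $J_a^-=\infty$ or $J_b^-=\infty$), the gap $(\underline{b}^{(n)},\underline{a}^{(n)})$ in which the non-crossing approximant contours must sit shrinks to $\{0\}$, so one must check that the integrand on these contours is dominated \emph{uniformly in $n$} near the pinch point $z+t_1-w-t_2=0$ by an integrable function. This is doable with the same $|z-w|^{-1}$ local-integrability estimate as in (\ref{IntSing}), but it does need to be written out, and it is precisely the kind of uniformity that the paper's Lemma~\ref{LTrunc} and the bounds (\ref{H1B})--(\ref{H8B}) are tracking.
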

\begin{remark}\label{T1R1} We mention that in Definition \ref{DPP} we require that the correlation kernel of a determinantal point process and its reference measures are both locally finite. The latter is clear for the measure $\mu_{\mathcal{A}} \times \lambda$, and for $K_{a,b,c}$ it follows from the continuity statement in Lemma \ref{WellDefKer}.
\end{remark}
\begin{remark}\label{T1R2} While Theorem \ref{T1} establishes the {\em existence} of a determinantal point process $M$ with the prescribed correlation kernel and reference measure, we have from part (3) of Proposition \ref{PropLem}, that this point process is {\em unique}. 
\end{remark}

The next two theorems show that if $c^- = 0$ and $J_a^- + J_b^- < \infty$, which is the case when $\Phi_{a,b,c}(z)$ has a removable singularity or a pole at the origin (i.e. not an essential singularity), one can lift the point processes of Theorem \ref{T1} to {\em line ensembles}, i.e. random elements in $C(\mathbb{N} \times \mathbb{R})$. We refer the reader to \cite[Definition 2.1]{CorHamA} and also \cite[Definition 2.1]{DEA21} (where we take $\Sigma = \mathbb{N}$ and $\Lambda = \mathbb{R}$) for a formal definition of line ensembles.

\begin{theorem}\label{T2} Assume the same notation as in Definition \ref{DLP}. Assume further that $c^- = 0$ and $J_a^- + J_b^- < \infty$. There exists a sequence $\{Y_i\}_{i \geq 1}$ of processes on $\mathbb{R}$ that are all defined on the same probability space $(\Omega, \mathcal{F}, \mathbb{P})$, such that the following hold. For each $m \in \mathbb{N}$, and $s_1, \dots, s_m \in \mathbb{R}$ with $s_1 < s_2 < \cdots < s_m$ we have that the random measure
\begin{equation}\label{T2E1}
M(\omega, A) = \sum_{i \geq 1} \sum_{j = 1}^m {\bf 1}\{ (s_j, Y_i(s_j, \omega)) \in A\}
\end{equation}
is a determinantal point process on $\mathbb{R}^2$, with correlation kernel $K_{a,b,c}$ as in (\ref{3BPKer}), and reference measure $\mu_{\mathcal{A}} \times \lambda$ as in Theorem \ref{T1}. In addition, 
\begin{equation}\label{T2E2}
Y_i(t, \omega) > Y_{i+1}(t, \omega) \mbox{ for each $i \in \mathbb{N}$, $t \in \mathbb{R}$ and $\omega \in \Omega$. }
\end{equation}
\end{theorem}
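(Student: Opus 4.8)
The plan is to read off the finite-dimensional distributions of the processes $\{Y_i\}_{i\geq1}$ from the determinantal point processes of Theorem \ref{T1}, glue them with Kolmogorov's extension theorem, and then force the strict ordering (\ref{T2E2}) by a pointwise redefinition that does not change the law.

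\textbf{Step 1 (consistency).} For a finite set $S=\{s_1<\cdots<s_m\}\subset\mathbb{R}$ let $M_S$ be the determinantal point process on $\mathbb{R}^2$ from Theorem \ref{T1}, which is supported on $\bigcup_{j=1}^{m}(\{s_j\}\times\mathbb{R})$. If $S\subset S'$, deleting from $M_{S'}$ the points over $S'\setminus S$ yields a determinantal point process with kernel $K_{a,b,c}$ restricted to $S\times\mathbb{R}$ and reference measure $\mu_S\times\lambda$; by the uniqueness of determinantal processes with a prescribed locally finite kernel and reference measure (part (3) of Proposition \ref{PropLem}) this coincides in law with $M_S$. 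Hence $\{M_S\}$ is consistent under restriction of the time set --- a step that uses nothing beyond Lemma \ref{WellDefKer}.

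\textbf{Step 2 (a single fiber).} Fix $t\in\mathbb{R}$ and let $K_t(x,y):=K_{a,b,c}(t,x;t,y)$ be the kernel of the point process $M_{\{t\}}$ on $\{t\}\times\mathbb{R}\cong\mathbb{R}$. The goal is to show that a.s.\ this configuration is simple, discrete, bounded above and infinite, so that it is a.s.\ the range of a unique strictly decreasing sequence $Y_1(t)>Y_2(t)>\cdots\to-\infty$. Simplicity is automatic from the continuity of $K_t$ (Lemma \ref{WellDefKer}) and the non-atomicity of $\lambda$. The remaining properties reduce to the behaviour of the intensity $x\mapsto K_t(x,x)$, which at equal times equals $K^1_{a,b,c}(t,\cdot;t,\cdot)+K^3_{a,b,c}(t,\cdot;t,\cdot)$ (the indicator in $K^2_{a,b,c}$ vanishes), with the first term bounded (an integral over the compact segment $\gamma$); one must prove that $K^3_{a,b,c}(t,x;t,x)$ decays super-exponentially as $x\to+\infty$ --- giving $\int_{[c,\infty)}K_t(x,x)\,d\lambda(x)<\infty$ for all $c$ and hence finitely many points above each level --- and that $\int_{\mathbb{R}}K_t(x,x)\,d\lambda(x)=\infty$, which by the standard fact that a determinantal process with infinite total intensity has a.s.\ infinitely many points rules out a finite configuration. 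This decay (and the matching $\frac{1}{\pi}\sqrt{-x}\,(1+o(1))$ growth as $x\to-\infty$) is obtained by a steepest-descent analysis of the double contour integral in (\ref{3BPKer}): deform $\Gamma^+_{\alpha+t}$ and $\Gamma^-_{\beta+t}$ through the saddles of $z^3/3-xz$ and $-w^3/3+xw$ at $\pm\sqrt{x}$, keeping track of the residue picked up if the contours must cross the origin (this residue is exactly what combines with $K^1_{a,b,c}$ in the ``crossed'' regime), and use that, since $c^-=0$ and $J_a^-+J_b^-<\infty$, the function $\Phi_{a,b,c}$ has at worst a pole at $0$ and is dominated by the cubic exponential $e^{z^3/3}$ along the deformed contours.

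\textbf{Step 3 (extension and strict ordering).} On the a.s.\ event from Step 2, intersected over $s\in S$, define $(Y_i(s))_{i\geq1,\,s\in S}$ to be the fiberwise decreasing enumerations of $M_S$ (and $(-i)_{i\geq1}$ off that probability-zero event), and let $\nu_S$ be its law on $(\mathbb{R}^{\mathbb{N}})^{S}$. Since ``configuration with nice fibers $\mapsto$ fiberwise decreasing enumerations'' is measurable and commutes with deleting points over $S'\setminus S$, Step 1 makes $\{\nu_S\}_{S\text{ finite}}$ a consistent projective family, and Kolmogorov's extension theorem produces $(\Omega,\mathcal{F},\mathbb{P})$ carrying processes $\{Y_i\}_{i\geq1}$ with these finite-dimensional distributions; by construction the random measure (\ref{T2E1}) is then determinantal with kernel $K_{a,b,c}$ and reference measure $\mu_{\mathcal{A}}\times\lambda$. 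To obtain (\ref{T2E2}) for every $i,t,\omega$, replace $Y_i$ by $\widehat Y_i(t,\omega):=Y_i(t,\omega)$ when $(Y_j(t,\omega))_{j\geq1}$ is strictly decreasing with limit $-\infty$ and $\widehat Y_i(t,\omega):=-i$ otherwise; then (\ref{T2E2}) holds identically for $\widehat Y$, and since for each fixed $t$ the $t$-fiber is a.s.\ as in Step 2 we have $\mathbb{P}(\widehat Y_i(t)=Y_i(t)\ \forall i)=1$, so (\ref{T2E1}) is a.s.\ --- hence in law --- unchanged when $Y$ is replaced by $\widehat Y$. Renaming $\widehat Y$ as $Y$ finishes the proof. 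The one substantial step is Step 2: the asymptotics of $K_{a,b,c}(t,x;t,x)$, which in the crossed-contour case ($a_1^-+b_1^->0$) becomes visible only after combining $K^3_{a,b,c}$ with the crossing residue and $K^1_{a,b,c}$; everything else --- consistency, Kolmogorov, the pointwise repair --- is soft. I would not address continuity of the $Y_i$ here; that, and the Brownian Gibbs property, belong to the next theorem.
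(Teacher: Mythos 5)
Your Steps 1 and 3 are sound and in fact closely mirror the paper's own scaffolding (restriction consistency, Kolmogorov's extension theorem, and a pointwise repair to get strict ordering). The genuine gap is Step 2, and it is precisely the gap the paper identifies and works around. You want to conclude that the one-time fiber process is a.s. infinite by showing $\int_{\mathbb{R}} K_t(x,x)\,d\lambda(x)=\infty$ and then invoking a ``standard fact'' for determinantal processes. But (i) that fact (Soshnikov's Theorem 4) requires the kernel to be Hermitian --- or gauge-equivalent to a Hermitian, locally trace-class kernel with $0\le K\le I$ --- and you do not verify this for $K_{a,b,c}(t,\cdot;t,\cdot)$; and (ii) even granting that, you never actually carry out the lower bound on the intensity as $x\to-\infty$. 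You announce a $\tfrac{1}{\pi}\sqrt{-x}$ asymptotic and gesture at steepest descent, but the bulk behavior of this kernel, where the poles and zeros of $\Phi_{a,b,c}$ accumulate at the origin and the saddles $\pm\sqrt{-x}$ may collide with the contours' enforced passage around $0$ in the crossed-contour regime, is exactly what the paper says it could not control: ``One way to approach this is to show that the traces of $K_{a,b,c}$ are infinite and apply [Sosh00, Theorem 4]. As we could not estimate the traces from below we take a different approach.'' Stating the conclusion of that analysis is not a proof of it, and this is the only nontrivial analytic input in your argument.

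By contrast, the paper deliberately avoids any asymptotics of $K_{a,b,c}$ on the diagonal. It approximates by the discrete Schur point processes of Section 3, establishes weak convergence of the point processes (built into the proof of Theorem \ref{T1}), proves an upper-tail bound on the top particle (Proposition \ref{PVC}) to get tightness from above, and then supplies the ``a.s.\ infinitely many points'' input by a soft route: the monotone coupling of Proposition \ref{MonCoup} dominates the general-parameter Schur process from below by the zero-parameter one, whose limit is the classical Airy point process for which the a.s.-infinite property is known (this is verified via the Tracy--Widom gap-probability asymptotics, without any trace estimate). Proposition \ref{TightnessCrit} then upgrades this to tightness from below, Proposition \ref{PropWC2} to finite-dimensional convergence, and Kolmogorov extension plus a modification give the processes. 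The $J_a^-+J_b^-<\infty$ case is finally reduced to the $J_a^-=J_b^-=0$ case by a translation, which you would also have to handle but do not mention. So your route is not merely a variant: it requires solving the analytic problem the paper explicitly chose not to attack. If you want to pursue it, you must (a) verify Hermitianity or find a suitable gauge, and (b) rigorously carry out the steepest-descent lower bound on $K_{a,b,c}(t,x;t,x)$ as $x\to-\infty$, paying attention to the origin when ``minus'' parameters are present; otherwise the argument is incomplete at its central step.
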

\begin{remark}\label{T2R1} From part (3) of Proposition \ref{PropLem} and Corollary \ref{CorWC2} we have that if $\{\tilde{Y}_i\}_{i \geq 1}$ is another sequence of processes on $\mathbb{R}$ that satisfies the properties in Theorem \ref{T2}, then $\{\tilde{Y}_i\}_{i \geq 1}$ has the same finite-dimensional distribution as $\{Y_i\}_{i \geq 1}$. I.e. the sequence of processes in Theorem \ref{T2} is unique.
\end{remark}

\begin{theorem}\label{T3} Assume the same notation as in Definition \ref{DLP}. Assume further that $c^- = 0$ and $J_a^- + J_b^- < \infty$. There exists a line ensemble $\mathcal{L}^{a,b,c} = \{\mathcal{L}_i^{a,b,c}\}_{i \geq 1}$ such that we have the following equality in the sense of finite-dimensional distributions
\begin{equation}\label{S1FDE}
\left(\sqrt{2} \cdot \mathcal{L}_i^{a, b, c}(t) + t^2: i \geq 1, t \in \mathbb{R} \right)  = \left(Y_i(t): i \geq 1 , t \in \mathbb{R} \right),
\end{equation}
where $\{Y_i\}_{i \geq 1}$ are as in Theorem \ref{T2}. Moreover, $\mathcal{L}^{a,b,c}$ satisfies the Brownian Gibbs property as in \cite[Definition 2.2]{CorHamA}.
\end{theorem}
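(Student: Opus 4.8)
# Proof proposal for Theorem \ref{T3}

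The plan is to build $\mathcal{L}^{a,b,c}$ directly from the processes $\{Y_i\}_{i\geq 1}$ produced by Theorem \ref{T2} by setting $\mathcal{L}_i^{a,b,c}(t) := 2^{-1/2}(Y_i(t) - t^2)$, so that \eqref{S1FDE} holds tautologically at the level of finite-dimensional distributions; the real content is to show that this candidate has a version with continuous sample paths (so that it is genuinely a line ensemble, i.e. a random element of $C(\mathbb{N}\times\mathbb{R})$) and that it satisfies the Brownian Gibbs property. I expect both of these to be obtained simultaneously, following the strategy of \cite[Proposition 3.12]{CorHamA}: first establish the Brownian Gibbs property for the (a priori only measurably-defined) process, and then use the Gibbs property together with the avoidance structure \eqref{T2E2} and a one-point tightness estimate to upgrade to continuity.

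First I would prove the Brownian Gibbs property. The standard route is to realize $\{Y_i\}_{i\geq 1}$ as a limit of finite systems of non-intersecting random walks or Brownian bridges whose edge limit produces the kernel $K_{a,b,c}$ — concretely, the Brownian-watermelon-with-outliers models of \cite{AFM10} in the cases covered there, and their natural generalization accommodating finitely many ``minus'' parameters. Each prelimit ensemble is a collection of non-intersecting Brownian bridges, hence manifestly satisfies an exact Brownian Gibbs property. One then passes to the limit: the Gibbs property is preserved under weak convergence of line ensembles provided one has (i) convergence of the finite-dimensional distributions to those determined by $K_{a,b,c}$ (which is exactly the determinantal content already packaged in Theorems \ref{T1}–\ref{T2}, via the convergence of correlation kernels and Proposition \ref{PropLem}), and (ii) tightness in $C(\mathbb{N}\times\mathbb{R})$. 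Tightness in turn follows from a uniform modulus-of-continuity bound, which for Brownian Gibbsian ensembles is implied by one-point tightness of each curve $\mathcal{L}_i^{a,b,c}(t)$ on compact $t$-intervals — and that one-point control comes from the determinantal formula, namely a uniform bound on $K_{a,b,c}(t,x;t,x)$ and the standard argument bounding the top curve above and, using the ordering \eqref{T2E2} plus the density of points, the $i$-th curve from below. This is precisely the mechanism in \cite[Section 3]{CorHamA}, and the parabolic shift by $-2^{-1/2}t^2$ is inserted exactly to make the curves converge globally rather than blow up, just as in \eqref{PALE}.

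The one genuinely new technical point, compared to \cite{CorHamA}, is the input side: one must exhibit a sequence of finite non-intersecting-path ensembles whose edge scaling limit has correlation kernel $K_{a,b,c}$ with both $a^+,b^+$ and finitely many $a^-,b^-$ parameters turned on, and for which the Gibbs property and the kernel convergence can both be verified. For the $a^+,b^+$-only and $J_1=J_2$ cases this is \cite{AFM10,CorHamA}; for finitely many ``minus'' parameters one expects the right prelimit to be a last-passage or watermelon model with defective rows/columns on both sides — the ``minus'' defects correspond to slow particles that, after rescaling, produce the linearly-growing (rather than parabolically-decaying) curves of slope $-\sqrt 2 a_i^-$, $\sqrt2 b_i^-$ depicted in Figure \ref{S12}. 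Alternatively, and probably cleaner, one can bypass an explicit prelimit model: given that Theorem \ref{T2} already hands us the ordered processes $\{Y_i\}$ with the correct finite-dimensional law, one can prove the Brownian Gibbs property \emph{intrinsically} by an $H$-Brownian-Gibbs resampling argument à la \cite{DEA21}, checking the defining resampling identity directly against the determinantal correlation functions. I expect the main obstacle to be exactly this step — producing the Brownian Gibbs property from the kernel $K_{a,b,c}$ when $K^1_{a,b,c}\not\equiv 0$, since the extra residue term $K^1_{a,b,c}$ (present precisely when ``minus'' parameters force the contours to cross) has no analogue in the classical Airy setting and must be shown not to obstruct the local Brownian structure. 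Once the Gibbs property is in hand, continuity, tightness, and the finite-dimensional identification \eqref{S1FDE} are routine consequences of the machinery already developed in the earlier sections and in \cite{CorHamA}.
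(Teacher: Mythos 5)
Your high-level skeleton matches the paper's: realize $\mathcal{L}^{a,b,c}$ as a weak limit in $C(\mathbb{N}\times\mathbb{R})$ of line ensembles already known to be Brownian Gibbsian, by combining kernel convergence and a one-point upper-tail bound to verify the hypotheses of \cite[Definition 3.3]{CorHamA} and then invoking \cite[Theorem 3.8, Propositions 3.6, 3.12]{CorHamA}. The paper packages exactly this mechanism in Lemma \ref{S8Lemma}, which uses Proposition \ref{PropWC0}/\ref{PropWC1}/\ref{PropWC2}, the tightness criterion Proposition \ref{TightnessCrit} (with the almost-sure infinitude of the limit process supplied by Theorem \ref{T2}), and Corollary \ref{CorWC2} to identify the finite-dimensional marginals with those of $\{Y_i\}$.

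However, there is a genuine gap. You correctly flag that the hard point is dealing with the ``minus'' parameters, i.e.\ the case where $K^1_{a,b,c}\not\equiv 0$, and you offer two speculative routes --- a new prelimit watermelon/LPP model with slow outliers, or an intrinsic $H$-Brownian-Gibbs resampling verification à la \cite{DEA21}. Neither is developed, and neither is what the paper does; the second in particular would require rederiving the Gibbs property from a Fredholm-determinant identity, which is far from routine. The paper resolves the difficulty much more cleanly via the \emph{translation trick} already used in Step 2 of the proof of Theorem \ref{T2}: using $1 + b^-/z = (b^-/z)(1 + z/b^-)$ and $1 - a^-/z = (a^-/z)(1-z/a^-)$ one shows that $\Phi_{a,b,c}(z)/\Phi_{a,b,c}(w) = (z/w)^{J_a^- - J_b^-}\cdot \Phi_{\hat a,\hat b,\hat c}(z)/\Phi_{\hat a,\hat b,\hat c}(w)$ for an explicit minus-free parameter set $\hat a,\hat b,\hat c$ (see (\ref{S7Q2})), and that the residual power of $z/w$ can be absorbed into a shift $t\mapsto t-\Delta$ together with a rescaling of the ``plus'' parameters (see (\ref{S7Q3}) and (\ref{S7Q5})). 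Hence $\mathcal{L}^{a,b,c}_i(t) = \mathcal{L}^{\tilde a,\tilde b,\tilde c}_i(t-\Delta) + 2^{-1/2}\bigl((t-\Delta)^2 - t^2\bigr)$ with all ``minus'' parameters of $(\tilde a,\tilde b,\tilde c)$ equal to zero, so that in every ensemble actually used along the limit one has $K^1 = 0$ and the contours never cross. This reduction sidesteps the obstacle you identify rather than confronting it. The paper also needs two further intermediate approximations you don't mention: padding the $a^+,b^+$ parameters with copies of $1/N$ to bootstrap from the case $J_a^+ = J_b^+$ of \cite[Proposition 3.12]{CorHamA} to $J_a^+\neq J_b^+$, and approximating $c^+>0$ by $N$ copies of $c^+/N$. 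Without the translation trick (or an actual proof of one of your two alternatives), the proposal does not establish the theorem for any parameters with $J_a^- + J_b^- > 0$, which is precisely the new regime.
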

\begin{remark} From \cite[Lemma 3.1]{DM21} we have that he finite-dimensional distributions of a line ensemble uniquely determine its law. Thus, the line ensemble $\mathcal{L}^{a,b,c}$ in Theorem \ref{T3} is unique.
\end{remark}

\begin{remark} As mentioned in Section \ref{Section1.1}, we are not certain that there exist line ensembles corresponding to the point processes from Theorem \ref{T1} when $c^- > 0$ or $J_a^- + J_b^- = \infty$. It seems plausible to us that there is a $\mathbb{Z}$-indexed line ensemble, i.e. a random element in $C(\mathbb{Z} \times \mathbb{R})$, corresponding to these point processes, but we do now know how to establish such a statement presently.
\end{remark}

%
%
\subsection{Key ideas and paper outline}\label{Section1.3} The way we construct the point processes in Theorem \ref{T1} is by taking an appropriate limit of a sequence of point processes that arise in a directed last passage percolation model with geometric weights that has several defective rows and columns. This approach is similar to the original one in \cite{BP08} who worked with exponential weights, although our asymptotic analysis is notably harder as we directly work with infinitely many parameters. The essential property of geometric last passage percolation is that it has a distributional equality with a Schur process, which in turn has the structure of a determinantal point process. The correlation kernel for this process has a double-contour integral formula, derived by different means in \cite{Agg15} and \cite{BR05}, and which is suitable for performing our desired asymptotics. In Section \ref{Section2} we summarize some basic results about determinantal point processes, and in Section \ref{Section3} we recall the Schur process and the formula for its correlation kernel. The parameter scaling we perform for the Schur processes is presented in Section \ref{Section4}, and the pointwise limit of their correlation kernels is established in Proposition \ref{LimitKernelProp}, whose proof is the content of Section \ref{Section5}. Proposition \ref{LimitKernelProp} shows that the Schur correlation kernels converge uniformly over compact sets to $K_{a,b,c}$ from Definition \ref{3BPKernelDef}, which together with some results in Section \ref{Section2} implies the existence of the determinantal point processes in Theorem \ref{T1} and that they are weak limits of the Schur determinantal point processes.

The way we construct the processes $\{Y_i\}_{i \geq 1}$ in Theorem \ref{T2} is by {\em improving} the convergence of Schur point processes to finite-dimensional convergence of their point {\em locations}. Part of Section \ref{Section2} is devoted to establishing several results that can be used to establish such finite-dimensional convergence for general point processes, and we hope the latter will find numerous applications in related problems. In our context, in addition to the point process convergence we need to establish tightness of the locations of the various points. To obtain tightness from above, one can directly investigate the one-point distribution functions, which can be written as Fredholm determinants, derive upper tail estimates for the kernels, and use those to obtain upper-tail probability estimates -- we do this in Section \ref{Section6}. Tightness from below is harder to establish directly from the Fredholm determinant, which in the lower-tail regime involves summing infinitely many oscillating and asymptotically exploding terms. To circumvent this issue we derive an alternative tightness criterion in Proposition \ref{TightnessCrit}, which states that tightness from below is ensured under knowledge of weak convergence of point processes, tightness from above, and that the limiting point process is almost surely infinite. A priori, we do not know that the determinantal point processes from Theorem \ref{T1} have infinitely many points for general parameters as in Theorem \ref{T2}. One way to approach this is to show that the traces of $K_{a,b,c}$ are infinite and apply \cite[Theorem 4]{Sosh00}. As we could not estimate the traces from below we take a different approach.

The basic idea is to note that when all $a,b,c$ parameters are zero, $K_{a,b,c}$ is precisely the extended Airy kernel, and we know that the Airy point process has infinitely many points almost surely -- see (\ref{S72E2}). In this case we can apply Proposition \ref{TightnessCrit} and conclude the tightness of our discrete models in this case. When $J_a^- = J_b^- = 0$ we can construct a {\em monotone coupling} of the weights in the last passage percolation model, for which the zero-parameter case is a lower bound. In particular, the lower tails of the zero-parameter model are heavier than those of the model with non-zero parameters, which allows us to conclude tightness from below for the latter from that of the former. This monotone coupling is established in Proposition \ref{MonCoup} and relies on the connection between the geometric last passage percolation, the Schur process, the Robinson-Schensted-Knuth (RSK) correspondence and Greene's theorem, see Section \ref{Section3.3}. Once tightness is established we obtain the processes in Theorem \ref{T2} for any parameters such that $J_a^- = J_b^- = 0$. The processes for $J_a^{-} + J_b^- < \infty$ are obtained from those with $J_a^- = J_b^- = 0$ by appropriate translations, see Step 2 in the proof of Theorem \ref{T2} in Section \ref{Section7.1}.

At this point the construction of the line ensembles in Theorem \ref{T3} is relatively straightforward and performed in Section \ref{Section8}. In the special case when $J_a^+ = J_b^+ = m \in \mathbb{Z}_{\geq 0}$ and $J_a^- = J_b^- = c^- = 0$, these ensembles were constructed in \cite[Proposition 3.12]{CorHamA} as a weak limit of the Brownian watermelons with outliers from \cite{AFM10}. The correlation kernels of the latter can be used to approximate any kernel $K_{a,b,c}$ with $J_a^- = J_b^- = c^- = 0$, which together with the results from Section \ref{Section2} implies convergence on the level of point processes. By obtaining upper tail estimates for $K_{a,b,c}$ and using the {\em now available} by Theorem \ref{T2} statement that the corresponding point processes are infinite almost surely, we can apply Proposition \ref{TightnessCrit} to conclude tightness and ultimately finite-dimensional convergence. At this point we have verified the hypotheses in \cite[Definition 3.3]{CorHamA}, which in turn enables us to construct the ensembles in Theorem \ref{T3} as weak limits of those with $J_a^+ = J_b^+ = m \in \mathbb{Z}_{\geq 0}$ when $J_a^- = J_b^- = c^- = 0$ using various results from \cite{CorHamA}. The ensembles for general $J_a^- + J_b^- < \infty$ are obtained from those with $J_a^- = J_b^- = 0$ by appropriate translations, similarly to the proof of Theorem \ref{T2}.\\

In the remainder of this section we discuss briefly some of the advantages of our approach to the results in Section \ref{Section1.2}. As mentioned earlier, in the case when $J_a^+ = J_b^+ = m \in \mathbb{Z}_{\geq 0}$ and $J_a^- = J_b^- = c^- = 0$ the ensembles in Theorem \ref{T3} were constructed in \cite[Proposition 3.12]{CorHamA}. The key asymptotic input in that construction is the convergence of the correlation kernels for Brownian watermelons with outliers from \cite{AFM10}. A crucial feature of the kernels in \cite{AFM10} is that they can be expressed as the sum of the kernel for Brownian watermelons {\em without} outliers and a {\em finite} $m$-dependent sum, whose asymptotics can be handled by hand. If $J_a^+ \neq J_b^+$ and especially if $J_a^+ + J_b^+ = \infty$, it is not clear how to perform the asymptotics in \cite{AFM10}, as the previously mentioned finite $m$-dependent sum asymptotically becomes infinite. The reason this analysis is tractable in our case is due to the double-contour integral formula for the Schur correlation kernel, in which even asymptotically infinitely many parameters enter in a benign way. 

One could have constructed the processes $\{Y_i\}_{ i \geq 1}$ in Theorem \ref{T2} directly from the ensembles in \cite[Proposition 3.12]{CorHamA}, although there is a subtle issue with establishing tightness from below and hence finite-dimensional convergence. Specifically, if we sought to apply Proposition \ref{TightnessCrit} we would require knowledge that the point processes with correlation kernels $K_{a,b,c}$ have almost surely infinitely many points, which we do not know how to obtain directly. The way this is established in the proof of Theorem \ref{T2} is by utilizing the monotone coupling for Schur processes with different parameters, relying on the combinatorial structure of the Schur process and most notably the RSK correspondence. A similar monotone coupling is available for the Brownian watermelons in view of \cite[Lemmas 2.6 and 2.7]{CorHamA}, and one could in principle translate it to the ensembles in \cite[Proposition 3.12]{CorHamA} ultimately resolving the tightness from below issue mentioned above.

Overall, our approach of constructing the processes $\{Y_i\}_{ i \geq 1}$ in Theorem \ref{T2} and $\mathcal{L}^{a,b,c}$ in Theorem \ref{T3} from Schur processes seems to be the most direct (although using the exponential last passage percolation model from \cite{BP08} would have been comparable).  In addition, in the course of the proof of Theorem \ref{T2} we see that the Schur process converges in the finite-dimensional sense to $\mathcal{L}^{a,b,c}$, which allows one to transfer statements known about Schur processes to the line ensembles. The latter is potentially quite useful, as the Schur process has rich algebraic and combinatorial structures that can be exploited to obtain various couplings, identities and inequalities, Proposition \ref{MonCoup} being just one example.

%
%
\subsection*{Acknowledgments}\label{Section1.4} The author would like to thank Amol Aggarwal, Alexei Borodin and Ivan Corwin for many fruitful discussions. This work was partially supported by the NSF grant DMS:2230262.

%
%
\section{Point processes and convergence}\label{Section2} In this section we summarize some of the basic definitions and notation for determinantal point processes and establish a few convergence results for the latter. We mention that some of the results in this section are well-known; however, many (especially the ones concerning convergence) are new. In order to clearly state and rigorously establish our new results we require a robust framework for determinantal point processes and we formulate one using \cite[Section 2]{J06} as a basis. Our framework slightly deviates from the ones in \cite{HKP} and \cite{Sosh00}, although we take the time to explain how it is consistent with both. Overall, we have attempted to have this section, and Appendix \ref{AppendixA} (where the more technical results are established), be a self-contained exposition of determinantal point processes and their convergence. To follow the results in this section and the appendix, a reader is essentially only expected to be familiar with \cite[Chapter 1]{Billing}, and a few statements from \cite{Cinlar} and \cite{Kall}, which will be recalled as needed. All of these are standard graduate probability textbooks.

%
%
\subsection{Point processes}\label{Section2.1} Throughout this section we assume that $(E, \mathcal{E})$ is the space $\mathbb{R}^k$ with the usual topology and corresponding Borel $\sigma$-algebra for some fixed $k \in \mathbb{N}$. We also fix a probability space $(\Omega, \mathcal{F}, \mathbb{P})$.

We say that $M$ is a {\em random measure} on $(E, \mathcal{E})$ if $M: \Omega \times \mathcal{E} \rightarrow [0, \infty]$ is such that $M_{\omega} = M(\omega, \cdot)$ is a measure on $\mathcal{E}$ for each $\omega \in \Omega$ and $M(\cdot, A) = M(A)$ is an (extended) random variable for each $A \in \mathcal{E}$. We also define the {\em mean} of $M$ to be the measure $\mu$ on $(E,\mathcal{E})$ given by
$$\mu(A) = \mathbb{E}[M(A)] \mbox{ for } A\in \mathcal{E}.$$

We say that $M$ is {\em locally finite} or {\em locally bounded} if for each bounded Borel set $B \in \mathcal{E}$ we have that $M(B)$ is a real random variable (i.e. $M(\omega, B) < \infty$ for each $\omega \in \Omega$). In particular, if $M$ is locally finite, then $M_\omega \in \mathcal{M}_{E}$ -- the space of locally bounded measures on $E$ with the vague topology. We refer the reader \cite[Chapter 4]{Kall} for background on the space $\mathcal{M}_{E}$ and the vague topology. For brevity we denote $\mathcal{M}_E$ by $S$ and its Borel $\sigma$-algebra by $\mathcal{S}$. From  \cite[Chapter 4]{Kall} we have that $S$ is a Polish space and so a locally finite random measure $M$ can be viewed as a random element in $(S,\mathcal{S})$ in the sense of \cite[Section 3]{Billing}. 

We say that a random element $M$ in $(S,\mathcal{S})$ is a {\em point process} if $M(B) \in \mathbb{Z}_{\geq 0}$ for each $\omega \in \Omega$ and each bounded Borel set $B \in \mathcal{E}$. We further say that $M$ is a {\em simple point process} if it is a point process and $M(\omega, \{ x\}) \leq 1$ for all $x \in E$ and $\omega \in \Omega$. \\

Suppose that $(\bar{E}, \bar{\mathcal{E}})$ is a measurable space such that $E \subseteq \bar{E}$ and $\mathcal{E} \subseteq \bar{\mathcal{E}}$. We say that a sequence of random elements $\{X_n \}_{n \geq 1}$ in $(\bar{E}, \bar{\mathcal{E}})$, all defined on $(\Omega, \mathcal{F}, \mathbb{P})$, {\em forms} the random measure $M$ if 
\begin{equation}\label{FormX}
M(\omega, A) = \sum_{n \geq 1} {\bf 1} \{X_n(\omega) \in A\}
\end{equation}
for each $\omega \in \Omega$ and $A \in \mathcal{E}$. Unless otherwise specified, we will assume that $\bar{E} = E \cup \{\partial\}$ and $\bar{\mathcal{E}} = \sigma(\mathcal{E}, \{\partial\})$ where $\partial$ is an extra point we add to the space $E$. The addition of an extra point $\partial$ to $E$ allows us to form finite random measures. For example, if $\{X_n \}_{n = 1}^N$ is a finite sequence of random elements in $(E, \mathcal{E})$, then we say that it forms the random measure $M$ if (\ref{FormX}) holds with $X_n(\omega) = \partial$ for $n \geq N+1$. Since $A \in \mathcal{E}$, any $X_n(\omega) = \partial$ does not contribute to the sum in (\ref{FormX}).

The following result states that any point process takes the form (\ref{FormX}) for some sequence of random elements. Its proof is given in Appendix \ref{AppendixA2}.
\begin{lemma}\label{FormL} Suppose that $M$ is a point process on $(E, \mathcal{E})$. There exists a sequence of random elements $\{X_n \}_{n \geq 1}$ in $(\bar{E}, \bar{\mathcal{E}})$ that satisfy (\ref{FormX}).
\end{lemma}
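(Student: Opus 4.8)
\textbf{Proof proposal for Lemma \ref{FormL}.}

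The plan is to produce the points of $M$ one at a time by a measurable selection argument, exhausting $E$ with an increasing sequence of bounded Borel sets. First I would fix an increasing sequence of bounded open sets $E_1 \subseteq E_2 \subseteq \cdots$ with $\bigcup_k E_k = E$ (e.g. open balls of radius $k$ centered at the origin, since $E = \mathbb{R}^k$). For a fixed $\omega$, the measure $M_\omega$ restricted to each $E_k$ is a finite integer-valued measure because $M$ is a point process; hence $M_\omega = \sum_{k \geq 1} M_\omega|_{E_k \setminus E_{k-1}}$ is a countable sum of finite atomic measures, so $M_\omega = \sum_{n} \delta_{x_n(\omega)}$ for some (finite or countable) collection of points $x_n(\omega) \in E$, listed with multiplicity. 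The whole content of the lemma is that these points can be chosen to depend \emph{measurably} on $\omega$, so that each $X_n$ is a genuine random element in $(\bar E, \bar{\mathcal{E}})$.

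The key steps, in order, are as follows. (1) Show that for each bounded Borel $B$, the map $\omega \mapsto M(\omega, B) \in \mathbb{Z}_{\geq 0}$ is measurable — this is immediate from the definition of a point process. (2) Construct a measurable ``first point in $B$'' map: given a bounded Borel set $B$, partition $E$ into a countable grid of half-open dyadic cubes; on the event $\{M(\cdot, B) \geq 1\}$, let $Q(\omega)$ be the first cube (in a fixed enumeration) of the current generation that meets $B$ and has positive mass, then refine, obtaining a nested sequence of cubes whose diameters shrink to zero; their intersection is a single point, call it $\xi_B(\omega)$, and $\omega \mapsto \xi_B(\omega)$ is measurable since at each stage the choice of cube is determined by countably many measurability statements of the form in (1). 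On the complement $\{M(\cdot,B)=0\}$ set $\xi_B = \partial$. (3) Iterate: having extracted $X_1, \dots, X_{j}$, define $M^{(j)}(\omega, \cdot) = M(\omega, \cdot) - \sum_{i=1}^{j} \delta_{X_i(\omega)}$ on the event that all of $X_1,\dots,X_j$ lie in $E$, which is again a point process (measurability of the subtraction follows because $M^{(j)}(\omega, A) = M(\omega, A) - \#\{i \le j: X_i(\omega) \in A\}$ and each indicator $\mathbf{1}\{X_i(\omega)\in A\}$ is measurable). Then apply step (2) to $M^{(j)}$ with $B = E_k$ for the smallest $k$ with $M^{(j)}(\omega, E_k) \geq 1$, with the convention $X_{j+1}=\partial$ if no such $k$ exists. (4) Verify by construction that $M(\omega, A) = \sum_{n\ge 1}\mathbf{1}\{X_n(\omega)\in A\}$ for every $A \in \mathcal{E}$: both sides agree on each $A \subseteq E_k$ since after extracting enough points we have removed all the mass of $M_\omega$ inside $E_k$ (there are only finitely many, $M_\omega(E_k) < \infty$), and monotone convergence extends this to all bounded $A$, hence to all $A \in \mathcal{E}$.

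The main obstacle is step (2): making the point-extraction map measurable. The naive approach — ``take the leftmost atom'' — is not obviously measurable because the location of an atom is not a continuous function of the measure in the vague topology. The dyadic-refinement device circumvents this: at each finite stage only a countable number of events of the form $\{M(\cdot, \text{cube} \cap B) \geq 1\}$ are consulted, each measurable by step (1), and the resulting nested-cube sequence identifies the point while keeping every decision inside the $\sigma$-algebra $\mathcal{F}$. One must be slightly careful that the selected point indeed lands in $B$ (choosing cubes that \emph{intersect} $B$ and have positive $M_\omega$-mass on that intersection, not merely on the cube, handles this) and that multiplicities are respected (subtracting $\delta_{X_i}$ rather than deleting an entire cube's worth of mass handles this). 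Once these bookkeeping points are settled, the rest is routine.
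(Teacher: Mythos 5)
Your proposal is correct, and it takes a genuinely different route from the paper's proof. The paper's argument is shorter but relies on two ingredients it cites from the literature: it invokes the Borel isomorphism theorem to transport $\bar E = \mathbb{R}^k \cup \{\partial\}$ measurably onto $[0,\infty]$ (sending $[-n,n]^k$ onto $[0,n]$), pushes $M$ forward to a point process $M'$ on $[0,\infty)$, and then applies the standard counting-measure representation (from \cite{Cinlar}) in which the atoms are recovered as the measurable hitting times $T_n(\omega)=\inf\{t\geq 0: M'(\omega,[0,t])\geq n\}$; the sought variables are then $X_n=\phi^{-1}(T_n)$. The entire ``measurable selection'' problem — which is the heart of the lemma — is thereby offloaded onto the total order of $\mathbb{R}_{+}$, where ``$n$-th atom from the left'' is manifestly measurable as an infimum. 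Your approach avoids the Borel isomorphism theorem and instead constructs the measurable selection directly in $\mathbb{R}^k$ via a dyadic-cube refinement: at each generation you pick the first (in a fixed enumeration) sub-cube carrying positive $M_\omega$-mass inside $B$, and continuity from above for the finite measure $M_\omega|_{\overline{B}}$ guarantees that the decreasing intersection of half-open cubes is nonempty and still carries mass, so the limit point lies in $B$ and is an atom; measurability then follows because each stage of the selection is determined by countably many events of the form $\{M(\cdot,C\cap B)\geq 1\}$. Your iterative-extraction and exhaustion scheme (prioritizing $E_1$, then $E_2\setminus E_1$, etc.) is essentially a hand-built substitute for the ordered enumeration that $[0,\infty)$ supplies for free in the paper's proof. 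What your approach buys is self-containedness (no appeal to the Borel isomorphism or to \cite{Cinlar}); what the paper's approach buys is brevity. One bookkeeping point worth being explicit about when writing this up: the reason the intersection of the nested half-open cubes is a singleton is \emph{not} a topological compactness fact (half-open cubes can shrink to an empty intersection); it is the measure-theoretic fact that $M_\omega(\bigcap_n(Q_n\cap B))=\lim_n M_\omega(Q_n\cap B)\geq 1 > 0$ forces the intersection to be nonempty — you implicitly use this, but it deserves a sentence.
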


Suppose that $M$ is a point process on $(E, \mathcal{E})$ and let $\{X_n \}_{n \geq 1}$ be as in Lemma \ref{FormL}. With this data we can construct for any $n \in \mathbb{N}$ a new point process $M_n$ on $(E^n, \mathcal{E}^{\otimes n})$ through
\begin{equation}\label{RM1}
M_n(\omega, A) = \sum_{\substack{i_1, \dots, i_n = 1 \\ |\{i_1, \dots, i_n\}| = n} }^\infty {\bf 1} \{ (X_{i_1}(\omega), \dots, X_{i_n} (\omega)) \in A) \} \mbox{ for } \omega \in \Omega \mbox{ and } A \in \mathcal{E}^{\otimes n}.
\end{equation}
We also denote the mean of $M_n$ by $\mu_n$ -- this is a measure on $(E^n, \mathcal{E}^{\otimes n})$. From (\ref{RM1}) we have that $M_n$ depends only on the measure $M$ (and not the particular definition and order of the variables $X_n$) and for each $\omega \in \Omega$ the measure $M_n(\omega, \cdot)$ is symmetric in the sense that 
$$M_n(\omega, B_1 \times \cdots \times B_n) = M_n(\omega, B_{\sigma(1)} \times \cdots \times B_{\sigma(n)})$$
for any permutation $\sigma \in S_n$ and Borel sets $B_1, \dots, B_n \in \mathcal{E}$. The mean $\mu_n$ is also symmetric. \\

We next show that for each $n_1, \dots, n_m \in \mathbb{N}$ and bounded pairwise disjoint $A_1, \dots, A_m \in \mathcal{E}$
\begin{equation}\label{RM3}
\mu_n( A_1^{n_1} \times \cdots \times A_m^{n_m}) = \mathbb{E} \left[ \prod_{j = 1}^m \frac{M(A_j)!}{(M(A_j) - n_j)!} \right].
\end{equation}
In (\ref{RM3}) both sides are allowed to be infinite, and we use the convention that $\frac{1}{r!} = 0$ for $r < 0$. We also mention that if $Z$ is a random variable taking value in $\mathbb{Z}_{\geq 0}$, then 
\begin{equation}\label{RM4}
\mathbb{E} \left[ \frac{Z!}{(Z-n)!}\right] = \mathbb{E}\left[ Z(Z-1) \cdots (Z- n +1) \right]
\end{equation}
is called the $n$-th {\em factorial moment}. In particular, (\ref{RM3}) shows that the joint (factorial) moments of the variables $M(A_1), \dots, M(A_m)$ are completely described by the mean measures $\mu_n$.

To see why (\ref{RM3}) holds set $U_i(\omega) = \{r \in \mathbb{N}: X_r(\omega) \in A_i\}$ for $i = 1, \dots, m$ and let $N_i(\omega) = |U_i(\omega)|$ be the cardinality of these random sets. From (\ref{RM1}) we get a contribution of one for every ordered tuple $(i_1, \dots, i_n)$ such that the first $n_1$ indices are in $U_1(\omega)$, the next $n_2$ indices are in $U_2(\omega)$ etc. The latter implies
$$M_n(\omega,A_1^{n_1} \times \cdots \times A_m^{n_m}) = \prod_{i = 1}^m \binom{N_i(\omega)}{n_i} \cdot n_i! = \prod_{j = 1}^m \frac{M(A_j)!}{(M(A_j) - n_j)!}, $$
where in the last equality we used that $N_i(\omega) = M(\omega, A_i)$ in view of (\ref{RM1}). Taking expectations on both sides of the above identity yields (\ref{RM3}).\\

In the remainder of this section we give sufficient conditions that ensure that a sequence of point processes $\{M^N\}_{N \geq 1}$ converge weakly to a point process $M^{\infty}$. The essential condition we require is that the mean measures $\mu^N_n$ of $M^N$, or equivalently by (\ref{RM3}) the joint factorial moments, converge on a rich enough family of sets, and that the limits do not grow too quickly in $n$. 

We start by defining the ``rich enough'' family of sets. If $T \subset \mathbb{R}$ is countable, we let
\begin{equation}\label{rect}
\mathcal{I}_T = \{ (a_1, b_1] \times \cdots \times (a_k, b_k]: a_i, b_i \not \in T \mbox{ for } i = 1, \dots, k \},
\end{equation}
which is the set of rectangles in $E$ whose corner coordinates avoid the set $T$. One readily observes that $\mathcal{I}_T$ is a {\em dissecting semi-ring}. We recall from \cite{Kall} that a family of sets $\mathcal{C}$ is {\em dissecting} if
\begin{itemize}
    \item every open $G \subseteq E$ is a countable union of sets in $\mathcal{C}$,
    \item every bounded Borel $B \in \mathcal{E}$ is covered by finitely many sets in $\mathcal{C}$,
\end{itemize}
and it is a {\em semi-ring} if
\begin{itemize}
    \item it is closed under finite intersections,
    \item any proper difference between sets in $\mathcal{C}$ is a finite disjoint union of sets in $\mathcal{C}$.
\end{itemize}
We next state the key convergence result for point processes that we use. It's proof is given in Appendix \ref{AppendixA3}, and we discuss the significance of the assumptions after the statement.
\begin{proposition}\label{PPConv} Let $T \subset \mathbb{R}$ be countable and $\mathcal{I}_T$ as in (\ref{rect}). Suppose that $\{M^N\}_{N \geq 1}$ is a sequence of point processes such that for each $n_1, \dots, n_m \in \mathbb{N}$ and pairwise disjoint $A_1, \dots, A_m \in \mathcal{I}_T$ the following limits all exist
\begin{equation}\label{RM11}
\lim_{N \rightarrow \infty} \mathbb{E} \left[ \prod_{i = 1}^m \frac{M^N(A_i)!}{(M^N(A_i) - n_i)!}\right] =: C(n_1, \dots, n_m; A_1, \dots, A_m) \in [0, \infty),
\end{equation}
where $n = n_1 + \cdots + n_m$. Then, we can find a point process $M^{\infty}$ and a countable set $T^{\infty}$ such that $T \subseteq T^{\infty} \subset \mathbb{R}$ and for all pairwise disjoint $A_1, \dots, A_m \in \mathcal{I}_{T^{\infty}}$
\begin{equation}\label{RM12}
\mathbb{E} \left[ \prod_{i = 1}^m \frac{M^{\infty}(A_i)!}{(M^{\infty}(A_i) - n_i)!}\right]= C(n_1, \dots, n_m; A_1, \dots, A_m).
\end{equation}
If we further suppose that for each $B \in \mathcal{I}_{T}$ there exists $\epsilon > 0$ such that
\begin{equation}\label{RM13}
\sum_{n = 1}^{\infty} \frac{\epsilon^n}{n!} \cdot C(n; B) < \infty,
\end{equation}
then $M^N$ converge weakly to a point process $M^{\infty}$, which satisfies (\ref{RM12}) for some countable $T^{\infty}$ containing $T$. In the last sentence the weak convergence is that of random elements in $(S, \mathcal{S})$.
\end{proposition}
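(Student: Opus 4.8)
The plan is to combine Prokhorov's theorem with the method of factorial moments, using throughout that $S=\mathcal{M}_E$ is Polish and that its Borel $\sigma$-algebra $\mathcal{S}$ is generated by the evaluation maps $\mu\mapsto\mu(B)$ as $B$ ranges over any dissecting semi-ring, so that two random measures with the same finite-dimensional distributions over such a semi-ring have the same law (see \cite[Chapter 4]{Kall}). First I would establish that $\{M^N\}_{N\ge 1}$ is tight in $(S,\mathcal{S})$. Taking $n=m=1$ in (\ref{RM11}) shows the intensities $\E[M^N(A)]$ converge, hence are bounded in $N$, for every $A\in\mathcal{I}_T$; since $\mathcal{I}_T$ is dissecting, every $f\in C_c^+(E)$ is supported in a bounded Borel set covered by finitely many members of $\mathcal{I}_T$, so $\sup_N\E[M^N(f)]<\infty$ and $\{M^N(f)\}_N$ is tight in $\R$ by Markov's inequality. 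Thus $\{M^N\}$ is tight, and along any subsequence with $M^N\Rightarrow M^\infty$ one checks that $M^\infty$ is again a point process: the set of locally finite counting measures is a Borel subset of $\mathcal{M}_E$ stable under vague limits (\cite[Chapter 4]{Kall}), and $M^\infty$ is a.s.\ locally finite since $\E[M^\infty(G)]\le\liminf_N\E[M^N(G)]<\infty$ for every bounded open $G$ (lower semicontinuity of $\mu\mapsto\mu(G)$ under vague convergence plus Fatou, then covering $G$ by finitely many members of $\mathcal{I}_T$).

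Next I would enlarge the boundary set and upgrade to convergence of counts. Since $\mu^\infty_1:=\E[M^\infty(\cdot)]$ is locally finite, in each coordinate only countably many affine hyperplanes carry positive $\mu^\infty_1$-mass inside a given bounded box; collecting these over an exhaustion of $E$ yields a countable $T_0$, and I set $T^\infty:=T\cup T_0$. For $A\in\mathcal{I}_{T^\infty}$ the closure $\bar A$ is compact and $\E[M^\infty(\partial A)]=0$, so $M^\infty(\partial A)=0$ a.s.\ and $\mu\mapsto\mu(A)$ is a.s.\ continuous at $M^\infty$ in the vague topology; hence for pairwise disjoint $A_1,\dots,A_m\in\mathcal{I}_{T^\infty}$ the continuous mapping theorem gives $(M^N(A_1),\dots,M^N(A_m))\Rightarrow(M^\infty(A_1),\dots,M^\infty(A_m))$. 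The crux is to promote this to convergence of the factorial moments in (\ref{RM11}), and here it is essential that the limits $C(\cdot)$ exist for \emph{all} $n$. Writing $Z_{N,i}=M^N(A_i)$, $Z^{(r)}=Z(Z-1)\cdots(Z-r+1)$, $n=n_1+\cdots+n_m$, and picking finitely many $B_j\in\mathcal{I}_T$ covering the bounded set $\bigcup_iA_i$, disjointness of the $A_i$ gives the pointwise bound $\prod_iZ_{N,i}^{(n_i)}\le\big(M^N(\bigsqcup_iA_i)\big)^{(n)}\le V_N^{(n)}$ with $V_N=\sum_jM^N(B_j)$. Expanding $V_N^{(n+1)}\le V_N^{\,n+1}$ into ordinary moments and refining the (possibly overlapping) $B_j$ into disjoint atoms in $\mathcal{I}_T$ via the semi-ring property expresses $\E[V_N^{\,n+1}]$ as a fixed finite linear combination of quantities $\E[\prod_l M^N(D_l)^{(m_l)}]$, each convergent by (\ref{RM11}); hence $\sup_N\E[V_N^{(n+1)}]<\infty$. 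An elementary ``integer gap'' estimate — on $\{V_N^{(n)}>R\}$ one has $V_N-n\ge v_0(R)-n$ with $v_0(R)\to\infty$ as $R\to\infty$, so $V_N^{(n)}\le V_N^{(n+1)}/(v_0(R)-n)$ there — then yields uniform integrability of $\{V_N^{(n)}\}_N$, hence of $\{\prod_iZ_{N,i}^{(n_i)}\}_N$. Together with the distributional convergence above this gives $\E[\prod_iZ_{N,i}^{(n_i)}]\to\E[\prod_iM^\infty(A_i)^{(n_i)}]$, and comparison with (\ref{RM11}) (and (\ref{RM3})) proves (\ref{RM12}). This establishes the first assertion.

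For the second assertion I would first use (\ref{RM13}) to extract exponential moments of the limiting counts: for $B\in\mathcal{I}_{T^\infty}\subseteq\mathcal{I}_T$, the identity $(1+\epsilon)^Z=\sum_{r\ge0}\tfrac{\epsilon^r}{r!}Z^{(r)}$ for $Z\in\Z_{\ge0}$, Tonelli, and (\ref{RM12}) give $\E[(1+\epsilon)^{M^\infty(B)}]=1+\sum_{r\ge1}\tfrac{\epsilon^r}{r!}C(r;B)<\infty$ for the corresponding $\epsilon$. Given pairwise disjoint $A_1,\dots,A_m\in\mathcal{I}_{T^\infty}$, covering $\bigcup_iA_i$ by finitely many members of $\mathcal{I}_{T^\infty}$ and applying H\"older's inequality produces $\delta>0$ with $\E[(1+\delta)^{\sum_iM^\infty(A_i)}]<\infty$, so the joint law of $(M^\infty(A_1),\dots,M^\infty(A_m))$ is the unique distribution on $\Z_{\ge0}^m$ with the prescribed joint (equivalently factorial) moments, by the multivariate moment problem. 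Consequently, if $M^\infty$ and $\tilde M^\infty$ are two subsequential weak limits with associated countable sets $T^\infty,\tilde T^\infty$, then on the dissecting semi-ring $\mathcal{I}_{T^\infty\cup\tilde T^\infty}\subseteq\mathcal{I}_T$ the two realize the same joint laws of counts over pairwise disjoint families; since any finite subfamily of a semi-ring refines into a finite disjoint subfamily of that semi-ring, they have the same finite-dimensional distributions over all of $\mathcal{I}_{T^\infty\cup\tilde T^\infty}$, hence the same law (\cite[Chapter 4]{Kall}). A tight sequence all of whose subsequential limits share a common law converges, so $M^N\Rightarrow M^\infty$, and $M^\infty$ satisfies (\ref{RM12}) for $T^\infty$ as constructed above.

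The main obstacle is the factorial-moment matching in the first assertion: distributional convergence of the counts $M^N(A_i)$ plus the prescribed-limit hypothesis does not by itself force the moments to converge, because under the hypotheses of the first assertion the limiting count laws need not be moment-determinate; the resolution is the observation that the availability of the $(n{+}1)$-st factorial moments as genuine finite limits supplies precisely the uniform integrability needed to pass to the limit in the $n$-th factorial moments. The remaining work — verifying that the limit is a locally finite counting measure, absorbing the (subsequence-dependent) boundary hyperplanes into a countable $T^\infty$, the semi-ring bookkeeping that reduces overlapping test sets to disjoint ones, and the moment-determinacy input in the second assertion — is routine.
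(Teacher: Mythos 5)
Your proposal is correct and follows essentially the same strategy as the paper's proof in Appendix \ref{AppendixA3}: tightness from the convergent first moments, passage to a subsequential limit with a countable enlargement $T^{\infty}$ of $T$ absorbing the hyperplanes charged by the limit's mean measure, uniform integrability supplied by the existence of one-higher-order factorial moment limits to upgrade the convergence of counts to (\ref{RM12}), and exponential-moment/moment determinacy under (\ref{RM13}) to identify all subsequential limits and conclude weak convergence. The differences are only in implementation: you invoke the vague-closedness of the set of integer-valued measures and the standard multivariate moment problem where the paper instead argues these sub-steps by hand (the modification argument in its Step 3 and Lemmas \ref{LemmaMGF}--\ref{MomentProblem}), and your uniform-integrability bound via $V_N^{(n+1)}$ and the ``integer gap'' replaces the paper's bounded $2h$-th moment criterion, both resting on the same idea.
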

\begin{remark} As the proof in Appendix \ref{AppendixA3} will reveal, (\ref{RM11}) ensures that the sequence $\{M^N\}_{N \geq 1}$ is tight and that each subsequential limit has a modification to a point process that satisfies (\ref{RM12}) for a suitable $T^{\infty}$. The condition (\ref{RM13}) should be viewed as a moment growth condition, which if satisfied ensures there is at most one point process that satisfies (\ref{RM12}). The latter together with tightness ensures that $M^{N}$ have a unique weak subsequential limit $M^{\infty}$ to which they need to converge.
\end{remark}

The conditions in Proposition \ref{PPConv} are chosen to handle quite general situations and we explain in a few simple examples how some pathological cases are covered. In the examples below $E = \mathbb{R}$.

Firstly, if one lets $M^N$ be deterministic measures with $M^N = \delta_{(-1)^N/N}$, then these measures converge vaguely to the measure $M^{\infty} = \delta_0$. Observe that $M^N((0,1]) = {\bf 1}\{ N \mbox{ is even} \}$. In particular, we see that the convergence in (\ref{RM11}) does not need to happen for all rectangles. This motivates the introduction of $\mathcal{I}_T$ that allows one to exclude certain pathological rectangles from having a limit.

If we instead let $M^N = \delta_{1/N}$, then as before these measures converge vaguely to the measure $M^{\infty} = \delta_0$. One can directly verify that the limit in (\ref{RM11}) exists for each choice of $A_1, \dots, A_m$ and $n_1, \dots, n_m$; however, $\mu_n^N((0,1]) = M^{N}((0,1]) = 1$ for each $N$, while $\mu_n^{\infty}((0,1]) = M^{\infty}((0,1]) = 0$. This shows that the equality in (\ref{RM12}) does not need to hold for all the rectangles for which the convergence in (\ref{RM11}) occurs. This motivates the introduction of $T^{\infty}$ and $\mathcal{I}_{T^{\infty}} \subseteq \mathcal{I}_{T}$.

We end this section with an easy corollary to Proposition \ref{PPConv}.
\begin{corollary}\label{CorUnique} Let $T \subset \mathbb{R}$ be countable and $\mathcal{I}_T$ as in (\ref{rect}). Suppose that $M', M''$ are two point processes such that for each $n_1, \dots, n_m \in \mathbb{N}$ and pairwise disjoint $A_1, \dots, A_m \in \mathcal{I}_T$ we have
\begin{equation}\label{RM15}
 \mathbb{E} \left[ \prod_{i = 1}^m \frac{M'(A_i)!}{(M'(A_i) - n_i)!}\right] =\mathbb{E} \left[ \prod_{i = 1}^m \frac{M''(A_i)!}{(M''(A_i) - n_i)!}\right],
\end{equation}
and also for each $B \in \mathcal{I}_T$ there exists $\epsilon > 0$ such that 
\begin{equation}\label{RM16}
\sum_{n = 1}^{\infty} \frac{\epsilon^n}{n!} \cdot \mathbb{E} \left[ \frac{M'(B)!}{(M'(B) - n)!}\right] < \infty.
\end{equation}
Then, $M'$ and $M''$ have the same distribution.
\end{corollary}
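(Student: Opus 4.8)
The plan is to deduce this from Proposition~\ref{PPConv} applied to a single sequence that \emph{interleaves} $M'$ and $M''$. Concretely, set $M^N = M'$ when $N$ is odd and $M^N = M''$ when $N$ is even. For pairwise disjoint $A_1,\dots,A_m \in \mathcal{I}_T$ and $n_1,\dots,n_m \in \mathbb{N}$, hypothesis (\ref{RM15}) says that
\[
\mathbb{E}\left[\prod_{i=1}^m \frac{M^N(A_i)!}{(M^N(A_i)-n_i)!}\right]
\]
is independent of the parity of $N$, so this sequence is constant in $N$; hence the limit in (\ref{RM11}) exists and equals the common value, which I denote $C(n_1,\dots,n_m;A_1,\dots,A_m)$. (One could equally apply Proposition~\ref{PPConv} to the two constant sequences separately and invoke the uniqueness afforded by (\ref{RM13}), but the interleaving keeps everything inside a single application.)

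To invoke Proposition~\ref{PPConv} I must check that $C(n_1,\dots,n_m;A_1,\dots,A_m)\in[0,\infty)$ and that the summability condition (\ref{RM13}) holds. The latter is immediate, since with $C(n;B)=\mathbb{E}[M'(B)!/(M'(B)-n)!]$ condition (\ref{RM13}) is literally (\ref{RM16}). For the finiteness of $C$, the key observation is the elementary identity $\sum_{n\geq 0}\binom{x}{n}\epsilon^n=(1+\epsilon)^x$ for $x\in\mathbb{Z}_{\geq 0}$, which (after multiplying the $n$-th summand in (\ref{RM16}) out, using $M'(B)!/(M'(B)-n)!=n!\binom{M'(B)}{n}$) shows that (\ref{RM16}) is equivalent to $\mathbb{E}\big[(1+\epsilon)^{M'(B)}\big]<\infty$; in particular $M'(B)$ has finite moments of all orders for every $B\in\mathcal{I}_T$. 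Given disjoint $A_1,\dots,A_m\in\mathcal{I}_T$, their union is a bounded Borel set, so by the dissecting property of $\mathcal{I}_T$ it is covered by finitely many rectangles $B_1,\dots,B_l\in\mathcal{I}_T$; then $\sum_{i}M'(A_i)=M'(\bigcup_i A_i)\leq\sum_{k}M'(B_k)$ has finite moments of all orders, and since
\[
\prod_{i=1}^m\frac{M'(A_i)!}{(M'(A_i)-n_i)!}\leq \prod_{i=1}^m M'(A_i)^{n_i}\leq\Big(\sum_{i=1}^m M'(A_i)\Big)^{n}, \qquad n=n_1+\cdots+n_m,
\]
we conclude $C(n_1,\dots,n_m;A_1,\dots,A_m)<\infty$.

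With both hypotheses verified, Proposition~\ref{PPConv} gives that $M^N$ converges weakly in $(S,\mathcal{S})$ to a point process $M^\infty$. But along the subsequence of odd $N$ the sequence $M^N$ is the constant sequence $M',M',\dots$, which converges weakly to the law of $M'$, and along the even subsequence it converges to the law of $M''$. Since all subsequences of a weakly convergent sequence have the same limit, $M'$ and $M^\infty$ have the same distribution, and so do $M''$ and $M^\infty$; hence $M'$ and $M''$ have the same distribution. The only step that is not pure bookkeeping is the verification $C\in[0,\infty)$ in the middle paragraph — i.e. promoting the one-rectangle exponential-moment bound (\ref{RM16}) to finiteness of the joint factorial moments over arbitrary disjoint families — and I expect this (handled by the covering argument above) to be the main, though minor, obstacle; everything else follows directly from Proposition~\ref{PPConv}.
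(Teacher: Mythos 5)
Your proof is correct and takes essentially the same route as the paper: interleave $M'$ and $M''$ into a single sequence, verify the hypotheses of Proposition~\ref{PPConv}, and invoke uniqueness of weak limits along the odd and even subsequences.

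The one place where you and the paper diverge is the verification that $C(n_1,\dots,n_m;A_1,\dots,A_m)<\infty$. You deduce from (\ref{RM16}) that $\mathbb{E}[(1+\epsilon)^{M'(B)}]<\infty$, cover $\bigcup_i A_i$ by finitely many rectangles from $\mathcal{I}_T$ via the dissecting property, and bound the factorial moment by $(\sum_i M'(A_i))^n$, which has finite expectation. The paper is more direct: since $T$ is countable one can pick a \emph{single} $B\in\mathcal{I}_T$ with $\bigcup_i A_i\subseteq B$, and then the monotonicity of the mean measure $\mu_n$ (inequality (\ref{RM17})) gives
\[
\mathbb{E}\Big[\prod_{i=1}^m \tfrac{M'(A_i)!}{(M'(A_i)-n_i)!}\Big]
=\mu_n(A_1^{n_1}\times\cdots\times A_m^{n_m})
\leq\mu_n(B^n)
=\mathbb{E}\Big[\tfrac{M'(B)!}{(M'(B)-n)!}\Big],
\]
whose finiteness is immediate from (\ref{RM16}). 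The paper's version buys you a cleaner comparison that stays within factorial moments, whereas yours passes through ordinary moments and a covering argument; both are valid, but the paper's is shorter and avoids the (easy but unnecessary) step of converting between factorial and ordinary moments.
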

\begin{proof} Fix $A_1, \dots, A_m \in \mathcal{I}_T$ that are pairwise disjoint and let $B \in \mathcal{I}_{T}$ be sufficiently large so that $\cup_{i = 1}^m A_i \subseteq B$. Note that for $n = n_1 + \cdots + n_m$ and any point process $M$ we have from (\ref{RM3})
\begin{equation}\label{RM17}
\mathbb{E} \left[ \frac{M(B)!}{(M(B)-n)!} \right] = \mu_n(B^n) \geq \mu_n (A_1^{n_1} \times \cdots \times A_m^{n_m}) = \mathbb{E} \left[\prod_{i = 1}^m \frac{M(A_i)!}{(M(A_i) - n_i)!} \right],
\end{equation}
where $\mu_n$ denotes the mean measure of $M_n$ as earlier in the section. From (\ref{RM16}) and (\ref{RM17}) we conclude that 
$$C(n_1, \dots, n_m; A_1, \dots, A_m): = \mathbb{E} \left[ \prod_{i = 1}^m \frac{M'(A_i)!}{(M'(A_i) - n_i)!}\right] =\mathbb{E} \left[ \prod_{i = 1}^m \frac{M''(A_i)!}{(M''(A_i) - n_i)!}\right]$$
are all finite. Setting $M^N = M'$ if $N$ is odd and $M^N = M''$ if $N$ is even, we see that $\{M^N\}_{N \geq 1}$ satisfies the conditions of Proposition \ref{PPConv}. The latter implies that $M^N$ converge weakly to some $M^{\infty}$. Since along odd indices the limit is $M'$, and along even ones it is $M''$, and weak limits are unique we conclude that $M'$ has the same distribution as $M''$.
\end{proof}

%
%
\subsection{Determinantal point processes}\label{Section2.2} In this section we introduce determinantal point processes. We start with an important definition.

\begin{definition}\label{CorrelationFunctions} Suppose that $\lambda$ is a locally finite measure on $(E, \mathcal{E})$, called a {\em reference measure}. Suppose that $M$ is a point process on $(E, \mathcal{E})$ and let $\mu_n$ be as in Section \ref{Section2.1}. If $\mu_n$ is absolutely continuous with respect to $\lambda^n$ (the product measure of $n$ copies of $\lambda$) on $E^n$, then we call its density $\rho_n$ the {\em $n$-th correlation function of $M$} (with respect to the reference measure $\lambda$). Specifically, we have that $\rho_n : E^n \rightarrow [0, \infty]$ is $\mathcal{E}^{\otimes n}$-measurable and satisfies for all $A \in \mathcal{E}^{\otimes n}$
\begin{equation}\label{RN1}
\mu_n(A) = \int_A \rho_n(x_1, \dots, x_n) \lambda^n(dx).
\end{equation} 
\end{definition}
\begin{remark}
Given a point process $M$ it is not necessary that functions $\rho_n$ satisfying (\ref{RN1}) exist. However, if they exist, then they are unique $\lambda^n$-a.e. -- this is a consequence of the uniqueness of Radon-Nikodym derivatives. In addition, we have that $\rho_n(x_1, \dots, x_n)$ are symmetric $\lambda^n$-a.e. as the measures $\mu_n$ are symmetric.
\end{remark}

\begin{remark}
If a point process $M$ has correlation functions $\rho_n$ for each $n \geq 1$, then from (\ref{RM3}) 
\begin{equation}\label{RN2}
 \mathbb{E} \left[ \prod_{j = 1}^m \frac{M(A_j)!}{(M(A_j) - n_j)!} \right] = \int_{A_1^{n_1} \times \cdots \times A_m^{n_m}}  \rho_n(x_1 ,\dots, x_n) \lambda^n(dx)
\end{equation}
for each $n_1, \dots, n_m \in \mathbb{N}$, $n = n_1 + \cdots + n_m$ and bounded pairwise disjoint $A_1, \dots, A_m \in \mathcal{E}$.
\end{remark}
Some authors take (\ref{RN2}) as the definition of correlation functions -- see e.g. \cite[Definition 2]{Sosh00}. The next statement, whose proof is given in Appendix \ref{AppendixA4}, shows that this definition is consistent with the one we had before.
\begin{lemma}\label{Cons} Let $M$ be a point process on $E$, $\mathcal{I} \subset \mathcal{E}$ a dissecting semi-ring, and $\lambda$ a locally finite measure on $(E,\mathcal{E})$. Let $u_n: E^n \rightarrow [0, \infty]$ be a $\lambda^n$-a.e. symmetric measurable function such that 
\begin{equation}\label{RN3}
 \mathbb{E} \left[ \prod_{j = 1}^m \frac{M(A_j)!}{(M(A_j) - n_j)!} \right] = \int_{A_1^{n_1} \times \cdots \times A_m^{n_m}}  u_n(x_1 ,\dots, x_n) \lambda^n(dx) < \infty
\end{equation}
for each $n_1, \dots, n_m \in \mathbb{N}$, $n = n_1 + \cdots + n_m$ and bounded pairwise disjoint $A_1, \dots, A_m \in \mathcal{I}$. Then, $u_n$ is the $n$-th correlation function of $M$ in the sense of Definition \ref{CorrelationFunctions}.
\end{lemma}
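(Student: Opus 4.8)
The plan is to deduce the statement from the abstract uniqueness of Radon--Nikodym derivatives together with the $\pi$--$\lambda$ / monotone class machinery that lets one upgrade an identity holding on a dissecting semi-ring to one holding on the full product $\sigma$-algebra. First I would recall what must be shown: that $\mu_n$, the mean measure of $M_n$ from \eqref{RM1}, is absolutely continuous with respect to $\lambda^n$ with density (a symmetric version of) $u_n$. Combining the hypothesis \eqref{RN3} with the identity \eqref{RM3}, we have for all $n_1,\dots,n_m\in\mathbb{N}$, $n=n_1+\cdots+n_m$, and pairwise disjoint bounded $A_1,\dots,A_m\in\mathcal{I}$ that
\[
\mu_n\big(A_1^{n_1}\times\cdots\times A_m^{n_m}\big)=\int_{A_1^{n_1}\times\cdots\times A_m^{n_m}}u_n(x_1,\dots,x_n)\,\lambda^n(dx).
\]
So the two $\sigma$-finite measures on $(E^n,\mathcal{E}^{\otimes n})$, namely $\mu_n$ and $u_n\,\lambda^n$, agree on all ``rectangles with disjoint sides'' built from $\mathcal{I}$. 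The work is to conclude they agree on all of $\mathcal{E}^{\otimes n}$.

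The main step I would carry out is an approximation argument to remove the disjointness restriction and pass to arbitrary products of semi-ring sets. Fix a generic product $B_1\times\cdots\times B_n$ with $B_i\in\mathcal{I}$ (these generate $\mathcal{E}^{\otimes n}$ and form a $\pi$-system, so agreement here suffices by Dynkin's $\pi$--$\lambda$ theorem, using $\sigma$-finiteness to localize). Because $\mathcal{I}$ is a dissecting semi-ring, I would first refine: cover each $B_i$ by finitely many sets of $\mathcal{I}$ and use the semi-ring property (proper differences are finite disjoint unions) to write each $B_i$ as a finite disjoint union of sets in $\mathcal{I}$; by multilinearity of both sides in the arguments $B_i$ it is then enough to treat $B_1\times\cdots\times B_n$ where the $B_i$ are each in $\mathcal{I}$, and any two of them are either equal or disjoint (one can always subdivide further within $\mathcal{I}$ to arrange this, since $\mathcal{I}$ is dissecting). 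Grouping the equal ones, such a product has exactly the form $A_1^{n_1}\times\cdots\times A_m^{n_m}$ with the $A_j$ pairwise disjoint, which is precisely the case covered above. Hence $\mu_n$ and $u_n\lambda^n$ agree on the generating $\pi$-system, and therefore on $\mathcal{E}^{\otimes n}$; this gives $\mu_n\ll\lambda^n$ with density $u_n$, so $u_n$ is the $n$-th correlation function in the sense of Definition \ref{CorrelationFunctions}. The $\lambda^n$-a.e.\ symmetry of $u_n$ is assumed, and matches the symmetry of $\mu_n$ noted in Section \ref{Section2.1}, so there is no conflict with the (a.e.) uniqueness of the density.

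The main obstacle is the combinatorial bookkeeping in the step that reduces an arbitrary product $B_1\times\cdots\times B_n$ to the ``disjoint powers'' form $A_1^{n_1}\times\cdots\times A_m^{n_m}$: one must be careful that the subdivision can be performed \emph{within} $\mathcal{I}$ (this is exactly where the dissecting + semi-ring hypotheses are used — dissecting to get finite covers of bounded Borel sets, semi-ring to get that intersections stay in $\mathcal{I}$ and differences decompose into finite disjoint unions in $\mathcal{I}$), and that on each common refinement any two coordinate-sets are equal or disjoint. A secondary technical point is the $\sigma$-finiteness/localization needed to invoke the $\pi$--$\lambda$ theorem: since $\lambda$ is only locally finite, one should exhaust $E^n$ by bounded sets from $\mathcal{I}^{\otimes n}$, establish the identity of measures restricted to each such bounded set, and then let the exhaustion increase. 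Everything else is routine: the identity \eqref{RM3} is already available, \eqref{RN3} is the hypothesis, and uniqueness of Radon--Nikodym derivatives gives the final ``$u_n$ is the correlation function'' conclusion.
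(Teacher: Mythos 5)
Your proposal matches the paper's argument in Appendix~\ref{AppendixA4}: subdivide products $B_1\times\cdots\times B_n$ with $B_i\in\mathcal{I}$ so that coordinate sets are pairwise equal or disjoint (this is exactly Lemma~\ref{DSRLemma}), identify these with disjoint-powers rectangles, and extend from this $\pi$-system to all of $\mathcal{E}^{\otimes n}$ via a localized $\pi$--$\lambda$ argument plus exhaustion by bounded sets. Note, though, that the a.e.\ symmetry of $\mu_n$ and $u_n$ is not merely a final consistency check, as your write-up suggests; it is actively used in the ``grouping the equal ones'' step, since a product such as $A_1\times A_2\times A_1$ is a different set from $A_1^{2}\times A_2$, and it is precisely symmetry that makes both measures agree on these two sets so the reduction to disjoint-powers form is valid.
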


For the next statement, we will assume that $E = \mathbb{R}$ and $M$ is a point process on $\mathbb{R}$ that has correlation functions $\rho_n$ for each $n \geq 1$. We define 
\begin{equation}\label{DefLast}
X_{\mathsf{max}}(\omega) = \inf\{ t \in \mathbb{R} : M(\omega, (t,\infty)) = 0 \}.
\end{equation}
Observe that $X_{\mathsf{max}}$ is an extended random variable on $[-\infty, \infty]$. The following result provides a formula for the cumulative distribution function of $X_{\mathsf{max}}$.

\begin{proposition}\label{PropLast}\cite[Proposition 2.4]{J06} Assume the same notation as in the preceding paragraph. Suppose that for each $t \in \mathbb{R}$ we have that 
\begin{equation}\label{IntegrableLast}
1 + \sum_{n = 1}^{\infty} \frac{1}{n!}  \int_{(t, \infty)^n} \rho_n(x_1, \dots, x_n) \lambda^n(dx) < \infty.
\end{equation}
Then, for each $t \in \mathbb{R}$ we have 
\begin{equation} \label{cdf}
\begin{split}
&\mathbb{P} \left( X_{\mathsf{max}} \leq t \right) =  1 + \sum_{n = 1}^{\infty} \frac{(-1)^n}{n!}  \int_{(t, \infty)^n}\rho_n(x_1, \dots, x_n)\lambda^n(dx).
\end{split}
\end{equation}
\end{proposition}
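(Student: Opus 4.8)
The plan is to identify $X_{\mathsf{max}} \leq t$ with the event that the point process $M$ has no points in $(t,\infty)$, and then use the inclusion–exclusion structure of determinantal (or general) point processes to express $\mathbb{P}(M((t,\infty)) = 0)$ as the alternating series of factorial moments in \eqref{cdf}. First I would observe that, since $M$ is a point process (hence $M(\omega, (t,\infty)) \in \mathbb{Z}_{\geq 0}$ and is locally finite only on bounded sets), the definition \eqref{DefLast} gives $\{X_{\mathsf{max}} \leq t\} = \{M((t,\infty)) = 0\}$ up to the usual care at the infimum: if $M((t,\infty)) = 0$ then $t$ is in the defining set, so $X_{\mathsf{max}} \leq t$; conversely if $X_{\mathsf{max}} \leq t$ then for every $s > t$ we have $M((s,\infty)) = 0$, and letting $s \downarrow t$ along a sequence together with continuity of measure from below (applied to the increasing sets $(s,\infty) \uparrow (t,\infty)$) yields $M((t,\infty)) = 0$. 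This reduces the claim to computing $\mathbb{P}(M((t,\infty)) = 0)$.

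Next I would set $Z = Z(\omega) = M(\omega, (t,\infty))$, a $\mathbb{Z}_{\geq 0}$-valued random variable, and recall the elementary identity ${\bf 1}\{Z = 0\} = \sum_{n \geq 0} \frac{(-1)^n}{n!} \cdot \frac{Z!}{(Z-n)!}$, valid pointwise: indeed the right-hand side is $\sum_{n=0}^{Z} (-1)^n \binom{Z}{n} = (1-1)^Z = {\bf 1}\{Z=0\}$, where the sum truncates because $\frac{1}{r!} = 0$ for $r < 0$ (the convention fixed in the excerpt). Taking expectations and justifying the interchange of $\mathbb{E}$ and $\sum_n$ gives
\[
\mathbb{P}(Z = 0) = \sum_{n=0}^{\infty} \frac{(-1)^n}{n!}\, \mathbb{E}\!\left[ \frac{Z!}{(Z-n)!} \right] = 1 + \sum_{n=1}^{\infty} \frac{(-1)^n}{n!}\, \mathbb{E}\!\left[ \frac{M((t,\infty))!}{(M((t,\infty)) - n)!} \right].
\]
Then I would invoke \eqref{RM3} (the case $m=1$, $A_1 = (t,\infty)$), combined with the definition of the correlation functions $\rho_n$ via \eqref{RN1}, which together give $\mathbb{E}\big[ \frac{M((t,\infty))!}{(M((t,\infty))-n)!}\big] = \mu_n\big((t,\infty)^n\big) = \int_{(t,\infty)^n} \rho_n(x_1,\dots,x_n)\, \lambda^n(dx)$. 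Substituting this into the displayed series yields exactly \eqref{cdf}.

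The main obstacle — and the only place the hypothesis \eqref{IntegrableLast} is used — is justifying the interchange of expectation and the infinite sum over $n$, since the terms $(-1)^n \frac{Z!}{n!(Z-n)!}$ are not nonnegative (Tonelli does not apply directly) and $Z$ may be unbounded. The remedy is dominated convergence for series: the partial sums $S_N(\omega) = \sum_{n=0}^{N} \frac{(-1)^n}{n!}\frac{Z!}{(Z-n)!}$ are dominated uniformly in $N$ by the absolutely convergent series $\sum_{n \geq 0} \frac{1}{n!}\frac{Z!}{(Z-n)!} = \sum_{n \geq 0} \binom{Z}{n} = 2^{Z}$, so it suffices to check $\mathbb{E}[2^{Z}] < \infty$; but $\mathbb{E}[2^Z] = \sum_{n \geq 0} \frac{1}{n!}\mathbb{E}\big[\frac{Z!}{(Z-n)!}\big] = 1 + \sum_{n \geq 1} \frac{1}{n!}\int_{(t,\infty)^n} \rho_n\, \lambda^n(dx)$ by \eqref{RM3} and \eqref{RN1} again with all signs positive (Tonelli), and this is finite precisely by \eqref{IntegrableLast}. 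With the integrable dominating function in hand, dominated convergence justifies passing $\mathbb{E}$ through the sum, and also shows each term $\mathbb{E}\big[\frac{Z!}{(Z-n)!}\big]$ is finite, so the right-hand side of \eqref{cdf} is a well-defined absolutely convergent series. This completes the argument; all remaining steps are the routine bookkeeping indicated above, and no properties of $M$ beyond being a point process with correlation functions are needed — in particular the determinantal structure plays no role in this particular statement.
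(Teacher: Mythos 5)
The paper does not prove this proposition itself; it cites it directly from Johansson \cite[Proposition~2.4]{J06}, so there is no in-text proof to compare against. Your argument is the standard one (identify $\{X_{\mathsf{max}}\le t\}$ with $\{M((t,\infty))=0\}$, apply the pointwise identity $\mathbf{1}\{Z=0\}=\sum_{n\ge 0}\frac{(-1)^n}{n!}\frac{Z!}{(Z-n)!}$, and pass the expectation through the series by dominated convergence with $2^Z$ as dominating function, whose integrability is exactly~\eqref{IntegrableLast}) and it is essentially correct and complete. One small point you gloss over: \eqref{RM3} is stated in the paper only for \emph{bounded} pairwise disjoint sets, whereas you apply it with $A_1=(t,\infty)$ unbounded. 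The extension is routine --- take $A_N=(t,t+N)\uparrow(t,\infty)$, note that $m\mapsto m!/(m-n)!$ is non-decreasing on $\mathbb{Z}_{\ge 0}$ under the stated convention, and invoke monotone convergence on both sides of \eqref{RM3} --- but it should be mentioned. Also worth noting explicitly: a priori $M((t,\infty))$ could be $+\infty$ since $(t,\infty)$ is unbounded, but this is ruled out almost surely by your own estimate $\mathbb{E}[2^Z]<\infty$, so the pointwise identity $\mathbf{1}\{Z=0\}=(1-1)^Z$ is valid outside a null set, which is all that is needed.
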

\begin{remark} We mention that in general the second line in (\ref{cdf}) does not define a {\em probability} distribution function. The series in (\ref{cdf}) converges, is continuous in $t$, lies in $[0,1]$, is increasing and converges to $1$ when $t \rightarrow \infty$. The only condition that is not automatically satisfied is that the series converges to $0$ as $t \rightarrow - \infty$. By monotonicity this series {\em does} have a limit in $[0,1]$ as $t \rightarrow -\infty$, however, this limit may be strictly positive. In particular, if this limit is equal to $\alpha$, then $\mathbb{P}(X_{\mathsf{max}} = -\infty) = \alpha$ and this is the same as the $\mathbb{P}(  M(\omega, \mathbb{R} ) = 0) = \alpha$. Essentially, condition (\ref{IntegrableLast}) ensures that the extended random variable in (\ref{DefLast}) is bounded from above almost surely, but it does not prevent it to be $-\infty$ with some positive probability.
\end{remark}

We now turn to the main definition of the section.
\begin{definition}\label{DPP} Suppose that $M$ is a point process on $(E ,\mathcal{E})$ and $\lambda$ is a locally finite measure on $(E, \mathcal{E})$. Suppose further that $K: E \times E \rightarrow \mathbb{C}$ is a locally bounded measurable function, called the {\em correlation kernel}. We say that $M$ is a {\em determinantal point process} with correlation kernel $K$ and reference measure $\lambda$ if the $n$-th correlation function $\rho_n$ as in Definition \ref{CorrelationFunctions} exists for each $n \geq 1$ and 
\begin{equation}\label{DP1}
\rho_n(x_1, \dots, x_n) = \det \left[ K(x_i, x_j) \right]_{i, j = 1}^n.
\end{equation}
\end{definition}
\begin{remark}\label{DPR1}
We mention that one does not in general need to assume that $K$ is locally bounded, but it will be convenient for us and this condition holds for all of the examples we are interested in. In addition, we note that the equations (\ref{RN1}) define the correlation functions $\rho_n$ on the support of $\lambda^n$. In particular, the value of $K$ outside of the support of $\lambda$ is immaterial. In some of our applications, we have that $\lambda$ is supported on some lattice in $E$ and we only define $K$ on that set.
\end{remark}

The following proposition summarizes some of the basic properties of determinantal point processes. Its proof occupies the remainder of this section.
\begin{proposition}\label{PropLem} Suppose that $M$ is a determinantal point process on $(E,\mathcal{E})$ as in Definition \ref{DPP}. We have the following statements:
\begin{enumerate}
\item $M$ is a simple point process $\mathbb{P}$-almost surely.
\item Suppose $D \in \mathcal{E}$ is any Borel set and define 
\begin{equation}\label{RestrictEqn}
N(\omega, A)= M(\omega, A \cap D) \mbox{ for each $A \in \mathcal{E}$ and $\omega \in \Omega$}.
\end{equation}
Then, $N$ is a determinantal point process with kernel $\tilde{K}(x,y) = {\bf 1}\{x \in D\} \cdot K(x,y) \cdot {\bf 1}\{y \in D\}$ and reference measure $\lambda$. 
\item The law of $M$ (as a random element in the space of locally bounded measures $(S, \mathcal{S})$ as in Section \ref{Section2.1}) is uniquely determined by the correlation kernel $K$ and reference measure $\lambda$.
\item If $f: E \rightarrow \mathbb{C} \setminus \{0\}$ is such that $f(x)$ and $1/f(x)$ are locally bounded, then $M$ is also a determinantal point process with correlation kernel 
\begin{equation}\label{GaugeTrans}
\tilde{K}(x,y) = \frac{f(x)}{f(y)} \cdot K(x,y) \mbox{ or alternatively } \tilde{K}(x,y) = \frac{f(x)}{f(y)} \cdot K(y,x).
\end{equation}
\item Let $\phi: E \rightarrow E$ be a measurable bijection with a measurable inverse such that $\phi$ and $\phi^{-1}$ are locally bounded. Then, the pushforward measure $M \phi^{-1}$ is a determinantal point process with correlation kernel $\tilde{K}(x,y) = K(\phi^{-1}(x), \phi^{-1}(y))$ and reference measure $\lambda \phi^{-1}$. 
\item If $\tilde{\lambda} = c \cdot \lambda$ for some $c > 0$, then $M$ is a determinantal point process with correlation kernel $\tilde{K}(x,y) = c^{-1} \cdot K(x,y)$ and reference measure $\tilde{\lambda}$.
\end{enumerate}
\end{proposition}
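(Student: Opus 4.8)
The plan is to handle parts (2), (4), (5), (6) by elementary row/column manipulations of the determinantal identity (\ref{DP1}) together with bookkeeping of local boundedness and local finiteness, and to isolate parts (1) and (3) as the only ones needing more. The common thread for the ``algebraic'' parts is that, by Definition \ref{CorrelationFunctions} and uniqueness of Radon--Nikodym derivatives, the correlation functions are intrinsic to the pair (process, reference measure) up to a $\lambda^n$-null set, so it suffices to exhibit, for the new reference measure, a version of $\rho_n$ equal to the claimed determinant. For part (4), scaling $f(x_i)$ out of row $i$ and $f(x_j)^{-1}$ out of column $j$ turns $\det[\tilde K(x_i,x_j)]_{i,j=1}^n$ into $\det[K(x_i,x_j)]_{i,j=1}^n$ pointwise (the variant with $K(y,x)$ additionally uses invariance of the determinant under transposition), and $\tilde K$ is locally bounded since $f,1/f$ are. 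For part (6), $\mu_n(A)=\int_A\rho_n\,d\lambda^n=\int_A c^{-n}\rho_n\,d\tilde\lambda^n$ and $c^{-n}\det[K(x_i,x_j)]_{i,j=1}^n=\det[c^{-1}K(x_i,x_j)]_{i,j=1}^n$. For part (2), I would take a sequence $\{X_n\}$ forming $M$ (Lemma \ref{FormL}); the subsequence of those $X_n$ lying in $D$ forms $N$, and (\ref{RM1}) gives $N_n(\omega,A)=M_n(\omega,A\cap D^n)$ on $\mathcal{E}^{\otimes n}$, whence $\mu^N_n(A)=\int_A\det[K(x_i,x_j)]_{i,j=1}^n\prod_{i=1}^n{\bf 1}\{x_i\in D\}\,d\lambda^n=\int_A\det[\tilde K(x_i,x_j)]_{i,j=1}^n\,d\lambda^n$ by factoring ${\bf 1}\{x_i\in D\}$ out of row and column $i$ (one may alternatively invoke Lemma \ref{Cons}). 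For part (5), $M\phi^{-1}$ is formed by $\{\phi(X_n)\}$, so $(M\phi^{-1})_n(\omega,A)=M_n(\omega,(\phi^{\otimes n})^{-1}(A))$, and the change-of-variables formula for pushforward measures, with $\lambda^n\circ(\phi^{\otimes n})^{-1}=(\lambda\phi^{-1})^n$, yields $\mu^{M\phi^{-1}}_n(A)=\int_A\det[K(\phi^{-1}(y_i),\phi^{-1}(y_j))]_{i,j=1}^n\,(\lambda\phi^{-1})^n(dy)$. That $N$ and $M\phi^{-1}$ are point processes, $\lambda\phi^{-1}$ is locally finite, and the new kernels are locally bounded all follow from $\phi,\phi^{-1}$ (and the identity map) sending bounded sets to bounded sets.

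For part (1) I would use the second correlation function: the mean measure of $M_2$ satisfies $\mu_2(\Delta)=\int_\Delta \det[K(x_i,x_j)]_{i,j=1}^2\,d\lambda^2=0$, since the integrand $K(x,x)K(y,y)-K(x,y)K(y,x)$ vanishes identically on the diagonal $\Delta=\{(x,x):x\in E\}\in\mathcal{E}^{\otimes 2}$. On the other hand, (\ref{RM1}) gives $M_2(\omega,\Delta)=\sum_{i\neq j}{\bf 1}\{X_i(\omega)=X_j(\omega)\}=\sum_{x\in E}M(\omega,\{x\})(M(\omega,\{x\})-1)$, a nonnegative integer-valued random variable with zero expectation; hence it is $0$ almost surely, and on that event $M(\omega,\{x\})\leq 1$ for every $x$, i.e. $M$ is simple.

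The genuine obstacle is part (3), which is the only place that invokes the convergence framework of Section \ref{Section2}. I would verify the hypotheses of Corollary \ref{CorUnique} for two determinantal point processes $M',M''$ with common $K$ and $\lambda$, taking $T=\emptyset$ so that $\mathcal{I}_T$ consists of all half-open rectangles. Equality (\ref{RM15}) of joint factorial moments is immediate from (\ref{RN2}), since for pairwise disjoint rectangles $A_1,\dots,A_m$ that moment equals $\int_{A_1^{n_1}\times\cdots\times A_m^{n_m}}\det[K(x_i,x_j)]_{i,j=1}^n\,d\lambda^n$, a function of $K$ and $\lambda$ alone. For the summability condition (\ref{RM16}), on a bounded rectangle $B$ set $C:=\sup_{B\times B}|K|<\infty$ (finite by local boundedness of $K$); Hadamard's inequality gives $|\det[K(x_i,x_j)]_{i,j=1}^n|\leq n^{n/2}C^n$, hence $\mathbb{E}[M'(B)!/(M'(B)-n)!]\leq n^{n/2}C^n\lambda(B)^n$, and using $n!\geq(n/e)^n$ the series $\sum_{n\geq 1}\frac{\epsilon^n}{n!}\cdot\mathbb{E}[M'(B)!/(M'(B)-n)!]$ has general term at most $(\epsilon C\lambda(B)e)^n/n^{n/2}$ and so converges for every $\epsilon>0$. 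Corollary \ref{CorUnique} then yields that $M'$ and $M''$ have the same law as random elements of $(S,\mathcal{S})$. Beyond this estimate the only thing to watch throughout is expository: keeping track of which identities hold pointwise versus only $\lambda^n$-a.e., and recording that all the mean measures $\mu_n$ appearing are locally finite, which is again the Hadamard bound.
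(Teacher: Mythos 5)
Your proposal is correct and follows essentially the same route as the paper for every part: parts (2), (4), (5), (6) reduce to multilinearity/transposition identities for determinants together with Lemma \ref{Cons} (or, equivalently, uniqueness of Radon--Nikodym derivatives); part (1) comes from the vanishing of $\rho_2$ on the diagonal; and part (3) is exactly the Hadamard-plus-Corollary \ref{CorUnique} argument. The one cosmetic deviation is in part (1), where the paper first localizes to $[-n,n]^k$ and then takes a union, whereas you integrate directly over the full diagonal $\Delta$; since $\mu_2(\Delta)=\int_\Delta\rho_2\,d\lambda^2=0$ holds with no localization (the integrand vanishes identically there), this shortcut is harmless.
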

\begin{remark}\label{PropRem} Most of the statements in Proposition \ref{PropLem} are straightforward and known. Some authors take it as part of the definition of determinantal point processes that they are simple, see e.g. \cite[Definition 4.2.1]{HKP}. For us this property comes out of Definition \ref{DPP}. The kernel transformation in (\ref{GaugeTrans}) is sometimes called a {\em gauge transformation}. The last two statements in Proposition \ref{PropLem} essentially state that the determinantal point process structure is maintained under changes of variables and rescaling the reference measure.
\end{remark}
\begin{proof} Throughout we assume that $\{X_n\}_{n \geq 1}$ is a sequence of random elements in $(\bar{E}, \bar{\mathcal{E}})$ that forms $M$ as in (\ref{FormX}). Its existence is ensured by Lemma \ref{FormL}. We also let $\mathcal{I}$ be the set of all rectangles as in (\ref{rect}) for $T = \emptyset$.\\

{\raggedleft Part (1).} Let $A_n = [-n, n]^k$ and note that by (\ref{RN1}) and (\ref{DP1}) we have 
$$ \mathbb{E}\left[ M_2(A_n^2 \cap \{x_1 = x_2\}) \right] =  \mu_2(A_n^2 \cap \{x_1 = x_2\}) = \int_{A_n^2 \cap \{x_1 = x_2\}} \det \left[ K(x_i, x_j) \right]_{i,j = 1}^2 \lambda^2(dx) = 0,$$
where we used that the determinant is zero when $x_1 = x_2$. Adding the above over $n \geq 1$ we conclude that $M_2(\{x_1 = x_2\}) = 0$ $\mathbb{P}$-almost surely. From (\ref{RM1}) we conclude that 
$$\mathbb{P}\left(X_{i_1} = X_{i_2} \in E \mbox{ for some } i_1 \neq i_2 \right) = 0,$$
which implies that $M$ is a simple point process $\mathbb{P}$-a.s.\\

{\raggedleft Part (2).} If $A_1, \dots, A_m \in \mathcal{I}$ are pairwise disjoint and $n_1, \dots, n_m \in \mathbb{N}$, we have from (\ref{RN2})
\begin{equation*}
\begin{split}
&\mathbb{E} \left[ \prod_{i = 1}^m \frac{N(A_i)!}{(N(A_i)-n_i)!} \right] = \mathbb{E} \left[ \prod_{i = 1}^m \frac{M(A_i \cap D)!}{(M(A_i \cap D)-n_i)!} \right] = \int_{(A_1 \cap D)^{n_1} \times \cdots \times (A_m \cap D)^{n_m}} \hspace{-15mm} \det \left[ K(x_i, x_j) \right]_{i,j = 1}^n \lambda^n(dx) \\
& = \int_{A_1^{n_1} \times \cdots \times A_m^{n_m}} \prod_{i = 1}^n {\bf 1}\{x_i \in D\} \cdot \det \left[ K(x_i, x_j) \right]_{i,j = 1}^n \lambda^n(dx) = \int_{A_1^{n_1} \times \cdots \times A_m^{n_m}} \hspace{-3mm} \det [ \tilde{K}(x_i, x_j) ]_{i,j = 1}^n \lambda^n(dx) ,
\end{split}
\end{equation*}
which by Lemma \ref{Cons} implies the statement in the proposition.\\

{\raggedleft Part (3).} If $A$ is an $n \times n$ matrix with complex entries and columns $v_1, \dots, v_n$, we have by Hadamard's inequality, see \cite[Corollary 33.2.1.1.]{Prasolov}, that 
\begin{equation}\label{Hadamard}
 |\det A| \leq \prod_{i = 1}^n \|v_i\|
\end{equation}
where $\|x\| = (|x_1|^2 + \cdots + |x_n|^2)^{1/2}$ for $x = (x_1, \dots, x_n) \in \mathbb{C}^n$. Using (\ref{Hadamard}) and that $K$ and $\lambda$ are locally bounded, we conclude that for each $B \in \mathcal{I}$ there is a constant $C > 0$ such that for $n \geq 1$
$$ \int_{B^n} \left| \det [ K(x_i, x_j) ]_{i,j = 1}^n \right| \lambda^n(dx) \leq n^{n/2} \cdot C^n.$$
Combining the latter with (\ref{RN2}), we see that (\ref{RM16}) is satisfied for $M' = M$, and hence the statement in the proposition follows from Corollary \ref{CorUnique}.\\

{\raggedleft Part (4).} The statement follows directly from Definition \ref{DPP} and the equality $\det [ K(x_i, x_j) ]_{i,j = 1}^n = \det [ \tilde{K}(x_i, x_j) ]_{i,j = 1}^n$ for each $n \geq 1$ and $x_1, \dots, x_n \in E$.\\

{\raggedleft Part (5).} Put $N = M \phi^{-1}$ and note that by our assumptions on $\phi$, we have that $N$ is a point process on $E$. Fix any pairwise disjoint bounded Borel sets $A_1, \dots, A_m \in \mathcal{E}$ and let $B_i = \phi^{-1}(A_i)$. Note that $B_1, \dots, B_m$ are also pairwise disjoint and bounded Borel sets by our assumptions on $\phi$. By definition we have $N(A_i) = M(B_i)$ for $i =1, \dots, m$ and so by changing variables we conclude for any $n_1, \dots, n_m \in \mathbb{N}$ that
\begin{equation*}
\begin{split}
&\mathbb{E} \left[ \prod_{i = 1}^m \frac{N(A_i)!}{(N(A_i)-n_i)!} \right] = \mathbb{E} \left[ \prod_{i = 1}^m \frac{M(B_i)!}{(M(B_i)-n_i)!} \right] = \int_{B_1^{n_1} \times \cdots \times B_m^{n_m}} \hspace{-1mm} \det \left[ K(x_i, x_j) \right]_{i,j = 1}^n \lambda^n(dx) \\
& = \int_{A_1^{n_1} \times \cdots \times A_m^{n_m}} \hspace{-2mm} 
 \det \left[ K(\phi^{-1}(y_i), \phi^{-1}(y_j)) \right]_{i,j = 1}^n (\lambda \phi^{-1})^n(dy) = \int_{A_1^{n_1} \times \cdots \times A_m^{n_m}}  \hspace{-2mm}  \det [ \tilde{K}(y_i, y_j) ]_{i,j = 1}^n \tilde{\lambda}^n(dy) ,
\end{split}
\end{equation*}
where $\tilde{\lambda} = \lambda \phi^{-1}$ and $\tilde{K}(x,y) = K(\phi^{-1}(x), \phi^{-1}(y))$. The statement in the proposition now follows from Lemma \ref{Cons} applied to the dissecting semi-ring of all bounded Borel sets.\\

{\raggedleft Part (6).} For each pairwise disjoint $A_1, \dots, A_m \in \mathcal{I}$ and $n_1, \dots, n_m \in \mathbb{N}$ we have from (\ref{RN2}), the definition of $\tilde{K}$, $\tilde{\lambda}$, and the multi-linearity of the determinant that
$$\mathbb{E} \left[ \prod_{i = 1}^m \frac{M(A_i)!}{(M(A_i)-n_i)!} \right] = \int_{A_1^{n_1} \times \cdots \times A_m^{n_m}}  \det \left[ \tilde{K}(x_i, x_j) \right]_{i,j = 1}^n \tilde{\lambda}^n(dx).$$
The last equality implies the statement in the proposition in view of Lemma \ref{Cons}.
\end{proof}

%
%
\subsection{Convergence of determinantal point processes}\label{Section2.3} In this section we investigate various questions of convergence for a sequence of determinantal point processes. We start with a result that ensures the weak convergence of a sequence of determinantal point processes under the assumption that their kernels converge uniformly over compact sets to a continuous kernel, and their reference measures converge vaguely. Its proof is given in Appendix \ref{AppendixA45}.

\begin{proposition}\label{PropWC0} Let $\lambda_N$ be a sequence of locally finite measures on $E$ that converge vaguely to $\lambda$, and let $M^N$ be a sequence of determinantal point processes with correlation kernels $K_N$ and reference measures $\lambda_N$. Suppose further that there is a function $K: E \times E \rightarrow \mathbb{C}$ such that for each compact set $\mathcal{V} \subset E$ we have
\begin{equation}\label{EWC0}
\lim_{N \rightarrow \infty}  \sup_{x,y \in \mathcal{V}} \left| K_N(x,y) -  K(x,y) \right| = 0,
\end{equation}
and such that $K(x,y)$ is continuous on $E^2$. Then, there exists a determinantal point process $M$ on $E$ with correlation kernel $K$ and reference measure $\lambda$. Furthermore, $M^N$ converge weakly to $M$ as random elements in $(S,\mathcal{S})$.
\end{proposition}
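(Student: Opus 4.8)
The plan is to deduce the statement from Proposition \ref{PPConv} applied to $\{M^N\}_{N\geq 1}$, and then to recognize the resulting weak limit as a determinantal point process with kernel $K$ and reference measure $\lambda$ via Lemma \ref{Cons}. To prepare for the first step, write $E=\mathbb{R}^k$ with coordinates $z=(z_1,\dots,z_k)$. For each $p\in\{1,\dots,k\}$ the set of $c\in\mathbb{R}$ for which the hyperplane $\{z\in E:z_p=c\}$ has positive $\lambda$-mass is at most countable: exhaust $E$ by balls on each of which $\lambda$ is finite, and note that a finite measure can assign positive mass to only countably many disjoint sets. Let $T\subset\mathbb{R}$ be the countable union of these $k$ sets. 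Then for pairwise disjoint $A_1,\dots,A_m\in\mathcal{I}_T$ (with $\mathcal{I}_T$ as in (\ref{rect})) and $n_1,\dots,n_m\in\mathbb{N}$ with $n:=n_1+\cdots+n_m$, the bounded rectangle $R:=A_1^{n_1}\times\cdots\times A_m^{n_m}\subset E^n$ satisfies $\lambda^n(\partial R)=0$, since $\partial R$ lies in a finite union of coordinate hyperplanes which are $\lambda^n$-null by Fubini and the choice of $T$; moreover $\lambda^n(\bar R)<\infty$ by local finiteness.

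First I would verify (\ref{RM11}). Because $M^N$ is determinantal with kernel $K_N$ and reference measure $\lambda_N$, (\ref{RN2}) gives
$$\mathbb{E}\left[\prod_{i=1}^m\frac{M^N(A_i)!}{(M^N(A_i)-n_i)!}\right]=\int_R\det\!\left[K_N(x_i,x_j)\right]_{i,j=1}^n\,\lambda_N^n(dx).$$
I would write the right-hand side as $\int_R\det[K(x_i,x_j)]\,\lambda_N^n(dx)$ plus an error of size at most $\bigl\|\det[K_N(\cdot,\cdot)]-\det[K(\cdot,\cdot)]\bigr\|_{L^\infty(\bar R)}\cdot\lambda_N^n(\bar R)$. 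The error goes to $0$: a determinant is a polynomial in its finitely many entries, so (\ref{EWC0}) forces uniform convergence of the integrands on the compact set $\bar R$, while $\sup_N\lambda_N^n(\bar R)<\infty$ because vague convergence $\lambda_N\to\lambda$ and Urysohn's lemma give $\limsup_N\lambda_N(\bar A_i)\leq\lambda(U_i)<\infty$ for bounded open $U_i\supseteq\bar A_i$, and $\lambda_N^n(\bar R)=\prod_i\lambda_N(\bar A_i)^{n_i}$. For the main term, the products $\lambda_N^n$ converge vaguely to $\lambda^n$ (standard: on product test functions $f_1\otimes\cdots\otimes f_n$, $f_j\in C_c(E)$, by multiplicativity, and then on general $g\in C_c(E^n)$ by Stone--Weierstrass using the just-noted local uniform mass bound); and since $\det[K(\cdot,\cdot)]$ is continuous and bounded on $\bar R$ and $\lambda^n(\partial R)=0$, a standard portmanteau-type argument (sandwich ${\bf 1}_R$ between continuous compactly supported functions whose $\lambda^n$-integrals differ by at most $\delta$) yields $\int_R\det[K(x_i,x_j)]\,\lambda_N^n(dx)\to\int_R\det[K(x_i,x_j)]\,\lambda^n(dx)=:C(n_1,\dots,n_m;A_1,\dots,A_m)\in[0,\infty)$. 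This is (\ref{RM11}).

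Next I would check the growth condition (\ref{RM13}) and assemble the pieces. For $B\in\mathcal{I}_T$, Hadamard's inequality (\ref{Hadamard}) gives $\bigl|\det[K(x_i,x_j)]_{i,j=1}^n\bigr|\leq n^{n/2}\,\|K\|_{L^\infty(B\times B)}^n$, so $C(n;B)\leq n^{n/2}\bigl(\|K\|_{L^\infty(B\times B)}\,\lambda(B)\bigr)^n$, and the bound $n!\geq(n/e)^n$ shows $\sum_{n\geq1}\frac{\epsilon^n}{n!}C(n;B)$ converges for every $\epsilon>0$. Hence Proposition \ref{PPConv} applies: $M^N$ converges weakly in $(S,\mathcal{S})$ to a point process $M$, and there is a countable $T^\infty\supseteq T$ with $\mathbb{E}\bigl[\prod_i M(A_i)!/(M(A_i)-n_i)!\bigr]=\int_R\det[K(x_i,x_j)]\,\lambda^n(dx)$ for all pairwise disjoint $A_1,\dots,A_m\in\mathcal{I}_{T^\infty}$. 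Since $\mathcal{I}_{T^\infty}$ is a dissecting semi-ring, $u_n(x_1,\dots,x_n):=\det[K(x_i,x_j)]_{i,j=1}^n$ is measurable and symmetric, and the integral is finite, Lemma \ref{Cons} shows $u_n$ is the $n$-th correlation function of $M$ for every $n\geq1$; as $K$ is continuous (hence locally bounded and measurable), $M$ is a determinantal point process with kernel $K$ and reference measure $\lambda$ in the sense of Definition \ref{DPP}, which is the claim.

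I expect the main obstacle to be the double limit inside (\ref{RM11}): one must pass to the limit simultaneously in the integrand (uniform convergence of kernels, and so of determinants, on compacts) and in the reference measure (vague convergence of the products $\lambda_N^n$), and the latter requires upgrading vague convergence --- which a priori only controls $C_c$ test functions --- to integration of ${\bf 1}_R$ against a continuous function. That upgrade is exactly where the careful choice of $T$ is used, as it is what guarantees $\lambda^n(\partial R)=0$. The remaining ingredients are either bookkeeping or direct appeals to results already proved in Section \ref{Section2}.
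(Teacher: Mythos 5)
Your proof is correct and takes essentially the same route as the paper's (Appendix \ref{AppendixA45}): the same choice of $T$ from coordinate hyperplanes of positive $\lambda$-mass, the same split of the factorial-moment integral into a small kernel-approximation error plus a main term handled by vague convergence of $\lambda_N^n$ together with $\lambda^n(\partial R)=0$, the same Hadamard bound for (\ref{RM13}), and the same final appeal to Proposition \ref{PPConv} followed by Lemma \ref{Cons}. The only cosmetic difference is that you reprove inline the content of the paper's Lemma \ref{VaguePairLim} (the sandwich/portmanteau step) and establish $\lambda_N^n\xrightarrow{v}\lambda^n$ via Stone--Weierstrass rather than by two applications of Kallenberg's Lemma 4.1.
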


In the remainder of this section we consider a specific setup, which is suitable for the applications we have in mind, and turn to explaining that first.
\begin{definition}\label{DefSlices}
We suppose that $E =\mathbb{R}^2$, $r \in \mathbb{N}$, $t_1, \dots, t_r \in \mathbb{R}$ are fixed with $t_1 < \cdots < t_r$ and put $\mathcal{T} = \{t_1, \dots, t_r\}$. if $\nu = (\nu_{t_1}, \dots, \nu_{t_r})$ is an $r$-tuple of locally finite measures on $\mathbb{R}$, we define the (necessarily locally bounded) measure $\mu_{\mathcal{T}, \nu}$ on $\mathbb{R}^2$ by
\begin{equation}\label{ERG1}
\mu_{\mathcal{T}, \nu} = \sum_{t \in \mathcal{T}} \nu_t(A_t) \mbox{, where } A_t = \{y \in \mathbb{R}: (t,y) \in A\}.
\end{equation}
Below we consider determinantal point processes on $E$ with reference measures of the form $\mu_{\mathcal{T}, \nu}$. Note that that only the values of the kernels on $ (\mathcal{T} \times \mathbb{R}) \times (\mathcal{T} \times \mathbb{R}) $ matter as $\mathcal{T} \times \mathbb{R}$ contains the support of the reference measure $\mu_{\mathcal{T}, \nu}$, cf. Remark \ref{DPR1}. In many cases we only focus on the values on this set,  but for concreteness, one can extend the kernels to be equal to $0$ outside of it.
\end{definition}

The following result shows that the measures in Definition \ref{DefSlices} behave well under projections to a fixed $t \in \mathcal{T}$.
\begin{lemma}\label{LemmaSlice} Assume the same notation as in Definition \ref{DefSlices}, and let $M$ be a determinantal point process with correlation kernel $K$ and reference measure $\mu_{\mathcal{T}, \nu}$. Fix $t \in \mathcal{T}$ and consider the random measure on $\mathbb{R}$, given by $M^t(A) := M(\{t\} \times A)$. Then, $M^t$ is a determinantal point process on $\mathbb{R}$ with correlation kernel $K_t(x,y) = K(t,x; t,y)$ and reference measure $\nu_t$.
\end{lemma}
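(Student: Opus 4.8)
The plan is to reduce the claim to an application of Lemma~\ref{Cons} (in the consistency form), using the dissecting semi-ring of bounded Borel subsets of $\mathbb{R}$ to identify the correlation functions of $M^t$ explicitly. First I would observe that $M^t$ is indeed a point process on $\mathbb{R}$: for any bounded Borel $A \subseteq \mathbb{R}$, the set $\{t\} \times A$ is a bounded Borel subset of $\mathbb{R}^2$, so $M^t(A) = M(\{t\}\times A) \in \mathbb{Z}_{\geq 0}$, and measurability in $\omega$ is inherited from that of $M$; since a singleton times a bounded set is bounded in $\mathbb{R}^2$, local finiteness passes through as well.

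Next, I would compute the joint factorial moments of $M^t$ against pairwise disjoint bounded Borel sets $A_1, \dots, A_m \subseteq \mathbb{R}$. Since $A_i$ disjoint implies $\{t\} \times A_i$ disjoint in $\mathbb{R}^2$, equation~(\ref{RN2}) applied to $M$ gives
\begin{equation*}
\mathbb{E}\left[ \prod_{i=1}^m \frac{M^t(A_i)!}{(M^t(A_i) - n_i)!} \right] = \int_{(\{t\}\times A_1)^{n_1} \times \cdots \times (\{t\}\times A_m)^{n_m}} \rho_n\big((s_1,x_1), \dots, (s_n,x_n)\big)\, (\mu_{\mathcal{T}} \times \lambda)^n(ds\,dx),
\end{equation*}
where $\rho_n = \det[K(\cdot,\cdot)]$ is the $n$-th correlation function of $M$. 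On the domain of integration every $s_j$ is forced to equal $t$, and the counting measure $\mu_{\mathcal{T}}$ contributes a factor of $1$ for each such coordinate, so the integral collapses to $\int_{A_1^{n_1}\times\cdots\times A_m^{n_m}} \det\big[K(t,x_i;t,x_j)\big]_{i,j=1}^n\, \lambda^n(dx)$. This is precisely $\int \det[K_t(x_i,x_j)]\, \lambda^n(dx)$ with $K_t(x,y) = K(t,x;t,y)$. The finiteness required in~(\ref{RN3}) follows because $M$ has correlation functions and the left-hand side is a factorial moment of the point process $M^t$ restricted to a bounded set, hence finite by Hadamard's inequality as in the proof of part~(3) of Proposition~\ref{PropLem} (or simply from finiteness of the corresponding quantity for $M$). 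Also $K_t$ is locally bounded since $K$ is, and $(x,y) \mapsto K_t(x,y)$ is $\mathcal{E}^{\otimes n}$-measurable, so $\det[K_t(x_i,x_j)]$ is an admissible symmetric candidate density.

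Finally, applying Lemma~\ref{Cons} with $E = \mathbb{R}$, reference measure $\lambda$, the dissecting semi-ring $\mathcal{I}$ of all bounded Borel sets, and $u_n(x_1,\dots,x_n) = \det[K_t(x_i,x_j)]_{i,j=1}^n$, we conclude that $u_n$ is the $n$-th correlation function of $M^t$ in the sense of Definition~\ref{CorrelationFunctions}. Since this holds for every $n\ge 1$ and has exactly the determinantal form~(\ref{DP1}), $M^t$ is a determinantal point process with correlation kernel $K_t$ and reference measure $\lambda$, as claimed. I do not anticipate a serious obstacle here; the only point requiring a little care is the bookkeeping of how the counting measure $\mu_{\mathcal{T}}$ on the first coordinate forces $s_j = t$ and contributes trivial weights, effectively ``integrating out'' the discrete coordinate, and checking that disjointness and boundedness are preserved under $A \mapsto \{t\}\times A$ so that~(\ref{RN2}) is genuinely applicable.
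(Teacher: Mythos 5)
Your proof is correct and follows essentially the same route as the paper's: compute the joint factorial moments of $M^t$ via (\ref{RN2}), observe that the counting measure on $\mathcal{T}$ collapses the integral to the desired determinantal form over $\lambda^n$, and invoke Lemma~\ref{Cons} with the dissecting semi-ring of bounded Borel sets. Your additional checks (that $M^t$ is a point process, finiteness, etc.) are correct and merely spell out what the paper leaves implicit.
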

\begin{proof} Using the definition of $M^t$ and (\ref{RN2}) we have for all pairwise disjoint bounded Borel sets $A_1, \dots, A_h $ in $\mathbb{R}$ and $n_1, \dots, n_h \geq 0$ with $n_1 + \cdots + n_h = n$ that
\begin{equation*}
\mathbb{E} \left[ \prod_{j = 1}^h \frac{M^t(A_j)!}{(M^t(A_j) - n_j)!} \right] = \int_{A_1^{n_1} \times \cdots \times A_h^{n_h}} \det \left[ K(t, x_i; t, x_j) \right]_{i,j = 1}^n  \nu_t^n(dx).
\end{equation*}
The result now follows from the last equation and Lemma \ref{Cons}, where $\mathcal{I}$ is the collection of bounded Borel sets in $\mathbb{R}$.
\end{proof}

The next result provides sufficient conditions for a sequence of determinantal point processes as in Definition \ref{DefSlices} to converge weakly. Its proof is given in Appendix \ref{AppendixA5}.
\begin{proposition}\label{PropWC1}Assume the same notation as in Definition \ref{DefSlices}. Let $\nu^N = (\nu^N_{t_1}, \dots, \nu^N_{t_r})$ be a sequence of $r$-tuples of locally finite measures on $\mathbb{R}$ that converge vaguely to $\nu = (\nu_{t_1}, \dots, \nu_{t_r})$, and let $M^N$ be a sequence of determinantal point processes with correlation kernels $K_N$ and reference measures $\mu_{\mathcal{T}, \nu^N}$. Suppose further that there is a function $K: (\mathcal{T} \times \mathbb{R}) \times (\mathcal{T} \times \mathbb{R}) \rightarrow \mathbb{C}$ such that for each $A > 0$ we have
\begin{equation}\label{DP2}
\lim_{N \rightarrow \infty} \max_{s,t \in \mathcal{T}} \sup_{- A \leq x,y \leq A} \left| K_N(s, x; t ,y) -  K(s, x; t ,y) \right| = 0,
\end{equation}
and such that $K(s,\cdot; t, \cdot)$ is continuous on $\mathbb{R}^2$ for each $s,t \in \mathcal{T}$. Then, there exists a determinantal point process $M$ on $E$ with correlation kernel $K$ and reference measure $\mu_{\mathcal{T}, \nu}$. Furthermore, $M^N$ converge weakly to $M$ as random elements in $(S,\mathcal{S})$.
\end{proposition}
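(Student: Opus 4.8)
The plan is to deduce Proposition \ref{PropWC1} from Proposition \ref{PropWC0}. The obstruction to a direct application is that the reference measure $\mu_{\mathcal{T}}\times\lambda$ is supported on the finitely many parallel lines $\mathcal{T}\times\mathbb{R}\subset E$, the kernels are only controlled on $(\mathcal{T}\times\mathbb{R})^2$, and their zero-extension to $E^2$ is discontinuous across these lines, so neither the continuity of the limiting kernel nor the uniform-on-compacts convergence required by Proposition \ref{PropWC0} holds as stated. I will circumvent this by replacing the kernels with continuous interpolants in the ``time'' coordinate that leave all the relevant point processes unchanged.

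Concretely, for each $s\in\mathcal{T}$ fix a continuous function $\chi_s\colon\mathbb{R}\to[0,1]$ with $\chi_s(t)={\bf 1}\{s=t\}$ for every $t\in\mathcal{T}$ (e.g.\ piecewise linear between consecutive elements of $\mathcal{T}$), and define for $(u_1,x_1),(u_2,x_2)\in\mathbb{R}^2$
$$\widetilde K_N(u_1,x_1;u_2,x_2)=\sum_{s,t\in\mathcal{T}}\chi_s(u_1)\,\chi_t(u_2)\,K_N(s,x_1;t,x_2),$$
and define $\widetilde K$ from $K$ by the same formula. Since $\chi_s(t)={\bf 1}\{s=t\}$ on $\mathcal{T}$, both $\widetilde K_N$ and $\widetilde K$ agree with $K_N$ and $K$, respectively, on $(\mathcal{T}\times\mathbb{R})^2$, which contains the support of $(\mu_{\mathcal{T}}\times\lambda_N)^{\otimes n}$; hence $\det[\widetilde K_N(\xi_i,\xi_j)]_{i,j=1}^n$ is a version of the $n$-th correlation function of $M^N$, so (recalling Remark \ref{DPR1} and Definition \ref{DefSlices}) $M^N$ is also a determinantal point process with correlation kernel $\widetilde K_N$ and the same reference measure $\mu_{\mathcal{T}}\times\lambda_N$, and likewise any determinantal point process with kernel $\widetilde K$ and reference measure $\mu_{\mathcal{T}}\times\lambda$ is automatically one with kernel $K$. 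Moreover $\widetilde K_N$ and $\widetilde K$ are finite sums of products of bounded functions of $u_1,u_2$ with locally bounded functions of $x_1,x_2$, hence are locally bounded on $E^2$, and $\widetilde K$ is continuous on $E^2$ because each slice $K(s,\cdot;t,\cdot)$ is continuous by hypothesis.

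It then remains to verify the hypotheses of Proposition \ref{PropWC0} for the kernels $\widetilde K_N$, $\widetilde K$ and the reference measures $\mu_{\mathcal{T}}\times\lambda_N$, $\mu_{\mathcal{T}}\times\lambda$ on $E=\mathbb{R}^2$. For vague convergence $\mu_{\mathcal{T}}\times\lambda_N\to\mu_{\mathcal{T}}\times\lambda$, test against $f\in C_c(\mathbb{R}^2)$ and write $\int f\,d(\mu_{\mathcal{T}}\times\lambda_N)=\sum_{t\in\mathcal{T}}\int_{\mathbb{R}}f(t,x)\,\lambda_N(dx)$, a finite sum whose terms converge to $\int_{\mathbb{R}}f(t,x)\,\lambda(dx)$ by the vague convergence $\lambda_N\to\lambda$ on $\mathbb{R}$. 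For the uniform convergence on compacts, if $\mathcal{V}\subset\mathbb{R}^2$ is compact, choose $A>0$ with $\mathcal{V}\subseteq\{|u|\le A,\ |x|\le A\}$; then for $(u_1,x_1),(u_2,x_2)\in\mathcal{V}$ one has
$$\bigl|\widetilde K_N(u_1,x_1;u_2,x_2)-\widetilde K(u_1,x_1;u_2,x_2)\bigr|\le\sum_{s,t\in\mathcal{T}}\bigl|K_N(s,x_1;t,x_2)-K(s,x_1;t,x_2)\bigr|,$$
a sum of finitely many terms tending to $0$ uniformly in $|x_1|,|x_2|\le A$ by (\ref{DP2}), so $\widetilde K_N\to\widetilde K$ uniformly on $\mathcal{V}\times\mathcal{V}$. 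Proposition \ref{PropWC0} then produces a determinantal point process $M$ with kernel $\widetilde K$ and reference measure $\mu_{\mathcal{T}}\times\lambda$ to which $M^N$ converges weakly in $(S,\mathcal{S})$; by the observations of the previous paragraph $M$ also has correlation kernel $K$ (extended by $0$ off $(\mathcal{T}\times\mathbb{R})^2$), which is exactly the assertion, and uniqueness of $M$, if one wishes to record it, is part (3) of Proposition \ref{PropLem}. The one genuine difficulty here is precisely the discontinuity across the lines $\mathcal{T}\times\mathbb{R}$ that blocks the direct appeal to Proposition \ref{PropWC0}; once the tent-function interpolation removes it without changing the point processes or the limiting kernel on the support, the remaining steps are routine bookkeeping.
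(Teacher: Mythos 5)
Your proof is correct, and it takes a genuinely different route from the paper's. The paper proves Propositions \ref{PropWC0} and \ref{PropWC1} \emph{independently} of each other, each by directly verifying the factorial-moment hypotheses of Proposition \ref{PPConv}: in the proof of \ref{PropWC1} (Appendix \ref{AppendixA5}) the integral defining the $n$-th factorial moment against $(\mu_{\mathcal T}\times\lambda_N)^{\otimes n}$ is decomposed into a finite sum over tuples $(s_1,\dots,s_n)\in\mathcal T^n$, and for each fixed tuple the uniform convergence (\ref{DP2}) together with the vague convergence $\lambda_N^n\to\lambda^n$ (via Lemma \ref{VaguePairLim} and an explicit check that the corners avoid $\lambda$-atoms) gives the limit. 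You instead reduce \ref{PropWC1} to \ref{PropWC0} by replacing the discontinuous zero-extension of $K_N$ to $E^2$ with a continuous interpolant $\widetilde K_N$ built from tent functions in the time coordinate, and you correctly observe (invoking Remark \ref{DPR1}) that since $\widetilde K_N$ and $K_N$ agree on $(\mathcal T\times\mathbb R)^2$, which carries the reference measure, the swap changes neither the point processes nor the limiting determinantal law. What your approach buys is economy: \ref{PropWC0} must be proved anyway (it is invoked on its own in Section \ref{Section7.2}), and your reduction avoids redoing the factorial-moment bookkeeping that the paper carries out a second time. What it costs is the small extra construction of the interpolants and the verification that $\mu_{\mathcal T}\times\lambda_N\xrightarrow{v}\mu_{\mathcal T}\times\lambda$, both of which you handle correctly. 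Either argument is acceptable; yours is arguably the cleaner of the two.
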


We suppose next that $X^N = \left(X_i^{j, N}: i \geq 1 \mbox{ and } j = 1,\dots, r\right)$ is a sequence of random elements in $(\mathbb{R}^{\infty}, \mathcal{R}^{\infty})$, see \cite[Example 1.2]{Billing}, such that 
\begin{equation}\label{OrderedX}
X_i^{j,N}(\omega) \geq X_{i+1}^{j,N}(\omega) \mbox{ for each } \omega \in \Omega,  i \geq 1 \mbox{ and } j = 1,\dots, r.
\end{equation}
We further suppose that $t_1 < \dots < t_r$ and the random measures $M^N$ on $\mathbb{R}^2$ defined by
\begin{equation}\label{FormedM}
M^N(\omega, A) = \sum_{i \geq 1} \sum_{j = 1}^r {\bf 1} \{ (t_j, X_i^{j, N}(\omega)) \in A \}
\end{equation}
are locally finite, and hence point processes. The following statement shows that if $M^N$ converge weakly and $X^N$ is tight, then the elements $X^N$ in fact converge weakly. 
\begin{proposition}\label{PropWC2} Assume the same notation as in the preceding paragraph. Suppose that for each $i \geq 1$ and $j \in \{1, \dots, r\}$ the sequence $\{X_i^{j,N}\}_{N \geq 1}$ is tight and that $M^N$ as in (\ref{FormedM}) converge weakly to a point process $M$. Then, the sequence $X^N$ converges weakly to some $X$, which also satisfies (\ref{OrderedX}), as random elements in $(\mathbb{R}^{\infty}, \mathcal{R}^{\infty})$. Moreover, the random measure
\begin{equation}\label{FormedM2}
 \tilde{M}(\omega, A) = \sum_{i \geq 1} \sum_{j = 1}^r {\bf 1} \{ (t_j, X_i^{j}(\omega)) \in A \}
\end{equation}
has the same distribution as $M$.
\end{proposition}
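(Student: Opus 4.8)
The plan is to combine a tightness argument in $(\R^{\infty}, \mathcal{R}^{\infty})$ with an identification of all subsequential weak limits through the point processes $M^N$. First I would note that $\{X^N\}_{N \geq 1}$ is tight: given $\epsilon > 0$, for each pair $(i,j)$ with $i \geq 1$ and $1 \leq j \leq r$ use tightness of $\{X_i^{j,N}\}_N$ to pick a compact interval $I_{i,j} \subset \R$ with $\sup_N \P(X_i^{j,N} \notin I_{i,j}) < \epsilon \cdot 2^{-(i+j)}$; then $K = \prod_{i,j} I_{i,j}$ is compact in $\R^{\infty}$ by Tychonoff and $\sup_N \P(X^N \notin K) \leq \epsilon$. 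Hence $\{X^N\}$ is tight, and by Prokhorov's theorem on the Polish space $\R^{\infty}$ it is relatively compact in distribution. It therefore suffices to show that every subsequential weak limit of $X^N$ has one and the same law, which I will describe explicitly in terms of the law of $M$; the claimed ordering of $X$ then follows since $\{x : x_i^j \geq x_{i+1}^j \ \forall i,j\}$ is closed in $\R^{\infty}$ and Portmanteau applies.

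Next I would introduce the lift map. Let $G \subset \R^{\infty}$ be the Borel set of sequences $x = (x_i^j)$ that satisfy (\ref{OrderedX}) and have $x_i^j \to -\infty$ as $i \to \infty$ for each $j$, and let $\Psi : G \to S$ send $x$ to $\sum_{i,j}\delta_{(t_j, x_i^j)}$, which is locally finite precisely because $x \in G$. By hypothesis each $M^N$ in (\ref{FormedM}) is locally finite, and a bounded window $\{t_j\}\times(L-1,L+1)$ can contain only finitely many atoms, so the ordering forces $X_i^{j,N} \to -\infty$; thus $X^N \in G$ a.s.\ and $M^N = \Psi(X^N)$. A short check shows $\Psi$ is continuous on $G$: if $x^{(k)} \to x$ in $G$ and $f \in C_c(\R^2)$ is supported in $[-R,R]^2$, then since $x \in G$ there is $I$ with $x_i^j < -R$ for $i > I$, and by coordinatewise convergence $x_{I+1}^{j,(k)} < -R$ for large $k$, so $\int f \, d\Psi(x^{(k)}) = \sum_{j}\sum_{i \le I} f(t_j, x_i^{j,(k)}) \to \sum_{j}\sum_{i \le I} f(t_j, x_i^{j}) = \int f \, d\Psi(x)$. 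Consequently, once we know that a subsequential weak limit $X$ of $X^N$ is concentrated on $G$, the continuous mapping theorem gives $M^{N_k} = \Psi(X^{N_k}) \Rightarrow \Psi(X)$ along the corresponding subsequence; comparing with $M^{N_k} \Rightarrow M$ yields $\Psi(X) \stackrel{d}{=} M$. Finally $X$ is recovered from $\Psi(X)$ by the measurable map $F(\nu)_i^j := \inf\{t \in \R : \nu(\{t_j\}\times(t,\infty)) < i\}$ — which returns the $i$-th largest atom location of $\nu$ on the line $\{t_j\}\times\R$ counted with multiplicity, a one-line verification using the ordering (the case of ties included) — so $X = F(\Psi(X))$ a.s.; hence the law of $X$ equals $F_{*}(\mathcal{L}(M))$, independently of the chosen subsequence. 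This gives weak convergence of the full sequence $X^N$ to such an $X$, and $\tilde M = \Psi(X) \stackrel{d}{=} M$, which is the final assertion.

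The main obstacle is the remaining step: showing that an arbitrary subsequential weak limit $X$ of $X^N$ is a.s.\ in $G$, i.e.\ that no point mass escapes to $+\infty$ or accumulates near a finite level in the limit. For escape to $+\infty$, tightness of $\{X_1^{j,N}\}_N$ directly bounds $\sup_N \P(X_1^{j,N} > B)$, and the ordering propagates this to all $X_i^{j,N}$. For accumulation, I would exploit that $M^N \Rightarrow M$ with $M$ a.s.\ locally finite: since all the measures $M^N$ and $M$ are supported on $\mathcal{T}\times\R$, the sets $\{t_j\}\times[-R,B]$ are genuine compacts of the support and $\nu \mapsto \nu(\{t_j\}\times[-R,B])$ is upper semicontinuous for the vague topology, so $\{\nu : \nu(\{t_j\}\times[-R,B]) \geq n\}$ is closed in $S$; Portmanteau then gives $\limsup_N \P(\,\#\{i : X_i^{j,N} \in [-R,B]\} \geq n\,) \leq \P(M(\{t_j\}\times[-R,B]) \geq n)$, which tends to $0$ as $n \to \infty$ because $M$ is locally finite. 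Combining with the $+\infty$ bound yields $\sup_N \P(X_n^{j,N} \geq -R) \to 0$ as $n \to \infty$; transferring this to the limit via Portmanteau on the open sets $\{x : x_n^j < -R\}$ gives $\P(X_n^j \geq -R) \to 0$, and hence $\P(\#\{i : X_i^j \geq -R\} = \infty) = 0$ for each rational $R$, i.e.\ $X \in G$ a.s. A technical point to keep in mind throughout is exactly this need to read sets like $\{t_j\}\times(t,\infty)$ as (relatively) open subsets of the support $\mathcal{T}\times\R$, since they have empty interior in $\R^2$.

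One may alternatively carry out Step 2 via a joint tightness argument for $\{(X^N, M^N)\}$ in $\R^{\infty}\times S$ together with Skorokhod's representation theorem, deducing $\Psi(X) = M'$ a.s.\ for the joint subsequential limit $(X, M')$ from the same no-escape estimates; this is essentially equivalent and I would use whichever reads more cleanly in the write-up.
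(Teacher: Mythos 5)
Your proposal is correct and reaches the same conclusion by a somewhat different implementation. The paper couples $(X^{N_v}, M^{N_v})$ jointly and applies Skorohod's representation theorem, whereupon the a.s.\ vague and coordinatewise convergence plus the deterministic bounds in (\ref{PO3}) and (\ref{PO4}) verify $\tilde M^\infty(\omega) = M^\infty(\omega)$ pointwise, and (\ref{PO6}) then determines the law of $X^\infty$ from that of $M$. You instead avoid Skorohod: you isolate the $G_\delta$ set $G\subset\mathbb{R}^\infty$, show the atom map $\Psi$ is continuous on $G$, argue via Portmanteau that all subsequential limits of $X^N$ are supported on $G$, push the weak convergence through $\Psi$, and recover the law of $X$ from the law of $M$ by the measurable inverse $F$ --- which is the same mechanism as (\ref{PO6}). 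The two key estimates are shared: tightness of $X_1^{j,N}$ from above, and local finiteness of the limiting point process $M$ to prevent accumulation from below.

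Two technical slips are worth repairing. First, the Portmanteau direction you invoke is flipped: the open set $\{x: x_n^j < -R\}$ gives $\mathbb{P}(X_n^j < -R) \leq \liminf_N \mathbb{P}(X_n^{j,N} < -R)$, which is the unhelpful inequality; you should instead take the \emph{closed} set $\{x: x_n^j \leq -R'\}$ for some $R' > R$ and use $\mathbb{P}(X_n^j \leq -R') \geq \limsup_N \mathbb{P}(X_n^{j,N} \leq -R')$, which supplies the needed lower bound on the limiting probability (also, the estimate you actually obtain is a $\limsup_N$, not a $\sup_N$, though $\limsup_N$ suffices). Second, the continuous mapping step needs a word of justification: $\Psi$ is defined only on $G$, and $G$ has empty interior in $\mathbb{R}^\infty$, so \emph{any} extension of $\Psi$ to $\mathbb{R}^\infty$ is discontinuous at every point of $G$ and the CMT in its usual form does not apply directly. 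The fix is to note that if $P_n\Rightarrow P$ on $\mathbb{R}^\infty$ with all $P_n$ and $P$ concentrated on the Polish subspace $G$, then $P_n\Rightarrow P$ on $G$ in the subspace topology (for $F$ closed in $G$ with $\mathbb{R}^\infty$-closure $\bar F$, one has $\limsup_n P_n(F) = \limsup_n P_n(\bar F) \leq P(\bar F) = P(F)$), after which the CMT applies on $G$ cleanly. The Skorohod route used in the paper, which you acknowledge as an alternative, sidesteps both points.
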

\begin{proof} We note that the tightness of $\{X_i^{j,N}\}_{N \geq 1}$ for each $i,j$ implies tightness of $X^N$ as random elements in $(\mathbb{R}^{\infty}, \mathcal{R}^{\infty})$. The latter is an easy consequence of Tychonoff's theorem, see \cite[Theorem 37.3]{Munkres}. From the converse part of Prohorov's theorem, see \cite[Theorem 5.2]{Billing}, we know that $M^N$ being convergent implies that it is tight. In the last statement we used that $S$ is a Polish space. Consider the sequence of pairs $(X^N, M^N)$, which we can regard as random elements in $(\mathbb{R}^{\infty} \times S, \mathcal{R}^{\infty} \otimes \mathcal{S})$. Note that since $\mathbb{R}^{\infty}$ and $S$ are Polish spaces, the same is true for $\mathbb{R}^{\infty} \times S$ and also the Borel $\sigma$-algebra on $\mathbb{R}^{\infty} \times S$ agrees with the product one, see \cite[M10]{Billing}. Finally, the individual tightness of $X^N$ and $M^N$ implies that of $(X^N,M^N)$. From the direct part of Prohorov's theorem, see \cite[Theorem 5.1]{Billing}, we conclude that $(X^N,M^N)$ is relatively compact.

Let $(X^{N_v},M^{N_v})$ be a subsequence weakly converging to some $(X^{\infty}, M^{\infty})$. By Skorohod's representation theorem, see \cite[Theorem 6.7]{Billing}, we may assume that $(X^{N_v},M^{N_v})$ and $(X^{\infty}, M^{\infty})$ are defined on the same probability space $(\Omega, \mathcal{F}, \mathbb{P})$ and for each $\omega \in \Omega$ we have
\begin{equation}\label{PO1}
M^{N_v}(\omega) \xrightarrow{v} M^{\infty}(\omega) \mbox{ and } X^{j,N_v}_i(\omega) \rightarrow X^{j,\infty}_i(\omega).
\end{equation}
We mention that the application of \cite[Theorem 6.7]{Billing} is justified as $\mathbb{R}^{\infty} \times S$ is Polish and we also note that $M^{\infty}$ has the same distribution as $M$.

Our first task is to show that $X^{\infty}$ satisfies the conditions of the proposition. The coordinate-wise convergence of $X^N(\omega)$ to $X^{\infty}(\omega)$ implies that (\ref{OrderedX}) is satisfied with $N = \infty$. Define for $j \in \{ 1, \dots, r\}$, $a < b$ and $v \in \mathbb{N}$
\begin{equation}\label{PO2}
F_v(j,a, b) = M^{N_v}(\omega)(\{ t_j \} \times (a,b)) \mbox{, and also } F_{\infty}(j,a, b) =  M^{\infty}(\omega)(\{ t_j \} \times (a,b)).
\end{equation}
Note that since $F_{\infty}(j,a,b) \in \mathbb{Z}_{\geq 0}$ for each real $a < b$, we have from the vague convergence in (\ref{PO1}) and \cite[Lemma 4.1]{Kall} that there exists $H(j, a,b) \in \mathbb{Z}_{\geq 0}$ (depending on $\omega$) such that 
\begin{equation}\label{PO3}
H(j,a,b) \geq F_v(j,a, b) \mbox{ for all } v \in \mathbb{N} \cup \{\infty\}.
\end{equation}
In addition, from the pointwise convergence in (\ref{PO1}) and the inequalities in (\ref{OrderedX}) we can find $B \in \mathbb{R}$ (depending on $\omega$) such that 
\begin{equation}\label{PO4}
B > X^{j, N_v}_i(\omega) \mbox{ for all } v \in \mathbb{N} \cup \{\infty\}, i \geq 1, j \in \{ 1, \dots, r\}.
\end{equation}
From (\ref{PO3}) and (\ref{PO4}), and the definition of $M^N$ in (\ref{FormedM}), we see that for any $b > B > a$, $j \in \{1, \dots, r\}$ and $v \in \mathbb{N}$
\begin{equation}\label{PO5}
X_{H(j,a,b) + 1}^{j, N_v}(\omega) \leq a, \mbox{ and hence } X_{H(j,a,b) + 1}^{j, \infty}(\omega) \leq a.
\end{equation}
Equations (\ref{PO4}) and (\ref{PO5}) show that $\tilde{M}^{\infty}(\omega)$ as in (\ref{FormedM2}) with $X$ replaced with $X^{\infty}$ is locally finite and hence a point process. 

We next show that the measure $\tilde{M}^{\infty}(\omega)$ is equal to $M^{\infty}(\omega)$. Fix a continuous function $g$ on $\mathbb{R}^2$ of compact support. Pick $b > B > a$ with $b$ sufficiently large and $a$ sufficiently small so that the support of $g$ is contained in $\mathbb{R} \times (a,b)$. If $H = \max_{j = 1, \dots, r} H(j,a,b)$, we note by the pointwise convergence in (\ref{PO1}) that
$$ \lim_{v \rightarrow \infty} \sum_{i = 1}^H \sum_{j = 1}^r g(t_j, X_i^{j,N_v}(\omega)) = \sum_{i = 1}^H \sum_{j = 1}^rg(t_j, X_i^{j,\infty}(\omega)) = \sum_{i = 1}^{\infty} \sum_{j = 1}^rg(t_j, X_i^{j,\infty}(\omega)) = \tilde{M}^{\infty}(\omega) g, $$
where the middle equality used (\ref{PO5}) and that $g$ vanishes outside of $\mathbb{R} \times (a,b)$. On the other hand, by (\ref{PO5}) and the vague convergence in (\ref{PO1}) we get
$$\lim_{v \rightarrow \infty} \sum_{i = 1}^H \sum_{j = 1}^r g(t_j, X_i^{j,N_v}(\omega)) = \lim_{v \rightarrow \infty} \sum_{i = 1}^{\infty} \sum_{j = 1}^r g(t_j, X_i^{j,N_v}(\omega))  =  \lim_{v \rightarrow \infty} M^{N_v}(\omega) g = M^{\infty}(\omega)g.$$
The last two equations show that $\tilde{M}^{\infty}(\omega) = M^{\infty}(\omega)$. The last equality and (\ref{PO4}) show for each $K \in \mathbb{N}$ and $a_{i,j} \in \mathbb{R}$
\begin{equation*}\label{PO6}
\begin{split}
&\{\omega \in \Omega: X^{\infty, j}_{i}(\omega) \leq a_{i,j} \mbox{ for } i = 1, \dots, K \mbox{ and } j = 1, \dots, r\} \\
& = \{\omega \in \Omega: M^{\infty}(\omega)(\{t_j\} \times (a_{i,j}, \infty)) < i \mbox{ for } i = 1, \dots, K \mbox{ and } j = 1, \dots, r\}.
\end{split}
\end{equation*}
Since $M^{\infty}$ has the same law as $M$ we conclude
\begin{equation}\label{PO6}
\begin{split}
&\mathbb{P}\left(X^{\infty, j}_{i}\leq a_{i,j} \mbox{ for } i = 1, \dots, K \mbox{ and } j = 1, \dots, r \right) \\
& =\mathbb{P}\left( M(\{t_j\} \times (a_{i,j}, \infty)) < i \mbox{ for } i = 1, \dots, K \mbox{ and } j = 1, \dots, r \right).
\end{split}
\end{equation}

At this point most of the work is done, and we just need to wrap up the weak convergence part of the proposition. Let $\hat{X}^{\infty}$ be a subsequential limit of $\{X^N\}_{N \geq 1}$ and let $\{ X^{R_v}\}_{v \geq 1}$ be a subsequence converging weakly to $\hat{X}^{\infty}$. From our work above we know that $(X^{R_v}, M^{R_v})$ is tight in $\mathbb{R}^{\infty} \times S$, and by possibly passing to a subsequence, which we continue to call $R_v$, we may assume that $(X^{R_v}, M^{R_v})$ converge weakly to $(\hat{X}^{\infty}, \hat{M}^{\infty})$. From (\ref{PO6}) we conclude that $\hat{X}^{\infty}$ has the same finite-dimensional distribution as $X^{\infty}$. Since finite-dimensional sets form a separating class in $\mathbb{R}^{\infty}$, see \cite[Example 1.2]{Billing}, we conclude that $\hat{X}^{\infty}$ has the same law as $X^{\infty}$. The last argument shows that $\{X^N\}_{N \geq 1}$ has at most one subsequential limit. By tightness (and hence relative compactness), it has at least one subsequential limit, which implies that $\{X^N\}_{N \geq 1}$ has exactly one subsequential limit to which the sequence weakly converges. Calling this limit $X$, we may pass to a subsequence so that $(X^{N_v},M^{N_v})$ jointly weakly converge to $(X^{\infty}, M^{\infty})$ as above. As weak limits are unique, we have that $X^{\infty}$ has the same law as $X$ and $\tilde{M}^{\infty}$ has the same law as $\tilde{M}$. Our earlier work now shows that $X$ satisfies (\ref{OrderedX}) and $\tilde{M}$ has the same law as $M$.
\end{proof}

We have the following immediate consequence to Proposition \ref{PropWC2}.
\begin{corollary}\label{CorWC2} Fix $r \in \mathbb{N}$, and $t_1, \dots, t_r \in \mathbb{R}$ with $t_1 < \dots < t_r$. Suppose that
$$X = \left(X_i^{j}: i \geq 1 \mbox{ and } j = 1,\dots, r\right), \hspace{3mm} Y = \left(Y_i^{j}: i \geq 1 \mbox{ and } j = 1,\dots, r\right)$$
are two random elements in $(\mathbb{R}^{\infty}, \mathcal{R}^{\infty})$, such that 
\begin{equation}\label{OrderedXY}
 X_i^{j}(\omega) \geq X_{i+1}^{j}(\omega), \mbox{ and } Y_i^{j}(\omega) \geq Y_{i+1}^{j}(\omega)  \mbox{ for each } \omega \in \Omega,  i \geq 1 \mbox{ and } j = 1,\dots, r.
\end{equation}
Suppose further that the random measures $M^X, M^Y$ on $\mathbb{R}^2$ defined by
\begin{equation}\label{FormedMXY}
M^X(\omega, A) = \sum_{i \geq 1} \sum_{j = 1}^r {\bf 1} \{ (t_j, X_i^{j}(\omega)) \in A \} \mbox{ and } M^Y(\omega, A) = \sum_{i \geq 1} \sum_{j = 1}^r {\bf 1} \{ (t_j, Y_i^{j}(\omega)) \in A \}
\end{equation}
are locally finite, and hence point processes. Then, $M^X$ has the same law as $M^Y$ as random elements in $(S,\mathcal{S})$ if and only if $X$ and $Y$ have the same law as random elements in $(\mathbb{R}^{\infty}, \mathcal{R}^{\infty})$, i.e. they have the same finite-dimensional distributions. 
\end{corollary}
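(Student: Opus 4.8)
The plan is to obtain the corollary from Proposition \ref{PropWC2} by the alternating-sequence trick already employed in the proof of Corollary \ref{CorUnique}, treating the two implications separately.

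\emph{The direction $X \stackrel{d}{=} Y \Rightarrow M^X \stackrel{d}{=} M^Y$.} Here I would use only that the law of a random element of $(S,\mathcal{S})$ is determined by the joint laws of $\big(M(B_1), \dots, M(B_k)\big)$ as $B_1, \dots, B_k$ range over bounded Borel subsets of $\mathbb{R}^2$ (a standard consequence of the structure of $\mathcal{S}$, see \cite[Chapter 4]{Kall}: the sets $\{\mu : \mu(B_1) \in A_1, \dots, \mu(B_k) \in A_k\}$ form a $\pi$-system generating $\mathcal{S}$). For bounded Borel $B \subseteq \mathbb{R}^2$ and $B^{(j)} := \{x \in \mathbb{R} : (t_j, x) \in B\}$ one has $M^X(B) = \sum_{j=1}^r \#\{i \geq 1 : X_i^{j} \in B^{(j)}\}$, which is one and the same measurable function of the underlying sequence for both $X$ and $Y$; hence $X \stackrel{d}{=} Y$ forces all these joint laws to agree, so $M^X \stackrel{d}{=} M^Y$.

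\emph{The direction $M^X \stackrel{d}{=} M^Y \Rightarrow X \stackrel{d}{=} Y$.} Assume $M^X$ and $M^Y$ have the same law and set $X^N := X$ for odd $N$ and $X^N := Y$ for even $N$. Then $\{X^N\}_{N \geq 1}$ is a sequence of random elements of $(\mathbb{R}^\infty, \mathcal{R}^\infty)$ satisfying (\ref{OrderedX}) (inherited from (\ref{OrderedXY})), and the random measures $M^N$ built from $X^N$ via (\ref{FormedM}) alternate between $M^X$ and $M^Y$, hence are locally finite point processes. Two checks make Proposition \ref{PropWC2} applicable: for fixed $i, j$ the sequence $\{X_i^{j,N}\}_{N \geq 1}$ assumes only the two laws $\mathrm{Law}(X_i^{j})$ and $\mathrm{Law}(Y_i^{j})$, and a finite family of laws on $\mathbb{R}$ is tight; and since $M^X$ and $M^Y$ have the same law, $\mathbb{E}[h(M^N)]$ is independent of $N$ for every bounded continuous $h$ on $S$, so $M^N$ converges weakly to a point process with that common law. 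Proposition \ref{PropWC2} then produces a random element $Z$ of $(\mathbb{R}^\infty, \mathcal{R}^\infty)$ such that $X^N$ converges weakly to $Z$. Passing to the odd subsequence gives $\mathrm{Law}(Z) = \mathrm{Law}(X)$ and to the even subsequence gives $\mathrm{Law}(Z) = \mathrm{Law}(Y)$, so $X \stackrel{d}{=} Y$. Finally, since the finite-dimensional cylinder sets form a separating class in $\mathbb{R}^\infty$ (\cite[Example 1.2]{Billing}), this is exactly equality of the finite-dimensional distributions of $X$ and $Y$.

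I do not anticipate a genuine obstacle: all the analytic content sits in Proposition \ref{PropWC2}, and the work here is just to verify its hypotheses for the alternating sequence. The only mild points of care are confirming that the weak convergence of $\{M^N\}$ is legitimate (it is, because the sequence of laws of $M^N$ is constant) and that the coordinatewise tightness of $\{X^N\}$ is automatic (it is, because only two laws occur), together with the entirely routine measurability statement used in the first direction.
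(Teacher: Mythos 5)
Your proof is correct, and the direction ``$M^X \stackrel{d}{=} M^Y \Rightarrow X \stackrel{d}{=} Y$'' is exactly the paper's argument: the alternating-sequence trick and Proposition~\ref{PropWC2}, with the same two observations (tightness is automatic because only two laws occur; the constant sequence of laws of $M^N$ is trivially weakly convergent).

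Where you diverge is the easier direction ``$X \stackrel{d}{=} Y \Rightarrow M^X \stackrel{d}{=} M^Y$.'' You observe that $M^X(B)$ is, for each bounded Borel $B$, a fixed measurable function of the underlying sequence, so the joint laws of finite tuples $(M^X(B_1),\dots,M^X(B_k))$ and $(M^Y(B_1),\dots,M^Y(B_k))$ coincide; you then invoke the fact that the sets $\{\mu : \mu(B_1)\in A_1,\dots,\mu(B_k)\in A_k\}$ form a $\pi$-system generating $\mathcal{S}$. The paper instead takes $f_1,\dots,f_m$ continuous and compactly supported, notes that the finite partial sums $\sum_{i=1}^n\sum_{j=1}^r f_\ell(t_j, X_i^j)$ have the same joint law for $X$ and $Y$, passes to the almost-sure $n\to\infty$ limit (using local finiteness of the measures), and concludes with \cite[Lemma 4.7]{Kall}, which says that the law of a random measure is determined by the joint laws of integrals against $C_c$ test functions. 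Both arguments rest on determining-class facts from \cite[Chapter 4]{Kall}; yours avoids the intermediate limit of partial sums at the cost of invoking measurability of the (possibly infinite-valued) counting functional $x\mapsto\#\{i:x_i^j\in B^{(j)}\}$ rather than convergence of finite sums. Both are fine; the choice is a matter of taste, and neither is more general than the other in this setting.
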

\begin{proof} Suppose first that $M^X$ and $M^Y$ have the same law. Let $X^N$ be the sequence of random elements in $(\mathbb{R}^{\infty}, \mathcal{R}^{\infty})$ such that $X^N = X$ for odd $N$ and $X^N = Y$ for even $N$. We see that $X^N$ satisfies the conditions of Proposition \ref{PropWC2}. The latter implies that $X^N$ converge weakly to some $X^{\infty}$ as random elements in $(\mathbb{R}^{\infty}, \mathcal{R}^{\infty})$. Since along odd indices the limit is $X$, and along even indices it is $Y$, and weak limits are unique, we conclude that $X$ has the same distribution as $Y$.

Suppose now that $X$ and $Y$ have the same law. If $f_1, \dots, f_m$ are continuous functions of compact support on $\mathbb{R}^2$, we have 
for each $n \in \mathbb{N}$ the equality in law of random vectors in $\mathbb{R}^m$ 
$$\left( \sum_{i = 1}^n \sum_{j = 1}^r f_1(t_j, X_i^j), \dots, \sum_{i = 1}^n \sum_{j = 1}^r f_m(t_j, X_i^j) \right) \overset{\mathrm{Law}}{=} \left( \sum_{i = 1}^n \sum_{j = 1}^r f_1(t_j, Y_i^j), \dots, \sum_{i = 1}^n \sum_{j = 1}^r f_m(t_j, Y_i^j) \right).$$
Since we assumed that $M^X$, $M^Y$ are locally finite, we see that the above vectors converge almost surely as $n \rightarrow \infty$ and we get
$$\left( \int_{\mathbb{R}^2} f_1(x) M^X(dx), \dots, \int_{\mathbb{R}^2} f_m(x) M^X(dx) \right) \overset{\mathrm{Law}}{=} \left( \int_{\mathbb{R}^2} f_1(x) M^Y(dx), \dots, \int_{\mathbb{R}^2} f_m(x) M^Y(dx) \right).$$
The latter and \cite[Lemma 4.7]{Kall} imply that $M^X$ and $M^Y$ have the same law.
\end{proof}

Before we state our next result we explain briefly how it relates to the ones we already presented. Later in the paper we will consider certain sequences of determinantal point processes $M^N$ of the form (\ref{FormedM}) for some sequence of random elements $X^N$. These random measures are purely atomic and the {\em locations} of their atoms are precisely the points of the form $(t_j, X_i^{j,N}(\omega))$ for $i \geq 1$ and $j = 1,\dots, r$. We will be interested in showing that the locations themselves converge weakly (in $\mathbb{R}^{\infty}$) as $N \rightarrow \infty$. These determinantal point processes will have correlation kernels $K_N$ for which we will be able to show (\ref{DP2}). From Proposition \ref{PropWC1} this will imply that $M^N$ converge weakly to some determinantal point process $M$. The fact that the point processes converge does not {\em a priori} imply that the atom locations (i.e. the $X^N$) weakly converge. The essence of Proposition \ref{PropWC2} is that the $X^N$ {\em also} need to converge if in addition to $M^N \Rightarrow M^{\infty}$ we also assume that the variables $\{X_i^{j, N}\}_{N \geq 1}$ are tight (in $N$) for each fixed $i \geq 1$ and $j \in \{1, \dots, r\}$. We are thus led to the question of how to ensure tightness of  $\{X_i^{j, N}\}_{N \geq 1}$ under the assumption that $M^N \Rightarrow M^{\infty}$. 

We can at this point project the entire problem from $E = \mathbb{R}^2$ to $\{t_j\} \times \mathbb{R} \cong \mathbb{R}$ for some fixed $j \in \{1, \dots, r\}$ using Lemma \ref{LemmaSlice}. In this case, the question becomes to find a tightness criterion for sequences of random elements in $\mathbb{R}^{\infty}$, which form point processes $M^N$ in $\mathbb{R}$ that are known to converge weakly. This is the essence of the following result.
\begin{proposition}\label{TightnessCrit} Suppose that $X^N = (X_i^N: i \geq 1)$ is a sequence of random elements in $\mathbb{R}^{\infty}$ such that $X_1^N(\omega) \geq X_2^N(\omega) \geq \cdots$. Denote by
\begin{equation}\label{SDForm}
M^N (\omega, A) = \sum_{i \geq 1} {\bf 1}\{ X_i^N (\omega) \in A\}
\end{equation}
the corresponding random measures and suppose that $M^N$ are point processes on $\mathbb{R}$. Assume that
\begin{enumerate}
\item  $M^N$ converge weakly to a point process $M$ on $\mathbb{R}$ as $N \rightarrow \infty$;
\item $\mathbb{P}(M( \mathbb{R}) = \infty) = 1$;
\item $\lim_{a \rightarrow \infty} \limsup_{N \rightarrow \infty} \mathbb{P}(X_1^N \geq a) = 0$.
\end{enumerate}
Then $\{X^N\}_{N \geq 1}$ forms a tight sequence of random elements in $\mathbb{R}^{\infty}$. 
\end{proposition}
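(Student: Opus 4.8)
The plan is to show that for each fixed $i \geq 1$ the real-valued sequence $\{X_i^N\}_{N \geq 1}$ is tight in $\mathbb{R}$; since $\mathbb{R}^{\infty}$ carries the product topology, tightness of every coordinate then yields tightness of $\{X^N\}_{N \geq 1}$ by Tychonoff's theorem, exactly as in the proof of Proposition \ref{PropWC2}. Tightness of $\{X_i^N\}_{N\geq 1}$ in $\mathbb{R}$ is equivalent to the two one-sided statements $\lim_{a \to \infty} \limsup_N \mathbb{P}(X_i^N \geq a) = 0$ (tightness from above) and $\lim_{a \to -\infty} \limsup_N \mathbb{P}(X_i^N \leq a) = 0$ (tightness from below). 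Tightness from above is immediate: the ordering $X_i^N \leq X_1^N$ gives $\mathbb{P}(X_i^N \geq a) \leq \mathbb{P}(X_1^N \geq a)$, and the claim is then precisely assumption (3). So the real content lies in tightness from below, where only assumptions (1) and (2) will be used.

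For tightness from below, fix $i \geq 1$ and reals $a < A$. Since $X_i^N \leq a$ forces all but at most $i-1$ of the points $X_1^N \geq X_2^N \geq \cdots$ to lie in $(-\infty, a]$, we have $\{X_i^N \leq a\} \subseteq \{M^N((a,\infty)) \leq i-1\} \subseteq \{M^N((a,A)) \leq i-1\}$, the last inclusion because $(a,A) \subseteq (a,\infty)$. The key point is that the map $\nu \mapsto \nu((a,A))$ is lower semicontinuous on the space $(S,\mathcal{S})$ of locally finite measures with its vague topology: approximate ${\bf 1}_{(a,A)}$ from below by continuous compactly supported functions and use inner regularity of locally finite Borel measures on $\mathbb{R}$. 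Hence $U := \{\nu \in S: \nu((a,A)) > i - 1/2\}$ is open, and on point measures membership in $U$ coincides with $\nu((a,A)) \geq i$. Applying the open-set half of the portmanteau theorem to the weak convergence $M^N \Rightarrow M$ from assumption (1) (no continuity-point hypothesis is required) gives $\liminf_N \mathbb{P}(M^N((a,A)) \geq i) \geq \mathbb{P}(M((a,A)) \geq i)$, and therefore
\begin{equation*}
\limsup_N \mathbb{P}(X_i^N \leq a) \ \leq \ \limsup_N \mathbb{P}(M^N((a,A)) \leq i-1) \ \leq \ \mathbb{P}(M((a,A)) \leq i-1).
\end{equation*}
I would then let $A \to \infty$: by continuity from below of the measure $M$ we get $\mathbb{P}(M((a,A)) \leq i-1) \downarrow \mathbb{P}(M((a,\infty)) \leq i-1)$; next let $a \to -\infty$ and use continuity from below once more together with assumption (2) to get $\mathbb{P}(M((a,\infty)) \leq i-1) \downarrow \mathbb{P}(M(\mathbb{R}) \leq i-1) = 0$. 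Combining with the display above yields $\lim_{a \to -\infty} \limsup_N \mathbb{P}(X_i^N \leq a) = 0$, which finishes the argument.

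The argument is short, and the one delicate point — where I expect the only genuine difficulty to lie — is the mismatch between the vague topology, in which $M^N \Rightarrow M$ is asserted, and the unbounded region $(a,\infty)$ that actually governs $X_i^N$: vague convergence gives no control over point mass escaping to $+\infty$, so one cannot work with $M^N((a,\infty))$ directly and must instead squeeze through the bounded open window $(a,A)$, for which the portmanteau inequality for open sets is available unconditionally, and only afterwards send $A \to \infty$ at the level of the fixed limiting measure $M$. Assumption (2), that $M(\mathbb{R}) = \infty$ almost surely, enters precisely at this final step and is manifestly essential — without it the limiting measure could be finite and the lower tails of the $X_i^N$ would be uncontrolled — whereas assumption (3) is used only for the (essentially trivial) tightness from above.
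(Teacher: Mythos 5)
Your proof is correct, and it takes a genuinely different route from the paper's. The paper embeds $X^N$ into the Polish space $[-\infty,\infty)^{\infty}$, uses the ordering and assumption (3) to deduce tightness there, extracts a jointly convergent subsequence $(X^{N_v},M^{N_v})$, applies Skorohod's representation theorem to obtain almost-sure vague convergence, and then shows by contradiction (using assumption (2) and \cite[Lemma 4.1]{Kall}) that no subsequential limit can place mass at $-\infty$ in any coordinate. Your argument bypasses compactification, subsequences, and Skorohod entirely: it establishes one-dimensional tightness of each $X_i^N$ directly, turning the lower-tail estimate into a portmanteau inequality $\limsup_N\mathbb{P}(X_i^N\leq a)\leq\mathbb{P}(M((a,A))\leq i-1)$ for the open set $\{\nu:\nu((a,A))>i-\tfrac12\}$, and then sends $A\to\infty$, $a\to-\infty$ using only monotone continuity of $M$ and assumption (2). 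You correctly identify that the squeeze through a bounded window $(a,A)$ is what makes the vague-topology portmanteau step legitimate, and your treatment of the inclusion $\{X_i^N\leq a\}\subseteq\{M^N((a,A))\leq i-1\}$ and the integer-valuedness of $M$ is sound. What the paper's route buys is structural parallelism with Proposition~\ref{PropWC2} (the whole section is organized around joint subsequential convergence in $\mathbb{R}^{\infty}\times S$); what yours buys is a shorter, quantitative, and more self-contained argument that isolates exactly where each hypothesis enters — assumption (3) for the upper tail, and assumptions (1)--(2) for the lower tail — without invoking Skorohod's theorem or almost-sure coupling.
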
 
\begin{remark}\label{TightCritR} In words, Proposition \ref{TightnessCrit} says that given a sequence of random elements $X^N$ in $(\mathbb{R}^{\infty}, \mathcal{R}^{\infty})$ that form point processes $M^N$ as in (\ref{SDForm}) one can conclude tightness of $\{X^N\}_{N \geq 1}$ if one assumes convergence of the point processes, that the limiting point process has infinitely many points and that the rightmost particle locations $X^N_1$ are bounded from above with high probability.
\end{remark}
\begin{proof} We can think of $X^N$ as random elements in $[-\infty, \infty)^{\infty}$, which is homeomorphic to $[0,\infty)^{\infty} \subset \mathbb{R}^{\infty}$ under the coordinate-wise map $x \rightarrow e^x$. Since $\mathbb{R}^{\infty}$ is Polish, we conclude the same for $[-\infty, \infty)^{\infty}$. Since $X_1^N(\omega) \geq X_2^N(\omega) \geq \cdots$, we see that condition (3) implies that $X^N$ is a tight sequence in $[-\infty,\infty)^{\infty}$. Suppose that $X^{N_v}$ is a subsequence that weakly converges to some $\hat{X}^{\infty}$. The statement of the proposition would follow if we can show for each $n \in \mathbb{N}$
\begin{equation}\label{QO1}
\mathbb{P}(\hat{X}_n^{\infty} = -\infty) = 0.
\end{equation}

Arguing as in the proof of Proposition \ref{PropWC2} we may pass to a subsequence, which we still call $N_v$, such that $(X^{N_v}, M^{N_v})$ converge weakly to some $(X^{\infty}, M^{\infty})$ -- a random element in $[-\infty, \infty)^{\infty} \times S$. Furthermore, by Skorohod's representation theorem, we may assume that $(X^{N_v}, M^{N_v})$ and $(X^{\infty}, M^{\infty})$ are all defined on the same probability space $(\Omega, \mathcal{F}, \mathbb{P})$ and for each $\omega \in \Omega$ 
\begin{equation}\label{QO2}
M^{N_v}(\omega) \xrightarrow{v} M^{\infty}(\omega) \mbox{ and } X^{N_v}_i(\omega) \rightarrow X^{\infty}_i(\omega) \in [-\infty, \infty).
\end{equation}
Since weak limits are unique, we observe that $\hat{X}^{\infty}$ has the same distribution as $X^{\infty}$ and $M^{\infty}$ has the same distribution as $M$ (as in the statement of the proposition).

From the pointwise convergence in (\ref{QO2}) we conclude that 
\begin{equation}\label{QO3}
X_1^{\infty}(\omega) \geq X_2^{\infty}(\omega) \geq \cdots, 
\end{equation}
and that we can find $B > 0$, depending on $\omega$, such that 
\begin{equation}\label{QO4}
B > X_i^{N_v}(\omega) \mbox{ for all } v \in \mathbb{N} \cup \{\infty\} \mbox{ and } i \in \mathbb{N}.
\end{equation}
Let $U$ be the set of $\omega$ such that $M^{\infty}(\omega) (\mathbb{R}) = \infty$. From condition (2) we know that $\mathbb{P}(U) = 1$. Fixing $n \in \mathbb{N}$ and $\omega \in U$, we can find $a < B < b$ with $a$ small and $b$ large enough so that 
\begin{equation*}
M^{\infty}(\omega)((a,b)) \geq n.
\end{equation*}
From \cite[Lemma 4.1]{Kall}, and the vague convergence in (\ref{QO2}) we can find $v_0 \in \mathbb{N}$ (depending on $\omega$) such that for $v \geq v_0$ we have 
$$M^{N_v}(\omega)((a,b)) \geq n.$$
Combining the last statement with (\ref{QO4}) and the identity (\ref{SDForm}), we conclude for all $\omega \in U$ and $v \geq v_0$ that  
$X_n^{N_v}(\omega) \geq a,$
which by the pointwise convergence in (\ref{QO2}) implies 
$$U \cap \{X^{\infty}_n(\omega) = -\infty\} = \emptyset.$$
The last equation, the fact that $\mathbb{P}(U) = 1$ and that $X^{\infty}$ has the same law as $\hat{X}^{\infty}$ imply (\ref{QO1}).
\end{proof}

%
%
\section{Schur processes}\label{Section3} In this section we introduce certain measures on sequences of partitions, which are special cases of the Schur processes in \cite{OR03}. In Section \ref{Section3.1} we introduce our measures following the notation in \cite{BR05}, and in Section \ref{Section3.2} we show that certain point processes associated with these measures are determinantal with correlation kernels that have a double contour integral form, see Proposition \ref{PropCK}. In Section \ref{Section3.3} we explain how one can relate our measures to geometric last passage percolation, and we utilize this connection in Section \ref{Section3.4} to prove a certain monotone coupling between our measures with different sets of parameters.

%
%
\subsection{Definition and core notation}\label{Section3.1} We start by recalling Schur symmetric polynomials and some of their basic properties. Our exposition follows parts of \cite[Chapter I]{Mac} and we refer the interested reader to the latter for more details.

A {\em partition} is a sequence $\lambda = (\lambda_1, \lambda_2,\dots)$ of non-negative integers such that $\lambda_1 \geq \lambda_2 \geq \cdots$ and all but finitely many elements are zero. We denote the set of all partitions by $\mathbb{Y}$. The {\em length} $\ell (\lambda)$ is the number of non-zero $\lambda_i$ and the {\em weight} is given by $|\lambda| = \lambda_1 + \lambda_2 + \cdots$ . There is a single partition of weight $0$, which we denote by $\emptyset$. A {\em Young diagram} is a graphical representation of a partition $\lambda$, with $\lambda_1$ left justified boxes in the top row, $\lambda_2$ in the second row and so on. In general, we do not distinguish between a partition $\lambda$ and the Young diagram representing it. Given two diagrams $\lambda$ and $\mu$ such that $\mu \subseteq \lambda$ (as a collection of boxes), we call the difference $\theta = \lambda - \mu$ a {\em skew Young diagram}. A skew Young diagram $\theta$ is a {\em horizontal $m$-strip} if $\theta$ contains $m$ boxes and no two lie in the same column. If $\lambda - \mu$ is a horizontal strip we write $\lambda \succeq \mu$, which is equivalent to $\lambda_1 \geq \mu_1 \geq \lambda_2 \geq \mu_2 \geq \cdots$.

A (column-strict) {\em skew tableau} $T$ is a sequence of partitions 
$$\mu = \lambda^{0} \subseteq  \lambda^{1} \subseteq \cdots \subseteq \lambda^{r} = \lambda,$$
such that each skew diagram $\theta^{i} = \lambda^{i} - \lambda^{i-1}$ for $1 \leq i \leq r$ is a horizontal strip. The skew diagram $\lambda - \mu$ is called the {\em shape} of $T$ and the sequence $(|\theta^{1}|, \dots, |\theta^{r}|)$ is the {\em weight} of $T$. We can visualize a skew tableau as a filling of the skew Young diagram $\lambda-\mu$ with numbers in the set $\{1, \dots, r\}$ so that the boxes in $\theta^{i}$ contain the number $i$. The condition that $\theta^i$ are horizontal strips is seen to be equivalent to the statement that the numbers in the filling weakly increase along rows and strictly increase along columns. If $\mu = \emptyset$, we call $T$ a {\em semi-standard Young tableau (SSYT)} or just a {\em tableau} and we let $SSYT(r)$ denote the set of all tableaux whose entries are in $\{1, \dots, r\}$.

Given finitely many variables $x_1, \dots, x_n$, we define the {\em skew Schur polynomials} via
\begin{equation}\label{UP1}
s_{\lambda/ \mu}(x_1, \dots, x_n) = \sum_{\mu = \lambda^{0} \preceq  \lambda^{1} \preceq \cdots \preceq \lambda^{n} = \lambda} \prod_{i = 1}^n x_i^{|\lambda^{i} - \lambda^{i-1}|}.
\end{equation}
In particular, skew Schur polynomials are indexed by pairs of partitions $\lambda$ and $\mu$ and are equal to zero unless $\mu \subseteq \lambda$. When $\mu = \emptyset$ we drop it from the notation, and write $s_{\lambda}$, which is then the Schur polynomial indexed by $\lambda$. From \cite[Section I.5]{Mac} we know that $s_{\lambda/\mu}$ are symmetric homogeneous polynomials of degree $|\lambda - \mu|$ and they satisfy the identities
\begin{equation}\label{UP2}
s_{\lambda/ \mu}(x_1, \dots, x_m, y_1, \dots, y_n) = \sum_{\kappa \in \mathbb{Y}} s_{\lambda/ \kappa}(x_1, \dots, x_m) \cdot s_{\kappa/ \mu}(y_1, \dots, y_n).
\end{equation}
From \cite[Chapter I, (4.3)]{Mac} the Schur polynomials satisfy the {\em Cauchy identity}
\begin{equation}\label{UP3}
\sum_{ \lambda \in \mathbb{Y}} s_{\lambda}(x_1, \dots, x_m) \cdot s_{\lambda}(y_1, \dots, y_n) = \prod_{i = 1}^m \prod_{j = 1}^n \frac{1}{(1 - x_i y_j)},
\end{equation}
The identity in (\ref{UP3}) is a special case of the {\em skew Cauchy identity}, see \cite[Section I.5, Example 26]{Mac}, which states
\begin{equation}\label{UP4}
\begin{split}
&\sum_{ \mu \in \mathbb{Y}} s_{\mu /\lambda}(x_1, \dots, x_m) \cdot s_{\mu/ \nu}(y_1, \dots, y_n)  \\
& = \prod_{i = 1}^m \prod_{j = 1}^n \frac{1}{(1 - x_i y_j)} \sum_{\kappa \in \mathbb{Y} } s_{\lambda /\kappa}(y_1, \dots, y_n) \cdot s_{\nu/ \kappa}(x_1, \dots, x_m) .
\end{split}
\end{equation}
We mention that {\em a priori} (\ref{UP3}) and (\ref{UP4}) should be understood as equalities of formal power series. However, if $x_i, y_j \in \mathbb{C}$ and $|x_i y_j| < 1$ for $i = 1, \dots, m$ and $j = 1, \dots, n$, then the series in (\ref{UP3}) and (\ref{UP4}) converge absolutely and the two sides are numerically equal.

With the above notation in place we can define our measures.
\begin{definition}\label{SPD} Fix $M, N \in \mathbb{N}$ and suppose that $\vec{X} = \{x_i\}_{i = 1}^M$, $\vec{Y} = \{y_i \}_{i  = 1}^N$ are such that $x_i, y_j \geq 0$ and $x_i y_j < 1$ for all $i = 1, \dots, M$, and $j = 1, \dots, N$. With this data we define the measure
\begin{equation}\label{SP}
\mathbb{P}_{\vec{X},\vec{Y}} (\lambda^1, \dots, \lambda^{M+N-1}) = \prod_{i = 1}^M \prod_{j = 1}^N (1 - x_i y_j) \prod_{i = 1}^M s_{\lambda^i/ \lambda^{i-1}}(x_i) \cdot \prod_{j = M}^{M+N-1} s_{\lambda^{j}/ \lambda^{j+1}}(y_{j-M+1}),
\end{equation}
where $\lambda^0 = \lambda^{M+N} = \emptyset$ and $\lambda^i \in \mathbb{Y}$. Note that the $\mathbb{P}_{\vec{X},\vec{Y}}$ is non-negative in view of (\ref{UP1}) and the sum over $\lambda$'s is equal to one in view of (\ref{UP2}) and (\ref{UP3}). In addition, in view of (\ref{UP1}) we know that $\mathbb{P}_{\vec{X},\vec{Y}}$ is supported on sequences such that
$$\emptyset \preceq \lambda^1 \preceq \lambda^2 \preceq \cdots \preceq \lambda^{M-1} \preceq \lambda^M \succeq \lambda^{M+1} \succeq \cdots \succeq \lambda^{M+N-1} \succeq \emptyset.$$
\end{definition}
\begin{remark} The measures $\mathbb{P}_{\vec{X},\vec{Y}}$ in (\ref{SP}) are special cases of the Schur processes in \cite{OR03}. In the notation in \cite[(2.2)]{BR05} they correspond to setting $T = M+N -1$, and using the single-variable specializations 
$$\rho_i^+ = x_{i+1} \mbox{ for } i = 0, \dots, M-1, \hspace{2mm} \rho_i^+ = 0 \mbox{ for } i = M, \dots, M+N-2,$$
$$  \rho_i^- = 0 \mbox{ for }i = 1, \dots, M-1, \hspace{2mm} \rho_i^- = y_{i-M+1} \mbox{ for }i = M, \dots, M+N-1.$$
Notice that in this case $\lambda^i = \mu^i$ for $i = 1, \dots, M-1$ and $\lambda^{i+1} = \mu^i$ for $i = M, \dots, M+N-2$.

\end{remark}

%
%
\subsection{Determinantal structure}\label{Section3.2} Suppose $\lambda = (\lambda^1, \dots, \lambda^{M+N-1})$ is distributed according to (\ref{SP}). For any $m \in \mathbb{N}$ with $m \leq M$ and $1 \leq M_1 < M_2 < \cdots < M_m \leq M$ we have from (\ref{UP2}) and (\ref{UP4}) 
\begin{equation}\label{ASPProj}
\mathbb{P}_{\vec{X},\vec{Y}} \left(  \cap_{r = 1}^m \{ \lambda^{M_r} = \mu^r \} \right)= \prod_{i = 1}^{M_m} \prod_{j = 1}^N (1 - x_i y_j) \prod_{r = 1}^m s_{\mu^r/ \mu^{r-1}}\left(x_{M_{r-1} + 1}, \dots, x_{M_r} \right) \cdot s_{\mu^{m}}(y_1, \dots, y_N),
\end{equation}
where $\mu^0 = 0$ and $M_0 = 0$. Consider the point process $\mathfrak{S}(\lambda)$ on $\{1, \dots, m\} \times \mathbb{Z} \subset \mathbb{R}^2$ 
\begin{equation}\label{PPDef}
\mathfrak{S}(\lambda) = \sum_{i = 1}^m \sum_{j = 1}^{\infty} \delta\left(i, \lambda^{M_i}_j - j\right).
\end{equation}
As is shown in Proposition \ref{PropCK}, the point process in (\ref{PPDef}) is a determinantal point process with an explicit correlation kernel. The fact that the Schur process is determinantal was originally proved in \cite{OR03} using Fock space techniques. In \cite{BR05} this statement was established through the Eynard-Mehta theorem and in \cite{Agg15} it was shown using Macdonald difference operators. The proposition below is essentially a restatement of \cite[Theorem 1.1.2]{Agg15} with a mild modification that we justify in the proof.
\begin{proposition}\label{PropCK} Suppose that $\mathfrak{S}(\lambda)$ is the point process in (\ref{PPDef}) with the same notation and assumptions as earlier in the section. Then, $\mathfrak{S}(\lambda)$ is a determinantal point process on $\mathbb{R}^2$ with reference measure given by the counting measure on $\{1, \dots, m\} \times \mathbb{Z}$ and correlation kernel 
\begin{equation}\label{CKDef}
K(u,x; v, y) = \frac{1}{(2\pi \im)^2} \oint_{C_{r_1}}dz \oint_{C_{r_2}}dw \frac{1}{z-w} \cdot \frac{\prod_{k = 1}^{N} (1 - y_k/z) \prod_{k = 1}^{M_v} (1 - x_kw)   }{\prod_{k = 1}^{N} (1 - y_k/w)  \prod_{k = 1}^{M_u} (1 - x_k z)} \cdot w^y z^{-x -1},
\end{equation}
where $C_{r_1}, C_{r_2}$ are positively oriented, zero-centered circles of radii $r_1, r_2$ that are positive reals satisfying the following conditions. If $u > v$ we have that $\max_{1 \leq i \leq N} y_i < r_1 < r_2 < \min_{1 \leq j \leq M_m} x_j^{-1}$; otherwise $\max_{1 \leq i \leq N} y_i < r_2 < r_1 < \min_{1 \leq j \leq M_m} x_j^{-1}$. 
\end{proposition}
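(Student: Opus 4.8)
The plan is to reduce Proposition~\ref{PropCK} to \cite[Theorem 1.1.2]{Agg15}, which already establishes that the point process $\mathfrak{S}(\lambda)$ associated with a Schur process of the type in (\ref{SP}) is determinantal with an explicitly computed correlation kernel. The only work is to match the conventions: the measures in \cite{Agg15} are parametrized slightly differently (in terms of the specializations $\rho_i^{\pm}$), the point configuration there may be indexed or shifted differently, and the contours in \cite{Agg15} may be described by equivalent but not literally identical conditions. So the proof should consist of (i) recording the precise dictionary between Definition~\ref{SPD} and the setup of \cite{Agg15} --- which is essentially the content of the Remark following Definition~\ref{SPD}, with $T = M+N-1$, $\rho_i^+ = x_{i+1}$ for $i = 0,\dots,M-1$ and $0$ afterward, $\rho_i^- = 0$ for $i<M$ and $\rho_i^- = y_{i-M+1}$ for $i \geq M$; (ii) checking that under this dictionary the point process in (\ref{PPDef}), built from the coordinates $\lambda^{M_i}_j - j$ at the selected times $M_1 < \cdots < M_m$, coincides with the one in \cite[Theorem 1.1.2]{Agg15}; and (iii) verifying that the kernel produced by \cite{Agg15} simplifies to the formula (\ref{CKDef}), together with the stated constraints on the radii $r_1, r_2$.

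First I would substitute the above specializations into the general kernel of \cite[Theorem 1.1.2]{Agg15}. The generating function of the single-variable specialization $\rho^+ = x$ is $H(\rho^+; z) = (1-xz)^{-1}$, and that of $\rho^- = y$ is $H(\rho^-; z) = 1 - y/z$ in the relevant convention (or its reciprocal, depending on orientation); multiplying these over the appropriate ranges of indices produces exactly the products $\prod_{k=1}^{N}(1 - y_k/z)$, $\prod_{k=1}^{M_v}(1-x_k w)$ in the numerator and $\prod_{k=1}^{N}(1-y_k/w)$, $\prod_{k=1}^{M_u}(1 - x_k z)$ in the denominator of (\ref{CKDef}). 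The monomial factor $w^y z^{-x-1}$ and the Cauchy factor $\frac{1}{z-w}$ come directly from the general formula. The key point to verify carefully is that the cutoff indices are $M_u$ and $M_v$ (not $M_m$): the kernel at times $u,v$ only sees the $x$-specializations up to those times, which is why the ``$+$'' products run up to $M_v$ in $w$ and $M_u$ in $z$, while the ``$-$'' products run over all of $1,\dots,N$ since all $y$-specializations come after time $M$.

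Next I would pin down the contours. In \cite{Agg15} the contours are circles (or deformable to circles) chosen so that $z$ encircles the poles coming from the $\prod(1-x_k z)$ factors in the denominator --- i.e. $r_1 < \min_j x_j^{-1}$ is wrong sign; rather one needs $|z| > \max y_i$ so the $\prod(1-y_k/z)$ factors are analytic, and $|z| < \min x_j^{-1}$ so no pole from $1 - x_k z$ is crossed --- hence $\max_i y_i < r_1 < \min_j x_j^{-1}$, and symmetrically for $w$. The ordering $r_1$ versus $r_2$ is dictated by where the pole at $z = w$ sits relative to the $z$-contour: when $u > v$ the combinatorics of the Eynard--Mehta/Lindström--Gessel--Viennot expansion places this pole outside the $z$-contour, forcing $r_1 < r_2$; when $u \le v$ one needs $r_1 > r_2$. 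I would justify this by tracking which term of $\frac{1}{z-w}$ (expanded as a geometric series) contributes, exactly as in \cite[Section~4 or~5]{Agg15}, or alternatively by noting that the kernel must satisfy the semigroup/projection relations of a determinantal process and the choice of radii is the unique one making the double integral finite and consistent.

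The ``mild modification'' alluded to before the statement --- and the one genuine check, which I expect to be the main obstacle --- is the indexing of the point process: (\ref{PPDef}) uses the Russian-convention coordinates $\lambda^{M_i}_j - j$ for $j \ge 1$, whereas \cite{Agg15} may use $\lambda_j - j + \text{const}$ or index $j$ over all of $\mathbb{Z}$ (with a ``vacuum'' tail $\lambda_j = 0$ for large $j$ contributing the points $-j$). One must check that the two conventions differ only by a global shift of the second coordinate, which induces only a relabeling $x \mapsto x + c$, $y \mapsto y + c$ of the kernel (cf. part (5) of Proposition~\ref{PropLem}), and that with the shift chosen as in (\ref{PPDef}) the exponent of $z$ is exactly $-x-1$ and of $w$ exactly $y$ as written. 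Once the dictionary, the kernel simplification, the contour conditions, and the coordinate-shift bookkeeping are all in place, the proposition follows immediately from \cite[Theorem 1.1.2]{Agg15} and the projection identity (\ref{ASPProj}), which guarantees that the marginal of the Schur process on the times $M_1,\dots,M_m$ is again a Schur process of the required form so that \cite{Agg15} applies verbatim.
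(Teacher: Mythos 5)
Your high-level plan --- reduce to \cite[Theorem 1.1.2]{Agg15} after translating conventions --- is the same as the paper's, but you misidentify what the ``mild modification'' actually is, and as a result your argument has a gap at precisely the point the paper needs to do real work.

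The obstacle is not the indexing of the point configuration (the conventions line up cleanly once one records the specializations $X^{(k)} = (x_{M_{k-1}+1},\dots,x_{M_k})$ for $k = 1,\dots,m$ and $Y^{(m)} = (y_1,\dots,y_N)$, all other $Y^{(k)} = 0$). The obstacle is the \emph{range of the parameters}: \cite[Theorem 1.1.2]{Agg15} is stated for specializations valued in $[0,1)$, whereas Definition \ref{SPD} only imposes $x_i, y_j \geq 0$ and $x_i y_j < 1$, so the individual $x_i$ or $y_j$ may exceed $1$. Applying \cite{Agg15} ``verbatim,'' as you propose, is therefore not justified. The paper handles this by exploiting the degree-one homogeneity of skew Schur polynomials: choose $C = \epsilon + \max_i y_i$ with $\epsilon$ small enough that $C \cdot \max_j x_j < 1$, and replace $x_i \mapsto \tilde{x}_i = C x_i$, $y_j \mapsto \tilde{y}_j = y_j / C$. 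The measure $\mathbb{P}_{\vec X,\vec Y}$ is unchanged, but now all $\tilde{x}_i, \tilde{y}_j \in [0,1)$, so \cite[Theorem 1.1.2]{Agg15} applies and yields the kernel $\tilde K$ of the form (\ref{CKDef}) with tildes. Finally, the change of variables $z \mapsto z/C$, $w \mapsto w/C$ in the double contour integral shows $\tilde K(u,x;v,y) = C^{x-y} K(u,x;v,y)$, which is a gauge transformation (part (4) of Proposition \ref{PropLem}), so the process is also determinantal with kernel $K$. Without this rescaling-and-gauge step your argument does not close; the rest of your discussion (matching specializations, contour placement, the $r_1$ versus $r_2$ ordering determined by the sign of $u-v$) is consistent with the paper but is essentially bookkeeping once the applicability of \cite{Agg15} has been secured.
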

\begin{remark} Notice that from our assumption that $x_i y_j < 1$ for all $i = 1, \dots, M$, and $j = 1, \dots, N$ we necessarily have that $\max_{1 \leq i \leq N} y_i < \min_{1 \leq j \leq M_m} x_j^{-1}$ and so one can always find $r_1, r_2$ as in Proposition \ref{PropCK}.
\end{remark}
\begin{proof} By assumption we have $x_i y_j < 1$ for all $i = 1, \dots, M$, and $j = 1, \dots, N$. Define $C = \epsilon + \max_{1 \leq i \leq N} y_i \in (0, \infty)$, where $ \epsilon > 0$ is chosen small enough so that $C \cdot \max_{1 \leq i \leq M} x_i < 1$. If we replace in (\ref{ASPProj}) $x_i$ with $\tilde{x}_i = x_i \cdot C$ and $y_j$ with $\tilde{y}_j = y_j \cdot C^{-1}$, then the measure remains unchanged by the homogeneity of the Schur polynomials. Furthermore, $\tilde{x}_i \in [0,1)$ and $\tilde{y}_j \in [0, 1)$ for all $i = 1, \dots, M$, and $j = 1, \dots, N$. From \cite[Theorem 1.1.2]{Agg15} applied to $X^{(k)} = (\tilde{x}_{M_{k-1} +1}, \dots, \tilde{x}_{M_k})$ for $k = 1, \dots,m$; $Y^{(k)} = 0$ for $k = 1, \dots, m-1$ and $Y^{(m)} = (\tilde{y}_1, \dots, \tilde{y}_N)$ we conclude that $\mathfrak{S}(\lambda)$ is a determinantal point process with kernel $\tilde{K}(u,x;v,y)$ as in (\ref{CKDef}) but with all $x,y$ parameters replaced with $\tilde{x}, \tilde{y}$. If we now change variables $z \rightarrow z/C$ and $w \rightarrow w/C$, we see that $\tilde{K}(u,x;v,y) = C^{x-y} K(u,x;v,y)$, which is a gauge transformation as in Proposition \ref{PropLem}, and so $\mathfrak{S}(\lambda)$ is a determinantal point process with kernel $K(u,x;v,y)$ as well.
\end{proof}
\begin{remark} We mention that \cite[Theorem 1.1.2]{Agg15} has two typos. Firstly, the expression on the right side of \cite[Equation (1.6)]{Agg15} should be multiplied by $-1$. This mistake appears to originate from \cite[Proposition 2.2.3]{Agg15}, where a formula from \cite[Remark 2.2.11]{BorCor} is recalled with a sign error, and propagates throughout the paper. The second typo is that the ``$s < t$'' in that theorem should be replaced with ``$s > t$''. Both of these typos were discovered after the paper was accepted, but they have been corrected in the arxiv version of the paper. As an alternative check, one can instead use \cite[Theorem 2.2]{BR05}, which agrees with (\ref{CKDef}) up to transposing the kernel, i.e. swapping $(u,x)$ and $(v,y)$ in that formula. From Proposition \ref{PropLem} a determinantal point process with one kernel is also determinantal with the transpose of that kernel.
\end{remark}

%
%
\subsection{Geometric last passage percolation}\label{Section3.3} In this section we establish a distributional equivalence between the Schur process $\mathbb{P}_{\vec{X},\vec{Y}}$ in Definition \ref{SPD} and {\em geometric last passage percolation}. We mention that this connection is a somewhat well-known consequence of the {\em Robinson-Schensted-Knuth (RSK)} correspondence and {\em Greene's theorem}, and with some effort can be read off from \cite[Section 5]{J06}. As we could not find the precise statement in the literature, we decided to include a short proof here, which was communicated to us privately by Ivan Corwin. We begin by first describing geometric last passage percolation and then we state and prove our desired distributional equivalence in Proposition \ref{PEquiv}. 

For $a \in [0,1)$, we let $p_a$ denote the geometric distribution with parameter $a$, i.e. 
\begin{equation}\label{DefGeom}
p_a(k) = (1-a) \cdot a^k \mbox{ for } k \geq 0.
\end{equation}
We fix two integers $M, N \in \mathbb{N}$, and parameters $\vec{X} = (x_1, \dots, x_M)$ and $\vec{Y} = (y_1, \dots, y_N)$ such that $x_i \geq 0$, $y_j \geq 0$ and $x_iy_j \in [0,1)$. We let $W = (W_{i,j}: i = 1, \dots, M \mbox{ and } j = 1, \dots, N)$ denote an array of independent random variables with law $p_{a_{i,j}}$, where $a_{i,j} = x_i y_j$. A {\em north-east (NE) chain} is a sequence of vertices $(c_1, r_1), (c_2, r_2), \dots, (c_k, r_k) \in \mathbb{Z}^2$ such that $c_1 \leq c_2 \leq \cdots \leq c_k$, and $r_1 \leq r_2 \leq \cdots \leq r_k$ for $i = 2, \dots, k$, see Figure \ref{geomLPP}. We let $\Pi_{M,N}$ denote the set of NE chains $\pi$ in $\mathbb{Z}^2$, whose vertices are contained in $\{1, \dots, M\} \times \{1, \dots, N\}$ and note that a chain may contain no vertices. For an integer $k \in \mathbb{N}$, we also let $\Pi_{M,N}^k$ denote the set of $k$ disjoint NE chains in $\Pi_{M,N}$.
\begin{figure}
	\begin{center}
		\begin{tikzpicture}[scale=0.7]
		\begin{scope}
		\draw (0,0) node[anchor = north east]{{\footnotesize $(1,1)$}};
		\draw (9,6) node[anchor=south west]{{\footnotesize $(M,N)$}};
		\draw[->, >=stealth'] (9,2.5) node[anchor=west]{{ $W_{i,j}$ with distribution $p_{a_{i,j}}$}} to[bend right] (7,2);
		\draw (12,1.8) node{$a_{i,j}=x_iy_j$};
		\draw[dotted, gray] (0,0) grid (9,6);
		
            \draw[ultra thick] (2,1) circle [radius=0.2]; 
            \draw[ultra thick] (2,2) circle [radius=0.2];
            \draw[ultra thick] (4,2) circle [radius=0.2];
            \draw[ultra thick] (4,3) circle [radius=0.2];
            \draw[ultra thick] (5,3) circle [radius=0.2];
            \draw[ultra thick] (7,5) circle [radius=0.2];
		\draw (7, -0.4) node{$i$};
		\draw (-0.4,2) node{$j$};
		\foreach \x in {1,2,...,10}
		\draw (\x-1,-1) node{$x_{\x}$};
		\foreach \x in {1,2,...,7}
		\draw (-1,\x-1) node{$y_{\x}$};
		\end{scope}
		\end{tikzpicture}
	\end{center}
	\caption{The NE chain $(3,2), (3,3), (5,3), (5,4), (6,4), (8,6)$ }
	\label{geomLPP}
\end{figure}
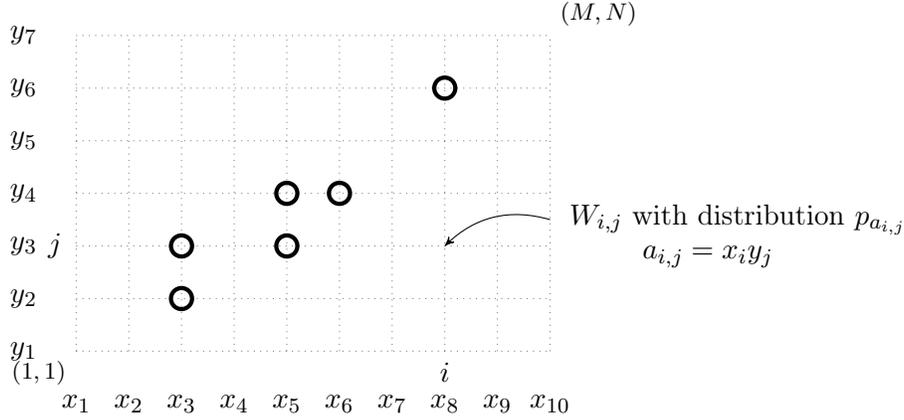

Given a NE chain $\pi \in \Pi_{M,N}$, we define its (random) {\em weight} by
\begin{equation}\label{PathWt}
w(\pi) = \sum_{(i,j) \in \pi} W_{i,j},
\end{equation}
where the sum is over all vertices $(i,j) \in \pi$. If $\pi$ has no vertices, the weight is zero. We finally define for any $k \in \mathbb{N}$, $m \in \{1, \dots, M\}$ and $n \in \{1, \dots, N\}$ 
\begin{equation}\label{GDef}
G^k(m,n) = \max_{(\pi_1, \dots, \pi_k) \in \Pi_{m,n}^k} w(\pi_1) + \cdots + w(\pi_k).
\end{equation}
We observe the following immediate consequences of the above definition:
\begin{itemize}
\item $G^{k+1}(m,n) \geq G^{k}(m,n) \in \mathbb{Z}_{\geq 0}$ for all $k \geq 1$,
\item $G^{k+1}(m,n) = G^{k}(m,n) = \sum_{i= 1}^m \sum_{j = 1}^n W_{i,j}$ for all $k \geq \min(m,n)$.
\end{itemize}
The latter observations show that if we define $\lambda_k(m,n)$ through
\begin{equation}\label{GtoL}
\lambda_1(m,n) = G^1(m,n) \mbox{ and } \lambda_k(m,n) = G^k(m,n) - G^{k-1}(m,n) \mbox{ for } k \geq 2,
\end{equation}
then $\{\lambda_k(m,n)\}_{k \geq 1}$ is a sequence of non-negative integers, which is eventually zero and whose terms sum up to $\sum_{i= 1}^m \sum_{j = 1}^n W_{i,j}$. In fact, this sequence is {\em decreasing}, and so $\lambda(m,n)$ is a partition of weight $\sum_{i= 1}^m \sum_{j = 1}^n W_{i,j}$. The last statement can be deduced by applying the RSK correspondence, \cite[Theorem 4.8.2]{Sagan}, and a generalization of Greene's theorem, \cite[Theorem 4.8.10]{Sagan}.

With the above notation in place we can state the main result of the section.
\begin{proposition}\label{PEquiv} The sequence $(\lambda(1,N), \lambda(2,N), \dots, \lambda(M,N), \lambda(M, N-1), \dots, \lambda(M,1))$, defined in (\ref{GtoL}) has distribution $\mathbb{P}_{\vec{X},\vec{Y}}$ as in Definition \ref{SPD}
\end{proposition}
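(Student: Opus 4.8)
The plan is to exhibit the sequence in (\ref{GtoL}) as the image of the weight matrix $W$ under the RSK correspondence and Greene's theorem, and then to match weights term by term with the right-hand side of (\ref{SP}).

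\textbf{Step 1: the law of $W$.} First I would record that, since the $W_{i,j}$ are independent with $W_{i,j}\sim p_{x_iy_j}$, for any array $w=(w_{i,j})\in\mathbb{Z}_{\ge 0}^{M\times N}$ one has
\[
\mathbb{P}(W=w)=\prod_{i=1}^M\prod_{j=1}^N(1-x_iy_j)\cdot\prod_{i=1}^M x_i^{\,r_i(w)}\prod_{j=1}^N y_j^{\,c_j(w)},\qquad r_i(w):=\sum_{j}w_{i,j},\ \ c_j(w):=\sum_i w_{i,j},
\]
so the only feature of $w$ that the weight sees is the vector of row sums and of column sums.

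\textbf{Step 2: RSK and Greene's theorem.} Next I would apply the RSK correspondence \cite{Sagan} to $W$, obtaining a pair $(P,Q)$ of semistandard Young tableaux of a common shape $\lambda$, where $Q$ has entries in $\{1,\dots,M\}$ and content $(r_1,\dots,r_M)$, and $P$ has entries in $\{1,\dots,N\}$ and content $(c_1,\dots,c_N)$. By the generalization of Greene's theorem already invoked in the text, the RSK shape of the submatrix $W[1{..}m,1{..}n]$ equals $\lambda(m,n)$; combined with the standard restriction properties of the insertion and recording tableaux this yields $\mathrm{sh}(Q|_{\le m})=\lambda(m,N)$ for $1\le m\le M$ and $\mathrm{sh}(P|_{\le n})=\lambda(M,n)$ for $1\le n\le N$. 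Hence the chain $\emptyset\preceq\lambda(1,N)\preceq\cdots\preceq\lambda(M,N)\succeq\lambda(M,N-1)\succeq\cdots\succeq\lambda(M,1)\succeq\emptyset$ has all consecutive differences horizontal strips — this is exactly the semistandardness of $Q$ on the ascending half and of $P$ on the descending half — so the sequence in (\ref{GtoL}) always lies in the support described in Definition \ref{SPD}. Moreover this chain records precisely the ordered pair $(P,Q)$, the ascending half reconstructing $Q$ cell by cell and the descending half reconstructing $P$; since RSK is a bijection, $W\mapsto(\text{sequence in (\ref{GtoL})})$ is therefore a bijection onto the set of all such chains of partitions.

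\textbf{Step 3: matching weights, and the main obstacle.} Finally, fixing a chain $(\lambda^1,\dots,\lambda^{M+N-1})$ as in (\ref{SP}) with $\lambda^0=\lambda^{M+N}=\emptyset$ and letting $w$ be its unique RSK preimage, reading the contents of $Q$ and $P$ off the two halves of the chain gives $r_i(w)=|\lambda^i-\lambda^{i-1}|$ for $1\le i\le M$ and, after aligning the reading direction of the descending half with the order of the specializations $y_1,\dots,y_N$ in (\ref{SP}), $c_j(w)=|\lambda^{M+j-1}-\lambda^{M+j}|$ for $1\le j\le N$. Substituting into Step 1 and using that a single-variable skew Schur polynomial is $s_{\kappa/\nu}(x)=x^{|\kappa-\nu|}$ when $\kappa/\nu$ is a horizontal strip and vanishes otherwise, one gets $\mathbb{P}(W=w)=\prod_{i,j}(1-x_iy_j)\prod_{i=1}^M s_{\lambda^i/\lambda^{i-1}}(x_i)\prod_{j=1}^N s_{\lambda^{M+j-1}/\lambda^{M+j}}(y_j)$, which is exactly $\mathbb{P}_{\vec X,\vec Y}(\lambda^1,\dots,\lambda^{M+N-1})$; since both sides vanish off the common support this finishes the proof. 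The hard part is Step 2 — carefully justifying $\lambda(m,N)=\mathrm{sh}(Q|_{\le m})$ and $\lambda(M,n)=\mathrm{sh}(P|_{\le n})$ from Greene's theorem together with the restriction lemmas for RSK — and, closely related, the bookkeeping of which of $P,Q$ records rows versus columns and in which direction the descending half of the chain is read, so that the specializations emerge in exactly the order $x_1,\dots,x_M,y_1,\dots,y_N$ of (\ref{SP}); once the conventions are pinned down consistently, the rest is a direct computation.
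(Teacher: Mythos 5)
Your proposal is correct and takes essentially the same route as the paper: factor $\mathbb{P}(W=w)$ through the row and column sums, use RSK (together with, implicitly, the RSK symmetry theorem to read off the content of $P$) and Greene's theorem to identify the chain $(\lambda(1,N),\dots,\lambda(M,1))$ with $(P,Q)$, and then match exponents against the single-variable skew Schur factors of (\ref{SP}). The bookkeeping caveat you flag at the end is a real one: the transition $\lambda^{M+j-1}\to\lambda^{M+j}$ removes column $N-j+1$, so in fact $|\lambda^{M+j-1}-\lambda^{M+j}|=c_{N-j+1}(w)$ rather than $c_j(w)$, i.e.\ the $y$-specializations emerge in the order $y_N,\dots,y_1$ -- a reversal that the paper's final equality in (\ref{S3Big}) also passes over silently and that is harmless for the downstream results, which are symmetric in the $y$-parameters.
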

\begin{proof} We start by recalling the RSK correspondence, which establishes a bijection between $M \times N$ matrices with entries in $\mathbb{Z}_{\geq 0}$ and pairs of SSYT $(P,Q)$ such that $P$ has the same shape as $Q$, $P \in SSYT(N)$ and $Q \in SSYT(M)$.

If $w = (w_{i,j}: i = 1, \dots, M, j = 1, \dots, N)$ is a matrix in $\mathbb{Z}_{\geq 0}^{M \times N}$, we construct the two-row array 
$\omega$, which consists of $w_{i,j}$ columns $\binom{i}{j}$ with $(i,j) \in \{1, \dots, M\} \times \{1, \dots, N\}$ traversed in lexicographic order. Assuming that $\omega$ takes the form 
$$\omega = \begin{bmatrix} u_1 & u_2 & \cdots & u_r \\ v_1 & v_2 & \cdots & v_r \end{bmatrix},$$
we define using the second row $P_k \in SSYT(N)$ for $k = 1, \dots, r$ via
$$P_k = (( \dots (( \hspace{1mm} \boxed{v_1} \leftarrow v_2) \leftarrow v_3) \dots ) \leftarrow v_{k-1} ) \leftarrow v_k,$$
where ``$\leftarrow$'' denotes {\em row insertion}, see \cite[Chapter 1]{Fulton} for a description. We also define tableaux $Q_k$ for $k = 1, \dots, r$, sometimes called a {\em recording tableaux}, from the first row in $\omega$. The $Q_k$'s are defined via $Q_1 = \boxed{u_1}$ and if $Q_{k-1}$ has been constructed, then we obtain $Q_k$ from $Q_{k-1}$ by adding the box that is in $P_k$ but not in $P_{k-1}$ and writing $u_k$ inside this box. As explained in \cite[Chapter 4]{Fulton} the above procedure associates to each integer matrix $w$ a pair $(P,Q) = (P_r, Q_r) \in SSYT(N) \times SSYT(M)$ of two SSYT of the same shape and the latter map is a bijection. We denote by $\mathrm{RSK}(w)$ the result of applying the RSK correspondence to $w$. 

We next list some properties of the above RSK bijection. Writing $\mu^k$ for the Young diagram in $Q$, formed by the boxes with entries that are at most $k$, we have directly from the definition of the RSK correspondence
\begin{equation}\label{WtP}
\emptyset = \mu^0 \preceq \mu^1 \preceq \mu^2 \preceq \cdots \preceq \mu^M \mbox{ and } |\mu^i - \mu^{i-1}| = \sum_{j = 1}^N w_{i,j} \mbox{ for } i = 1, \dots, M.
\end{equation}
Transposing the matrix $w$ to $w^t$, corresponds to swapping the two rows in $\omega$, and then by the Symmetry theorem, see \cite[Chapter 4]{Fulton}, we have that $\mathrm{RSK}(w^t) = (Q,P)$. Combining the latter with (\ref{WtP}) we conclude that if $\nu^k$ is the Young diagram in $P$, formed by the boxes with entries that are at most $k$, then
\begin{equation}\label{WtP2}
\emptyset = \nu^0 \preceq \nu^1 \preceq \nu^2 \preceq \cdots \preceq \nu^N \mbox{ and } |\nu^i - \nu^{i-1}| = \sum_{j = 1}^M w_{j,i} \mbox{ for } i = 1, \dots, N.
\end{equation}
Notice that the shape of $P$ is $\nu^N$ and that of $Q$ is $\mu^M$, so that $\mu^M = \nu^N$.

Let $g^k(m,n;w)$ be as in (\ref{GDef}) with $W_{i,j}$ replaced with $w_{i,j}$ and let $\lambda(m,n;w)$ be the partitions defined in (\ref{GtoL}) with $G^k(m,n)$ replaced with $g^{k}(m,n;w)$. By a generalization of Greene's theorem, see \cite[Theorem 4.8.10]{Sagan}, we have 
\begin{equation}\label{Greene}
\lambda(m, N;w) = \mu^m \mbox{ for } m = 1\dots, M \mbox{ and } \lambda(M, n;w) = \nu^n \mbox{ for } n = 1, \dots, N.
\end{equation}

We are now ready to complete the proof of the proposition. Let $(\Omega, \mathcal{F},\mathbb{P})$ be a probability space on which we have defined an $M \times N$ matrix $W$ with entries $W_{i,j}$ that are independent random variables with law $p_{a_{i,j}}$, where $a_{i,j} = x_i y_j$. We then have for $Q = \emptyset \preceq \mu^1 \preceq \mu^2 \preceq \cdots \preceq \mu^M$ and $P = \emptyset \preceq \nu^1 \preceq \nu^2 \preceq \cdots \preceq \nu^N$
\begin{equation}\label{S3Big}
\begin{split}
&\mathbb{P}\left( \lambda(1,N) = \mu^1, \dots, \lambda(M,N) = \mu^M = \nu^N, \lambda(M,N-1) = \nu^{N-1}, \dots, \lambda(M,1) = \nu^1 \right)  \\
& = \sum_{w \in \mathbb{Z}_{\geq 0}^{M \times N} } \prod_{i = 1}^M \prod_{j = 1}^N (1 - x_i y_j)  (x_i y_j)^{w_{i,j}} \times {\bf 1}\{ \mathrm{RSK}(w) = (P,Q) \} \\
& = \prod_{i = 1}^M \prod_{j = 1}^N (1 - x_i y_j) \sum_{w \in \mathbb{Z}_{\geq 0}^{M \times N} }   \prod_{i = 1}^M x_i^{|\mu^i - \mu^{i-1}|} \prod_{j = 1}^N y_j^{|\nu^{i}- \nu^{i-1}|}\times {\bf 1}\{ \mathrm{RSK}(w) = (P,Q) \} \\
& = \prod_{i = 1}^M \prod_{j = 1}^N (1 - x_i y_j)   \prod_{i = 1}^M x_i^{|\mu^i - \mu^{i-1}|} \prod_{j = 1}^N y_j^{|\nu^{j}- \nu^{j-1}|} \\
& = \mathbb{P}_{\vec{X},\vec{Y}} \left(  \lambda^1= \mu^1, \dots, \lambda^M = \mu^M = \nu^N, \lambda^{M+1} = \nu^{N-1}, \dots, \lambda^{M+N-1} = \nu^1  \right).
\end{split}
\end{equation}
For other sequences of partitions the probabilities in (\ref{S3Big}) are both equal to zero. We mention that in going from the first to the second line in (\ref{S3Big}) we used (\ref{Greene}); in going from the second to the third we used (\ref{WtP}) and (\ref{WtP2}); in going from the third to the fourth we used that $\mathrm{RSK}$ is a bijection, and so the sum over $w$ becomes equal to one; in going from the fourth to the fifth we used Definition \ref{SPD} and the formula for skew Schur polynomials in (\ref{UP1}).
\end{proof}

%
%
\subsection{Monotone coupling for Schur processes}\label{Section3.4} The goal of this section is to establish a certain monotone coupling between Schur processes as in Definition \ref{SPD} for different sets of parameters. The precise statement is as follows.

\begin{proposition}\label{MonCoup} Fix $M, N \in \mathbb{N}$. There exists a probability space $(\Omega, \mathcal{F}, \mathbb{P})$ and a family of random sequences $(\lambda^1[\vec{X}, \vec{Y}], \dots, \lambda^{M+N-1}[\vec{X}, \vec{Y}]) \in \mathbb{Y}^{M+N-1}$, indexed by $\vec{X} = (x_1, \dots, x_M) \in \mathbb{R}^M_{\geq 0}$ and $\vec{Y}=  (y_1, \dots, y_N) \in \mathbb{R}^N_{\geq 0}$ with $x_i y_j \in [0,1)$ for all $i = 1, \dots, M$ and $j = 1, \dots, N$ so that the following hold. Under $\mathbb{P}$ the distribution of $(\lambda^1[\vec{X}, \vec{Y}], \dots, \lambda^{M+N-1}[\vec{X}, \vec{Y}])$ is $\mathbb{P}_{\vec{X}, \vec{Y}}$ as in Definition \ref{SPD}. In addition, we have for each $\omega \in \Omega$, $k \in \mathbb{N}$ and $j \in \{1, \dots, M+N-1\}$
\begin{equation}\label{MonIneq}
\sum_{i = 1}^k \lambda_i^j[\vec{X}, \vec{Y}](\omega) \geq \sum_{i = 1}^k \lambda_i^j[\vec{X}', \vec{Y}'](\omega),
\end{equation}
provided that $x_iy_j \geq x_i'y_j'$ for all $i = 1, \dots, M$ and $j = 1, \dots, N$.
\end{proposition}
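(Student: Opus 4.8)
The plan is to realize every Schur process $\mathbb{P}_{\vec X,\vec Y}$ simultaneously on one probability space through geometric last passage percolation, using Proposition \ref{PEquiv}, and to couple the geometric weight arrays for different parameters by the standard monotone (quantile) coupling. Concretely, I would take $(\Omega,\mathcal{F},\mathbb{P})$ carrying an array $(U_{i,j}: 1 \le i \le M,\ 1 \le j \le N)$ of i.i.d.\ random variables uniform on $(0,1)$. For $a \in [0,1)$ let $F_a(k) = 1 - a^{k+1}$ for $k \ge 0$ be the cumulative distribution function of the geometric law $p_a$ from (\ref{DefGeom}), and $F_a^{-1}(u) = \inf\{k \ge 0 : F_a(k) \ge u\}$ its quantile function. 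Given $\vec X,\vec Y$ with $x_iy_j \in [0,1)$ for all $i,j$, set $W_{i,j}[\vec X,\vec Y] := F_{x_iy_j}^{-1}(U_{i,j})$; then $\{W_{i,j}[\vec X,\vec Y]\}_{i,j}$ is a family of independent variables with $W_{i,j}[\vec X,\vec Y] \sim p_{x_iy_j}$, which is exactly the weight array of Section \ref{Section3.3}. Defining $G^k(m,n)[\vec X,\vec Y]$ by (\ref{GDef}) and $\lambda(m,n)[\vec X,\vec Y]$ by (\ref{GtoL}) from these weights, and letting $(\lambda^1[\vec X,\vec Y],\dots,\lambda^{M+N-1}[\vec X,\vec Y])$ be the sequence $(\lambda(1,N),\dots,\lambda(M,N),\lambda(M,N-1),\dots,\lambda(M,1))$, Proposition \ref{PEquiv} gives that its law under $\mathbb{P}$ is $\mathbb{P}_{\vec X,\vec Y}$, which is the first assertion.

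It remains to verify the pathwise monotonicity (\ref{MonIneq}). First, for fixed $u \in (0,1)$ the map $a \mapsto F_a^{-1}(u)$ is nondecreasing on $[0,1)$: indeed $F_a^{-1}(u) = \inf\{k \ge 0 : a^{k+1} \le 1-u\}$, and for each $k$ the quantity $a^{k+1}$ is nondecreasing in $a$, so the set over which the infimum is taken shrinks as $a$ grows. Consequently, if $x_iy_j \ge x_i'y_j'$ for all $i,j$, then $W_{i,j}[\vec X,\vec Y](\omega) \ge W_{i,j}[\vec X',\vec Y'](\omega)$ for every $\omega$ and every $(i,j)$. Since the weight $w(\pi)$ of an NE chain in (\ref{PathWt}) is the sum of the $W_{i,j}$ over the vertices of $\pi$, we get $w(\pi)[\vec X,\vec Y](\omega) \ge w(\pi)[\vec X',\vec Y'](\omega)$ for every chain $\pi$, and therefore, taking in (\ref{GDef}) a tuple of $k$ disjoint chains that is optimal for the primed weights, $G^k(m,n)[\vec X,\vec Y](\omega) \ge G^k(m,n)[\vec X',\vec Y'](\omega)$ for all $k$, $m$, $n$ and every $\omega$.

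Finally I would translate this into the claimed inequality for partitions. By (\ref{GtoL}), telescoping with the convention $G^0 \equiv 0$, one has $\sum_{i=1}^k \lambda_i(m,n) = G^k(m,n)$ for every $k \ge 1$. Each $\lambda^j[\vec X,\vec Y]$ equals $\lambda(m_j,n_j)$ for the pair $(m_j,n_j)$ read off from the ordering in Proposition \ref{PEquiv}, namely $(m_j,n_j) = (j,N)$ for $1 \le j \le M$ and $(m_j,n_j) = (M, M+N-j)$ for $M < j \le M+N-1$. Hence (\ref{MonIneq}) is precisely the inequality $G^k(m_j,n_j)[\vec X,\vec Y](\omega) \ge G^k(m_j,n_j)[\vec X',\vec Y'](\omega)$ established above, which completes the proof.

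I expect no serious obstacle: the construction is the standard monotone coupling, and all the structural facts needed (that the partial sums of $\lambda(m,n)$ are last passage values across $k$ disjoint NE chains, and that $\lambda(m,n)$ is a genuine partition) are already recorded in Section \ref{Section3.3} via RSK and Greene's theorem. The only points demanding a little care are the one-line verification that the quantile coupling is monotone in the parameter $a$, and keeping the reindexing between $(\lambda^1,\dots,\lambda^{M+N-1})$ and the family $\{\lambda(m,n)\}$ consistent so that the inequality holds for all $j \in \{1,\dots,M+N-1\}$ simultaneously.
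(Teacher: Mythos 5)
Your proposal is correct and takes essentially the same approach as the paper: couple the geometric weight arrays monotonically on a single probability space, invoke Proposition \ref{PEquiv}, and use the pathwise monotonicity of the last passage values $G^k(m,n)$ in the matrix entries together with the telescoping identity $\sum_{i=1}^k \lambda_i(m,n) = G^k(m,n)$. The only cosmetic difference is the specific realization of the monotone coupling — you use the quantile coupling $W_{i,j} = F_{a_{i,j}}^{-1}(U_{i,j})$ with i.i.d.\ uniforms, whereas the paper sets $W_{i,j} = \lfloor b_{i,j} Z_{i,j} \rfloor$ with i.i.d.\ $\mathrm{Exp}(1)$ variables and $b_{i,j} = (-\log a_{i,j})^{-1}$; both produce independent $p_{a_{i,j}}$-distributed entries that are pointwise nondecreasing in $a_{i,j}$.
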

\begin{remark}\label{RSKMonR1} In the proof of Proposition \ref{MonCoup} we construct a coupling of random arrays with independent geometric random variables $W[\vec{X}, \vec{Y}]$ that is monotone in the parameters $\vec{X}, \vec{Y}$. Subsequently, we run the same RSK correspondence on these coupled arrays to produce coupled Schur processes. The inequality in (\ref{MonIneq}) is then a direct consequence of Greene's theorem and the fact that $G^k(m,n)$ in (\ref{GtoL}) are monotone in the entries of the matrix $W$.
\end{remark}
\begin{proof} We let $(\Omega, \mathcal{F}, \mathbb{P})$ be a probability space on which we have defined $MN$ i.i.d. exponential random variables with parameter $1$, denoted by $Z_{i,j}$ for $i = 1, \dots, M$ and $j = 1, \dots, N$. Let us fix two vectors $\vec{X}$ and $\vec{Y}$ as in the statement of the proposition, and let $a_{i,j} = x_i y_j$. We define $b_{i,j} =  \frac{1}{-\log a_{i,j}}$ with the convention that $b_{i,j} = 0$ if $a_{i,j} = 0$ and put 
$$W_{i,j}[\vec{X}, \vec{Y}] = \lfloor b_{i,j} \cdot Z_{i,j} \rfloor.$$
By a direct computation we check that $W_{i,j}[\vec{X}, \vec{Y}]$ are independent and have the geometric distribution $p_{a_{i,j}}$ as in (\ref{DefGeom}). We now define $(\lambda^1[\vec{X}, \vec{Y}], \dots, \lambda^{M+N-1}[\vec{X}, \vec{Y}])$ to be the sequence of partitions $(\lambda(1,N), \lambda(2,N), \dots, \lambda(M,N), \lambda(M, N-1), \dots, \lambda(M,1))$ as in (\ref{GDef}) and (\ref{GtoL}) for the matrix $W[\vec{X}, \vec{Y}]$. From Proposition \ref{PEquiv} we know that under $\mathbb{P}$ the sequence $(\lambda^1[\vec{X}, \vec{Y}], \dots, \lambda^{M+N-1}[\vec{X}, \vec{Y}])$ is distributed according to $\mathbb{P}_{\vec{X}, \vec{Y}}$. This proves the first part of the proposition.

If we now have two sets of parameters $(\vec{X}, \vec{Y})$ and $(\vec{X}', \vec{Y}')$ as in the second part of the proposition, then we have $W_{i,j}[\vec{X}, \vec{Y}] \geq W_{i,j}[\vec{X}', \vec{Y}'],$ 
which from (\ref{GDef}) and (\ref{GtoL}) implies (\ref{MonIneq}).
\end{proof}
\begin{remark}\label{RSKMonR2} It is worth mentioning that while the inequality in (\ref{MonIneq}) holds almost surely, our coupling does not ensure that
$$\lambda_k^j[\vec{X}, \vec{Y}](\omega)  \geq \lambda_k^j[\vec{X}', \vec{Y}'](\omega).$$
For example, if $M = N = 3$, it is possible for our coupling to produce with positive probability
$$W[\vec{X}, \vec{Y}] = \begin{bmatrix} 2 & 0 & 0 \\ 2 & 0 & 0 \\ 0 & 2 & 2 \end{bmatrix} \mbox{ and } W[\vec{X}', \vec{Y}'] = \begin{bmatrix} 2 & 0 & 0 \\ 2 & 1 & 0 \\ 0 & 2 & 2 \end{bmatrix}.$$
In this case, $\lambda^3[\vec{X}, \vec{Y}] = 44$ and $\lambda^3[\vec{X}', \vec{Y}'] = 531$, so that 
$$\lambda^3_2[\vec{X}, \vec{Y}] = 4 > 3 = \lambda^3_2[\vec{X}', \vec{Y}'].$$
\end{remark}

%
%
\section{Convergence of point processes}\label{Section4} The goal of this section is to establish Theorem \ref{T1}. In Section \ref{Section4.1} we prove Lemma \ref{WellDefKer}, which in particular establishes the well-posedness of the kernel $K_{a,b,c}$ that appears in Theorem \ref{T1}. In Section \ref{Section4.2} we introduce a particular scaling of the parameters of the Schur process from Section \ref{Section3.1}, and derive an alternative formula for the correlation kernel from Proposition \ref{PropCK} that is suitable for asymptotic analysis -- this is Lemma \ref{PrelimitKernel}. In Section \ref{Section4.3} we state what the kernels from Lemma \ref{PrelimitKernel} converge to under our specified scaling, this is Proposition \ref{LimitKernelProp}, and use that statement in conjunction with statements from Section \ref{Section2} to prove Theorem \ref{T1}. Proposition \ref{LimitKernelProp} is the main technical asymptotic result of the paper and its proof is the content of Section \ref{Section5}.

%
%
\subsection{Limiting kernel properties}\label{Section4.1} In this section we prove Lemma \ref{WellDefKer}. We isolate the following useful inequalities that will be used within the proof
\begin{equation}\label{RatBound}
|1 + z| \leq e^{|z|} \mbox{ for $z \in \mathbb{C}$ and } |1 - z|^{-1} \leq e^{|z|(2/d + 2)} \mbox{ if } z \in \mathbb{C}, |1- z| \geq d > 0.
\end{equation}
The first inequality in (\ref{RatBound}) is trivial and for the second inequality we note that 
$$|1- z|^{-1} \leq 1/d \leq e^{1/d} \leq e^{2|z|/d} \leq e^{|z|(2/d + 2)} \mbox{ if $|z| \geq 1/2$,}$$
while for $|z| \leq 1/2$ we have 
$$|1 - z|^{-1} \leq (1 - |z|)^{-1} \leq 1 + 2|z| \leq e^{2|z|} \leq e^{|z|(2/d + 2)} .$$

In the remainder of the section we prove Lemma \ref{WellDefKer}. 

\begin{proof}[Proof of Lemma \ref{WellDefKer}] We assume that $t_1, t_2, x_1, x_2 \in \mathbb{R}$ are fixed and $\alpha, \beta \in \mathbb{R}$ are such that $\alpha + t_1 < \underline{a}$ while $\beta + t_2 > \underline{b}$ with $\underline{a}, \underline{b}$ as in Definition \ref{DLP}.

The integral in the definition of $K^1_{a,b,c}$ is well-defined and finite as the integrand is entire and the contour $\gamma$ a finite straight segment. 

We next investigate the integral in the definition of $K^3_{a,b,c}$. As mentioned after (\ref{DefPhi}), we have that the function $\Phi_{a,b,c}(z)$ is a meromorphic function on $\mathbb{C} \setminus \{0\}$ whose zeros are at $\{-(b_i^+)^{-1}\}_{i =1}^{J_b^+}$ and $\{- b_i^-\}_{i =1}^{J_b^-}$, while its poles are at $\{(a_i^+)^{-1}\}_{i =1}^{J_a^+}$ and $\{ a_i^-\}_{i =1}^{J_a^-}$. From the definition of $\Gamma_{\alpha}^+$ and $\Gamma_{\beta}^-$ in Definition \ref{DefPhi}, and the fact that $\alpha + t_1 < \underline{a}$ while $\beta + t_2 > \underline{b}$, we have that for $w \in \Gamma_{\beta}^-$ the variable $w + t_2$ is bounded away from the zeros of $\Phi_{a,b,c}$ and for $z \in \Gamma_{\alpha}^+$ the variable $z + t_2$ is bounded away from the poles of $\Phi_{a,b,c}$. The latter means that the function
$$e^{z^3/3 -x_1z - w^3/3 + x_2w}\cdot \frac{\Phi_{a,b,c}(z + t_1) }{\Phi_{a,b,c}(w + t_2)}$$
is continuous for $(z,w) \in \Gamma_{\alpha}^+ \times \Gamma_{\beta}^-$. The term $(z + t_1  -w - t_2)^{-1}$ that appears in the integrand defining $K^3_{a,b,c}$ is also continuous on $ \Gamma_{\alpha}^+ \times \Gamma_{\beta}^-$ if the contours $\Gamma_{\alpha + t_1}^+$ and $\Gamma_{\beta + t_2}^-$ are disjoint, which happens precisely when $\beta + t_2 < \alpha + t_1$. Instead, if we have that $\alpha + t_1 \leq \beta + t_2$, then the contours $\Gamma_{\alpha + t_1}^+$ and $\Gamma_{\beta + t_2}^-$ intersect at the points $z_{\pm} = c \pm \im d$, where $c = (\alpha + t_1 + \beta + t_2)/2$ and $d = (\beta + t_2 - \alpha - t_1)/2$. This causes the term $(z + t_1  -w - t_2)^{-1}$ to have a singularity at the points $(z,w) = (z_+ - t_1, z_+ - t_2)$ and $(z,w) = (z_- - t_1, z_- - t_2)$. Setting $z + t_1 = z_{\pm} + x + \im x$ and $w + t_2 = z_{\pm} + y - \im y$ provides a local parametrization of the contours near $(z_{\pm}, z_{\pm})$ and one directly observes that 
\begin{equation}\label{IntSing}
\frac{1}{|z + t_1  -w - t_2|} = \frac{1}{|(x + \im x)- (y - \im y)|} = \frac{1}{\sqrt{(x-y)^2 + (x + y)^2}} = \frac{1}{\sqrt{2} \sqrt{x^2 + y^2}}.
\end{equation}
We thus conclude that $(z + t_1  -w - t_2)^{-1}$ has an {\em integrable} singularity near the points $(z_{\pm} - t_1, z_{\pm} - t_2)$. In particular, we conclude that the integrand 
\begin{equation}\label{IntegrandKer}
e^{z^3/3 -x_1z - w^3/3 + x_2w}\cdot \frac{\Phi_{a,b,c}(z + t_1) }{\Phi_{a,b,c}(w + t_2)} \cdot \frac{1}{z + t_1  -w - t_2}
\end{equation}
in the definition of $K_{a,b,c}^3$ is {\em locally} integrable over $\Gamma_{\alpha}^+ \times \Gamma_{\beta}^-$. \\

To see why the function in (\ref{IntegrandKer}) is integrable over $\Gamma_{\alpha}^+ \times \Gamma_{\beta}^-$ it suffices to dominate it (in absolute value) by a function that is integrable over these contours. We seek to apply (\ref{RatBound}). Note that by the definition of the contours we can find $d_1 > 0$ (depending on $\alpha, t_1, a_1^+$)  such that 
$$|1 - a_i^+ (z + t_1)| \geq d_1 \mbox{ and } |1 - a_i^-/(z+t_1)| \geq d_1$$
for all $i \geq 1$ and $z \in \Gamma_{\alpha}^+$. Similarly, we can find $d_2 > 0$ (depending on $\beta, t_2, b_1^+$)  such that 
$$|1 + b_i^+ (w + t_2)| \geq d_2 \mbox{ and } |1 + b_i^-/(w+t_2)| \geq d_2$$
for all $i \geq 1$ and $w \in \Gamma_{\beta}^-$. We can apply (\ref{RatBound}) to conclude that for some $C > 0$ (depending on $d_1, d_2$) 
\begin{equation}\label{BoundPhiRat}
\left|\frac{\Phi_{a,b,c}(z + t_1) }{\Phi_{a,b,c}(w + t_2)}  \right| \leq e^{ (|z+t_1| + |w+t_2|) \cdot (c^+ + C B^+ + C A^+)} \cdot e^{(|z + t_1|^{-1} + |w+t_2|^{-1}) \cdot (c^- + C B^- + C A^-)  },
\end{equation}
where $A^{\pm} = \sum_{i \geq 1}a_i^{\pm}$ and $B^{\pm} = \sum_{i \geq 1}b_i^{\pm}$. In particular, the second exponential term on the right side of (\ref{BoundPhiRat}) is only present if $c^- + b_1^- + a_1^- > 0$ in which case $\underline{a} = \underline{b} = 0$, $\alpha + t_1 < 0$, $\beta + t_2 > 0$, the contours $\Gamma_{\alpha + t_1}^+$, $\Gamma_{\beta + t_2}^-$ are bounded away from $0$ and hence this term is uniformly bounded by a constant. In short, the left side of (\ref{BoundPhiRat}) grows at most exponentially fast in $|z| + |w|$ for $(z,w) \in \Gamma_{\alpha}^+ \times \Gamma_{\beta}^-$.

Writing $z = \alpha + y + \im \epsilon_z y$ and $w = \beta - x + \im \epsilon_w x$ where $\epsilon_z , \epsilon_w \in \{-1,1\}$, we see that 
\begin{equation*}
\left| e^{z^3/3 - w^{3}/3}  \right| = e^{\Real (\alpha + y + \im \epsilon_z y)^3/3 - \Real (\beta -x + \im \epsilon_z x)^3/3 } = e^{(\alpha^3- \beta^3)/3} \cdot e^{\alpha^2 y + \beta^2x} \cdot e^{-(2/3) \cdot (x^3 + y^3)}.
\end{equation*}
The latter implies that there is a constant $C_1 > 0$, depending on $\alpha, \beta$, such that 
\begin{equation}\label{BoundCubicKer}
\left| e^{z^3/3 - w^{3}/3}  \right| \leq \exp(C_1 - |z|^3/6 - |w|^3/6).
\end{equation}
Combining (\ref{BoundPhiRat}) with (\ref{BoundCubicKer}) and the trivial inequality $|e^u| \leq e^{|u|}$ for $u \in \mathbb{C}$, we see that for some $D_0, D_1 > 0$ (depending on $t_1, t_2, \alpha, \beta, c^{\pm}, A^{\pm}, B^{\pm}, C$ and $C_1$)
\begin{equation}\label{IntegrandKerB}
\left| \frac{\Phi_{a,b,c}(z + t_1) }{\Phi_{a,b,c}(w + t_2)} \cdot \frac{e^{z^3/3 -x_1z - w^3/3 + x_2w}}{z + t_1  -w - t_2} \right| \leq \frac{e^{D_0 + D_1(|z| + |w|) + |z||x_1| + |w||x_2| - |z|^3/6 - |w|^3/6}}{|z + t_1  -w - t_2|}.
\end{equation}
The right side of (\ref{IntegrandKerB}) is integrable over $\Gamma_{\alpha}^+ \times \Gamma_{\beta}^-$ due to the already established local integrability, and the fact that near infinity the cubic terms in the exponential provide sufficient decay.\\

Our work so far shows that the kernel $K_{a,b,c}$ is well-defined. We next proceed to show that the value of the kernel does not depend on $\alpha$ and $\beta$ as long as $\alpha + t_1 < \underline{a}$ and $\beta + t_2 > \underline{b}$. Let us fix $\alpha_1, \alpha_2, \beta_1, \beta_2$ such that $\alpha_i + t_1 < \underline{a}$ and $\beta_i + t_2 > \underline{b}$ for $i = 1,2$. We further denote by $K^{1,\alpha,\beta}_{a,b,c}$ and $K^{3,\alpha,\beta}_{a,b,c}$ the kernels in $K^{1}_{a,b,c}$ and $K^{3}_{a,b,c}$ in Definition \ref{3BPKernelDef}, reflecting the dependence of the latter on the choice of $\alpha, \beta$. Note that $K^2_{a,b,c}$ already does not depend on $\alpha, \beta$ and so we wish to prove that 
\begin{equation*}
K^{1,\alpha_1, \beta_1}_{a,b,c}(t_1, x_1; t_2, x_2)  + K^{3,\alpha_1, \beta_1}_{a,b,c}(t_1, x_1; t_2, x_2) = K^{1,\alpha_2, \beta_2}_{a,b,c}(t_1, x_1; t_2, x_2)  + K^{3,\alpha_2, \beta_2}_{a,b,c}(t_1, x_1; t_2, x_2).
\end{equation*} 
Fix $\alpha_3 < \min(-t_1, \alpha_1, \alpha_2, \beta_1 + t_2 - t_1, \beta_2 + t_2 - t_1)$ and $\beta_3 > \max(-t_2, \beta_1, \beta_2, \alpha_1 + t_1 - t_2, \alpha_2 + t_1 - t_2 )$. To establish the above we show that for $i = 1,2$ we have
\begin{equation*}
K^{1,\alpha_i, \beta_i}_{a,b,c}(t_1, x_1; t_2, x_2)  + K^{3,\alpha_i, \beta_i}_{a,b,c}(t_1, x_1; t_2, x_2) = K^{1,\alpha_3, \beta_3}_{a,b,c}(t_1, x_1; t_2, x_2)  + K^{3,\alpha_3, \beta_3}_{a,b,c}(t_1, x_1; t_2, x_2).
\end{equation*}
By symmetry it suffices to show the above for $i = 1$ only, and then the statement will follow from showing that
\begin{equation}\label{Indep1}
\begin{split}
&K^{1,\alpha_1, \beta_1}_{a,b,c}(t_1, x_1; t_2, x_2)  \hspace{-1mm} +\hspace{-1mm} K^{3,\alpha_1, \beta_1}_{a,b,c}(t_1, x_1; t_2, x_2) = K^{1,\alpha_1, \beta_3}_{a,b,c}(t_1, x_1; t_2, x_2)  \hspace{-1mm} + \hspace{-1mm}K^{3,\alpha_1, \beta_3}_{a,b,c}(t_1, x_1; t_2, x_2) \\
&K^{1,\alpha_1, \beta_3}_{a,b,c}(t_1, x_1; t_2, x_2)  \hspace{-1mm}  + \hspace{-1mm} K^{3,\alpha_1, \beta_3}_{a,b,c}(t_1, x_1; t_2, x_2) = K^{1,\alpha_3, \beta_3}_{a,b,c}(t_1, x_1; t_2, x_2)  \hspace{-1mm} + \hspace{-1mm} K^{3,\alpha_3, \beta_3}_{a,b,c}(t_1, x_1; t_2, x_2).
\end{split}
\end{equation}

Let us denote by $u_{\pm}$ with $\Imag(u_+) \geq \Imag(u_-)$ the two intersection points of $\Gamma_{\alpha_1 + t_1}^+$ and $\Gamma_{\beta_3 + t_2}^-$ -- note that these contours indeed intersect since $\alpha_1 + t_1 < \beta_3 + t_2$ by the definition of $\beta_3$. We also let $v_{\pm}$ be the intersection points of $\Gamma_{\alpha_1 + t_1}^+$ and $\Gamma_{\beta_1 + t_2}^-$ if $\alpha_1 + t_1 <\beta_1 + t_2$ and otherwise we put $v_+ = v_- = \alpha_1 + t_1$. As before we assume $\Imag(v_+) \geq \Imag(v_-)$. We split the contour $\Gamma^+_{\alpha}$ into two parts: $\Gamma^{+,1}_{\alpha}$ -- this consists of the two segments connecting $(u_+ - t_1)$ with $(v_+ - t_1)$ and $(u_- - t_1)$ with $(v_- -t_1)$, and $\Gamma^{+,2}_{\alpha}$ -- given by $\Gamma^{+,2}_{\alpha} = \Gamma^+_{\alpha} \setminus \Gamma^{+,1}_{\alpha}$. All the contours are oriented in the direction of increasing imaginary part. See Figure \ref{S41}.

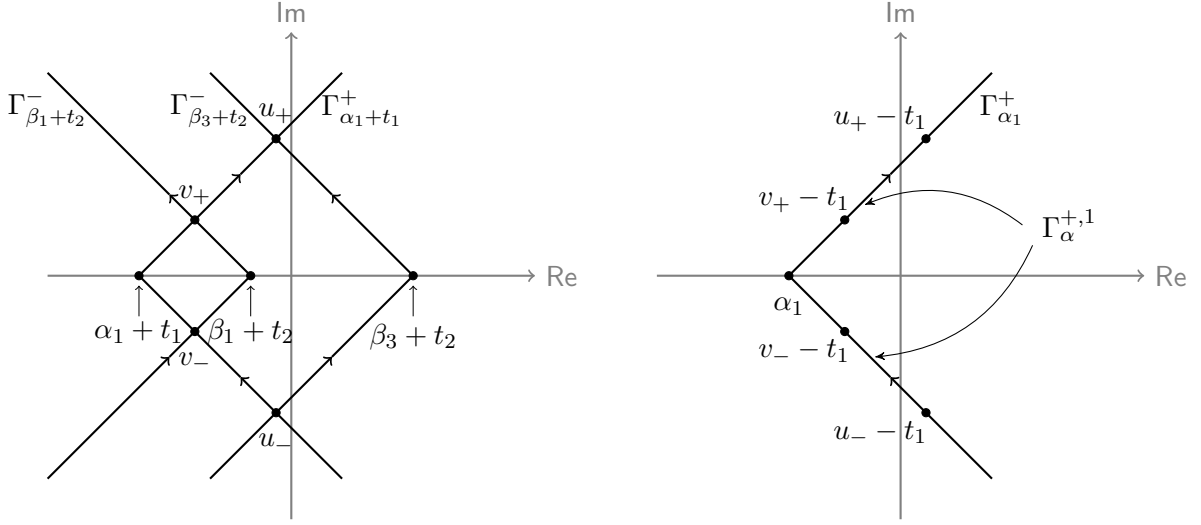
\begin{figure}[h]
    \centering
     \begin{tikzpicture}[scale=2.7]

        \def\tra{3} 
        \draw[->, thick, gray] (-1.2,0)--(1.2,0) node[right]{$\Real$};
        \draw[->, thick, gray] (0,-1.2)--(0,1.2) node[above]{$\Imag$};

        \draw[-,thick][black] (-0.2,0) -- (-0.6,-0.4);
        \draw[->,thick][black]  (-1.2,-1) -- (-0.6,-0.4);
        \draw[black, fill = black] (-0.2,0) circle (0.02);
        \draw (-0.2,-0.275) node{$\beta_1 + t_2$};
        \draw[->,very thin][black] (-0.2,-0.2) -- (-0.2, -0.05);
        \draw[->,thick][black] (-0.2,0) -- (-0.6,0.4);
        \draw[-,thick][black]  (-1.2,1) -- (-0.6,0.4);

        \draw[-,thick][black] (0.6,0) -- (0.2,-0.4);
        \draw[->,thick][black] (-0.4,-1) -- (0.2,-0.4);
        \draw[black, fill = black] (0.6,0) circle (0.02);
        \draw (0.6,-0.3) node{$\beta_3 + t_2$};
        \draw[->,very thin][black] (0.6,-0.2) -- (0.6, -0.05);
        \draw[->,thick][black] (0.6,0) -- (0.2,0.4);
        \draw[-,thick][black]  (-0.4,1) -- (0.2,0.4);       

        \draw[-,thick][black] (-0.75, 0) -- (-0.25,-0.5);
        \draw[->,thick][black] (0.25, -1) -- (-0.25, -0.5);
        \draw[black, fill = black] (-0.75,0) circle (0.02);
        \draw (-0.75,-0.275) node{$\alpha_1 + t_1$};
        \draw[->,very thin][black] (-0.75,-0.2) -- (-0.75, -0.05);
        \draw[->,thick][black] (-0.75,0) -- (-0.25,0.5);
        \draw[-,thick][black]  (-0.25,0.5) -- (0.25,1);

        \draw[black, fill = black] (-0.475,0.275) circle (0.02);
        \draw[black, fill = black] (-0.475,-0.275) circle (0.02);
        \draw[black, fill = black] (-0.075,0.675) circle (0.02);
        \draw[black, fill = black] (-0.075,-0.675) circle (0.02);
        \draw (-0.475,0.4) node{$v_+$};
        \draw (-0.475,-0.425) node{$v_-$};
        \draw (-0.075,0.8) node{$u_+$};
        \draw (-0.075,-0.825) node{$u_-$};

        \draw (0.35,0.825) node{$\Gamma^+_{\alpha_1 + t_1}$};
        \draw (-0.4,0.825) node{$\Gamma^-_{\beta_3 + t_2}$};
        \draw (-1.2,0.825) node{$\Gamma^-_{\beta_1 + t_2}$};

        \draw[->, thick, gray] ({\tra -1.2},0)--({\tra + 1.2},0) node[right]{$\Real$};
        \draw[->, thick, gray] ({\tra + 0},-1.2)--({\tra + 0},1.2) node[above]{$\Imag$};

        \draw[-,thick][black] ({\tra -0.55}, 0) -- ({\tra -0.05},-0.5);
        \draw[->,thick][black] ({\tra + 0.45}, -1) -- ({\tra -0.05}, -0.5);
        \draw[black, fill = black] ({\tra -0.55},0) circle (0.02);
        \draw ({\tra -0.55},-0.15) node{$\alpha_1$};
        \draw[->,thick][black] ({\tra -0.55},0) -- ({\tra -0.05},0.5);
        \draw[-,thick][black]  ({\tra -0.05},0.5) -- ({\tra + 0.45},1);

        \draw[black, fill = black] ({\tra  -0.275},0.275) circle (0.02);
        \draw[black, fill = black] ({\tra -0.275},-0.275) circle (0.02);
        \draw[black, fill = black] ({\tra + 0.125},0.675) circle (0.02);
        \draw[black, fill = black] ({\tra + 0.125},-0.675) circle (0.02);
        \draw ({\tra -0.475},0.375) node{$v_+ - t_1$};
        \draw ({\tra -0.475},-0.375) node{$v_- - t_1$};
        \draw ({\tra - 0.1},0.775) node{$u_+ - t_1$};
        \draw ({\tra - 0.1},-0.775) node{$u_- - t_1$};

        \draw ({\tra + 0.5},0.825) node{$\Gamma^+_{\alpha_1}$};
        \draw[->, >=stealth'] ({\tra + 0.6},0.25) node[anchor=west]{{ $\Gamma_{\alpha}^{+,1}$}} to[bend right] ({\tra -0.175},0.35);
        \draw[->, >=stealth'] ({\tra + 0.65},0.15) node[anchor=west]{} to[bend left] ({\tra - 0.125},-0.4);

    \end{tikzpicture} 
    \caption{The left part depicts the contours $\Gamma_{\alpha_1 + t_1}^+, \Gamma_{\beta_1 + t_2}^-, \Gamma_{\beta_3 + t_2}^-$ and their intersection points $v_{\pm}$, $u_{\pm}$. The right part depicts the contours $\Gamma_{\alpha_1}^+, \Gamma_{\alpha_1}^{+,1}$, where the latter consists of the two segments connecting $v_{\pm} - t_1$ with $u_{\pm} - t_1$.}
    \label{S41}
\end{figure}

We have from (\ref{3BPKer}) that
\begin{equation}\label{Indep2}
\begin{split}
K^{3, \alpha_1, \beta_1}_{a,b,c} (t_1, x_1; t_2, x_2) =  &\frac{1}{(2\pi \im)^2} \int_{\Gamma^{+,1}_{\alpha_1 }} d z \int_{\Gamma_{\beta_1}^-} dw \frac{e^{z^3/3 -x_1z - w^3/3 + x_2w}}{z + t_1 - w - t_2} \cdot \frac{\Phi_{a,b,c}(z + t_1) }{\Phi_{a,b,c}(w + t_2)} \\
&+ \frac{1}{(2\pi \im)^2} \int_{\Gamma^{+,2}_{\alpha_1 }} d z \int_{\Gamma_{\beta_1}^-} dw \frac{e^{z^3/3 -x_1z - w^3/3 + x_2w}}{z + t_1 - w - t_2} \cdot \frac{\Phi_{a,b,c}(z + t_1) }{\Phi_{a,b,c}(w + t_2)}.
\end{split}
\end{equation}
We now proceed to deform the contours $\Gamma_{\beta_1}^-$ to $\Gamma_{\beta_3}^-$ in the above two integrals. We note that the only singularity we can encounter is a simple pole at $w = z + t_1 - t_2$, and in the second line of (\ref{Indep2}) this pole lies outside of the region enclosed by $\Gamma_{\beta_1}^-$ and $\Gamma_{\beta_3}^-$. By Cauchy's theorem the second line of (\ref{Indep2}) remains unchanged if we deform $\Gamma_{\beta_1}^-$ to $\Gamma_{\beta_3}^-$. We mention that the decay estimates necessary to deform the contours near infinity come from (\ref{IntegrandKerB}), which one can analogously show for all $w$ in the region enclosed by $\Gamma_{\beta_1}^-$ and $\Gamma_{\beta_3}^-$ (rather than $w$ on the contours themselves). If we now deform $\Gamma_{\beta_1}^-$ to $\Gamma_{\beta_3}^-$ in the first line of (\ref{Indep2}) we obtain by the residue theorem for each $ z \in \Gamma^{+,1}_{\alpha_1 }$
\begin{equation*}
\begin{split}
&  \int_{\Gamma_{\beta_1}^-} \hspace{-2mm} dw \frac{e^{z^3/3 -x_1z - w^3/3 + x_2w}}{z + t_1 - w - t_2} \cdot \frac{\Phi_{a,b,c}(z + t_1) }{\Phi_{a,b,c}(w + t_2)} =   \int_{\Gamma_{\beta_3}^-}\hspace{-2mm} dw \frac{e^{z^3/3 -x_1z - w^3/3 + x_2w}}{z + t_1 - w - t_2} \cdot \frac{\Phi_{a,b,c}(z + t_1) }{\Phi_{a,b,c}(w + t_2)} \\
& - 2 \pi \im \cdot \exp \left( z^3/3 - x_1 z - (z + t_1 - t_2)^3/3 + x_2 (z + t_1 -t_2) \right).
\end{split}
\end{equation*}
Combining the last few observations with (\ref{Indep2}) we conclude that 
\begin{equation}\label{Indep3}
\begin{split}
&K^{3, \alpha_1, \beta_1}_{a,b,c} (t_1, x_1; t_2, x_2) = K^{3, \alpha_1, \beta_3}_{a,b,c} (t_1, x_1; t_2, x_2) \\
&+ \frac{1}{2\pi \im} \int_{\Gamma^{+,1}_{\alpha_1 }} d z  \exp \left( z^3/3 - x_1 z - (z + t_1 - t_2)^3/3 + x_2 (z + t_1 -t_2) \right).
\end{split}
\end{equation}
We now perform a change of variables in the second line of (\ref{Indep3}) to $w = z + t_1$. This gives
\begin{equation}\label{Indep4}
\begin{split}
&\frac{1}{2\pi \im} \int_{\Gamma^{+,1}_{\alpha_1 }} d z  \exp \left( z^3/3 - x_1 z - (z + t_1 - t_2)^3/3 + x_2 (z + t_1 -t_2) \right)  \\
& = \frac{1}{2\pi \im} \int_{v_+}^{u_+} e^{(t_2 - t_1)w^2 + (t_1^2 - t_2^2) w + w (x_2-x_1) + x_1 t_1 - x_2 t_2 - t_1^3/3 + t_2^3/3} dw \\
& + \frac{1}{2\pi \im} \int_{u_-}^{v_-} e^{(t_2 - t_1)w^2 + (t_1^2 - t_2^2) w + w (x_2-x_1) + x_1 t_1 - x_2 t_2 - t_1^3/3 + t_2^3/3} dw  \\
& = K^{1, \alpha_1, \beta_3}_{a,b,c} (t_1, x_1; t_2, x_2) - K^{1, \alpha_1, \beta_1}_{a,b,c} (t_1, x_1; t_2, x_2) ,
\end{split}
\end{equation}
where in the last equality we used Cauchy's theorem and that by definition
$$K^{1, \alpha_1, \beta_1}_{a,b,c} (t_1, x_1; t_2, x_2) = \frac{1}{2\pi \im} \int_{v_-}^{v_+} e^{(t_2 - t_1)w^2 + (t_1^2 - t_2^2) w + w (x_2-x_1) + x_1 t_1 - x_2 t_2 - t_1^3/3 + t_2^3/3} dw$$
$$K^{1, \alpha_1, \beta_3}_{a,b,c} (t_1, x_1; t_2, x_2) = \frac{1}{2\pi \im} \int_{u_-}^{u_+} e^{(t_2 - t_1)w^2 + (t_1^2 - t_2^2) w + w (x_2-x_1) + x_1 t_1 - x_2 t_2 - t_1^3/3 + t_2^3/3} dw.$$
The first line of (\ref{Indep1}) now follows from (\ref{Indep3}) and (\ref{Indep4}). The second line of (\ref{Indep1}) is proved analogously so we omit the proof.\\

We finally show that $K_{a,b,c}(t_1, \cdot; t_2, \cdot)$ is continuous  on $ \mathbb{R}^2$. Suppose that $x_i^n \rightarrow x_i$ as $n \rightarrow \infty$ for $i = 1,2$. The continuity of $K_{a,b,c}(t_1, \cdot; t_2, \cdot)$ at $(x_1, x_2)$ follows from
\begin{equation}\label{ContK}
\begin{split}
\lim_{n \rightarrow \infty} K^i_{a,b,c}(t_1, x_1^n; t_2, x_2^n) \rightarrow K^i_{a,b,c}(t_1, x_1; t_2, x_2) \mbox{ for }i = 1,2,3. 
\end{split}
\end{equation}
Using (\ref{3BPKer}), we have that (\ref{ContK}) holds for $i = 1$ by the bounded convergence theorem, for $i = 2$ directly from the definition of $K^2_{a,b,c}$ and for $i = 3$ by the dominated convergence theorem with dominating function as in (\ref{IntegrandKerB}).
\end{proof}

%
%
\subsection{Parameter scaling and prelimit formula} \label{Section4.2}

In the following definition we explain how we scale the parameters in the ascending Schur process of Section \ref{Section3.1}.
\begin{definition}\label{ParScale}
Assume that $\{a_i^+\}_{ i \geq 1}$, $\{a_i^-\}_{ i \geq 1}$, $\{b_i^+\}_{ i \geq 1}$, $\{b_i^-\}_{ i \geq 1}$, $c^+$, $c^-$ are as in Definition \ref{DLP}. Fix $m \in \mathbb{N}$, $q \in (0,1)$ and $t_1, \dots, t_m \in \mathbb{R}$ with $t_1 < \cdots < t_m$. Define also the constants 
\begin{equation}\label{SigmaQ}
\sigma_q = \frac{q^{1/3} (1 + q)^{1/3}}{1- q} \mbox{ and } f_q = \frac{q^{1/3}}{2 (1 + q)^{2/3}}.
\end{equation}

For $N \in \mathbb{N}$ increasing to infinity, we put $M_k(N) = N + \lfloor t_k N^{2/3} \rfloor$ for $k = 1, \dots, m$. We consider five numbers $A_N, B_N, C_N^{+}, C_N^{-}, D_N$ and sequences $\{x^N_i \}_{ i\geq 1}$ and $\{y^N_i\}_{i \geq 1}$ such that
\begin{equation}\label{ABSeq}
\begin{split}
&x_i^N =1 - \frac{1}{N^{1/3} b_i^+ \sigma_q } \mbox{ for $i = 1, \dots, B_N$, and } y_i^N =1 -\frac{1}{N^{1/3} a_i^+ \sigma_q } \mbox{ for $i = 1, \dots, A_N$},
\end{split}
\end{equation}
where $B_N \leq \min\left(\lfloor N^{1/12} \rfloor, J_b^+ \right)$ is the largest integer such that $x^N_{B_N} \geq q$, and $A_N \leq \min\left(\lfloor N^{1/12} \rfloor, J_a^+ \right)$ is the largest integer such that $y^N_{A_N} \geq q$. Here, we use the convention $x_0^N = y_0^N = 1$ so that $A_N =0$ and $B_N = 0$ are possible. We also have
\begin{equation}\label{DSeq}
\begin{split}
&D_N = \min \left( \max(J_a^-, J_b^-), \lfloor N^{1/12} \rfloor \right)  \\
&x^N_{B_N  + i} = 1 - \frac{b_i^- }{N^{1/3}\sigma_q}   , \hspace{2mm}   y^N_{A_N + i} = 1 - \frac{a_i^-}{N^{1/3} \sigma_q}  \mbox{ for } i = 1, \dots, D_N;
\end{split}
\end{equation}
\begin{equation}\label{CSeq}
\begin{split}
&C^-_N =  \begin{cases} 0 &\mbox{ if $c^- = 0$, } \\  \lfloor N^{1/12} \rfloor &\mbox{ if } c^- > 0 \end{cases}, \hspace{2mm} C^+_N =  \begin{cases} 0 &\mbox{ if $c^+ = 0$, } \\  \lfloor N^{1/12} \rfloor  &\mbox{ if } c^+ > 0 \end{cases}, \hspace{2mm}  \\
&x_{B_N + D_N +  i }^N = y_{A_N + D_N + i}^N =1 - \frac{2}{N^{1/4}c^+ \sigma_q} \mbox{ for } i = 1 ,\dots, C_N^+  \mbox{ and }\\
& x_{B_N + D_N + C_N^+ + i }^N = y_{A_N + D_N +  C_N^+ + i}^N = 1  - \frac{c^-}{2N^{5/12} \sigma_q} \mbox{ for $i = 1, \dots, C_N^-$};
\end{split}
\end{equation}
\begin{equation}\label{RemSeq}
\begin{split}
\mbox{ $x_i^N = q$ for $i > B_N + D_N + C_N^+ + C_N^-$ and $y_i^N = q$ for $i > A_N + D_N + C_N^+ + C_N^-$.}
\end{split}
\end{equation}
We let $N_0 \in \mathbb{N}$ be sufficiently large (depending on $q, a_1^-, b_1^-, c^+, c^-$ and $t_1, \dots, t_m$) so that $M_m(N) > \cdots > M_1(N) >  B_N + D_N + C_N^+ + C_N^-$,  $N > A_N + D_N + C_N^+ + C_N^-$ and $x_i^N, y_i^N \geq q$ for all $i \in \mathbb{N}$, provided that $N \geq N_0$. We also record for future use $\tilde{N} = N - A_N - C_N^+ - C_N^- - D_N$. 

Note that if $N \geq N_0$ and $M \geq M_m(N)$ we can define the ascending Schur process of Section \ref{Section3.1} with parameters $N, M, \{x_i^N\}_{i = 1}^N$ and $\{y_i^N\}_{ i = 1}^M$ as above. We observe that the condition $x^N_i y^N_j \in [0,1)$ is satisfied by the construction above. Indeed, $x^N_i \in [0, 1)$ and $y^N_j \in [0,1)$ in (\ref{ABSeq}), (\ref{CSeq}) and (\ref{RemSeq}). In (\ref{DSeq}) we could have that $x^N_i \in [0,1]$ or $y^N_j \in [0,1]$; however, by the definition of $D_N$ we cannot have simultaneously $x_i^N = y_j^N = 1$. For $N \geq N_0$ we denote the distribution (\ref{SP}) with the above set of parameters by $\mathbb{P}_N$.
\end{definition}

Now that we have explained how we scale parameters with $N$, we proceed to derive an alternative formula for the correlation kernel from Proposition \ref{PropCK}, which is suitable for taking the $N \rightarrow \infty$ limit. This alternative formula is recorded in Lemma \ref{PrelimitKernel} and we proceed to introduce the relevant contours that appear in that result and its proof.

\begin{definition}\label{finiteContours} Let $a \in \mathbb{R}$, and $\delta \in (0, 1/2]$. With this data we define two contours $\gamma_{a,\delta}^{+}$ and $\gamma_{a, \delta}^-$. The contour $\gamma_{a, \delta}^{+}$ consists of two segments connecting $a$ with $a + \delta \pm \im \delta$, as well as a circular arc of a $0$-centered circle that passes through the points $a + \delta \pm \im \delta$. Analogously, $\gamma_{a, \delta}^{-}$ consists of two segments connecting $a$ with $a - \delta \pm \im \delta$, as well as a circular arc of a $0$-centered circle that passes through the points $a - \delta \pm \im \delta$. Both $\gamma_{a,\delta}^{+}$ and $\gamma_{a, \delta}^-$ are positively oriented. We will call the portion of $\gamma_{a,\delta}^{\pm}$ that consists of the straight segments by $\gamma_{a,\delta}^{\pm, \vert}$ and the circular portions by $\gamma_{a,\delta}^{\pm, \circ}$. 

If we are further given a $\rho \in [0, \delta)$, we will consider the contours $\gamma_{a,\delta, \rho}^{+}$ and $\gamma_{a, \delta, \rho}^-$. These are obtained from $\gamma_{a,\delta}^{+}$ (resp. $\gamma_{a, \delta}^-$), by replacing the two segments that connect $a$ to $a+ \rho \pm \im \rho$ (resp. $a - \rho \pm  \im \rho$) with a straight vertical segment that connects $a+ \rho \pm \im \rho$ (resp. $a - \rho \pm  \im \rho$). The contours in this definition are depicted in Figure \ref{S42}.
\end{definition}
\begin{figure}[h]
    \centering
     \begin{tikzpicture}[scale=2.7]

        \def\tra{3} 
        \draw[->, thick, gray] (-1.2,0)--(1.2,0) node[right]{$\Real$};
        \draw[->, thick, gray] (0,-1.2)--(0,1.2) node[above]{$\Imag$};
        \def\radA{0.824} 
        \def\radB{0.447} 

        \draw[->,thick][black] (0.6,0) -- (0.7,0.1);
        \draw[-,thick][black] (0.7,0.1) -- (0.8,0.2);
        \draw[-,thick][black] (0.7,-0.1) -- (0.6,0);
        \draw[->,thick][black] (0.8,-0.2) -- (0.7,-0.1);
        \draw[black, fill = black] (0.6,0) circle (0.02);
        \draw (0.6,-0.1) node{$a$};
        \draw[->,thick][black] (0.8,0.2) arc (14.05:180:\radA);
        \draw[-,thick][black] (0.8,0.2) arc (14.05:360 - 14.05:\radA);

        \draw[->,thick][black] (0.6,0) -- (0.5,0.1);
        \draw[-,thick][black] (0.5,0.1) -- (0.4,0.2);
        \draw[-,thick][black] (0.5,-0.1) -- (0.6,0);
        \draw[->,thick][black] (0.4,-0.2) -- (0.5,-0.1);
        \draw[->,thick][black] (0.4,0.2) arc (26.58:180:\radB);
        \draw[-,thick][black] (0.4,0.2) arc (26.58:360 - 26.58:\radB);

        \draw[dashed,very thin][gray] (0.35,0.2) -- (0.95, 0.2);
        \draw[dashed,very thin][gray] (0.35,-0.2) -- (0.95, -0.2);      
        \draw[->,very thin][black] (0.9,0) -- (0.9, 0.2);
        \draw[->,very thin][black] (0.9,0.2) -- (0.9, 0);
        \draw[->,very thin][black] (0.9,0) -- (0.9, -0.2);
        \draw[->,very thin][black] (0.9,-0.2) -- (0.9, 0);
        \draw (0.95,0.1) node{$\delta$};
        \draw (0.95,-0.1) node{$\delta$};

        \draw (0.5,0.8) node{$\gamma_{a,\delta}^{+,\circ}$};
        \draw (0.35,0.45) node{$\gamma_{a,\delta}^{-, \circ}$};
         \draw[->, >=stealth'] (0.3,0.15) node[anchor=east]{{ $\gamma_{a,\delta}^{-, \vert}$}} to[bend right] (0.45,0.15);
        \draw[->, >=stealth'] (0.8,0.4)  to[bend right] (0.75,0.15);
        \draw (0.94,0.42) node{$\gamma_{a,\delta}^{+,\vert}$};

        \draw[->, thick, gray] ({\tra -1.2},0)--({\tra + 0.9},0) node[right]{$\Real$};
        \draw[->, thick, gray] ({\tra + 0},-1.2)--({\tra + 0},1.2) node[above]{$\Imag$};
        \def\radA{0.824} 
        \def\radB{0.447} 

        \draw[->,thick][black] ({\tra + 0.7},-0.1) -- ({\tra + 0.7},0);
        \draw[-,thick][black] ({\tra + 0.7},0) -- ({\tra + 0.7},0.1);
        \draw[-,thick][black] ({\tra + 0.7},0.1) -- ({\tra + 0.8},0.2);
        \draw[-,thick][black] ({\tra + 0.8},-0.2) -- ({\tra + 0.7},-0.1);
        \draw[black, fill = black] ({\tra + 0.6},0) circle (0.02);
        \draw ({\tra + 0.6},-0.1) node{$a$};
        \draw[->,thick][black] ({\tra + 0.8},0.2) arc (14.05:180:\radA);
        \draw[-,thick][black] ({\tra + 0.8},0.2) arc (14.05:360 - 14.05:\radA);

        \draw[-,thick][black] ({\tra + 0.5},0.1) -- ({\tra + 0.4},0.2);
        \draw[-,thick][black] ({\tra + 0.5},-0.1) -- ({\tra + 0.4},-0.2);
        \draw[->,thick][black] ({\tra + 0.5},-0.1) -- ({\tra + 0.5},0);
        \draw[-,thick][black] ({\tra + 0.5},0) -- ({\tra + 0.5},0.1);
        \draw[->,thick][black] ({\tra + 0.4},0.2) arc (26.58:180:\radB);
        \draw[-,thick][black] ({\tra + 0.4},0.2) arc (26.58:360 - 26.58:\radB);
        
        \draw ({\tra + 0.3},0.1) node{$\delta$};
        \draw ({\tra + 0.3},-0.1) node{$\delta$};
        \draw[dashed,very thin][gray] ({\tra + 0.35},0.2) -- ({\tra + 0.8}, 0.2);
        \draw[dashed,very thin][gray] ({\tra + 0.35},-0.2) -- ({\tra + 0.8}, -0.2);
        \draw[dashed,very thin][gray] ({\tra + 0.5},0.1) -- ({\tra + 1.2}, 0.1);
        \draw[dashed,very thin][gray] ({\tra + 0.5},-0.1) -- ({\tra + 1.2}, -0.1);
        \draw[->,very thin][black] ({\tra + 0.35},0) -- ({\tra + 0.35}, 0.2);
        \draw[->,very thin][black] ({\tra + 0.35},0.2) -- ({\tra + 0.35}, 0);
        \draw[->,very thin][black] ({\tra + 0.35},0) -- ({\tra + 0.35}, -0.2);
        \draw[->,very thin][black] ({\tra + 0.35},-0.2) -- ({\tra + 0.35}, 0);
        \draw[->,very thin][black] ({\tra + 1.2},0) -- ({\tra + 1.2}, -0.1);
        \draw[->,very thin][black] ({\tra + 1.2},0) -- ({\tra + 1.2}, 0.1);
        \draw ({\tra + 1.3},0) node{$2\rho$};

        \draw ({\tra + 0.5},0.8) node{$\gamma_{a,\delta,\rho}^+$};
        \draw ({\tra + 0.35},0.45) node{$\gamma_{a,\delta, \rho}^-$};

    \end{tikzpicture} 
    \caption{The left part depicts the contours $\gamma_{a,\delta}^{\pm,\circ}, \gamma_{a,\delta}^{\pm, \vert}$. The right part depicts the contours $\gamma_{a,\delta,\rho}^-, \gamma_{a,\delta, \rho}^+$.}
    \label{S42}
\end{figure}
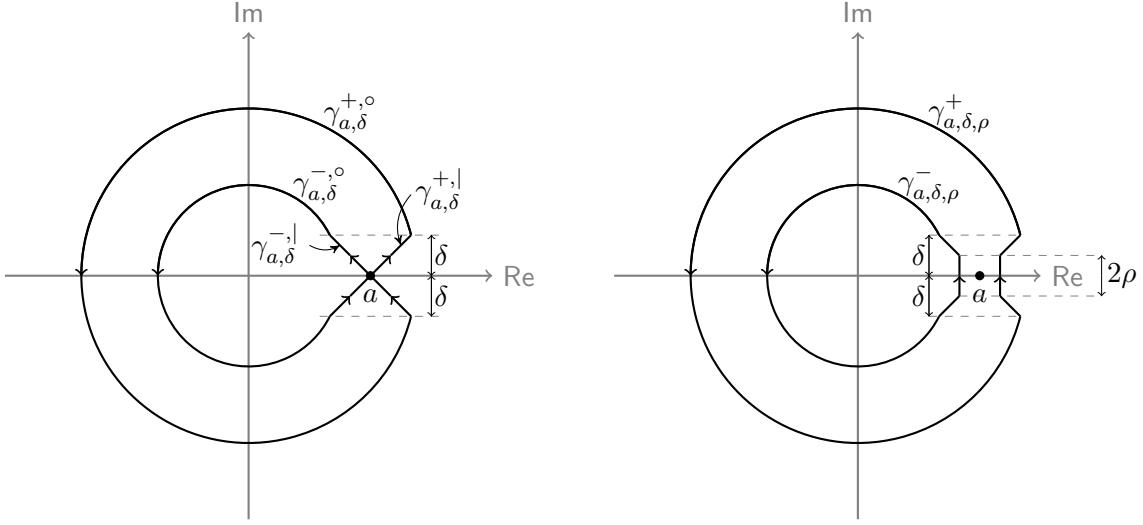

\begin{lemma}\label{PrelimitKernel}
Assume the same notation in Definitions \ref{DLP} and \ref{ParScale}, and fix $\delta \in (0, 1/2]$. For $N$ large enough so that $N \geq N_0$ and $\delta > N^{-1/3}$, we let $\mathfrak{S}(\lambda)$ be the point process as in (\ref{PPDef}) for $\lambda$ distributed according to $\mathbb{P}_N$. Then, $\mathfrak{S}(\lambda)$ is a determinantal point process on $\{1, \dots, m\} \times \mathbb{R}$ with reference measure given by the counting measure on $\{1, \dots, m\} \times \mathbb{Z}$ and correlation kernel 
\begin{equation}\label{CKDefAlt}
K_N(u,x; v, y) = K^1_N(u,x;v,y) + K^2_N(u,x;v,y) + K^3_N(u,x;v,y),
\end{equation}
where 
\begin{equation}\label{PKF1}
\begin{split}
K^1_N(u,x;v,y) =\left( 1- q \right)^{M_u(N) - M_v(N)} \cdot  \frac{1}{2\pi \im} \int_{1 -  \im N^{-1/3}}^{1 + \im N^{-1/3}} (1 - qw)^{{M_v}(N) - {M_u}(N)} \cdot w^{y - x - 1} dw,
\end{split}
\end{equation}
\begin{equation}\label{PKF2}
\begin{split}
K^2_N(u,x;v,y) =-\left( 1- q \right)^{M_u(N) - M_v(N)} \cdot  \frac{{\bf 1} \{ u> v\}}{2\pi \im} \cdot \int_{C_1} (1 - qw)^{{M_v}(N) - {M_u}(N)} \cdot w^{y - x - 1} dw,
\end{split}
\end{equation}
with $C_1$ being the positively oriented, zero-centered circle of radius $1$. The kernel $K^3_N$ is given by
\begin{equation}\label{PKF3}
\begin{split}
K^3_N(u,x;v,y) =\frac{1}{(2\pi \im)^2} \int_{\gamma_{z}}dz  \int_{\gamma_{w}}dw \frac{1}{z-w} \cdot \prod_{i = 1}^7 H_i^N(z,w) \cdot w^y z^{-x-1} ,
\end{split}
\end{equation}
where $\gamma_{w} = \gamma_{1 + N^{-1/3}, \delta}^{-}$ and $\gamma_{z} = \gamma_{1-N^{-1/3}, \delta}^+$ with $\gamma_{a,\delta}^{\pm}$ as in Definition \ref{finiteContours}. The functions $H^N_i(z,w)$ are given by 
\begin{equation}\label{PKF41}
H^N_1(z,w) =  \prod_{i = 1}^{A_N} \frac{1 - \frac{1}{z} \cdot  \left(1 - \frac{1}{N^{1/3} a_i^+ \sigma_q} \right)}{1 -  \frac{1}{w} \cdot  \left(1 - \frac{1}{N^{1/3} a_i^+ \sigma_q} \right)}, \hspace{2mm} H^N_2(z,w) =\prod_{i = 1}^{B_N} \frac{1 - w \cdot \left(1 - \frac{1}{N^{1/3} b_i^+ \sigma_q} \right)}{1 - z \cdot \left(1 - \frac{1}{N^{1/3} b_i^+ \sigma_q} \right)}, 
\end{equation}
\begin{equation}\label{PKF42}
H^N_3(z,w) = \prod_{i = 1}^{D_N} \frac{1 - w \cdot\left(1 - \frac{b_i^-}{N^{1/3}\sigma_q}  \right)}{1 - z \cdot\left(1 - \frac{b_i^-}{N^{1/3}\sigma_q}  \right)} \cdot \frac{1 - \frac{1}{z} \cdot \left(1 - \frac{a_i^-}{N^{1/3} \sigma_q}  \right)}{1 - \frac{1}{w} \cdot \left(1 - \frac{a_i^-}{N^{1/3} \sigma_q}  \right)},
\end{equation}
\begin{equation}\label{PKF43}
H^N_4(z,w) = \left( \frac{1 - w \cdot \left(1 - \frac{c^-}{2N^{5/12} \sigma_q}\right)}{1 - z \cdot \left(1 - \frac{c^-}{2N^{5/12} \sigma_q} \right)}  \cdot \frac{1 - \frac{1}{z} \cdot \left(1 - \frac{c^-}{2N^{5/12} \sigma_q} \right)}{1 - \frac{1}{w} \cdot \left(1  - \frac{c^-}{2N^{5/12} \sigma_q} \right)} \right)^{C_N^-}, 
\end{equation}
\begin{equation}\label{PKF44}
H^N_5(z,w) = \left( \frac{1 - w \cdot\left(1 - \frac{2}{N^{1/4} c^+ \sigma_q} \right)}{1 - z \cdot \left(1 - \frac{2}{N^{1/4} c^+ \sigma_q} \right)} \cdot \frac{1 - \frac{1}{z} \cdot \left(1 - \frac{2}{N^{1/4} c^+ \sigma_q} \right)}{1 - \frac{1}{w} \cdot \left(1 - \frac{2}{N^{1/4} c^+ \sigma_q} \right)} \right)^{C_N^+}, 
\end{equation}
\begin{equation}\label{PKF45}
H^N_6(z,w) = \left( \frac{(1 - q/z)(1 - qw)}{(1 - q/w)(1 - qz)}\right)^{\tilde{N}}, H_7^N(z,w) = \frac{(1 - qw)^{\tilde{M}_v}}{(1 - qz)^{\tilde{M}_u}} \cdot (1-q)^{\tilde{M}_u- \tilde{M}_v},
\end{equation}
with $\tilde{M}_r = M_r(N) +A_N -B_N - N $ for $r = 1, \dots, m$. Part of the statement is that all integrals above are well-defined and finite for each $N$ as in the lemma.
\end{lemma}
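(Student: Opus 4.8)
The plan is to derive the formula in Lemma~\ref{PrelimitKernel} from the correlation kernel in Proposition~\ref{PropCK} by a sequence of elementary manipulations: a rescaling of the partitions (to pass from $\{1,\dots,m\}\times\mathbb{Z}$ to $\{1,\dots,m\}\times\mathbb{R}$), an explicit substitution of the scaled parameters of Definition~\ref{ParScale}, and finally a deformation of the contours in Proposition~\ref{PropCK} to the finite contours $\gamma_z,\gamma_w$ of Definition~\ref{finiteContours}. First I would invoke Proposition~\ref{PropCK}, which tells us that $\mathfrak{S}(\lambda)$ is determinantal with reference measure the counting measure on $\{1,\dots,m\}\times\mathbb{Z}$ and kernel $K$ as in \eqref{CKDef}, where the $x$-parameters are $x_i^N$ (for $i=1,\dots,M_u(N)$ or $M_v(N)$) and the $y$-parameters are $y_i^N$ (for $i=1,\dots,N$); the hypothesis $x_i^N y_j^N\in[0,1)$ is checked in Definition~\ref{ParScale}. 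The statement that $\mathfrak{S}(\lambda)$ lives on $\{1,\dots,m\}\times\mathbb{R}$ rather than $\{1,\dots,m\}\times\mathbb{Z}$ is vacuous at this stage since the reference measure is unchanged; the point is simply that we are free to regard the support as sitting inside $\mathbb{R}$.

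Next I would substitute the parameters. Among the $y_j^N$, exactly $\tilde N = N - A_N - C_N^+ - C_N^- - D_N$ of them equal $q$, and the remaining ones are the values in \eqref{ABSeq}, \eqref{DSeq}, \eqref{CSeq}; similarly the $x_i^N$ for $i\le M_r(N)$ split into $B_N$ values from \eqref{ABSeq}, $D_N$ from \eqref{DSeq}, $C_N^++C_N^-$ from \eqref{CSeq}, and $\tilde M_r = M_r(N)+A_N-B_N-N$ copies of $q$ (note $\tilde M_r\ge 0$ for $N\ge N_0$ by the ordering constraints in Definition~\ref{ParScale}). Plugging these into the products $\prod_k(1-y_k/z)/\prod_k(1-y_k/w)$ and $\prod_k(1-x_kw)/\prod_k(1-x_kz)$ in \eqref{CKDef} and grouping terms by type produces exactly the factors $H_1^N$ through $H_7^N$: $H_1^N,H_2^N$ from the $a_i^+,b_i^+$ parameters, $H_3^N$ from the $a_i^-,b_i^-$, $H_4^N,H_5^N$ from the $c^-,c^+$ blocks (raised to the powers $C_N^-,C_N^+$), and $H_6^N,H_7^N$ collecting the $q$'s, with the prefactor $(1-q)^{\tilde M_u-\tilde M_v}$ in $H_7^N$ matching the powers of $(1-qz),(1-qw)$. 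This is a bookkeeping exercise; one should be careful about the exact indexing and the fact that the $z$-side product runs to $M_u(N)$ while the $w$-side runs to $M_v(N)$, which is why the $\tilde M_u\ne \tilde M_v$ asymmetry survives in $H_7^N$.

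The substantive step is the contour deformation. In \eqref{CKDef} the contours are circles $C_{r_1},C_{r_2}$ with $\max_i y_i^N < r_{1,2} < \min_j (x_j^N)^{-1}$, and the relative order of $r_1,r_2$ depends on whether $u>v$. I would deform $C_{r_1}$ (the $z$-contour) to $\gamma_z = \gamma_{1-N^{-1/3},\delta}^{+}$ and $C_{r_2}$ (the $w$-contour) to $\gamma_w = \gamma_{1+N^{-1/3},\delta}^{-}$. Since all parameters $x_i^N,y_i^N\in[q,1]$ (with the $D_N$-block possibly touching $1$ but not simultaneously on both sides), the poles of the integrand in $z$ sit at $z=0$, at $z = (x_i^N)^{-1}\ge 1$, and at the values $1-\tfrac{a_i^-}{N^{1/3}\sigma_q}$ etc.\ coming from the $1/z$ factors, all of which have real part $\le 1-c N^{-1/3}$ for a suitable constant; one checks that $\gamma_z$, which bulges to the right only up to $1-N^{-1/3}+\delta$, can be arranged (for $\delta>N^{-1/3}$ and $N$ large, and possibly shrinking $\delta$ or the specific radii) to separate $\{0\}\cup\{\text{left poles}\}$ from $\{(x_i^N)^{-1}\}$; symmetrically for $\gamma_w$. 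Crucially, $\gamma_z$ and $\gamma_w$ intersect (they are not nested), so deforming the circles to these contours crosses the pole of $\tfrac{1}{z-w}$ at $z=w$; the residue there, integrated over the arc where the contours cross, is precisely what produces the extra terms $K^1_N + K^2_N$ in \eqref{CKDefAlt}. Computing $\operatorname{Res}_{z=w}\tfrac{1}{z-w}\prod H_i^N\,w^yz^{-x-1}$: at $z=w$ all the $H_i^N$ collapse, leaving $(1-q)^{M_u-M_v}(1-qw)^{M_v-M_u}w^{y-x-1}$, and the residue is picked up along the vertical segment of $\gamma_w$ between $1+\im N^{-1/3}$ and $1-\im N^{-1/3}$ — this yields $K^1_N$ in \eqref{PKF1} — with an additional full-circle contribution $K^2_N$ in \eqref{PKF2} present exactly when $u>v$, reflecting the $u>v$ vs.\ $u\le v$ distinction in the ordering of $r_1,r_2$ in Proposition~\ref{PropCK}. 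Finally, that all three integrals are well-defined and finite for each $N$ is immediate: $\gamma_z,\gamma_w$ are compact contours, on $\gamma_z$ the integrand has no pole (the factors $1-qz$ etc.\ are bounded away from $0$, and $1/z$ is bounded since $\gamma_z$ avoids the origin), similarly on $\gamma_w$, and $\tfrac{1}{z-w}$ is continuous on $\gamma_z\times\gamma_w$ since the deformed contours are disjoint away from the residue crossing which we have already extracted; the one-dimensional integrals in $K^1_N,K^2_N$ have continuous integrands on compact contours. The main obstacle is the contour deformation bookkeeping: one must verify that for all large $N$ the contours $\gamma_z,\gamma_w$ genuinely separate the correct poles despite the $D_N$-block parameters approaching $1$ like $1-O(N^{-1/3})$ and the $q$-poles at $1/q>1$, and one must carefully account for which residues are crossed in the $u>v$ versus $u\le v$ cases so that the $K^1_N,K^2_N$ terms come out with the right signs and the right presence of the indicator $\mathbf{1}\{u>v\}$.
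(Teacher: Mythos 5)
Your overall strategy — start from Proposition~\ref{PropCK}, substitute the scaled parameters, then deform the circles to $\gamma_z,\gamma_w$ while tracking the residue of $1/(z-w)$ — is the same one the paper uses. But there are two concrete gaps.

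First, the parameter substitution by itself does \emph{not} produce $H_1^N,\dots,H_7^N$. If you count the $q$-parameters (there are $\tilde N$ of them among the $y^N_k$ and $\tilde M_r+\tilde N$ among the $x^N_k$ up to $M_r(N)$) and group, you get
$\bigl(\tfrac{(1-q/z)(1-qw)}{(1-q/w)(1-qz)}\bigr)^{\tilde N}\cdot\tfrac{(1-qw)^{\tilde M_v}}{(1-qz)^{\tilde M_u}}$,
which is $H_6^N H_7^N$ \emph{divided} by $(1-q)^{\tilde M_u-\tilde M_v}=(1-q)^{M_u(N)-M_v(N)}$. So the kernel you get from direct substitution is off from the target by a factor of $(1-q)^{M_v(N)-M_u(N)}$. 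The missing step is a gauge transformation (Proposition~\ref{PropLem}(4)) conjugating by $(1-q)^{M_u-M_v}$; this is what puts the $(1-q)^{\tilde M_u-\tilde M_v}$ into $H_7^N$ and the matching prefactors into $K^1_N,K^2_N$. Your statement that ``grouping terms by type produces exactly the factors $H_1^N$ through $H_7^N$'' is therefore not correct as written.

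Second, the residue accounting is asserted rather than established, and as stated it would be hard to carry out directly because $\gamma_z$ and $\gamma_w$ cross each other, so ``deform $C_{r_1}\to\gamma_z$, then $C_{r_2}\to\gamma_w$'' produces a $w$-dependent family of residues whose contour of integration is not obviously the segment in $K_N^1$. The paper's device is to first deform the circles (with the $u>v$ reordering already extracted as $K_N^2$) to a pair of \emph{non-intersecting} intermediate contours $\gamma^{+}_{1-N^{-1/3},\delta,\rho_1}$ and $\gamma^{-}_{1+N^{-1/3},\delta,\rho_2}$ (Definition~\ref{finiteContours}, right panel of Figure~\ref{S42}), for which no $z=w$ pole is crossed at all; this requires choosing $\rho_1,\rho_2$ differently depending on whether $\overline y=1$ or $\overline y<1$, a case distinction you do not mention but which is forced because when some $y^N_i=1$ the point $1$ must be kept inside the $w$-contour and outside the $z$-contour, and the opposite when $\overline y<1$. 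Only \emph{after} that does one relate the intersecting contours $\gamma_z,\gamma_w$ of $K^3_N$ to the intermediate ones by sliding the sub-arc past $z=w$; the residue picked up there, by Cauchy's theorem, is exactly $-K^1_N$ (the integral over the vertical segment from $1-\im N^{-1/3}$ to $1+\im N^{-1/3}$). Your description ``the vertical segment of $\gamma_w$'' is also slightly off: $\gamma_w=\gamma^-_{1+N^{-1/3},\delta}$ has no vertical segment; the segment in $K^1_N$ is the chord joining the two intersection points of $\gamma_z$ and $\gamma_w$, and it belongs to neither contour. One final geometric caveat: the bulge of $\gamma_z$ does in fact cross the vertical line $\Real z=1$ once $|\Imag z|>2N^{-1/3}$ (since $\delta>N^{-1/3}$), so the reason no $(x^N_j)^{-1}$-pole is enclosed is not that the bulge ``stops before $1$'' but that these poles lie on the real axis, from which $\gamma_z$ stays at least $2^{-1/2}N^{-1/3}$ away on $[1,q^{-1}]$; shrinking $\delta$ below $N^{-1/3}$, which you float as an option, is not available since the lemma fixes $\delta>N^{-1/3}$.
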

\begin{proof} Let $\overline{y} = \max_{1 \leq i \leq N} y_i^N$ and $\underline{x} = \min_{1 \leq j \leq M_m} (x_j^N)^{-1}$. As explained in Definition \ref{ParScale}, we have that $\overline{y} < \underline{x}$ provided that $N \geq N_0$ and so we can apply Proposition \ref{PropCK}. By conjugating the kernel (\ref{CKDef}) in that proposition by $(1-q)^{M_u - M_v}$, and applying part (4) of Proposition \ref{PropLem}, we see that $\mathfrak{S}(\lambda)$ is a determinantal point process on $\{1, \dots, m\} \times \mathbb{R}$ with reference measure given by the counting measure on $\{1, \dots, m\} \times \mathbb{Z}$ and correlation kernel 
\begin{equation}\label{CKDefAlt1.5}
\tilde{K}_N(u,x; v, y) = \frac{1}{(2\pi \im)^2} \oint_{C_{r_1}}dz \oint_{C_{r_2}}dw \frac{1}{z-w} \cdot\prod_{i = 1}^7H^N_i(z,w)  \cdot w^y z^{-x -1},
\end{equation}
where $\overline{y} < r_1, r_2 < \underline{x}$ and we have $r_1 < r_2$ when $u > v$, and $r_2 < r_1$ when $u \leq v$. We mention that in deriving (\ref{CKDefAlt1.5}) we also substituted the sequences $x_i^N, y_i^N$ from Definition \ref{ParScale} and used the definition of $H^N_i(z,w)$ from (\ref{PKF41} - \ref{PKF45}).

If $u > v$, we can deform the $z$ contour outside of the $w$ contour, and in the process we pick up a residue at $z = w$ coming from the $z-w$ in the denominator. Observe that $H^N_i(w,w) = 1$ for all $i = 1, \dots, 6$ and $H^N_7(w,w) = (1-qw)^{M_v(N) - M_u(N)} \cdot (1-q)^{M_u(N) - M_v(N)}$. We conclude that
\begin{equation}\label{CKDefAlt2}
\tilde{K}_N(u,x; v, y) = \frac{1}{(2\pi \im)^2} \oint_{C_{r_1}}dz \oint_{C_{r_2}}dw \frac{1}{z-w} \cdot\prod_{i = 1}^7H^N_i(z,w)  \cdot w^y z^{-x -1} + K_N^2(u,x;v,y),
\end{equation}
where $\overline{y} < r_2 < r_1 < \underline{x}$ for all $u,v \in \{1, \dots, m\}$. What remains is to show that the double integral in (\ref{CKDefAlt2}) is equal to $ K_N^1(u,x;v,y) +  K_N^3(u,x;v,y)$.

We now define $\rho_1, \rho_2 \in (0, \delta)$ as follows. If $\overline{y} = 1$, then we can find $\epsilon > 0$ such that $\underline{x} > 1 + \epsilon$, and we proceed to fix $\rho_1$ so that $ N^{-1/3} < \rho_1 < \min(\delta, N^{-1/3} + \epsilon, 2N^{-1/3})$  and then fix $\rho_2$ so that $2N^{-1/3} - \rho_1 < \rho_2 < N^{-1/3}$. If $\overline{y} < 1$, we can find $\epsilon > 0$ such that $\overline{y} < 1- \epsilon$, and we proceed to fix $\rho_2$ so that $N^{-1/3} < \rho_2 < \min(\delta, N^{-1/3} + \epsilon, 2N^{-1/3})$  and then fix $\rho_1$ so that $2N^{-1/3} - \rho_2 < \rho_1 < N^{-1/3}$. By Cauchy's theorem we can deform $C_{r_1}$ and $C_{r_2}$ in (\ref{CKDefAlt2}) to $\gamma^+_{\rho_1}:=\gamma^+_{1 - N^{-1/3}, \delta, \rho_1}$ and $\gamma^-_{\rho_2}:= \gamma^-_{1 + N^{-1/3}, \delta, \rho_2}$, respectively, without crossing any poles and hence without changing the value of the integral. We recall that the contours $\gamma_{a,\delta, \rho}^{\pm}$ are as in Definition \ref{finiteContours}, see also Figure \ref{S43}. We have thus reduced the proof to showing that 
\begin{equation}\label{CKDefAlt3}
 \frac{1}{(2\pi \im)^2} \oint_{\gamma^+_{\rho_1}}dz \oint_{\gamma^-_{\rho_2}}dw \frac{1}{z-w} \cdot\prod_{i = 1}^7H^N_i(z,w)  \cdot w^y z^{-x -1} = K_N^1(u,x;v,y) +  K_N^3(u,x;v,y).
\end{equation}
Below we show that the right side of (\ref{CKDefAlt3}) is well-defined. Afterwards, we will deform the $\gamma_z$ and $\gamma_w$ contours in the definition of $K_N^3(u,x;v,y)$ to $\gamma^+_{\rho_1}$ and $\gamma^-_{\rho_2}$, respectively. In the process of the deformation, we will pick up a residue, which will precisely cancel with $ K_N^1(u,x;v,y) $, hence establishing (\ref{CKDefAlt3}). We turn to providing the details below.

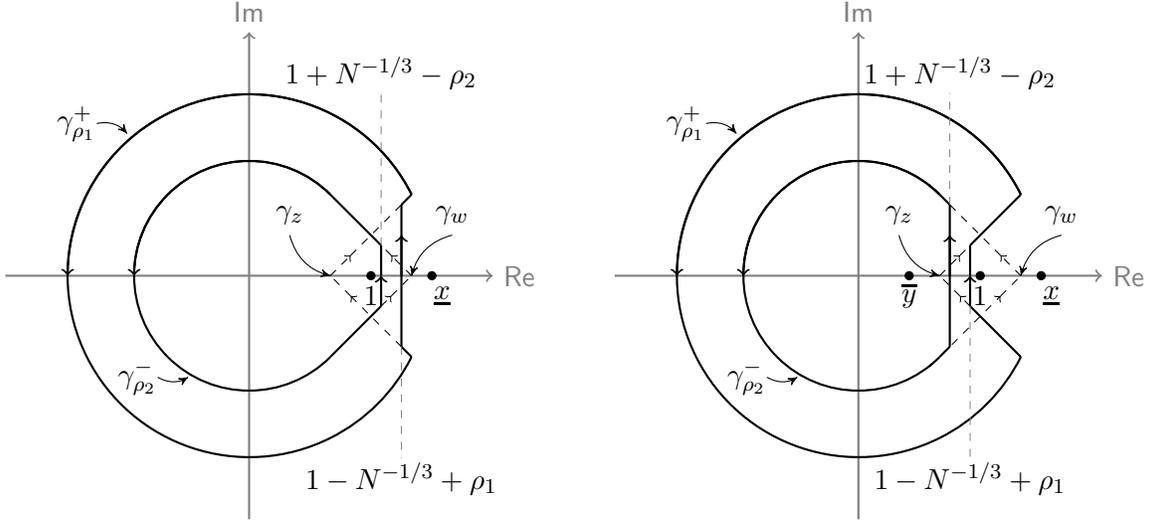
\begin{figure}[h]
    \centering
     \begin{tikzpicture}[scale=2.7]

        \def\tra{3} 
        \draw[->, thick, gray] (-1.2,0)--(1.2,0) node[right]{$\Real$};
        \draw[->, thick, gray] (0,-1.2)--(0,1.2) node[above]{$\Imag$};
        \def\radA{0.894} 
        \def\radB{0.566} 

        \draw[->,dashed][black] (0.4,0) -- (0.5,0.1);
        \draw[-,dashed][black] (0.5,0.1) -- (0.75,0.35);
        \draw[->,dashed][black] (0.75,-0.35) -- (0.5,-0.1);
        \draw[-,dashed][black] (0.5,-0.1) -- (0.4,0);
        \draw[->,dashed][black] (0.8,0) -- (0.7,0.1);
        \draw[-,dashed][black] (0.7,0.1) -- (0.65,0.15);
        \draw[->,dashed][black] (0.65,-0.15) -- (0.7,-0.1);
        \draw[-,dashed][black] (0.7,-0.1) -- (0.8,0);

        \draw[->,thick][black] (0.75,-0.35) -- (0.75,0.2);
        \draw[-,thick][black] (0.75,0) -- (0.75,0.35);
        \draw[-,thick][black] (0.75,-0.35) -- (0.8,-0.4);
        \draw[-,thick][black] (0.75,0.35) -- (0.8,0.4);
        \draw[black, fill = black] (0.6,0) circle (0.02);
        \draw (0.6,-0.1) node{$1$};
        
        \draw[->,thick][black] (0.8,0.4) arc (26.57:180:\radA);
        \draw[-,thick][black] (0.8,0.4) arc (26.57:360 - 26.57:\radA);
        \draw[black, fill = black] (0.9,0) circle (0.02);
        \draw (0.95,-0.1) node{$\underline{x}$};  

        \draw[->,thick][black] (0.65,-0.15) -- (0.65,0);
        \draw[-,thick][black] (0.65,0.15) -- (0.65,0);
        \draw[-,thick][black] (0.65,0.15) -- (0.4,0.4);
        \draw[-,thick][black] (0.65,-0.15) -- (0.4,-0.4);
        \draw[->,thick][black] (0.4,0.4) arc (45:180:\radB);
        \draw[-,thick][black] (0.4,0.4) arc (45:360 - 45:\radB);

        \draw[dashed,very thin][gray] (0.75,-0.35) -- ( 0.75, -0.9);
        \draw[dashed,very thin][gray] (0.65,0.15) -- ( 0.65, 0.9);
        \draw ( 0.75,-1) node{$1 - N^{-1/3} + \rho_1$};
        \draw ( 0.65,1) node{$1 + N^{-1/3} - \rho_2$};

        \draw (- 0.55,-0.5) node{$\gamma_{\rho_2}^-$};
        \draw (- 0.85,0.75) node{$\gamma_{\rho_1}^+$};
        \draw (0.2,0.3) node{$\gamma_{z}$};
        \draw (1,0.3) node{$\gamma_{w}$};
        
        \draw[->, >=stealth'] ( - 0.45, -0.5)  to[bend right] ( - 0.3,-0.5);
        \draw[->, >=stealth'] ( - 0.75, 0.75)  to[bend left] ( - 0.60,0.7);
        \draw[->, >=stealth'] ( 1, 0.2)  to[bend right] ( 0.8,0);
        \draw[->, >=stealth'] ( 0.2, 0.2)  to[bend right] ( 0.4,0);

        \draw[->, thick, gray] ({\tra -1.2},0)--({\tra + 1.2},0) node[right]{$\Real$};
        \draw[->, thick, gray] ({\tra + 0},-1.2)--({\tra + 0},1.2) node[above]{$\Imag$};

        \draw[->,dashed][black] ({\tra + 0.4},0) -- ({\tra + 0.5},0.1);
        \draw[-,dashed][black] ({\tra + 0.5},0.1) -- ({\tra + 0.55},0.15);
        \draw[->,dashed][black] ({\tra + 0.55},-0.15) -- ({\tra + 0.5},-0.1);
        \draw[-,dashed][black] ({\tra + 0.5},-0.1) -- ({\tra + 0.4},0);
        \draw[->,dashed][black] ({\tra + 0.8},0) -- ({\tra + 0.7},0.1);
        \draw[-,dashed][black] ({\tra + 0.7},0.1) -- ({\tra + 0.35},0.45);
        \draw[->,dashed][black] ({\tra + 0.35},-0.45) -- ({\tra + 0.7},-0.1);
        \draw[-,dashed][black] ({\tra + 0.7},-0.1) -- ({\tra + 0.8},0);

        \draw[->,thick][black] ({\tra + 0.55},-0.15) -- ({\tra + 0.55},0);
        \draw[-,thick][black] ({\tra + 0.55},0) -- ({\tra + 0.55},0.15);
        \draw[-,thick][black] ({\tra + 0.55},0.15) -- ({\tra + 0.8},0.4);
        \draw[-,thick][black] ({\tra + 0.8},-0.4) -- ({\tra + 0.55},-0.15);
        \draw[black, fill = black] ({\tra + 0.6},0) circle (0.02);
        \draw ({\tra + 0.6},-0.1) node{$1$};
        \draw[black, fill = black] ({\tra + 0.9},0) circle (0.02);
        \draw ({\tra + 0.95},-0.1) node{$\underline{x}$};       
        \draw[->,thick][black] ({\tra + 0.8},0.4) arc (26.57:180:\radA);
        \draw[-,thick][black] ({\tra + 0.8},0.4) arc (26.57:360 - 26.57:\radA);

        \draw[-,thick][black] ({\tra + 0.45},0) -- ({\tra + 0.45},0.35);
        \draw[->,thick][black] ({\tra + 0.45},-0.35) -- ({\tra + 0.45},0.2);
        \draw[-,thick][black] ({\tra + 0.4},-0.4) -- ({\tra + 0.45},-0.35);
        \draw[-,thick][black] ({\tra + 0.4},0.4) -- ({\tra + 0.45},0.35);       
        \draw[->,thick][black] ({\tra + 0.4},0.4) arc (45:180:\radB);
        \draw[-,thick][black] ({\tra + 0.4},0.4) arc (45:360 - 45:\radB);
        \draw[black, fill = black] ({\tra + 0.25},0) circle (0.02);
        \draw ({\tra + 0.25},-0.1) node{$\overline{y}$};

        \draw[dashed,very thin][gray] ({\tra + 0.45},0.35) -- ({\tra + 0.45}, 0.9);
        \draw[dashed,very thin][gray] ({\tra +0.55},-0.15) -- ({\tra + 0.55}, -0.9);
        \draw ({\tra + 0.5},1) node{$1 + N^{-1/3} - \rho_2$};
        \draw ({\tra + 0.55},-1) node{$1 - N^{-1/3} + \rho_1$};

        \draw ({\tra - 0.55},-0.5) node{$\gamma_{\rho_2}^-$};
        \draw ({\tra - 0.85},0.75) node{$\gamma_{\rho_1}^+$};
        \draw ({\tra + 0.2},0.3) node{$\gamma_{z}$};
        \draw ({\tra + 1},0.3) node{$\gamma_{w}$};

        \draw[->, >=stealth'] ({\tra - 0.45}, -0.5)  to[bend right] ({\tra - 0.3},-0.5);
        \draw[->, >=stealth'] ({\tra - 0.75}, 0.75)  to[bend left] ({\tra - 0.60},0.7);
        \draw[->, >=stealth'] ( {\tra + 1}, 0.2)  to[bend right] ( {\tra +0.8},0);
        \draw[->, >=stealth'] ( {\tra + 0.2}, 0.2)  to[bend right] ( {\tra + 0.4},0);

    \end{tikzpicture} 
    \caption{The left part depicts the contours $\gamma_{\rho_1}^+$ and $\gamma_{\rho_2}^-$ when $\overline{y} = 1$. The right part depicts the contours $\gamma_{\rho_1}^+$ and $\gamma_{\rho_2}^-$ when $\overline{y} < 1$. The figures also depict the contours $\gamma_z,\gamma_w$ from the statement of the lemma, which partially overlap with the contours $\gamma_{\rho_1}^+, \gamma_{\rho_2}^-$, respectively. The parts that do not overlap are drawn in dashed black lines.} 
    \label{S43}
\end{figure}

We observe that the all the functions $H_i^N(z,w)$ are bounded and continuous on $\gamma_{z} \times \gamma_w$, and the only singularity of the integrand in (\ref{PKF3}) comes from $z-w$  in the denominator, which vanishes on $\gamma_{z} \times \gamma_w$ only when $z = w = 1 \pm \im N^{-1/3}$. Arguing as in (\ref{IntSing}), we conclude that $|z-w|^{-1}$ is locally integrable near the points $1 \pm \im N^{-1/3}$ and so the integral in (\ref{PKF3}) is well-defined and finite. The integrals in (\ref{PKF1}) and (\ref{PKF2}) are also well-defined and finite as the integrands are continuous and the contours compact. This shows that the right side of (\ref{CKDefAlt3}) is well-defined. 

Suppose first that $\overline{y} = 1$. In this case, we first deform the $\gamma_w$ contour to $\gamma^-_{\rho_2}$, which does not affect the value of $K_N^3(u,x;v,y)$ by Cauchy's theorem as we do not cross any poles. Subsequently, for each $w \in \gamma^-_{\rho_2}$ we deform the $\gamma_z$ contour to $\gamma^+_{\rho_1}$. Let us denote by $\gamma^-_{\rho_2,1}$ the portion of $\gamma^-_{\rho_2}$ that is to the right of $1$. In words, $\gamma^-_{\rho_2,1}$ consists of three segments connecting $1 - \im N^{-1/3}$ to $1 + N^{-1/3} - \rho_2 - \im \rho_2$, connecting $1 + N^{-1/3} - \rho_2 - \im \rho_2$ to $1 + N^{-1/3} - \rho_2 + \im \rho_2$ and connecting $1 + N^{-1/3} - \rho_2 + \im \rho_2$ to $1 + \im N^{-1/3}$. If $w \not \in \gamma^-_{\rho_2,1}$, then the deformation of $\gamma_z$ to $\gamma^+_{\rho_1}$ does not cross any poles, but if $w \in \gamma^-_{\rho_2,1}$, then the deformation crosses a simple pole at $z = w$. From the residue theorem we conclude that
\begin{equation}\label{Deform1}
\begin{split}
K_N^3(u,x;v,y) = & \frac{1}{(2\pi \im)^2} \oint_{\gamma^+_{\rho_1}}dz \oint_{\gamma^-_{\rho_2}}dw \frac{1}{z-w} \cdot\prod_{i = 1}^7H^N_i(z,w)  \cdot w^y z^{-x -1}  \\
& - \frac{1}{2\pi \im} \int_{\gamma^-_{\rho_2,1}} (1-qw)^{M_v(N) - M_u(N)} \cdot (1-q)^{M_u(N) - M_v(N)} w^{y - x - 1} dw.
\end{split}
\end{equation}
By Cauchy's theorem we see that the second line in (\ref{Deform1}) is precisely $- K_N^1(u,x;v,y)$ and so (\ref{Deform1}) implies (\ref{CKDefAlt3}) when $\overline{y} = 1$.

If we now suppose that $\overline{y} < 1$, then we first deform the $\gamma_z$ contour to $\gamma^+_{\rho_1}$, which does not cross any poles. Afterwards, we deform the $\gamma_w$ contour to $\gamma^-_{\rho_2}$. If we denote by $\gamma^+_{\rho_1,1}$ the portion of $\gamma^+_{\rho_1}$ to the left of $1$, then for $z \not \in \gamma^+_{\rho_1,1}$ the latter deformation crosses no poles. If $z \in   \gamma^+_{\rho_1,1}$, then the latter deformation crosses the simple pole at $w = z$.  From the residue theorem we conclude that
\begin{equation}\label{Deform2}
\begin{split}
K_N^3(u,x;v,y) = & \frac{1}{(2\pi \im)^2} \oint_{\gamma^+_{\rho_1}}dz \oint_{\gamma^-_{\rho_2}}dw \frac{1}{z-w} \cdot\prod_{i = 1}^7H^N_i(z,w)  \cdot w^y z^{-x -1}  \\
&-  \frac{1}{2\pi \im} \int_{\gamma^+_{\rho_2,1}} (1-qz)^{M_v(N) - M_u(N)} \cdot (1-q)^{M_u(N) - M_v(N)} z^{y - x - 1} dz.
\end{split}
\end{equation}
By Cauchy's theorem we see that the second line in (\ref{Deform2}) is precisely $- K_N^1(u,x;v,y)$ and so (\ref{Deform2}) implies (\ref{CKDefAlt3}) when $\overline{y}  < 1$. This suffices for the proof.
\end{proof}

%
%
\subsection{Proof of Theorem \ref{T1}} \label{Section4.3} In the following statement we derive the limits of the kernels $K_N^i(u,x;v,y)$ for $i = 1,2,3$ from Lemma \ref{PrelimitKernel} and use the result to prove Theorem \ref{T1}.

\begin{proposition}\label{LimitKernelProp} Assume the same notation as in Lemma \ref{PrelimitKernel} and set $\delta = N^{-1/12}$. Suppose that $\tilde{x}_N, \tilde{y}_N$ are sequences of reals that converge to $\tilde{x}, \tilde{y} \in \mathbb{R}$ as $N \rightarrow \infty$, and fix $u,v \in \{1, \dots, m\}$. With this data we define
\begin{equation}\label{ScaleXY}
\begin{split}
&x_N = \frac{2q}{1-q} \cdot \tilde{N} + \frac{q t_u}{1- q} \cdot N^{2/3} + \sigma_q \tilde{x}_N \cdot N^{1/3}  \\
&y_N = \frac{2q}{1-q} \cdot \tilde{N} + \frac{q t_v}{1- q} \cdot N^{2/3} + \sigma_q \tilde{y}_N \cdot N^{1/3} .
\end{split}
\end{equation}
Then, we have the following limits
\begin{equation}\label{LimitK1}
\lim_{N \rightarrow \infty} N^{1/3} \sigma_q K^1_N(u, x_N; v, y_N) = \frac{1}{2\pi \im} \int_{-\im \sigma_q}^{\im \sigma_q} dw \exp \left( (\tilde{y} - \tilde{x}) w - f_q(t_v - t_u) w^2 \right);
\end{equation}
\begin{equation}\label{LimitK2}
\lim_{N \rightarrow \infty} N^{1/3}\sigma_q K^2_N(u, x_N; v, y_N) = -   \frac{ {\bf 1}\{t_u > t_v\} }{\sqrt{4 \pi f_q (t_u - t_v)} } \cdot  \exp \left( - \frac{(\tilde{y}- \tilde{x})^2}{4 f_q (t_u - t_v)} \right);
\end{equation}
\begin{equation}\label{LimitK3}
\begin{split}
&\lim_{N \rightarrow \infty} N^{1/3} \sigma_q K^3_N(u, x_N;v, y_N) = \frac{1}{(2\pi \im)^2} \int_{\Gamma^+_{-\sigma_q}} dz \int_{\Gamma^-_{\sigma_q}} dw \hspace{2mm}  e^{ \frac{z^3}{3} - \tilde{x} z + f_q t_u z^2 - \frac{w^3}{3} + \tilde{y} w- f_q t_v w^2 }\\
& \times \frac{1}{z - w} \cdot e^{c^+z + c^-/z - c^+w - c^-/w} \cdot \prod_{i = 1}^{\infty} \frac{(1 - b_i^+w) (1 - b_i^-/w) (1 + a_i^+ z) (1 + a_i^-/z) }{(1 - b_i^+z) (1 - b_i^-/z) (1 + a_i^+w) (1 + a_i^-/w) },
\end{split}
\end{equation}
where we recall from (\ref{SigmaQ}) that $f_q =  \frac{q^{1/3}}{2 (1 + q)^{2/3}}$, and the contours $\Gamma^{\pm}_a$ are as in Definition \ref{DefContInf}. Part of the statement is that the above integrals are all convergent.
\end{proposition}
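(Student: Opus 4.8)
The plan is to establish the three limits in Proposition~\ref{LimitKernelProp} separately, each by substituting the scaled variables $x_N,y_N$ from (\ref{ScaleXY}) into the prelimit formulas (\ref{PKF1}), (\ref{PKF2}), (\ref{PKF3}) and performing a steepest-descent / saddle-point analysis. The common feature of all three is that one passes from contour integrals on the circles/arcs of Definition~\ref{finiteContours} to the Airy-type contours $\Gamma^\pm_{\pm\sigma_q}$ of Definition~\ref{DefContInf}, after rescaling the integration variable near $1$ by $w = 1 + \sigma_q^{-1} N^{-1/3} W$ (and similarly $z = 1 + \sigma_q^{-1}N^{-1/3}Z$), so that $W$ ranges over a contour that converges to $\Gamma^-_{\sigma_q}$, and $Z$ over one converging to $\Gamma^+_{-\sigma_q}$. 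The prefactor $N^{1/3}\sigma_q$ is exactly the Jacobian of this change of variables, so the normalization is forced.

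For (\ref{LimitK1}) and (\ref{LimitK2}): after the substitution, $K^1_N$ becomes $\frac{1}{2\pi\im}\int (1-qw)^{M_v-M_u} w^{y_N-x_N-1}(1-q)^{M_u-M_v}\,dw$ over the tiny vertical segment $[1-\im N^{-1/3},1+\im N^{-1/3}]$, which after rescaling becomes a segment converging to $[-\im\sigma_q,\im\sigma_q]$; expanding $\log$ of the integrand in powers of $N^{-1/3}$ using $M_u(N)=N+\lfloor t_uN^{2/3}\rfloor$ and the definition of $x_N,y_N$ gives the quadratic exponent $(\tilde y-\tilde x)w - f_q(t_v-t_u)w^2$, with all higher-order terms vanishing uniformly on the (bounded, shrinking) contour — this is a dominated-convergence argument once one controls the cubic error term. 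For $K^2_N$, the contour $C_1$ is the full unit circle; the Laplace-type analysis localizes the integral near $w=1$ (where the real part of the relevant exponent is maximized, using $t_u>t_v$ which makes $M_v-M_u$ negative and large), and the Gaussian integral $\frac{1}{2\pi\im}\int_{-\im\infty}^{\im\infty}e^{(\tilde y-\tilde x)w - f_q(t_u-t_v)w^2}\,dw = \frac{1}{\sqrt{4\pi f_q(t_u-t_v)}}e^{-(\tilde y - \tilde x)^2/(4f_q(t_u-t_v))}$ produces (\ref{LimitK2}); the contribution away from $w=1$ must be shown to be exponentially negligible, which requires a lower bound on $|1-qw|$ off a neighborhood of $1$.

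For (\ref{LimitK3}), which is the substantial part: I would analyze the seven factors $H^N_i(z,w)$ one at a time under the rescaling. The factors $H^N_1,\dots,H^N_5$ involve only finitely many (at most $\lfloor N^{1/12}\rfloor$) parameters, each of size $\asymp N^{-1/3}$ or smaller, and under $z=1+\sigma_q^{-1}N^{-1/3}Z$ one checks e.g. $1 - z(1 - (N^{1/3}b_i^+\sigma_q)^{-1}) \to 1 - b_i^{+,-1}(b_i^{+,-1} + \sigma_q^{-1}Z\cdot b_i^+ \cdot \ldots)$, more precisely that $H^N_1 H^N_2 H^N_3$ converges to $\prod_{i\ge1}\frac{(1-b_i^+w)(1-b_i^-/w)(1+a_i^+z)(1+a_i^-/z)}{(1-b_i^+z)(1-b_i^-/z)(1+a_i^+w)(1+a_i^-/w)}$ (the summability $\sum(a_i^\pm+b_i^\pm)<\infty$ and the monotonicity of the parameters let one truncate the infinite products uniformly, using estimates in the spirit of (\ref{RatBound})); $H^N_4$ with $c^-$ contributes $e^{c^-/z - c^-/w}$ in the limit (each factor is $1 + O(N^{-5/12})$ raised to the power $C_N^-\asymp N^{1/12}$, so the exponent tends to $c^-(1/z-1/w)$ type terms — note the power $N^{5/12}$ and count $N^{1/12}$ are calibrated so the product converges); $H^N_5$ with $c^+$ similarly gives $e^{c^+z - c^+w}$. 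The heart of the saddle-point analysis is $H^N_6 = \left(\frac{(1-q/z)(1-qw)}{(1-q/w)(1-qz)}\right)^{\tilde N}$ together with $H^N_7$ and $w^yz^{-x-1}$: here $\tilde N = N - A_N - C_N^+ - C_N^- - D_N \sim N$, and writing the combined exponent as $\tilde N\log\frac{1-q/z}{1-qz} - \tilde N\log\frac{1-q/w}{1-qw} + (\tilde M_v\log(1-qw) - \tilde M_u\log(1-qz)) + y\log w - (x+1)\log z$, one Taylor-expands around $z=w=1$. The point $w=1$ is a \emph{double} critical point of $W\mapsto\log\frac{1-q/w}{1-qw}$ (its first two derivatives at $1$ vanish with the right choice of $\sigma_q$), so the rescaling $w = 1 + \sigma_q^{-1}N^{-1/3}W$ produces exactly a cubic term $\tfrac13 W^3$ from the $\tilde N$-factor (after $\tilde N\cdot N^{-1}\to 1$), a quadratic term $f_q t_v W^2$ from the $\tilde M_v = M_v + A_N - B_N - N \sim t_v N^{2/3}$ factor, and a linear term $-\tilde y W$ from the $y\log w$ factor with $y = y_N$ as in (\ref{ScaleXY}); the constants $\sigma_q, f_q$ in (\ref{SigmaQ}) are precisely what makes the cubic coefficient $1/3$ and the quadratic coefficient $f_q t_v$.

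The main obstacle, and where most of the work in Section~\ref{Section5} must go, is making the saddle-point analysis of (\ref{LimitK3}) rigorous rather than formal: one needs (i) uniform control of the Taylor remainders — the cubic approximation of $\tilde N\log\frac{1-q/w}{1-qw}$ must hold uniformly for $w$ on the portion of $\gamma_w = \gamma^-_{1+N^{-1/3},\delta}$ within an $N^{-1/3+\epsilon}$-neighborhood of $1$, with $\delta = N^{-1/12}$ chosen so that the deformed contour has the correct steepest-descent shape; (ii) a \emph{tail estimate} showing the contribution from $|w-1|,|z-1| \gtrsim N^{-1/3+\epsilon}$ (i.e. away from the critical point, including the circular arcs $\gamma^{\pm,\circ}$) is negligible — this needs a genuine lower bound $\mathrm{Re}\big(\tilde N\log\frac{1-q/w}{1-qw}\big) \le -cN^{3\epsilon}$ or similar on that region, which is the analogue of the decay estimate (\ref{BoundCubicKer}) but at the discrete level and is delicate because the arcs are at distance $\delta = N^{-1/12}$ from $1$, so only the $\tilde N$-scale exponent provides decay there; (iii) dealing with the non-integrable-looking $1/(z-w)$ singularity at $z=w=1\pm\im N^{-1/3}$ — as in the proof of Lemma~\ref{WellDefKer}, this is an integrable singularity of the type (\ref{IntSing}), and one must check the bound survives the limit so that dominated convergence applies and yields the $1/(z-w)$ factor with $z\in\Gamma^+_{-\sigma_q}$, $w\in\Gamma^-_{\sigma_q}$. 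Once these estimates are in place, dominated convergence gives (\ref{LimitK3}); combining the three limits and invoking Lemma~\ref{LemmaSlice}, Proposition~\ref{PropWC1}, and the well-definedness from Lemma~\ref{WellDefKer} then yields Theorem~\ref{T1}, since $K^1 + K^2 + K^3$ is seen (after a trivial affine reparametrization $\tilde x \mapsto \tilde x$, $t_u\mapsto 2f_q t_u$, etc., absorbing $f_q, \sigma_q$) to equal $K_{a,b,c}$ of Definition~\ref{3BPKernelDef}.
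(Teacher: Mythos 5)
Your proposal follows essentially the same route as the paper's Section~\ref{Section5}: rescaling $w=1+N^{-1/3}\sigma_q^{-1}W$ and Taylor expansion with bounded convergence for $K^1_N$, a Laplace/steepest-descent argument on the unit circle (controlled via the real part of the phase, the content of Lemma~\ref{PropertiesOfG}) for $K^2_N$, and for $K^3_N$ first truncating the circular arcs of $\gamma_z,\gamma_w$ so only the straight segments near $1$ remain (Lemma~\ref{LTrunc}), then rescaling $z=1+N^{-1/3}\tilde z$, $w=1+N^{-1/3}\tilde w$ and passing to the limit factor-by-factor with dominated convergence. One small computational slip worth flagging: after the rescaling, each of the $C_N^-\asymp N^{1/12}$ parenthetical factors in $\tilde H^N_4$ is $1+O(N^{-1/12})$ rather than $1+O(N^{-5/12})$ --- the deviation $c^-/(2N^{5/12}\sigma_q)$ sits against $1-z\asymp N^{-1/3}$ and so contributes at order $N^{-1/12}$ --- and this is precisely what yields the nontrivial limit $e^{c^-/(\sigma_q\tilde z)-c^-/(\sigma_q\tilde w)}$; with the bound as you wrote it the product would tend to $1$.
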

\begin{remark}
We mention that in equations (\ref{LimitK1}), (\ref{LimitK2}) and (\ref{LimitK3}) we have that $x_N, y_N$ are reals (and not necessarily integers). The kernels $K^i_N(u, x; v,y)$ are well-defined for any choice of $x,y \in \mathbb{R}$ if we write 
$$w^{y} =  \exp \left( y \cdot \log w \right) \mbox{ and } z^{-x} = \exp \left( - x \cdot \log z \right), $$
where as usual we take the principal branch of the logarithm.
\end{remark}
Proposition \ref{LimitKernelProp} is proved in Section \ref{Section5}. In the remainder of this section we assume the validity of Proposition \ref{LimitKernelProp} and conclude the proof of Theorem \ref{T1}.

\begin{proof}[Proof of Theorem \ref{T1}] Set $t_i = f_q^{-1} \cdot s_i$ and $\mathcal{T} = \{t_1, \dots, t_m\}$. Assume the same notation as in Lemma \ref{PrelimitKernel} with $\delta = N^{-1/12}$. We define 
\begin{equation}\label{S4PVC1}
X_i^{j,N} = \sigma_q^{-1} N^{-1/3} \cdot \left( \lambda_i^{M_j(N)} - \frac{2q \tilde{N} }{1-q} - \frac{qt_j N^{2/3}}{1-q} - i\right) \mbox{ for $j \in \{1, \dots, m\}$ and $i \in \mathbb{N}$.}
\end{equation}
Let $f$ be any piece-wise linear strictly increasing bijection from $\mathbb{R}$ to $\mathbb{R}$ such that $f(j) = s_j$ for $j = 1, \dots, m$. Setting $a_N = \sigma_q^{-1} N^{-1/3} $ and for $t \in \mathbb{R}$
$$b_N(t) = \sigma_q^{-1} N^{-1/3} \cdot \left( - \frac{2q \tilde{N}}{1-q} - \frac{qt  N^{2/3}}{1-q} \right),$$
we define $\phi_N : \mathbb{R}^2 \rightarrow \mathbb{R}^2$ via $\phi_N(t,x) = (f(t), a_N x + b_N(f_q^{-1}f(t)))$. We readily observe that 
$$(s_j, X_i^{j,N}) =  \phi_N(j, \lambda_i^{M_j(N)} - i) \mbox{ for $j \in \{1, \dots, m\}$ and $i \in \mathbb{N}$,}$$
which implies that if $M_X^N$ is the random measure on $\mathbb{R}^2$ formed by $\{(s_j, X_i^{j,N}): 1\leq j \leq m, i \geq 1\}$, then $M_X^N = \mathfrak{S}(\lambda) \phi_N^{-1}$. From Lemma \ref{PrelimitKernel} we know that $\mathfrak{S}(\lambda)$ is a determinantal point process on $\mathbb{R}^2$ with correlation kernel $K_N(u,x; v,y)$ as in (\ref{CKDefAlt}) for all large $N$. Combining the latter with parts (5) and (6) of Proposition \ref{PropLem}, we conclude that for all large $N$ the random measure $M_X^N$ is a determinantal point process on $\mathbb{R}^2$ with correlation kernel 
\begin{equation}\label{S4ScaledKer1}
K^{X}_N(s_i, \tilde{x}_N; s_j, \tilde{y}_N) = N^{1/3} \sigma_q K_N(i, x_N; j, y_N)
\end{equation}
and reference measure given by $\mu_{\mathcal{A}, \nu^N}$. Here, $\mu_{\mathcal{A}, \nu^N}$ is as in Definition \ref{DefSlices} with $\mathcal{T} = \mathcal{A} = \{s_1, \dots, s_m\}$, and $\nu^N = (\nu_{s_1}^N, \dots, \nu_{s_m}^N)$ with $\nu^N_{s}$ being $\sigma_q^{-1/3} N^{-1/3}$ times the counting measure on $a_N \cdot \mathbb{Z} +  b_N(f_q^{-1}s)$. We mention that the relationship between $\tilde{x}_N, \tilde{y}_N$ and $x_N$, $y_N$ is as in (\ref{ScaleXY}).\\

We denote the right sides of (\ref{LimitK1}), (\ref{LimitK2}) and (\ref{LimitK3}) by $K_{\infty}^i(t_u, \tilde{x}; t_v, \tilde{y})$ for $i = 1,2,3$. By a straightforward computation we check for $i = 1,2,3$ that
\begin{equation}\label{MatchingFormula}
K^i_{\infty}(t_u, x_1; t_v, x_2) = e^{   (x_1 f_q t_u +  2f_q^3 t_u^3/3) - (x_2 f_q t_v +  2f_q^3 t_v^3/3) }  \cdot K^i_{a,b,c}(f_q t_v, x_2 + f_q^2 t_v^2; f_q t_u,x_1 + f_q^2 t_u^2).
\end{equation}
To verify (\ref{MatchingFormula}) we will compute the right side and compare with the expressions in (\ref{LimitK1}), (\ref{LimitK2}) and (\ref{LimitK3}). We take in Definition \ref{3BPKernelDef} $\alpha = - f_q t_v - \sigma_q$ and $\beta = - f_q t_u + \sigma_q$. The latter means that the contours $\Gamma_{\alpha + f_q t_v}^+$ and $\Gamma_{\beta + f_q t_u}^-$ have two intersection points at $\pm \im \sigma_q$. From (\ref{3BPKer}) we get 
\begin{equation*}
\begin{split}
&e^{ (x_1 f_q t_u +  2f_q^3 t_u^3/3) - (x_2 f_q t_v +  2f_q^3 t_v^3/3) }  K^1_{a,b,c}\left(f_q t_v, x_2 + f_q^2 t_v^2; f_q t_u,x_1 + f_q^2 t_u^2 \right)  \\
& = \frac{ e^{ (x_1 f_q t_u +  2f_q^3 t_u^3/3) - (x_2 f_q t_v +  2f_q^3 t_v^3/3) }  }{2\pi \im} \int_{- \im \sigma_q}^{\im \sigma_q} \hspace{-2mm} dw e^{f_q(t_u - t_v)w^2 + f_q^2(t_v^2 - t_u^2) w + w (x_1+ f_q^2 t_u^2 - x_2 - f_q^2 t_v^2) }  \\
&\times e^{ (x_2 + f_q^2 t_v^2) f_q t_v -(x_1 + f_q^2 t_u^2) f_q t_u - f_q^3t_v^3/3 + f_q^3 t_u^3/3 }\\
& = \frac{1}{2\pi \im}  \int_{- \im \sigma_q}^{\im \sigma_q}dw \exp \left( f_q(t_u - t_v) w^2 + w(x_1 - x_2) \right) =  \frac{1}{2\pi \im}  \int_{- \im \sigma_q}^{\im \sigma_q}dw \exp \left( f_q(t_u - t_v) w^2 + w(x_2 - x_1) \right) 
\end{split}
\end{equation*}
which matches the right side of (\ref{LimitK1}) when $\tilde{x} = x_1, \tilde{y} = x_2$. In the last equality above we changed variables $w \rightarrow -w$. 

We similarly have that
\begin{equation*}
\begin{split}
&e^{ (x_1 f_q t_u +  2f_q^3 t_u^3/3) - (x_2 f_q t_v +  2f_q^3 t_v^3/3) }   K^2_{a,b,c}\left(f_q t_v, x_2 + f_q^2 t_v^2; f_q t_u,x_1 + f_q^2 t_u^2 \right)   \\
& = - e^{ (x_1 f_q t_u +  2f_q^3 t_u^3/3) - (x_2 f_q t_v +  2f_q^3 t_v^3/3) }  \cdot  \frac{{\bf 1}\{ t_u > t_v\} }{\sqrt{4\pi f_q (t_u - t_v)}} \\
&\times  e^{ - \frac{(x_1 + f_q^2 t_u^2 - x_2 - f_q^2 t_v^2)^2}{4f_q (t_u - t_v)} - \frac{f_q(t_u - t_v)(x_1 + f_q^2t_u^2 + x_2 +  f_q^2 t_v^2)}{2} + \frac{f_q^3(t_v - t_u)^3}{12} } = -  \frac{{\bf 1}\{ t_u > t_v\} }{\sqrt{4\pi f_q (t_u - t_v)}} \cdot e^{ - \frac{(x_2 - x_1)^2}{4 f_q (t_u - t_v)}},
\end{split}
\end{equation*}
which matches the right side of (\ref{LimitK2}) when $\tilde{x} = x_1, \tilde{y} = x_2$.

Finally, we have from (\ref{3BPKer}) upon changing variables $\tilde{z} = z + f_q t_v$ and $\tilde{w} = w + f_q t_u$ that
\begin{equation*}
\begin{split}
&e^{ (x_1 f_q t_u +  2f_q^3 t_u^3/3) - (x_2 f_q t_v +  2f_q^3 t_v^3/3) }   K^3_{a,b,c}\left(f_q t_v, x_2 + f_q^2 t_v^2; f_q t_u,x_1 + f_q^2 t_u^2 \right)   \\
& = e^{ (x_1 f_q t_u +  2f_q^3 t_u^3/3) - (x_2 f_q t_v +  2f_q^3 t_v^3/3) }  \cdot \frac{1}{(2\pi \im)^2} \int_{\Gamma_{-\sigma_q }^+} d \tilde{z} \int_{\Gamma_{\sigma_q}^-} d\tilde{w}  \frac{\Phi_{a,b,c}(\tilde{z}) }{\Phi_{a,b,c}(\tilde{w})} \cdot \frac{1}{\tilde{z} - \tilde{w}} \\
&\times e^{(\tilde{z} - f_q t_v)^3/3 -(x_2 + f_q^2 t_v^2) (\tilde{z} - f_q t_v) - (\tilde{w} - f_q t_u)^3/3 + (x_1 + f_q^2 t_u^2)(\tilde{w} - f_q t_u)}  \\
&  = \frac{1}{(2\pi \im)^2} \int_{\Gamma_{-\sigma_q }^+} d \tilde{z} \int_{\Gamma_{\sigma_q}^-} d\tilde{w} \frac{\Phi_{a,b,c}(\tilde{z}) }{\Phi_{a,b,c}(\tilde{w})} \cdot \frac{e^{\tilde{z}^3/3 - \tilde{z}^2 f_q t_v - \tilde{z} x_2  -\tilde{w}^3/3 + \tilde{w}^2 f_q t_u + \tilde{w} x_1}}{\tilde{z} - \tilde{w}},
\end{split}
\end{equation*}
which matches the right side of (\ref{LimitK3}) when $\tilde{x} = x_1, \tilde{y} = x_2$ once we change variables $w = - \tilde{z}$ and $z = -\tilde{w}$.\\

We are now ready to finish up. From (\ref{S4ScaledKer1}), (\ref{MatchingFormula}) and Proposition \ref{LimitKernelProp} we have for each $A > 0$
\begin{equation}\label{S4VL1}
\lim_{N \rightarrow \infty} \max_{s,t \in \mathcal{A}} \sup_{- A \leq x,y \leq A} \left| K^X_N(s, x; t ,y) -  e^{   (x s +  2s^3/3) - (yt +  2t^3/3) }  K_{a,b,c}(t, y + t^2; s ,x + s^2) \right| = 0.
\end{equation}
By Lemma \ref{WellDefKer} we know that $e^{   (x s +  2s^3/3) - (yt +  2t^3/3) }  K_{a,b,c}(t, y + t^2; s ,x + s^2)$ is continuous in $(x,y) \in \mathbb{R}^2$ for each fixed $s,t \in \mathcal{A}$. One also observes for each $s \in \mathcal{A}$ that $\nu^N_s$ converges vaguely to the Lebesgue measure on $\mathbb{R}$ and so by Proposition \ref{PropWC1} we conclude that there is a determinantal point process $\tilde{M}$ on $\mathbb{R}^2$ with correlation kernel 
$$\tilde{K}(s,x; t,y) = e^{   (x s +  2s^3/3) - (yt +  2t^3/3) }  K_{a,b,c}(t, y + t^2; s ,x + s^2),$$
and reference measure $\mu_{\mathcal{A}} \times \lambda$. Furthermore, $M_X^N$ converge weakly to $\tilde{M}$. We let $\phi(s,x) = (s, x+s^2)$ and set $M = \tilde{M} \phi^{-1}$. From parts (4) and (5) of Proposition \ref{PropLem} we see that $M$ is a determinantal point process on $\mathbb{R}^2$ with correlation kernel $K_{a,b,c}(s, x; t ,y)$ and reference measure $\mu_{\mathcal{A}} \times \lambda$, which concludes the proof.
\end{proof}

%
%
\section{Kernel convergence}\label{Section5} In this section we prove Proposition \ref{LimitKernelProp}. In Section \ref{Section5.1} we prove (\ref{LimitK1}) and in Section \ref{Section5.2} we prove (\ref{LimitK2}). In Section \ref{Section5.3} we explain how one can truncate the contours in the definition of $K^3_N$ near $1$ without affecting the value of the kernel significantly, and prove (\ref{LimitK3}) in Section \ref{Section5.4}. Throughout this section we assume without mention that $N \geq N_0$ and $N^{-1/12} \in (0,1/2]$, where we recall that $N_0$ is as in Definition \ref{ParScale}.

%
%
\subsection{Proof of (\ref{LimitK1})} \label{Section5.1} Throughout this section all constants in the big $O$ notations depend on $q, t_1, \dots, t_m$. In view of (\ref{PKF1}) and (\ref{ScaleXY}) we have that 
\begin{equation}\label{OM1}
\begin{split}
&N^{1/3} \sigma_q K_N^1(u, x_N; v, y_N) = \frac{1}{2\pi \im} \int_{- \im  \sigma_q}^{\im  \sigma_q}d\tilde{w} \exp \left( [M_v(N) - M_u(N)] \cdot F_N(\tilde{w}) \right) \\
&\times \exp \left( \log (1 + N^{-1/3}  \sigma_q^{-1} \tilde{w}) \cdot \left( \frac{q(t_v - t_u) N^{2/3}}{1-q} + \sigma_q N^{1/3} (\tilde{y}_N - \tilde{x}_N) -1 \right) \right),
\end{split}
\end{equation}
where we have performed the change of variables $w = 1 + N^{-1/3} \sigma_q^{-1} \tilde{w}$ and the function $F_N(\tilde{w})$ is 
\begin{equation}\label{OM2}
F_N(\tilde{w}) = \log \left(1 - q \left(1 + N^{-1/3} \sigma_q^{-1} \tilde{w} \right) \right) - \log(1 - q).
\end{equation} 
By straightforward Taylor expansion near $0$ we have that 
$$F_N(\tilde{w}) = - \frac{q}{1-q} \cdot N^{-1/3} \sigma_q^{-1} \tilde{w} - \frac{q^2}{2(1-q)^2} \cdot N^{-2/3} \sigma_q^{-2} \tilde{w}^2 + O(N^{-1}), $$
and also
\begin{equation*}
\begin{split}
&\exp \left( \log (1 + N^{-1/3}  \sigma_q^{-1} \tilde{w}) \cdot \left( \frac{q(t_v - t_u) N^{2/3}}{1-q} + \sigma_q N^{1/3} (\tilde{y}_N - \tilde{x}_N) - 1\right) \right)  \\
& = \exp \left(\frac{q(t_v - t_u)}{1-q} N^{1/3}  \sigma_q^{-1} \tilde{w} + (\tilde{y}_N - \tilde{x}_N) \tilde{w} - \frac{q(t_v - t_u)}{2(1-q)} \sigma_q^{-2} \tilde{w}^2   + O\left( (1 + |\tilde{x}_N - \tilde{y}_N|)N^{-1/3} \right) \right).
\end{split}
\end{equation*}
Combining the latter with $M_v(N) - M_u(N) = N^{2/3} (t_v - t_u) + O(1)$, we see that 
\begin{equation*}
\begin{split}
N^{1/3} \sigma_q K_N^1(u, x_N; v, y_N) = &\frac{1}{2\pi \im} \int_{- \im  \sigma_q}^{\im  \sigma_q}d\tilde{w} \exp \left( (\tilde{y}_N - \tilde{x}_N) \tilde{w} - \frac{q(t_v-t_u)}{2(1-q)^2} \sigma_q^{-2} \tilde{w}^2   \right) \\
& \times \exp \left( O\left( (1 + |\tilde{x}_N - \tilde{y}_N|)N^{-1/3} \right) \right).
\end{split}
\end{equation*}
We may now let $N \rightarrow \infty$ above and conclude (\ref{LimitK1}) from the bounded convergence theorem.

%
%
\subsection{Proof of (\ref{LimitK2})} \label{Section5.2} Throughout this section all constants in the big $O$ notations depend on $q, t_1, \dots, t_m$. The equality in (\ref{LimitK2}) is clear when $v \geq u$ and so we assume that $u > v$ in the sequel. In view of (\ref{PKF2}) and (\ref{ScaleXY}) we have that 
\begin{equation}\label{OM3}
\begin{split}
&N^{1/3} \sigma_q K_N^2(u, x_N; v, y_N) = - \frac{\sigma_q}{2\pi \im} \int_{- \im  \pi  N^{1/3} }^{\im  \pi N^{1/3} }\hspace{-2mm} d\tilde{w} \hspace{2mm} e^{ [{M_v}(N) - {M_u}(N)] G(\tilde{w}N^{-1/3})+ \tilde{w} \sigma_q (\tilde{y}_N - \tilde{x}_N) } \cdot e^{c_N \tilde{w}},
\end{split}
\end{equation}
where we have performed the change of variables $w = \exp (N^{-1/3} \tilde{w})$, and set
\begin{equation}\label{OM4}
\begin{split}
&G(z) = \log(1 - qe^{z}) - \log (1 -q) + \frac{q}{1-q} \cdot z \mbox{ and } \\
& c_N = N^{-1/3} \cdot \frac{q}{1-q} \cdot  \left[t_v N^{2/3} - t_u N^{2/3} - M_v(N) + M_u(N) \right] = O(N^{-1/3}).
\end{split}
\end{equation} 
The following lemma summarizes the relevant properties we require for the function $G(z)$.
\begin{lemma}\label{PropertiesOfG} Let $q \in (0,1)$ and $G(z)$ be as in (\ref{OM4}). There exist $q$-dependent constants $\delta \in (0,1)$ and $C > 0$  such that $G(z)$ is well-defined for $\Real(z) \in [-\delta, \delta]$ and satisfies
\begin{equation}\label{OM5}
\left| G(z) + \frac{q}{2(1-q)^2} \cdot z^2 \right| \leq C |z|^3 \mbox{, provided that } |z| \leq \delta.
\end{equation}
There also exists a $q$-dependent constant $\epsilon > 0$ such that for $x \in [-\pi, \pi]$ we have
\begin{equation}\label{OM6}
\Real G( \im x) \geq \epsilon x^2.
\end{equation}
\end{lemma}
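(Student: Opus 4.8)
The plan is to prove both assertions by direct computation from the closed form of $G$, treating $G$ as an analytic function near the origin for (\ref{OM5}) and evaluating $|1-qe^{\im x}|$ explicitly for (\ref{OM6}).

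\textbf{Well-definedness and (\ref{OM5}).} First I would pin down the domain. Since $1-qe^z=0$ forces $e^z=1/q$, hence $\Real(z)=\log(1/q)$, and since $1-qe^z\in(-\infty,0)$ forces $e^z$ to be real and $>1/q$, hence again $\Real(z)>\log(1/q)$, any $\delta\in\bigl(0,\min(1,\log(1/q))\bigr)$ has the property that $1-qe^z$ lies in $\mathbb{C}\setminus(-\infty,0]$ whenever $\Real(z)\in[-\delta,\delta]$. Thus $G(z)=\log(1-qe^z)-\log(1-q)+\tfrac{q}{1-q}z$ is well-defined and analytic on that strip with the principal branch of $\log$. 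Next, a short computation gives $G(0)=0$, $G'(z)=\tfrac{-qe^z}{1-qe^z}+\tfrac{q}{1-q}$ so $G'(0)=0$, and $G''(z)=\tfrac{-qe^z}{(1-qe^z)^2}$ so $G''(0)=\tfrac{-q}{(1-q)^2}$. Hence $H(z):=G(z)+\tfrac{q}{2(1-q)^2}z^2$ is analytic on the disk $\{|z|\le\delta\}$ and vanishes to order at least $3$ at $0$, so $H(z)/z^3$ extends to an analytic function on that disk and is therefore bounded on $\{|z|\le\delta/2\}$; this yields (\ref{OM5}) (after relabelling $\delta/2$ as $\delta$) with $C$ equal to that bound. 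Alternatively one invokes Taylor's formula with integral remainder for $H$.

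\textbf{Lower bound (\ref{OM6}).} Here the key identity is that, since $\Real\log(\cdot)=\log|\cdot|$ for the principal branch, $\Real G(\im x)=\log|1-qe^{\im x}|-\log(1-q)=\tfrac12\log\tfrac{1-2q\cos x+q^2}{(1-q)^2}$, and writing $1-2q\cos x+q^2=(1-q)^2+2q(1-\cos x)$ this becomes $\Real G(\im x)=\tfrac12\log\!\bigl(1+\tfrac{2q(1-\cos x)}{(1-q)^2}\bigr)$, which is automatically nonnegative. To extract quadratic growth I would use the elementary inequality $1-\cos x\ge\tfrac{2}{\pi^2}x^2$ on $[-\pi,\pi]$ (which follows from $\sin(x/2)\ge x/\pi$ on $[0,\pi]$ by concavity of $\sin$ on $[0,\pi/2]$), giving $\Real G(\im x)\ge\tfrac12\log\!\bigl(1+\tfrac{4q}{\pi^2(1-q)^2}x^2\bigr)$, and then the concavity of $t\mapsto\log(1+t)$ on the interval $[0,\tfrac{4q}{(1-q)^2}]$ containing the values of $\tfrac{4q}{\pi^2(1-q)^2}x^2$ for $|x|\le\pi$, which bounds $\log(1+t)$ below by $\tfrac{\log(1+4q/(1-q)^2)}{4q/(1-q)^2}\,t$. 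Combining the two steps gives (\ref{OM6}) with $\epsilon=\tfrac{1}{\pi^2}\log\tfrac{1+q}{1-q}>0$; as a sanity check this constant is sharp, both chained inequalities being equalities at $x=\pi$.

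Neither half is a serious obstacle, but the point needing the most care is the first one: one must check that $1-qe^z$ avoids the branch cut of the principal logarithm on the whole strip $\Real(z)\in[-\delta,\delta]$, not merely on a small disk about the origin, which is exactly why $\delta$ has to be chosen strictly below $\log(1/q)$. The remaining work is routine — two derivative evaluations for (\ref{OM5}), and the exact value of $|1-qe^{\im x}|^2$ together with two standard convexity estimates for (\ref{OM6}).
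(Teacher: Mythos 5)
Your proof is correct. The well-definedness argument and the Taylor bound (\ref{OM5}) are essentially the same as the paper's (the constraint $\delta < \log(1/q)$ is exactly the paper's $e^\delta < q^{-1}$, and you both read $G(0)=G'(0)=0$, $G''(0)=-q/(1-q)^2$ off the explicit derivatives). The interesting divergence is in (\ref{OM6}). The paper first differentiates $\Real G(\im x)$ in $x$ to show it is increasing on $(0,\pi)$ and decreasing on $(-\pi,0)$, then uses the already-established Taylor estimate (\ref{OM5}) to get a quadratic lower bound on a small interval $|x|\le\epsilon_1$, and finally transports that bound to all of $[-\pi,\pi]$ via the monotonicity (comparing $x$ to the rescaled point $(\epsilon_1/\pi)x$). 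You instead rewrite $1-2q\cos x+q^2=(1-q)^2+2q(1-\cos x)$ so that $\Real G(\im x)=\tfrac12\log\bigl(1+\tfrac{2q(1-\cos x)}{(1-q)^2}\bigr)$, and then chain the two elementary inequalities $1-\cos x\ge\tfrac{2}{\pi^2}x^2$ and $\log(1+t)\ge\tfrac{\log(1+T)}{T}\,t$ for $t\in[0,T]$. The paper's route is slightly slicker in that it recycles (\ref{OM5}) and never needs to name a constant; your route is self-contained (it does not depend on the first half of the lemma at all), yields an explicit and in fact sharp constant $\epsilon=\tfrac{1}{\pi^2}\log\tfrac{1+q}{1-q}$ with equality at $x=\pm\pi$, and avoids the small extra computation of $\tfrac{d}{dx}\Real G(\im x)$. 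Both arguments are sound; yours is arguably the cleaner one to read, though it buys nothing essential for the downstream use of the lemma, where only the existence of some $\epsilon>0$ matters.
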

\begin{proof} Note that $G$ is well-defined provided that $|qe^{z}| < 1$, which can be ensured if $\delta \in (0,1) $ is chosen small enough so that $e^{\delta} < q^{-1}$ and $\Real(z) \in [-\delta, \delta]$. By a direct computation we have 
$$G'(z) = -\frac{q e^z}{1 - q e^z} + \frac{q}{1-q} = \frac{1}{1-q} - \frac{1}{1 - qe^z} \mbox{ and } G''(z) =  \frac{-q e^z}{(1 - qe^z)^2}.$$ 
In particular, we see that $G(0) = G'(0) = 0$ and $G''(0) = - \frac{q}{(1-q)^2}$, which implies (\ref{OM5}) if we take $C$ sufficiently large.

Observe that $\Real G(\im x) = (1/2) \log ( 1 +q^2 - 2q \cos(x))  - \log (1-q)$ and so
$$\frac{d}{dx} \Real G(\im x) = \frac{q \sin(x) }{1 + q^2 - 2q \cos(x)}.$$
We conclude that $\Real G(\im x)$ increases on $(0, \pi)$ and decreases on $(-\pi, 0)$. I.e. $\Real G(\im x)$ increases as $x$ moves away from $0$ either in the positive or negative direction. 

From (\ref{OM5}) we can find a positive constant $\epsilon_1 \in (0,\delta)$ such that for $|x| \leq \epsilon_1$ we have 
$$\left| \Real G(\im x) - \frac{qx^2 }{2(1-q)^2} \right| \leq C|x|^3 \leq C \epsilon_1 \cdot x^2 <\frac{q}{4(1-q)^2} \cdot x^2 .$$
The latter shows that for $|x| \leq \epsilon_1$ we have
$$\Real G(\im x) \geq \frac{q}{4(1-q)^2} \cdot x^2$$
On the other hand, if $x \in [-\pi, \pi]$, then $y = \frac{\epsilon_1}{\pi} \cdot x$ satisfies $|y| \leq \epsilon_1$ and $|x| \geq |y|$. The monotonicity of $\Real G(\im x)$ now gives
$$\Real G(\im x) \geq \Real G(\im y) \geq \frac{q}{4(1-q)^2} \cdot y^2 =  \frac{q}{4(1-q)^2} \cdot (\epsilon_1/\pi)^2 \cdot x^2,$$
which establishes (\ref{OM6}) with $\epsilon = \frac{q}{4(1-q)^2} \cdot (\epsilon_1/\pi)^2$. 
\end{proof}

We now turn to analyzing the limit of (\ref{OM3}) as $N \rightarrow \infty$. It follows from (\ref{OM5}) and the fact that $M_v(N) - M_u(N) = (t_v- t_u)N^{2/3} + O (1)$ that for each $\tilde{w} \in \mathbb{C}$
$$\lim_{N \rightarrow \infty} e^{ [{M_v}(N) - {M_u}(N)] G(\tilde{w}N^{-1/3})+ \tilde{w} \sigma_q (\tilde{y}_N - \tilde{x}_N) } \cdot e^{c_N \tilde{w}} = \exp \left( - \frac{ q (t_v - t_u) }{2 (1-q)^2}\cdot \tilde{w}^2 + \tilde{w} \sigma_q (\tilde{y} - \tilde{x}) \right).$$
We can find a constant $C_1 > 0$ such that $|\tilde{x}_N| \leq C_1$ and $|\tilde{y}_N| \leq C_1$ for all $N \in \mathbb{N}$. Combining the latter with (\ref{OM6}) we conclude that for some $C_2 > 0$ and all large $N \in \mathbb{N}$ we have
$$\left| e^{ [{M_v}(N) - {M_u}(N)] G(\tilde{w}N^{-1/3})+ \tilde{w} \sigma_q (\tilde{y}_N - \tilde{x}_N) } \cdot e^{c_N \tilde{w}}  \right| \leq C_2 \cdot \exp \left( 2C_1 \sigma_q |\tilde{w}| - \epsilon (t_u - t_v) |\tilde{w}|^2  \right),$$
where we used that $t_u > t_v$ (which was assumed in the beginning of the section). From the dominated convergence theorem with dominating function $C_2 \cdot \exp \left( 2C_1 \sigma_q |\tilde{w}| - \epsilon (t_u - t_v) |\tilde{w}|^2  \right)$ we get 
\begin{equation}\label{OM7}
\begin{split}
\lim_{N \rightarrow \infty} N^{1/3}\sigma_q  K_N^2(u, x_N; v, y_N) =- \frac{\sigma_q}{2\pi \im} \int_{- \im  \infty }^{\im  \infty }\hspace{-2mm} d\tilde{w} \hspace{2mm} \exp \left( \frac{q (t_u - t_v) }{2(1-q)^2}\cdot \tilde{w}^2 + \tilde{w} \sigma_q (\tilde{y} - \tilde{x})\right) .
\end{split}
\end{equation}
Changing variables $ \tilde{w} = \im w$ we recognize above the characteristic function of a Gaussian random variable, which precisely evaluates to the right side of (\ref{LimitK2}).

%
%
\subsection{Contour truncation} \label{Section5.3} Let us denote $\gamma_{z,N}$ the contour $\gamma_{a, \delta}^{+, \vert}$ from Definition \ref{finiteContours} with $a = 1 - N^{-1/3}$ and $\delta = N^{-1/12}$. We also let $\gamma_{w,N}$ denote the contour $\gamma_{a,\delta}^{-, \vert}$ from Definition \ref{finiteContours} with $a = 1 + N^{-1/3}$ and $\delta = N^{-1/12}$. In Lemma \ref{LTrunc} below we show that if we replace the contours $\gamma_z$ and $\gamma_w$ in the definition of $K_N^3(u, x_N; v, y_N)$ from (\ref{PKF3}) with the contours $\gamma_{z,N}$ and $\gamma_{w,N}$ as above, then the integral remains asymptotically the same as $N \rightarrow \infty$. What this means is that we can truncate the contours $\gamma_z$, $\gamma_w$ near the point $1$ without affecting the asymptotics of the kernel. We require the following result for the proof of Lemma \ref{LTrunc}.
\begin{lemma}\label{TaylorS} Fix $q \in (0,1)$ and define the function 
\begin{equation}\label{DefS}
S(z) = \log (1 - q/z) - \log(1 - qz) - \frac{2q}{1-q} \cdot \log z
\end{equation}
for $z \in \mathbb{C} \setminus \{0\}$, where we take the principal branch of the logarithm. There exists $\delta_0 \in (0, 1/2)$ and $C_0 > 0$ (depending on $q$) such that 
\begin{equation}\label{EqTayS}
\left| S(z) - \sigma_q^3 (z-1)^3/3 \right| \leq C_0 \cdot |z-1|^4 \mbox{ if } |z-1| \leq \delta_0.
\end{equation}
Furthermore, we have the following inequalities
\begin{equation}\label{RealS}
\begin{split}
&\frac{d}{d\theta} \Real S(r e^{ \pm \im \theta}) > 0 \mbox{ for } \theta \in (0, \pi) \mbox{ and } r \in (0,1) \\
& \frac{d}{d\theta} \Real S(r e^{ \pm \im \theta}) < 0 \mbox{ for } \theta \in (0, \pi) \mbox{ and } r > 1.
\end{split}
\end{equation}
\end{lemma}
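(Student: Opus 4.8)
The plan is to treat the Taylor expansion (\ref{EqTayS}) and the monotonicity statements (\ref{RealS}) separately, since they require rather different computations. For (\ref{EqTayS}), I would first compute the derivatives of $S(z)$ at $z = 1$. From (\ref{DefS}),
\[
S'(z) = \frac{q/z^2}{1 - q/z} + \frac{q}{1 - qz} - \frac{2q}{(1-q) z} = \frac{q}{z(z - q)} + \frac{q}{1 - qz} - \frac{2q}{(1-q) z}.
\]
A direct computation should give $S(1) = S'(1) = S''(1) = 0$ and $S'''(1) = 2\sigma_q^3$, where $\sigma_q = q^{1/3}(1+q)^{1/3}/(1-q)$ as in (\ref{SigmaQ}); the vanishing of the lower-order terms is exactly the standard ``critical point of order three'' phenomenon underlying the Airy scaling. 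Then (\ref{EqTayS}) follows from Taylor's theorem with remainder: since $S$ is holomorphic in a fixed neighborhood of $z = 1$ (the singularities of the three logarithmic terms are at $0$, $1/q$, and $q$, all bounded away from $1$), one picks $\delta_0 \in (0,1/2)$ small enough that the closed disk $\{|z-1| \le \delta_0\}$ avoids those singularities, and then $C_0$ can be taken to be a bound on $|S^{(4)}|$ over that disk divided by $4!$.

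For the monotonicity inequalities (\ref{RealS}), I would parametrize $z = r e^{\im\theta}$ with $r$ fixed and differentiate the real part in $\theta$. Writing $\log(1 - q/z) - \log(1 - qz) - \tfrac{2q}{1-q}\log z$ and using $\Real \log w = \tfrac12 \log|w|^2$, one gets
\[
\Real S(r e^{\im\theta}) = \tfrac12 \log\!\big(1 + q^2/r^2 - 2(q/r)\cos\theta\big) - \tfrac12 \log\!\big(1 + q^2 r^2 - 2 q r \cos\theta\big) - \tfrac{2q}{1-q}\log r,
\]
and the last term drops out upon $\theta$-differentiation. Differentiating,
\[
\frac{d}{d\theta}\Real S(r e^{\im\theta}) = \frac{(q/r)\sin\theta}{1 + q^2/r^2 - 2(q/r)\cos\theta} - \frac{q r \sin\theta}{1 + q^2 r^2 - 2 q r \cos\theta}.
\]
Factoring out $q \sin\theta > 0$ for $\theta \in (0,\pi)$, the sign is governed by the difference of the two fractions with numerators $1/r$ and $r$; a short manipulation (clearing denominators, which are positive) should reduce the sign of this difference to the sign of $(1-r)$ times a manifestly positive quantity, giving the claimed positive sign for $r \in (0,1)$ and negative sign for $r > 1$. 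The case $z = r e^{-\im\theta}$ is identical by conjugation symmetry, since $\Real S(\bar z) = \Real S(z)$.

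The main obstacle I anticipate is the bookkeeping in verifying $S''(1) = 0$ and extracting the precise constant $S'''(1) = 2\sigma_q^3$: this requires carefully combining the three rational pieces of $S'(z)$ over a common denominator and expanding to third order at $z=1$, and it is easy to make a sign or factor error. Matching the answer against $\sigma_q^3 = q(1+q)/(1-q)^3$ is a useful consistency check. The monotonicity part, by contrast, is routine once the derivative is written down, since everything reduces to comparing $1/r$ with $r$ against strictly positive denominators; the only care needed is to confirm the denominators $1 + q^2/r^2 - 2(q/r)\cos\theta$ and $1 + q^2 r^2 - 2qr\cos\theta$ are positive, which holds because each is $|1 - q/z|^2$ (resp. $|1 - qz|^2$) and $q/r \ne e^{\pm\im\theta}$, $qr \ne e^{\pm\im\theta}$ for the relevant ranges — indeed $qr < r \le 1$ when $r\le 1$ is false in general, so one instead notes $qr \ne 1$ and $q/r \ne 1$ unless $r = q$ or $r = 1/q$ with $\theta = 0$, which are excluded since $\theta \in (0,\pi)$.
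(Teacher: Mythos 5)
Your proposal is correct and follows essentially the same route as the paper's proof: you compute $S'$, $S''$, $S'''$ at $z=1$ to identify the degree-three critical point with $S'''(1)=2\sigma_q^3$, invoke Taylor's theorem with holomorphy of $S$ on a small disk around $1$ (all singularities being at $0$, $q$, $1/q$), and for the monotonicity you write out $\Real S(re^{\pm\im\theta})$, differentiate in $\theta$, and reduce the sign to that of $r^{-1}-r$; clearing denominators indeed yields the factor $q(1-q^2)(r^{-1}-r)\sin\theta$ over a manifestly positive product. The only minor quibble is the somewhat garbled final parenthetical about $qr<r\le1$, but your intended (and correct) point — that strict positivity of the denominators $|1-q/z|^2$ and $|1-qz|^2$ follows from $\theta\in(0,\pi)$ excluding the real branch points $z=q$ and $z=1/q$ — comes through.
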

\begin{proof} By a direct computation we have
\begin{equation}\label{SDer}
\begin{split}
& S'(z) = \frac{q}{z^2 - qz} + \frac{q}{1-qz} - \frac{2q}{(1-q) z}, \hspace{2mm} S''(z) = \frac{-q(2z - q)}{(z^2 - qz)^2} + \frac{q^2}{(1-qz)^2} + \frac{2q}{(1-q)z^2},  \\
&S'''(z) = \frac{-2q(z^2 - qz)^2 + 2q(2z - q)^2(z^2 -qz) }{(z^2 - qz)^4} + \frac{2 q^3}{(1-qz)^3} - \frac{4q}{(1-q)z^3}.
\end{split}
\end{equation}
This gives $S(1) = S'(1) = S''(1) = 0$ and 
$$S'''(1) = \frac{-2q(1-q) + 2q (2-q)^2 + 2q^3 - 4q (1-q)^2}{(1-q)^3} = \frac{2 q(1+q)}{(1-q)^3} = 2 \sigma_q^3.$$
Taylor expanding $S(z)$ near $z =1$ we conclude that there are $\delta_0, C_0$ satisfying (\ref{EqTayS}).\\

In the remainder we prove (\ref{RealS}). Fix $\epsilon \in \{-1, 1\}$ and note that
$$\Real S(re^{\epsilon \theta}) = \frac{1}{2} \log (1 + q^2 r^{-2} -2qr^{-1} \cos (\theta) ) - \frac{1}{2} \log(1  + q^2 r^2 -2qr \cos(\theta)) - \frac{2q}{1-q} \log r.$$
This gives
\begin{equation*}
\begin{split}
&\frac{d}{d\theta} \Real S(re^{\epsilon \theta}) = \frac{q r^{-1} \sin(\theta) }{1 + q^2r^{-2} - 2q r^{-1} \cos(\theta) } - \frac{qr \sin(\theta)}{1 + q^2 r^2 - 2qr \cos(\theta)} \\
& = \frac{\sin(\theta) q (1-q^2) (r^{-1} - r)}{[1 + q^2r^{-2} - 2q r^{-1} \cos(\theta)] [1 + q^2 r^2 - 2qr \cos(\theta)]},
\end{split}
\end{equation*}
which clearly implies (\ref{RealS}).
\end{proof}

\begin{lemma}\label{LTrunc} Assume the same notation as in Proposition \ref{LimitKernelProp} and suppose that $\gamma_{z,N}$ and $\gamma_{w,N}$ are as in the beginning of this section. Then
\begin{equation}\label{TruncEq}
\lim_{N \rightarrow \infty} N^{1/3} K_N^3(u, x_N; v ,y_N) -  \frac{N^{1/3}}{(2\pi \im)^2} \int_{\gamma_{z,N}}dz  \int_{\gamma_{w,N}}dw \frac{1}{z-w} \cdot \prod_{i = 1}^7 H_i^N(z,w) \cdot w^{y_N} z^{-x_N-1} = 0.
\end{equation}
\end{lemma}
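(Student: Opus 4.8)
The plan is to show that the difference in \eqref{TruncEq} is of order $o(N^{-1/3})$ by bounding the contribution of the ``truncated'' pieces of the contours, i.e. the parts of $\gamma_z = \gamma^+_{1-N^{-1/3},\delta}$ and $\gamma_w = \gamma^-_{1+N^{-1/3},\delta}$ that are \emph{not} the straight vertical segments $\gamma_{z,N}$, $\gamma_{w,N}$. With $\delta = N^{-1/12}$, these are the circular arcs $\gamma^{\pm,\circ}$ together with the two short slanted segments joining $1 \mp N^{-1/3} \pm \mathsf{i} N^{-1/3}$ to $1 \mp N^{-1/3} \pm \mathsf{i}\delta$. Write the integrand in \eqref{PKF3} as $\frac{1}{z-w}\cdot \prod_{i=1}^7 H_i^N(z,w) \cdot w^{y_N} z^{-x_N-1}$ and split the double integral over $\gamma_z\times\gamma_w$ into four pieces according to whether each variable lies on the vertical segment or on the truncated part. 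It suffices to show that the three pieces involving at least one truncated variable are $o(N^{-4/3})$, since we then multiply by $N^{1/3}$.

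The key estimate is to control $w^{y_N} z^{-x_N-1}\cdot H_6^N(z,w)$, which carries the dominant exponential behavior. Substituting \eqref{ScaleXY} and the definition of $\tilde N$, one finds that on the contours this product equals, up to lower-order factors, $\exp\big(\tilde N\, S(z) - \tilde N\, S(w)\big)$ times a bounded prefactor coming from the $N^{2/3}$- and $N^{1/3}$-scale terms, where $S$ is the function in \eqref{DefS} of Lemma \ref{TaylorS}; here I would make precise the bookkeeping identity $y_N \log w - (x_N+1)\log z + \tilde N\log\frac{(1-q/z)(1-qw)}{(1-q/w)(1-qz)} = \tilde N S(z) - \tilde N S(w) + (\text{terms of size } O(N^{2/3}|z-1| + N^{1/3}))$, which is routine Taylor expansion. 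By \eqref{RealS}, $\Real S(re^{\pm\mathsf{i}\theta})$ is strictly increasing in $\theta\in(0,\pi)$ for $r<1$ and strictly decreasing for $r>1$; since on $\gamma_z$ we have $|z|<1$ away from a neighborhood of $1$ and on $\gamma_w$ we have $|w|>1$ away from a neighborhood of $1$, the real part of $\tilde N(S(z) - S(w))$ is maximized at the ``entry points'' where $|z-1|$ and $|w-1|$ are of order $N^{-1/12}$. Combining this monotonicity with the local cubic estimate \eqref{EqTayS} — which gives $\Real S(z) \le -c\,|z-1|^3 \le -c\, N^{-1/4}$ on the entry segments and $\Real S(z) \le -c'$ on the arcs — shows that $|w^{y_N}z^{-x_N-1}H_6^N(z,w)|$ is bounded by $\exp(-c\, \tilde N N^{-1/4}) = \exp(-c'' N^{3/4})$ on the truncated portions (using $\tilde N = N + O(N^{11/12})$). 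This super-polynomial decay beats every other factor.

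The remaining factors are handled crudely: $H_1^N,\dots,H_5^N$ and $H_7^N$ are products of at most $O(N^{1/12})$ ratios, each of which is bounded uniformly on the (compact) contours by a constant close to $1$ since the parameters entering them are all $O(N^{-1/3})$ or smaller and the contours stay a fixed distance $N^{-1/3}$ from $1$; hence the whole product is at most $e^{O(N^{1/12}\log N)}$, which is negligible against $e^{-c'' N^{3/4}}$. The factor $(z-w)^{-1}$ is bounded by $O(N^{1/3})$ away from the common point $1$ (the only place it can be large), and near $1$ the earlier argument in the proof of Lemma \ref{PrelimitKernel} (cf.\ \eqref{IntSing}) shows $|z-w|^{-1}$ is locally integrable; in all cases this contributes at most a polynomial factor in $N$. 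Finally the contour lengths are $O(1)$. Putting these together, each truncated piece is bounded by $N^{C}\exp(-c'' N^{3/4}) = o(N^{-4/3})$, and multiplying by $N^{1/3}$ yields \eqref{TruncEq}.

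\textbf{Main obstacle.} The delicate point is the bookkeeping that isolates $\tilde N(S(z)-S(w))$ as the governing exponent while verifying that the leftover terms (of apparent size $N^{2/3}|z-1|$, which is as large as $N^{2/3}\cdot N^{-1/12} = N^{7/12}$ on the entry segments, or even $N^{2/3}$ on the arcs) are genuinely dominated — this requires combining them with the $\Real S$ bound and checking the constants line up, i.e.\ that $c''N^{3/4}$ really does overwhelm the $O(N^{7/12})$ fluctuations; equivalently one may need to track the next-order term $f_q t_u z^2$ in the expansion (as in \eqref{LimitK3}) and absorb it into a shifted contour argument. The monotonicity input \eqref{RealS} is exactly what makes this possible without a stationary-phase analysis, but applying it requires care that the truncated contours genuinely have $|z|$ bounded below $1$ (resp.\ $|w|$ bounded above $1$) by the right amount, which is where the specific geometry of $\gamma^{\pm,\circ}$ and the choice $\delta = N^{-1/12}$ enter.
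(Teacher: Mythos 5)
Your overall strategy matches the paper's: split the double integral according to whether each variable lies on $\gamma_{z,N}$/$\gamma_{w,N}$ or on the remainder, bound every factor other than $\exp(\tilde N[S(z)-S(w)])$ crudely by $e^{O(N^{2/3})}$ (using contour boundedness and the $O(N^{1/12})$ count of ratios in $H_1^N,\dots,H_5^N$), and use \eqref{EqTayS} with the monotonicity \eqref{RealS} to get $\Real\bigl(\tilde N[S(z)-S(w)]\bigr)\le -cN^{3/4}$ on the truncated pieces, which overwhelms everything else. Two points deserve attention, though.

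You misread the contours. By Definition \ref{finiteContours}, $\gamma_{a,\delta}^{+,\vert}$ is the pair of $45^{\circ}$ segments from $a$ to $a+\delta\pm\im\delta$ (the superscript $\vert$ means ``the straight part,'' not ``vertical''). Thus $\gamma_{z,N}=\gamma^{+,\vert}_{1-N^{-1/3},N^{-1/12}}$ consists of the two slanted segments of length $\sqrt{2}N^{-1/12}$ emanating from the vertex $1-N^{-1/3}$, and the truncated part $\gamma_z\setminus\gamma_{z,N}$ is \emph{only} the circular arc $\gamma^{+,\circ}$. Your description of the truncated part as including segments joining $1\mp N^{-1/3}\pm\im N^{-1/3}$ to $1\mp N^{-1/3}\pm\im\delta$ would force it to approach the vertex, where $|z-1|\asymp N^{-1/3}$ and so $\Real S(z)\asymp -N^{-1}$, giving $\tilde N\,\Real S(z)=O(1)$. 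There the claimed $\exp(-c''N^{3/4})$ bound fails and your estimate collapses. With the correct decomposition (arcs only), the monotonicity \eqref{RealS} yields $\Real S(z)\le \Real S(1-N^{-1/3}+N^{-1/12}+\im N^{-1/12})\le -cN^{-1/4}$ uniformly on the entire arc, and the bound goes through as you intended.

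The ``main obstacle'' you flag is not actually present. Given the precise form of $x_N,y_N$ in \eqref{ScaleXY}, the bookkeeping identity is exact, not approximate: with $S$ as in \eqref{DefS} and $H_8^N$ as in \eqref{H8Def}, one verifies directly that
\begin{equation*}
w^{y_N}z^{-x_N-1}\,H_6^N(z,w)=\exp\!\bigl(\tilde N[S(z)-S(w)]\bigr)\cdot H_8^N(z,w)\cdot\exp\!\bigl(\sigma_q\tilde y_N N^{1/3}\log w-\sigma_q\tilde x_N N^{1/3}\log z\bigr)
\end{equation*}
with no remainder. This is exactly what the $\tfrac{2q\tilde N}{1-q}$ term in \eqref{ScaleXY} is engineered for. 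There are therefore no ``terms of apparent size $O(N^{2/3}|z-1|+N^{1/3})$'' to absorb; the $N^{2/3}$-scale piece is cleanly isolated as $H_8^N$ and bounded by $\exp(cN^{2/3})$ on the bounded contours, and similarly the $N^{1/3}$-scale piece. No shifted-contour argument or tracking of the $f_qt_u z^2$ term is required for the truncation lemma.
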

\begin{proof} Let us denote $\gamma_w^0 = \gamma_{w,N}$, $\gamma_w^1 = \gamma_w \setminus \gamma_{w,N}$ and similarly $\gamma_z^0 = \gamma_{z,N}$, $\gamma_z^1 = \gamma_z \setminus \gamma_{z,N}$. Using the formula for $K_N^3(u, x_N; v ,y_N)$ from (\ref{PKF3}) with $x_N, y_N$ scaled as in (\ref{ScaleXY}), and $\delta = N^{-1/12}$ we get
\begin{equation}\label{TE1}
\begin{split}
&K_N^3(u, x_N; v ,y_N) -  \frac{1}{(2\pi \im)^2} \int_{\gamma_{z,N}}dz  \int_{\gamma_{w,N}}dw \frac{1}{z-w} \cdot \prod_{i = 1}^7 H_i^N(z,w) \cdot w^{y_N} z^{-x_N-1}   \\
& = K_N^{3,0,1}(u, x_N; v ,y_N) + K_N^{3,1,0}(u, x_N; v ,y_N) + K_N^{3,1,1}(u, x_N; v ,y_N), 
\end{split}
\end{equation}
where for $j_1, j_2 \in \{0,1\}$
\begin{equation}\label{TE1.5}
\begin{split}
&K_N^{3,j_1,j_2}(u, x_N; v ,y_N) \int_{\gamma^{j_1}_z}dz \int_{\gamma^{j_2}_w } dw  \cdot \frac{1}{z-w}\prod_{i = 1}^5 H_i^N(z,w) \cdot H^N_7(z,w) \cdot H^N_8(z,w) \\
& \times \exp \left(\log w \cdot \sigma_q \tilde{y}_N \cdot N^{1/3} -  \log z \cdot \sigma_q \tilde{x}_N \cdot N^{1/3}  \right) \cdot \exp \left(\tilde{N} \cdot [S(z) - S(w)] \right).
\end{split}
\end{equation}
where $S(z)$ is as in (\ref{DefS}), and $H_8(z,w)$ is given by
\begin{equation}\label{H8Def}
H^N_8(z,w) = \exp \left( \log w \cdot \frac{qt_v}{1-q} \cdot N^{2/3} - \log z \cdot \frac{qt_u}{1- q}\cdot N^{2/3} - \log z\right).
\end{equation}

Suppose that $z,w \in \mathbb{C}$ are such that $|z|, |w| \in [1/2, 2]$, $x \in [q,1]$ and there is a positive constant $\tilde{\delta} > 0$ such that $|z - x^{-1}| \geq \tilde{\delta} N^{-1/3}$ and $|w - x| \geq \tilde{\delta} N^{-1/3}$. We then have that 
\begin{equation}\label{RatBound1}
\left|\frac{1 - w x}{1 - z x} \right| = O(N^{1/3}) \mbox{ and } \left| \frac{1 - x/z}{1 - x/w} \right| = O(N^{1/3}),
\end{equation}
where the constant in the big $O$ notation depends on $q, \tilde{\delta}$. 

We observe from the definition of $\gamma_z, \gamma_w$ that if $z \in \gamma_z \cup \gamma_w$, then $1 - N^{-1/12} \leq |z| \leq 1 +N^{-1/12}$ and so for large $N$ we have $|z| \in [1/2, 2]$. In addition, we note that $\gamma_z$ is at least distance $2^{-1/2} N^{-1/3}$ away from $[1, q^{-1}]$ and $\gamma_w$ is at least distance $2^{-1/2} N^{-1/3}$ away from $[q,1]$. In particular, the estimates in (\ref{RatBound1}) hold for all $z \in \gamma_z$ and $w \in \gamma_w$ and $x \in [q,1]$. Since $H^N_i(z,w)$ are products of $O(N^{1/12})$ terms of the form (\ref{RatBound1}) for $i = 1, \dots, 5$, we conclude that there is a constant $c_0  > 0$ such that for $z \in \gamma_z$ and $w \in \gamma_w$ we have
\begin{equation}\label{Tr1}
\left| \prod_{i = 1}^5 H_i^N(z,w) \right| \leq \exp \left(c_0  \cdot N^{1/12} \cdot \log N  \right).
\end{equation}

We next note from the boundedness of the contours $\gamma_z, \gamma_w$ that there are constants $c_1 > 0$ (depending on $q$, and the sequences $\{\tilde{x}_n\}_{n \geq 1}$, $\{\tilde{y}_n\}_{n \geq 1}$) and $c_2 > 0$ (depending on $q$ and $t_1, \dots, t_m$) such that for $z \in \gamma_z$ and $w \in \gamma_w$ we have
\begin{equation}\label{Tr2}
\begin{split}
&\left| \exp \left(\log w \cdot \sigma_q \tilde{y}_N \cdot N^{1/3} -  \log z \cdot \sigma_q \tilde{x}_N \cdot N^{1/3}\right)  \right| \leq \exp \left( c_1 \cdot N^{1/3} \right) \mbox{ and }\\
&\left| H^N_8(z,w)  \right| \leq \exp \left( c_2 \cdot N^{2/3} \right).
\end{split}
\end{equation}
Since $\gamma_z$ and $\gamma_w$ are bounded contours and are also bounded away from $q^{-1}$ we have
\begin{equation}\label{Tr3}
\begin{split}
&\left| H^N_7(z,w)  \right| \leq \exp \left( c_3 \cdot N^{2/3} \right),
\end{split}
\end{equation}
for some $c_3 > 0$ (depending on $q$ and $t_1, \dots, t_m$) provided that $z \in \gamma_z$ and $w \in \gamma_w$. Notice that for $(z,w) \in \gamma_z \times \gamma_w \setminus\gamma_z^0 \times  \gamma_w^0$ we have $|z-w| \geq \rho N^{-1/12}$ for some universal $\rho > 0$ and so for such $z,w$
\begin{equation}\label{Tr4}
\begin{split}
&\left| \frac{1}{z - w} \right| \leq \exp \left( c_4 \cdot \log N \right),
\end{split}
\end{equation}
where $c_4$ is a universal constant.

We finally focus on the term $\exp \left(\tilde{N} \cdot [S(z) - S(w)] \right)$. We have from (\ref{EqTayS}) that for all large $N$
\begin{equation}\label{RealSB}
\begin{split}
&\Real S(z) \leq C_0 |z-1|^4 + (\sigma_q^3/3) \Real (z-1)^3 \leq c_5 N^{-1} - (\sigma_q^3/24)  |z-1|^3 \mbox{ if } z \in \gamma_z^0\\
&\Real S(w) \geq -C_0 |z-1|^4 + (\sigma_q^3/3) \Real (w-1)^3 \geq - c_5 N^{-1} + (\sigma_q^3/24)  |w-1|^3 \mbox{ if } w \in \gamma_w^0,
\end{split}
\end{equation}
for some $c_5 > 0 $ that depends on $q$. The latter and (\ref{RealS}) also gives
\begin{equation*}
\begin{split}
&\Real S(z) \leq \Real S(1 - N^{-1/3} + N^{-1/12} + \im N^{-1/12}) \leq c_5N^{-1} -  (\sigma_q^3/24) N^{-1/4} \mbox{ if } z \in \gamma_{z}^1 \\
&\Real S(w) \geq \Real S(1 + N^{-1/3} - N^{-1/12} - \im N^{-1/12}) \geq - c_5N^{-1} +  (\sigma_q^3/24) N^{-1/4} \mbox{ if } w \in \gamma_w^1 .
\end{split}
\end{equation*}
Combining the last two sets of inequalities we conclude for $(z,w) \in \gamma_z \times \gamma_w \setminus\gamma_z^0 \times  \gamma_w^0$ 
\begin{equation}\label{Tr5}
\left|\exp \left(\tilde{N} \cdot [S(z) - S(w)] \right)\right| = \exp \left(\tilde{N}[\Real S(z)  - \Real S(w)] \right) \leq \exp \left(2c_5 - (\sigma_q^3/24) \cdot \tilde{N} N^{-1/4} \right).
\end{equation}
Combining (\ref{Tr1}), (\ref{Tr2}), (\ref{Tr3}), (\ref{Tr4}), (\ref{Tr5}), the fact that $\tilde{N} = N + O(N^{1/12})$, and the fact that $\gamma_z, \gamma_w$ have bounded (in $N$) length, we see that the second line of (\ref{TE1}) is bounded by $\exp \left( C N^{2/3} - (\sigma_q^3/24) N^{3/4} \right)$ for some large enough constant $C$. This implies (\ref{TruncEq}).
\end{proof}

%
%
\subsection{Proof of (\ref{LimitK3})} \label{Section5.4} In this section we prove (\ref{LimitK3}). We denote
\begin{equation}\label{NS1}
 \tilde{K}_N^3(u, x_N; v ,y_N) =  \frac{1}{(2\pi \im)^2} \int_{\gamma_{z,N}}dz  \int_{\gamma_{w,N}}dw \frac{1}{z-w} \cdot \prod_{i = 1}^7 H_i^N(z,w) \cdot w^{y_N} z^{-x_N-1},
\end{equation}
where $\gamma_{z,N}$ and $\gamma_{w,N}$ are as in Section \ref{Section5.3}. In view of Lemma \ref{LTrunc} it suffices to prove that 
\begin{equation}\label{LimitK3R1}
\begin{split}
&\lim_{N \rightarrow \infty} N^{1/3} \sigma_q \tilde{K}^3_N(u, x_N;v, y_N) = \frac{1}{(2\pi \im)^2} \int_{\Gamma^+_{-\sigma_q}} dz \int_{\Gamma^-_{\sigma_q}} dw \hspace{2mm}  e^{ \frac{z^3}{3} - \tilde{x} z + f_q t_u z^2 - \frac{w^3}{3} + \tilde{y} w- f_q t_v w^2 }\\
& \times \frac{1}{z - w} \cdot e^{c^+z + c^-/z - c^+w - c^-/w} \cdot \prod_{i = 1}^{\infty} \frac{(1 - b_i^+w) (1 - b_i^-/w) (1 + a_i^+ z) (1 + a_i^-/z) }{(1 - b_i^+z) (1 - b_i^-/z) (1 + a_i^+w) (1 + a_i^-/w) },
\end{split}
\end{equation}

We perform the change of variables $z = 1 + N^{-1/3} \tilde{z}$ and $w = 1 + N^{-1/3} \tilde{w}$ and get
\begin{equation}\label{LimitK3R2}
\begin{split}
& N^{1/3} \sigma_q \tilde{K}_N^3(u, x_N; v ,y_N) = \frac{\sigma_q}{(2\pi \im)^2} \int_{\Gamma_{-1}^+}d\tilde{z}  \int_{\Gamma_1^-}d \tilde{w} {\bf 1}\{ |\Imag ( \tilde{w})| \leq N^{1/4}, |\Imag (\tilde{z})| \leq N^{1/4}\} \\
& \times   \frac{1}{\tilde{z} - \tilde{w}}  \times \prod_{i = 1}^{8} \tilde{H}^N_i(\tilde{z}, \tilde{w}) ,
\end{split}
\end{equation}
where 
\begin{equation}\label{NS2}
\tilde{H}^N_i(\tilde{z}, \tilde{w}) = H_i(1 + N^{-1/3} \tilde{z}, 1 + N^{-1/3} \tilde{w}) \mbox{ for } i = 1, \dots, 5,
\end{equation}
with $H^N_i$ as in (\ref{PKF41} - \ref{PKF45}). In addition, we have
\begin{equation}\label{NS3}
\tilde{H}^N_6(\tilde{z}, \tilde{w}) = \exp \left( \tilde{N} \cdot [S(1 + N^{-1/3} \tilde{z}) - S(1 + N^{-1/3} \tilde{w})] \right),
\end{equation}
where $S(z)$ is as in (\ref{DefS}), 
\begin{equation}\label{NS4}
\begin{split}
&\tilde{H}^N_7(\tilde{z}, \tilde{w}) = \exp \left( \tilde{M}_u Q(1 + N^{-1/3} \tilde{z} ) -\tilde{M}_v Q(1 + N^{-1/3} \tilde{w}) \right) \mbox{, with }\\
&Q(z) = - \log (1 - qz) + \log(1-q) - \frac{q}{1-q} \log z,
\end{split}
\end{equation}
and 
\begin{equation}\label{NS5}
\begin{split}
&\tilde{H}^N_8(\tilde{z}, \tilde{w}) = \exp \left( \sigma_q \tilde{y}_N \cdot N^{1/3} \cdot \log (1 + N^{-1/3} \tilde{w}) -  \sigma_q \tilde{x}_N \cdot N^{1/3} \cdot \log (1 + N^{-1/3} \tilde{z}) \right) \\
& \times \exp \left( \log(1 + N^{-1/3} \tilde{z}) \cdot \left( - \frac{qt_u}{1- q} \cdot N^{2/3} - 1 + \tilde{M}_u \cdot \frac{q}{1-q} \right) \right) \\
&\times \exp \left( \log(1 +N^{-1/3} \tilde{w}) \cdot \left( \frac{q t_v}{1-q} \cdot N^{2/3} - \tilde{M}_v \cdot \frac{q}{1-q} \right) \right).
\end{split}
\end{equation}
We proceed to analyze each $\tilde{H}_i^N$ for $i = 1, \dots, 8$ as $N \rightarrow \infty$. We mention that for each $i = 1, \dots, 8$ we will compute the pointwise limit of $\tilde{H}_i^N$ and find a suitable dominating function. In addition, when we find dominating functions for the functions $\tilde{H}_1^N, \tilde{H}_2^N, \tilde{H}_5^N, \tilde{H}_7^N$ we do so under more general assumptions on $\tilde{z}, \tilde{w}$. We do this with an outlook of using these bounds in later parts of the text.

\subsubsection{The term $\tilde{H}_1^N(\tilde{z}, \tilde{w})$} From (\ref{PKF41}) and (\ref{NS2}) we have that 
 \begin{equation*}
\begin{split}
\tilde{H}_1^N(\tilde{z}, \tilde{w}) = \prod_{i = 1}^{A_N} \frac{1 - \frac{1}{1 + N^{-1/3} \tilde{z} } \cdot  \left(1 - \frac{1}{N^{1/3} a_i^+ \sigma_q} \right)}{1 -  \frac{1}{1 + N^{-1/3} \tilde{w}} \cdot  \left(1 - \frac{1}{N^{1/3}  a_i^+ \sigma_q } \right)} = \prod_{i = 1}^{A_N}  \frac{\frac{\tilde{z}}{N^{1/3}} + \frac{1}{N^{1/3} a_i^+ \sigma_q} - \frac{\tilde{z}}{N^{2/3} a_i^+ \sigma_q} + O(N^{-2/3} |\tilde{z}|^2) }{ \frac{\tilde{w}}{N^{1/3}} + \frac{1}{N^{1/3} a_i^+ \sigma_q} - \frac{\tilde{w}}{N^{2/3} a_i^+ \sigma_q} + O(N^{-2/3} |\tilde{z}|^2) },
\end{split}
\end{equation*}
where the constant in the big $O$ notation depends on $q$ alone and in deriving the above we used that $\left(1 - \frac{1}{N^{1/3}  a_i^+ \sigma_q } \right) \in [q,1]$ from the definition of $A_N$ in Definition \ref{ParScale}. In particular,
\begin{equation}\label{H1E}
\begin{split}
\tilde{H}_1^N(\tilde{z}, \tilde{w}) = \prod_{i = 1}^{A_N}  \frac{1 +  a^+_i \sigma_q \tilde{z} - N^{-1/3} \tilde{z} + O(N^{-1/3} a_i^+ |\tilde{z}|^2) }{1 + a^+_i \sigma_q \tilde{w} - N^{-1/3} \tilde{w} + O(N^{-1/3}   a_i^+ |\tilde{w}|^2) }.
\end{split}
\end{equation}

Using (\ref{H1E}) and the bounds from (\ref{RatBound}) we can find for each $\tilde{\delta} > 0$ constants $C_1 > 0$ and $N_1 \in \mathbb{N}$, depending on $\tilde{\delta}$ and $q$, such that the following holds. For each $N \geq N_1$, $\tilde{z}, \tilde{w} \in \mathbb{C}$ with $|\tilde{z}| , |\tilde{w}| \leq 2N^{1/4}$ and $|1 + a_i^+ \sigma_q \tilde{w}| \geq \tilde{\delta}$ for all $i \geq 1$ we have
\begin{equation}\label{H1B}
\left|\tilde{H}_1^N(\tilde{z}, \tilde{w}) \right|  \leq \exp \left( C_1 + C_1 (|\tilde{z}| + |\tilde{w}|) \sum_{i = 1}^{\infty} a_i^+ \right).
\end{equation}
In particular, we observe that for $\tilde{z} \in \Gamma_{-1}^+, |\Imag(\tilde{z})| \leq N^{1/4}$ and $\tilde{w} \in \Gamma_{1}^-, |\Imag(\tilde{w})| \leq N^{1/4}$, we have that $|\tilde{z}| , |\tilde{w}| \leq 2N^{1/4}$, and also $|1 + a^+_i\sigma_q \tilde{w}| \geq 1/2$. This means that the conditions before (\ref{H1B}) are satisfied for $\tilde{\delta} = 1/2$, and so (\ref{H1B}) holds for some $C_1$ and all large $N$ (depending on $q$ only). The inequality in (\ref{H1B}) for a different choice of $\tilde{\delta}$ will be used later in the text. 

We next claim that for each fixed $\tilde{z} \in \Gamma_{-1}^+$ and $\tilde{w} \in \Gamma_1^-$ 
\begin{equation}\label{H1L}
\lim_{N \rightarrow \infty} \tilde{H}_1^N(\tilde{z}, \tilde{w}) = \prod_{i = 1}^{J_a^+} \frac{1 + a_i^+ \sigma_q \tilde{z}   }{1 + a_i^+ \sigma_q \tilde{w}  } = \prod_{i = 1}^{\infty} \frac{1 + a_i^+ \sigma_q \tilde{z}   }{1 + a_i^+ \sigma_q \tilde{w}  },
\end{equation}
where we recall that $J_a^+$ is as in Definition \ref{DLP}. If $J_a^+ < \infty$, we have that (\ref{H1L}) follows from (\ref{H1E}) as in this case $A_N = J_a^+$ for all large $N$. If $J_{a}^+ = \infty$, we can set $K \in \mathbb{N}$ to be sufficiently large (depending on $q$ and the sequence $\{a_i^+\}_{i \geq 1})$ so that $a_i^+ \sigma_q \in [0, 1/2]$ for all $i \geq K$. If $i \geq K$ and $\tilde{z} \in \Gamma_{-1}^+$ we have that $|1 + \sigma_q a_i^+ \tilde{z}| \geq 1/2$ and so
\begin{equation}\label{H1FI}
1 + \sigma_q a^+_i\tilde{z} - N^{-1/3} \tilde{z} + O(N^{-1/3}a_i^+ |\tilde{z}|^2) =  \left( 1 + \sigma_q a^+_i\tilde{z} \right) \cdot \exp \left( O(N^{-1/3}) \right),
\end{equation}
where the constant in the big $O$ notation depends on $\tilde{z}$ and $q$ but not $i$, provided that $i \geq K$. Using (\ref{H1FI}) and the fact that $A_N = O(N^{1/12})$ we conclude that 
\begin{equation*}
\begin{split}
&\lim_{N \rightarrow \infty}  \prod_{i = 1}^{A_N} \left( 1 +   a^+_i \sigma_q \tilde{z} - N^{-1/3} \tilde{z} + O(N^{-1/3} a_i^+ |\tilde{z}|^2)  \right) =    \prod_{i = 1}^{K} \left(    1 + a_i^+ \sigma_q  \tilde{z} \right) \\
& \times \lim_{N \rightarrow \infty} \prod_{i = K+1}^{A_N} \left( 1 +  a^+_i \sigma_q\tilde{z} \right) \cdot \exp \left( O(N^{-1/4}) \right) = \prod_{i = 1}^{\infty}\left( 1 + a_i^+ \sigma_q \tilde{z}  \right).  
\end{split}
\end{equation*}
On the other hand, if $\tilde{w} \in \Gamma_{1}^-$, we have that $|1 + a^+_i\sigma_q \tilde{w}| \geq 1/2$ for all $i \geq 1$ and so (\ref{H1FI}) holds with $\tilde{z}$ replaced with $\tilde{w}$ and constant now depending on $\tilde{w}$ and $q$. Using the latter and that $A_N = O(N^{1/12})$ we conclude
\begin{equation*}
\begin{split}
&\lim_{N \rightarrow \infty}  \prod_{i = 1}^{A_N} \left( 1 +   a^+_i \sigma_q \tilde{w} - N^{-1/3} \tilde{w} + O(N^{-1/3} a_i^+ |\tilde{w}|^2)  \right)  \\
& =  \lim_{N \rightarrow \infty} \prod_{i = 1}^{A_N} \left( 1 +  a^+_i \sigma_q\tilde{w} \right) \cdot \exp \left( O(N^{-1/4}) \right) = \prod_{i = 1}^{\infty}\left(1 + a_i^+ \sigma_q \tilde{w}  \right).  
\end{split}
\end{equation*}
The last two limits and (\ref{H1E}) imply (\ref{H1L}).

\subsubsection{The term $\tilde{H}_2^N(\tilde{z}, \tilde{w})$} From (\ref{PKF42}) and (\ref{NS2}) we have that 
 \begin{equation*}
\begin{split}
\tilde{H}_2^N(\tilde{z}, \tilde{w}) = \prod_{i = 1}^{B_N} \frac{1 - (1 + N^{-1/3} \tilde{w}) \cdot \left(1 - \frac{1}{N^{1/3} b_i^+ \sigma_q} \right)}{1 - (1 + N^{-1/3} \tilde{z}) \cdot \left(1 - \frac{1}{N^{1/3} b_i^+ \sigma_q} \right)} =   \prod_{i = 1}^{B_N} \frac{1 -  b_i^+ \sigma_q \tilde{w}  + \tilde{w}N^{-1/3} }{1 -  b_i^+ \sigma_q \tilde{z}  + \tilde{z}N^{-1/3}}.
\end{split}
\end{equation*}
Using the latter and (\ref{RatBound}) we can find for each $\tilde{\delta} > 0$ constants $C_2, N_2 > 0$, depending on $\tilde{\delta}, q$ such that the following holds. For each $N \geq N_2$, $\tilde{z}, \tilde{w} \in \mathbb{C}$ with $|\tilde{z}| , |\tilde{w}| \leq 2N^{1/4}$ and $|1 - b_i^+ \sigma_q \tilde{z}| \geq \tilde{\delta}$ for all $i \geq 1$ we have
\begin{equation}\label{H2B}
\left|\tilde{H}_2^N(\tilde{z}, \tilde{w}) \right|  \leq \exp \left( C_2 + C_2 (|\tilde{z}| + |\tilde{w}|) \sum_{i = 1}^{\infty} b_i^+ \right).
\end{equation}
Note that the conditions before (\ref{H2B}) are satisfied when $\tilde{z} \in \Gamma_{-1}^+, |\Imag(\tilde{z})| \leq N^{1/4}$ and $\tilde{w} \in \Gamma_{1}^-, |\Imag(\tilde{w})| \leq N^{1/4}$ with $\tilde{\delta} = 1/2$, and so (\ref{H2B}) holds for some $C_2$ and all large $N$ (depending on $q$ only). The inequality in (\ref{H2B}) for a different choice of $\tilde{\delta}$ will be used later in the text. 

Arguing as in the case of $\tilde{H}_1^N$ we also have for each $\tilde{z} \in \Gamma_{-1}^+$ and $\tilde{w} \in \Gamma_{1}^-$ 
\begin{equation}\label{H2L}
\lim_{N \rightarrow \infty} \tilde{H}_2^N(\tilde{z}, \tilde{w})  = \lim_{N \rightarrow \infty} \prod_{i = 1}^{B_N} \frac{1 -  b_i^+ \sigma_q \tilde{w}  }{1 -  b_i^+ \sigma_q \tilde{z}  } = \prod_{i = 1}^{\infty} \frac{1 -  b_i^+  \sigma_q \tilde{w}  }{1 - b_i^+ \sigma_q \tilde{z} }.
\end{equation}

\subsubsection{The term $\tilde{H}_3^N(\tilde{z}, \tilde{w})$} From (\ref{PKF43}) and (\ref{NS2}) we have that 
\begin{equation*}
\begin{split}
& \tilde{H}_3^N(\tilde{z}, \tilde{w}) = \prod_{i = 1}^{D_N} \frac{1 - (1 + N^{-1/3}\tilde{w}) \cdot\left(1 - \frac{b_i^-}{N^{1/3}\sigma_q}  \right)}{1 - (1 + N^{-1/3}\tilde{z})  \cdot\left(1 - \frac{b_i^-}{N^{1/3}\sigma_q}  \right)} \cdot \frac{1 - \frac{1}{1 + N^{-1/3}\tilde{z} } \cdot \left(1 - \frac{a_i^-}{N^{1/3} \sigma_q}  \right)}{1 - \frac{1}{1 + N^{-1/3}\tilde{w} } \cdot \left(1 - \frac{a_i^-}{N^{1/3} \sigma_q}  \right)} \\
&= \prod_{i = 1}^{D_N} \frac{1 - \frac{b_i^-}{\sigma_q \tilde{w}}- \sigma_q^{-1} b_i^- N^{-1/3} }{1 - \frac{b_i^-}{\sigma_q \tilde{z}}- \sigma_q^{-1} b_i^- N^{-1/3} } \cdot  \frac{1 + \frac{a_i^-}{\sigma_q \tilde{z}}  - \sigma_q^{-1} a_i^- N^{-1/3} + O(N^{-1/3} |\tilde{z}|) }{1 + \frac{a_i^-}{\sigma_q \tilde{w}}  - \sigma_q^{-1} a_i^- N^{-1/3} + O(N^{-1/3} |\tilde{w}|)}.
\end{split}
\end{equation*}
We next note that if $\tilde{z} \in \Gamma^+_{-1}$, and $\tilde{w} \in \Gamma^-_{1}$, then for all $i \geq 1$ 
\begin{equation}\label{H3LB}
\left|1 - \frac{b_i^-}{\sigma_q \tilde{z}}  \right| \geq \frac{1}{2} \mbox{ and } \left| 1 + \frac{a_i^-}{\sigma_q \tilde{w}}  \right| \geq \frac{1}{2}.
\end{equation}
As the two statements are proved analogously, we focus on the first inequality. If $\Real \tilde{z} \leq 0$, then $\Real (1/\tilde{z}) \leq 0$ and so 
$$\left|1 - \frac{b_i^-}{\sigma_q \tilde{z}}  \right| \geq \Real \left(1 - \frac{b_i^-}{\sigma_q \tilde{z}} \right) \geq 1, $$
which proves the first inequality in (\ref{H3LB}). We may thus assume that $\tilde{z} = x \pm \im (x+1)$ for some $x \geq 0$. In the latter case we have that 
$$\Real(1/\tilde{z}) = \frac{x}{x^2 + (x+1)^2} \mbox{ and } \Imag(1/\tilde{z}) = \pm \frac{x + 1}{x^2 + (x+1)^2}.$$
The latter suggests that $\Arg(1/\tilde{z}) \in [\pi/4, 7\pi/4]$, and so $\frac{b_i^-}{\sigma_q \tilde{z}}$ (which has the same argument) is at least distance $1/2$ from $1$, which proves (\ref{H3LB}) for $\tilde{z} \in \Gamma^+_{-1}$ with $\Real \tilde{z} \geq 0$ as well.

We may now combine the lower bounds in (\ref{H3LB}), the fact that $|1/\tilde{w}|$, and $|1/\tilde{z}|$ are uniformly bounded for $\tilde{z} \in \Gamma^+_{-1}$, and $\tilde{w} \in \Gamma^-_{1}$, with (\ref{RatBound}) to conclude that for some $C_3 > 0$, depending on $q$, and all large $N$
\begin{equation}\label{H3B}
\left|\tilde{H}_3^N(\tilde{z}, \tilde{w}) \right|  \leq \exp \left(C_3 + C_3  \sum_{i = 1}^{\infty} (a_i^- + b_i^-) \right),
\end{equation}
provided that $\tilde{z} \in \Gamma_{-1}^+, |\Imag(\tilde{z})| \leq N^{1/4}$ and $\tilde{w} \in \Gamma_{1}^-, |\Imag(\tilde{w})| \leq N^{1/4}$. We mention that in deriving (\ref{H3B}) we used that $D_N = O(N^{1/12})$ and our choice of $\tilde{z}, \tilde{w}$ ensures $|\tilde{z}| = O(N^{1/4})$ and $|\tilde{w}| = O(N^{1/4})$.

Arguing as in the case of $\tilde{H}_1^N$ we also have for each $\tilde{z} \in \Gamma_{-1}^+$ and $\tilde{w} \in \Gamma_{1}^-$ 
\begin{equation}\label{H3L}
\lim_{N \rightarrow \infty} \tilde{H}_3^N(\tilde{z}, \tilde{w})  = \prod_{i = 1}^{\infty} \frac{1 - \frac{b_i^-}{\sigma_q \tilde{w}} }{1 - \frac{b_i^-}{\sigma_q \tilde{z}} } \cdot  \frac{1 + \frac{a_i^-}{\sigma_q \tilde{z}}  }{1 + \frac{a_i^-}{\sigma_q \tilde{w}}  }.
\end{equation}

\subsubsection{The term $\tilde{H}_4^N(\tilde{z}, \tilde{w})$}  From (\ref{PKF44}) and (\ref{NS2}) we have $ \tilde{H}_4^N(\tilde{z}, \tilde{w}) = 1$ if $c^- = 0$ and if $c^- > 0$
\begin{equation*}
\begin{split}
& \tilde{H}_4^N(\tilde{z}, \tilde{w}) =   \left( \frac{1 - (1 + N^{-1/3} \tilde{w}) \cdot \left(1 - \frac{c^-}{2N^{5/12} \sigma_q}\right)}{1 - (1 + N^{-1/3} \tilde{z})  \cdot \left(1 - \frac{c^-}{2N^{5/12} \sigma_q} \right)}  \cdot \frac{1 - \frac{1}{1 + N^{-1/3} \tilde{z} } \cdot \left(1 - \frac{c^-}{2N^{5/12} \sigma_q} \right)}{1 - \frac{1}{1 + N^{-1/3} \tilde{w} } \cdot \left(1  - \frac{c^-}{2N^{5/12} \sigma_q} \right)} \right)^{\lfloor N^{1/12} \rfloor }  \\
& = \left( \frac{ - \frac{\tilde{w}}{N^{1/3}} + \frac{c^-}{2 N^{5/12} \sigma_q} + \frac{c^- \tilde{w} }{2N^{3/4} \sigma_q} }{ - \frac{\tilde{z}}{N^{1/3}} + \frac{c^-}{2 N^{5/12} \sigma_q} + \frac{c^- \tilde{z} }{2N^{3/4} \sigma_q} }   \cdot \frac{\frac{\tilde{z}}{N^{1/3}} + \frac{c^-}{2N^{5/12} \sigma_q} - \frac{c^- \tilde{z}}{2N^{3/4} \sigma_q } + O(|\tilde{z}|^2 N^{-2/3})}{\frac{\tilde{w}}{N^{1/3}} + \frac{c^-}{2N^{5/12} \sigma_q} - \frac{c^- \tilde{w}}{2N^{3/4} \sigma_q} + O(|\tilde{w}|^2 N^{-2/3})}\right)^{\lfloor N^{1/12} \rfloor },
\end{split}
\end{equation*}
where the constant in the big $O$ notation depends on $q$ alone and in deriving the above we used that $ \left(1 - \frac{c^-}{2N^{5/12} \sigma_q}\right) \in [q, 1]$ provided that $N \geq N_0$ as in Definition \ref{ParScale}. Using the above we conclude
\begin{equation*}
\begin{split}
& \tilde{H}_4^N(\tilde{z}, \tilde{w}) =   \left(  \frac{1  - \frac{c^-}{2 N^{1/12} \sigma_q \tilde{w}} - \frac{c^- }{2N^{5/12} \sigma_q} }{  1 - \frac{c^-}{2 N^{1/12} \sigma_q \tilde{z}} - \frac{c^- }{2N^{5/12} \sigma_q} }  \cdot \frac{1 + \frac{c^-}{2N^{1/12} \sigma_q \tilde{z}} - \frac{c^- }{2N^{5/12} \sigma_q } + O(|\tilde{z}| N^{-1/3})}{1 + \frac{c^-}{2N^{1/12} \sigma_q \tilde{w}} - \frac{c^- }{2N^{5/12} \sigma_q} + O(|\tilde{w}| N^{-1/3})}\right)^{\lfloor N^{1/12} \rfloor }.
\end{split}
\end{equation*}
For fixed $\tilde{z}, \tilde{w}$ and $c^- > 0$ we conclude that 
\begin{equation}\label{H4L}
\lim_{N \rightarrow \infty} \tilde{H}_4^N(\tilde{z}, \tilde{w})  = \exp\left( \frac{c^-}{\sigma_q \tilde{z}} - \frac{c^-}{\sigma_q \tilde{w}} \right),
\end{equation}
and of course (\ref{H4L}) still holds if $c^- = 0$ as then $ \tilde{H}_4^N \equiv 1$ for all $N \geq 1$. Since $u^{-1}$ is bounded for $u \in \Gamma_{-1}^+ \cup \Gamma_1^-$ we also conclude that for some $C_4 > 0$, depending on $q$, and all large $N$
\begin{equation}\label{H4B}
\left|\tilde{H}_4^N(\tilde{z}, \tilde{w}) \right|  \leq \exp \left(C_4 (1 + c^-) \right).
\end{equation}

\subsubsection{The term $\tilde{H}_5^N(\tilde{z}, \tilde{w})$}  From (\ref{PKF45}) and (\ref{NS2}) we have $ \tilde{H}_5^N(\tilde{z}, \tilde{w}) = 1$ if $c^+ = 0$ and if $c^+ > 0$
\begin{equation*}
\begin{split}
& \tilde{H}_5^N(\tilde{z}, \tilde{w}) =   \left( \frac{1 - (1 + N^{-1/3} \tilde{w}) \cdot\left(1 - \frac{2}{N^{1/4} c^+ \sigma_q} \right)}{1 -  (1 + N^{-1/3} \tilde{z})  \cdot \left(1 - \frac{2}{N^{1/4} c^+ \sigma_q} \right)} \cdot \frac{1 - \frac{1}{ (1 + N^{-1/3} \tilde{z}) } \cdot \left(1 - \frac{2}{N^{1/4} c^+ \sigma_q} \right)}{1 - \frac{1}{ (1 + N^{-1/3} \tilde{w}) } \cdot \left(1 - \frac{2}{N^{1/4} c^+ \sigma_q} \right)} \right)^{\lfloor N^{1/12} \rfloor }\\
&  \left( \frac{ - \frac{\tilde{w}}{N^{1/3}} + \frac{2}{ N^{1/4} c^+ \sigma_q} + \frac{2\tilde{w} }{N^{7/12} c^+ \sigma_q} }{  - \frac{\tilde{z}}{N^{1/3}} + \frac{2}{ N^{1/4} c^+ \sigma_q} + \frac{2\tilde{z} }{N^{7/12} c^+ \sigma_q}  }   \cdot \frac{\frac{\tilde{z}}{N^{1/3}} + \frac{2}{N^{1/4} c^+ \sigma_q} - \frac{2 \tilde{z}}{N^{7/12} c^+ \sigma_q } + O(|\tilde{z}|^2 N^{-2/3})}{\frac{\tilde{w}}{N^{1/3}} + \frac{2}{N^{1/4} c^+ \sigma_q} - \frac{2 \tilde{w}}{N^{7/12} c^+ \sigma_q } + O(|\tilde{w}|^2 N^{-2/3})}\right)^{\lfloor N^{1/12} \rfloor },
\end{split}
\end{equation*}
where the constant in the big $O$ notation depends on $q$ alone and in deriving the above we used that $ \left(1 - \frac{2}{N^{1/4} c^+ \sigma_q}\right) \in [q, 1]$, provided that $N \geq N_0$ as in Definition \ref{ParScale}. Using the above we conclude
\begin{equation*}
\begin{split}
& \tilde{H}_5^N(\tilde{z}, \tilde{w}) =   \left( \frac{1 - \frac{c^+\sigma_q \tilde{w}}{2N^{1/12}} + \frac{\tilde{w} }{N^{1/3}} }{  1 - \frac{c^+\sigma_q\tilde{z}}{2N^{1/12}} + \frac{\tilde{z} }{N^{1/3}}  }   \cdot \frac{1 + \frac{c^+\sigma_q\tilde{z} }{2N^{1/12}}  - \frac{\tilde{z}}{N^{1/3} } + O(c^+|\tilde{z}|^2 N^{-5/12})}{1 + \frac{c^+ \sigma_q \tilde{w} }{2N^{1/12}} - \frac{ \tilde{w} }{N^{1/3} } + O(c^+|\tilde{w}|^2 N^{-5/12})}\right)^{\lfloor N^{1/12} \rfloor }.
\end{split}
\end{equation*}
From the latter and (\ref{RatBound}) we can find for each $\tilde{\delta} > 0$ constants $C_5, N_5 > 0$, depending on $\tilde{\delta}, q$ such that the following holds. For each $N \geq N_5$, $\tilde{z}, \tilde{w} \in \mathbb{C}$ with $|\tilde{z}|, |\tilde{w}| \leq 2N^{1/4}$ such that 
$$\left|1 - \frac{c^+\sigma_q\tilde{z}}{2N^{1/12}} + \frac{\tilde{z} }{N^{1/3}} \right| \geq \tilde{\delta} \mbox{ and } \left|1 + \frac{c^+  \sigma_q\tilde{w} }{2N^{1/12}} - \frac{\tilde{w}}{N^{1/3} } + O(c^+|\tilde{w}|^2 N^{-5/12}) \right| \geq \tilde{\delta}, $$
we have that
\begin{equation}\label{H5B}
\left|\tilde{H}_5^N(\tilde{z}, \tilde{w}) \right|  \leq \exp \left(C_5 + C_5 c^+ (|z| + |w|)  \right).
\end{equation}
Note that the conditions before (\ref{H5B}) are satisfied when $\tilde{z} \in \Gamma_{-1}^+, |\Imag(\tilde{z})| \leq N^{1/4}$ and $\tilde{w} \in \Gamma_{1}^-, |\Imag(\tilde{w})| \leq N^{1/4}$ with $\tilde{\delta} = 1/2$, and so (\ref{H5B}) holds for some $C_5$ and all large $N$ (depending on $q$ only). The inequality in (\ref{H5B}) for a different choice of $\tilde{\delta}$ will be used later in the text.

For fixed $\tilde{z}, \tilde{w}$ and $c^+ > 0$ we also have by directly taking the limit above 
\begin{equation}\label{H5L}
\lim_{N \rightarrow \infty} \tilde{H}_5^N(\tilde{z}, \tilde{w})  = \exp\left( c^+ \sigma_q \tilde{z} - c^+ \sigma_q \tilde{w} \right),
\end{equation}
and of course (\ref{H5L}) still holds if $c^+ = 0$ as then $ \tilde{H}_5^N \equiv 1$ for all $N \geq 1$.

\subsubsection{The term $\tilde{H}_6^N(\tilde{z}, \tilde{w})$} From (\ref{NS3}) and (\ref{EqTayS}) we have for fixed $\tilde{z} \in \Gamma_{-1}^+$ and $\tilde{w} \in \Gamma_1^-$
\begin{equation}\label{H6L}
\lim_{N \rightarrow \infty} \tilde{H}_6^N(\tilde{z}, \tilde{w})  =\lim_{N \rightarrow \infty}  \exp \left( \sigma_q^3 \tilde{z}^3/3 - \sigma_q^3 \tilde{w}^3/3 + O(N^{-1/3}) \right) = \exp \left( \sigma_q^3 \tilde{z}^3/3 - \sigma_q^3 \tilde{w}^3/3\right),
\end{equation}
where the constant in the big $O$ notation depends on $\tilde{z}, \tilde{w}$. We mention that in deriving (\ref{H6L}) we used that $\tilde{N} = N + O(N^{1/12})$ with a universal constant in the last big $O$ notation. From (\ref{RealSB}) we also have that for some $C_6 > 0$  and all large $N$
\begin{equation}\label{H6B}
\left|\tilde{H}_6^N(\tilde{z}, \tilde{w}) \right|  \leq \exp \left( C_6 - (\sigma_q^3/24) |\tilde{z}|^3 - (\sigma_q^3/24) |\tilde{w}|^3   \right),
\end{equation}
provided that $\tilde{z} \in \Gamma_{-1}^+, |\Imag(\tilde{z})| \leq N^{1/4}$ and $\tilde{w} \in \Gamma_{1}^-, |\Imag(\tilde{w})| \leq N^{1/4}$.

\subsubsection{The term $\tilde{H}_7^N(\tilde{z}, \tilde{w})$} Recall from (\ref{NS4}) that $Q(z) = - \log (1 - qz) + \log(1-q) - \frac{q}{1-q} \log z$. By a direct computation
$$Q'(z) = \frac{q}{1- qz} -\frac{q}{(1-q)z} \mbox{ and } Q''(z) = \frac{q}{(1-qz)^2} + \frac{q}{(1-q) z^2},$$
which gives $Q(1) = Q'(1) = 0$ and so for $|x| \leq 1/2$ we have
\begin{equation}\label{H7Q}
Q(1 + x) = x^2 \cdot \frac{q}{2(1-q)^2} + O(x^3).
\end{equation}
From (\ref{NS4}), (\ref{H7Q}) and the fact that $\tilde{M}_u = t_u \cdot N^{2/3} + O(N^{1/12})$, $\tilde{M}_v = t_v N^{2/3} + O(N^{1/12})$ we conclude that for fixed $\tilde{z} \in \Gamma_{-1}^+$ and $\tilde{w} \in \Gamma_{1}^-$
\begin{equation}\label{H7L}
\begin{split}
\lim_{N \rightarrow \infty} \tilde{H}_7^N(\tilde{z}, \tilde{w})  &=\lim_{N \rightarrow \infty}  \exp \left( \tilde{M}_u Q(1 + N^{-1/3} \tilde{z} ) -\tilde{M}_v Q(1 + N^{-1/3} \tilde{w}) \right) \\
&= \exp \left( t_u \cdot \frac{q}{2(1-q)^2} \cdot \tilde{z}^2 -  t_v \cdot \frac{q}{2(1-q)^2} \cdot \tilde{w}^2 \right).
\end{split}
\end{equation}
From (\ref{H7Q}) we also conclude that for some $C_7 > 0$, depending on $t_u, t_v, q$,  and all large $N$ we have
\begin{equation}\label{H7B}
\left|\tilde{H}_7^N(\tilde{z}, \tilde{w}) \right|  \leq \exp \left( C_7  + C_7(|\tilde{z}|^2 + |\tilde{w}|^2)  \right),
\end{equation}
provided that $|\tilde{z}| \leq 2N^{1/4}$ and $|\tilde{w}| \leq 2N^{1/4}$ and so in particular if $\tilde{z} \in \Gamma_{-1}^+, |\Imag(\tilde{z})| \leq N^{1/4}$ and $\tilde{w} \in \Gamma_{1}^-, |\Imag(\tilde{w})| \leq N^{1/4}$.

\subsubsection{The term $\tilde{H}_8^N(\tilde{z}, \tilde{w})$} From (\ref{NS5}) and the fact that $\tilde{M}_u = t_u N^{2/3} + O(N^{1/12})$, $\tilde{M}_v = t_v N^{2/3} + O(N^{1/12})$ we conclude that for fixed $\tilde{z} \in \Gamma_{-1}^+$ and $\tilde{w} \in \Gamma_{1}^-$
\begin{equation}\label{H8L}
\begin{split}
\lim_{N \rightarrow \infty} \tilde{H}_8^N(\tilde{z}, \tilde{w})  = \exp \left(\tilde{y} \sigma_q  \tilde{w} - \tilde{x} \sigma_q  \tilde{z} \right).
\end{split}
\end{equation}
Furthermore, if $|\tilde{z}| \leq 2N^{1/4} $ and $|\tilde{w}| \leq 2N^{1/4}$ we have 
\begin{equation}\label{TBO1}
\begin{split}
&\log(1 + N^{-1/3} \tilde{z}) \cdot \left( - \frac{qt_u}{1- q} \cdot N^{2/3} - 1 +  \frac{q\tilde{M}_u}{1-q} \right) = \log(1 + N^{-1/3} \tilde{z})  \cdot O(N^{1/12}) = O(1),\\
& \log(1 +N^{-1/3} \tilde{w}) \cdot \left( \frac{q t_v}{1-q} \cdot N^{2/3} - \frac{q \tilde{M}_v}{1-q} \right) =  \log(1 + N^{-1/3} \tilde{w})  \cdot O(N^{1/12}) = O(1),
\end{split}
\end{equation}
where the constants in the big $O$ notations depend on $t_u, t_v, q$. Combining the latter with (\ref{NS5}), we conclude that for some $C_8 > 0$, depending on $t_u, t_v, q$ and the sequences $\tilde{x}_n, \tilde{y}_n$, and all large $N$ 
\begin{equation}\label{H8B}
\left|\tilde{H}_8^N(\tilde{z}, \tilde{w}) \right|  \leq \exp \left( C_8  + C_8(|\tilde{z}|  + |\tilde{w}|)  \right),
\end{equation}
provided that $\tilde{z} \in \Gamma_{-1}^+, |\Imag(\tilde{z})| \leq N^{1/4}$ and $\tilde{w} \in \Gamma_{1}^-, |\Imag(\tilde{w})| \leq N^{1/4}$.

\subsubsection{Proving  (\ref{LimitK3R1})} It follows from (\ref{H1L}), (\ref{H2L}), (\ref{H3L}), (\ref{H4L}), (\ref{H5L}), (\ref{H6L}), (\ref{H7L}) and (\ref{H8L}) that for each $\tilde{z} \in \Gamma_{-1}^+$ and $\tilde{w} \in \Gamma_1^-$ such that $(\tilde{z}, \tilde{w}) \neq (\pm \im, \pm \im)$ (these are the points where $\tilde{z} - \tilde{w}$ vanishes) we have
\begin{equation}\label{H9L}
\begin{split}
&\lim_{N \rightarrow \infty}  \frac{{\bf 1}\{ |\Imag ( \tilde{w})| \leq N^{1/4}, |\Imag (\tilde{z})| \leq N^{1/4}\}}{\tilde{z} - \tilde{w}}  \times \prod_{i = 1}^{8} \tilde{H}^N_i(\tilde{z}, \tilde{w}) \\
&=  \frac{e^{\frac{\sigma_q^3 \tilde{z}^3}{3} + \frac{t_uq}{2(1-q)^2} \tilde{z}^2 - \sigma_q \tilde{x} \tilde{z} - \frac{\sigma_q^3 \tilde{w}^3}{3} - \frac{t_uq}{2(1-q)^2} \tilde{w}^2 + \sigma_q \tilde{y} \tilde{w}} }{\tilde{z} - \tilde{w}} \cdot  e^{\frac{c^-}{\sigma_q \tilde{z}} - \frac{c^-}{\sigma_q \tilde{w}} + c^+ \sigma_q \tilde{z} - c^+ \sigma_q \tilde{w}} \\
&\times \prod_{i = 1}^{\infty}  \frac{(1 - b_i^+\tilde{w}\sigma_q) (1 - b_i^-/(\tilde{w}\sigma_q)) (1 + a_i^+ \tilde{z}\sigma_q) (1 + a_i^-/(\tilde{z}\sigma_q)) }{(1 - b_i^+\tilde{z}\sigma_q) (1 - b_i^-/(\tilde{z}\sigma_q)) (1 + a_i^+\tilde{w}\sigma_q) (1 + a_i^-/(\tilde{w}\sigma_q)) }.
\end{split}
\end{equation}
On the other hand, we have from (\ref{H1B}), (\ref{H2B}), (\ref{H3B}), (\ref{H4B}), (\ref{H5B}), (\ref{H6B}), (\ref{H7B}) and (\ref{H8B}) that there are constants $D_0, D_1, D_2 > 0$, depending on $t_u, t_v, q, c^+, c^-$ and the sequences $\tilde{x}_n, \tilde{y}_n, a^{\pm}_n, b^{\pm}_n$, such that for all large $N$
\begin{equation}\label{H9B}
\begin{split}
&\left| \frac{{\bf 1}\{ |\Imag ( \tilde{w})| \leq N^{1/4}, |\Imag (\tilde{z})| \leq N^{1/4}\}}{\tilde{z} - \tilde{w}}  \times \prod_{i = 1}^{8} \tilde{H}^N_i(\tilde{z}, \tilde{w}) \right| \\
&\leq \exp \left(D_0 + D_1 (|\tilde{z}| + |\tilde{w}|) + D_2 (|\tilde{z}|^2 + |\tilde{w}|^2) - (\sigma_q^3/24) |\tilde{z}|^3 - (\sigma_q^3/24)|\tilde{w}|^3  \right) \cdot \frac{1}{|\tilde{z} - \tilde{w}|}.
\end{split}
\end{equation}
Note that the function on the second line of (\ref{H9B}) is integrable over $(\tilde{z}, \tilde{w}) \in \Gamma_{-1}^+ \times \Gamma_1^-$. Indeed, the integrability near infinity is ensured by the cubic terms in the exponential, while the integrability of $|\tilde{z}- \tilde{w}|^{-1}$ near the singularities at $(\pm \im,  \pm \im)$ follows from the argument in (\ref{IntSing}). Using (\ref{H9L}) and the dominated convergence theorem, with dominating function given by the second line of (\ref{H9B}), we conclude from (\ref{LimitK3R2}) that
\begin{equation*}
\begin{split}
& \lim_{N \rightarrow \infty} N^{1/3} \sigma_q \tilde{K}_N^3(u, x_N; v ,y_N) = \frac{\sigma_q}{(2\pi \im)^2} \int_{\Gamma_{-1}^+}d\tilde{z}  \int_{\Gamma_1^-}d \tilde{w} \frac{e^{\frac{\sigma_q^3 \tilde{z}^3}{3} + \frac{t_uq}{2(1-q)^2} \tilde{z}^2 - \sigma_q \tilde{x} \tilde{z} - \frac{\sigma_q^3 \tilde{w}^3}{3} - \frac{t_uq}{2(1-q)^2} \tilde{w}^2 + \sigma_q \tilde{y} \tilde{w}} }{\tilde{z} - \tilde{w}}    \\
&\times e^{\frac{c^-}{\sigma_q \tilde{z}} - \frac{c^-}{\sigma_q \tilde{w}} + c^+ \sigma_q \tilde{z} - c^+ \sigma_q \tilde{w}} \prod_{i = 1}^{\infty}  \frac{(1 - b_i^+\tilde{w}\sigma_q) (1 - b_i^-/(\tilde{w}\sigma_q)) (1 + a_i^+ \tilde{z}\sigma_q) (1 + a_i^-/(\tilde{z}\sigma_q)) }{(1 - b_i^+\tilde{z}\sigma_q) (1 - b_i^-/(\tilde{z}\sigma_q)) (1 + a_i^+\tilde{w}\sigma_q) (1 + a_i^-/(\tilde{w}\sigma_q)) }.
\end{split}
\end{equation*}
Changing variables in the last equation $z = \sigma_q \tilde{z}$ and $w = \sigma_q \tilde{w}$, and using that $\frac{q}{2(1-q)^2} \cdot \sigma_q^{-2} = f_q$ we obtain (\ref{LimitK3R1}).

%
%
\section{Upper tail estimates}\label{Section6} In this section we obtain upper tail estimates for the kernel $K_N(u,x;v,y)$ from (\ref{CKDefAlt}) when $x,y$ are large -- this is Proposition \ref{KUTE} below. We further use this result to estimate the upper tails of the distribution of  $\lambda^{M_k(N)}_1$ under $\mathbb{P}_N$ as in Definition \ref{ParScale} -- this is Proposition \ref{PVC} below.

%
%
\subsection{Kernel estimate}\label{Section6.1} The goal of this section is to obtain the following upper tail estimate.
\begin{proposition}\label{KUTE} Assume the same notation in Definitions \ref{DLP} and \ref{ParScale}, and suppose that $a_1^- = b_1^- = c^- = 0$. For each $L \geq 0$ there exist $N_1 \geq N_0$, $R_1 > 0$ and $r_1 > 0$, depending on $L$, $q$, $c^+$, $\{t_1, \dots, t_m\}$ and the sequences $\{a_i^+\}_{i \geq 1}$, $\{b_i^+\}_{i \geq 1}$, such that the following holds. For any $N \geq N_1$, $u, v \in \{1, \dots, m\}$, $\tilde{x}_N, \tilde{y}_N \in [-L, \infty)$,
\begin{equation}\label{EqUTE}
N^{1/3} \sigma_q \left|  K_N(u,x_N;v,y_N) - K^2_N(u,x_N;v,y_N)   \right| \leq R_1 \cdot \exp \left( - r_1 \cdot |\tilde{x}_N| - r_1 \cdot |\tilde{y}_N| \right),
\end{equation}
where $x_N, y_N$ are as in (\ref{ScaleXY}), $K_N(u,x;v,y)$ is as in (\ref{CKDefAlt}) with $\delta = N^{-1/12} \in (0, 1/2]$ and $K^2_N(u,x;v,y)$ is as in (\ref{PKF2}).
\end{proposition}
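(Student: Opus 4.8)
The plan is to estimate the contour integrals defining $K_N^1$ and $K_N^3$ separately, since $K_N = K_N^1 + K_N^2 + K_N^3$ and the claim concerns $K_N - K_N^2$. The heart of the matter is $K_N^3$, and I would proceed by mimicking the contour-truncation and change-of-variables scheme already set up in Section \ref{Section5}. First I would invoke Lemma \ref{LTrunc} to replace the contours $\gamma_z,\gamma_w$ by the truncated contours $\gamma_{z,N},\gamma_{w,N}$ up to an error of order $\exp(CN^{2/3}-(\sigma_q^3/24)N^{3/4})$, which is negligible (and in particular beats $\exp(-r_1(|\tilde x_N|+|\tilde y_N|))$ once we note that on the relevant range the exponents $\tilde x_N,\tilde y_N$ can be taken $\le O(N^{1/4})$ after a further truncation; points with $|\tilde x_N|$ or $|\tilde y_N|$ larger than $c N^{1/12}$, say, are handled crudely by the trivial bound since then $R_1 e^{-r_1(\cdots)}$ is enormous). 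Then, as in Section \ref{Section5.4}, I would perform the substitution $z=1+N^{-1/3}\tilde z$, $w=1+N^{-1/3}\tilde w$, arriving at the integral of $(\tilde z-\tilde w)^{-1}\prod_{i=1}^8\tilde H_i^N(\tilde z,\tilde w)$ over $\Gamma_{-1}^+\times\Gamma_1^-$ intersected with $\{|\Imag|\le N^{1/4}\}$.

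\textbf{The key step.} The source of the decay in $\tilde x_N,\tilde y_N$ is the factor $\tilde H_8^N$, which by (\ref{H8L}) behaves like $\exp(\tilde y\sigma_q\tilde w-\tilde x\sigma_q\tilde z)$; combined with the real parts $\Real\tilde z\le -1$ on $\Gamma_{-1}^+$ near the ``tip'' and $\Real\tilde w\ge 1$ on $\Gamma_1^-$ near the tip, one gets $\Real(-\tilde x\sigma_q\tilde z)\le-\sigma_q\tilde x$ and $\Real(\tilde y\sigma_q\tilde w)\le -\sigma_q\tilde y$ near the tips — wait, here one must be careful about signs, since $\tilde x_N$ may be negative. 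The correct mechanism is the freedom, present in the {\em limiting} kernel (\ref{LimitK3}) and still available in the prelimit, to choose the vertical position of the contours: I would shift $\Gamma_{-1}^+$ to $\Gamma_{-1-\kappa\tilde x_N}^+$ and $\Gamma_1^-$ to $\Gamma_{1+\kappa\tilde y_N}^-$ for a small fixed $\kappa>0$ (staying on the correct side of all poles, which is possible precisely because $a_1^-=b_1^-=c^-=0$ so $\Phi_{a,b,c}$ has no singularity at the origin and the relevant poles of the $\tilde H_i^N$ are at macroscopic distance), picking up the usual residue that reassembles into $K_N^1$. On the shifted contours the cubic term $\Real(\sigma_q^3\tilde z^3/3)$ from $\tilde H_6^N$ together with the linear term from $\tilde H_8^N$ produces, after optimizing over the shift, the bound $\exp(-c|\tilde x_N|^{3/2})$, which is stronger than the exponential bound claimed; here the moderate-deviation bounds of the form (\ref{RealSB}) and the Taylor expansion (\ref{EqTayS}) of $S$ are exactly what is needed, and the dominating functions (\ref{H1B}), (\ref{H2B}), (\ref{H3B}), (\ref{H4B}), (\ref{H5B}), (\ref{H6B}), (\ref{H7B}), (\ref{H8B}), now invoked \emph{with $\tilde\delta$ depending on the shift}, control the product $\prod_{i=1}^8\tilde H_i^N$ along the way. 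The residue term $K_N^1$ is a one-dimensional integral over the short vertical segment joining the new intersection points, and there the same shift gives a Gaussian-type bound $\exp(-c(\tilde x_N-\tilde y_N)^2/(t_u-t_v))$ when $t_u\neq t_v$ and an even faster decay when $t_u=t_v$ (the segment is then trivial or the integrand is pure phase over a bounded segment times the exponential factor); in all cases one gets at least $\exp(-r_1|\tilde x_N|-r_1|\tilde y_N|)$.

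\textbf{Main obstacle.} The principal difficulty is bookkeeping the contour shift uniformly: one must verify that for $\tilde x_N,\tilde y_N\in[-L,\infty)$ the shifted contours $\Gamma_{-1-\kappa\tilde x_N}^+$ and $\Gamma_{1+\kappa\tilde y_N}^-$ still have exactly the intersection structure assumed in Lemma \ref{PrelimitKernel} (so the residue is a single simple pole at $z=w$, i.e. $w=z$ in the original variables, giving the clean $K_N^1$ form), that they stay bounded away from the poles of each $\tilde H_i^N$ with a gap that degrades at most polynomially in $N$ (so the $\tilde\delta$-dependent constants $C_i$ in (\ref{H1B})--(\ref{H8B}) remain $O(1)$ or at worst $\exp(o(N^{3/4}))$), and that the cubic decay from $\tilde H_6^N$ is not destroyed by the shift — this last point forces $\kappa$ small and is why one only claims an exponential, rather than the true stretched-exponential, rate. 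I would organize this as: (i) a lemma fixing admissible shift parameters and the resulting contour geometry; (ii) the residue computation reproducing $K_N^1$ and reducing the claim to bounding the shifted double integral and the segment integral; (iii) the pointwise bound on the shifted integrand via (\ref{H1B})--(\ref{H8B}) and (\ref{RealSB}); (iv) integration against the cubic Gaussian-type dominating function to extract $\exp(-r_1|\tilde x_N|-r_1|\tilde y_N|)$; (v) the crude fallback for $\tilde x_N$ or $\tilde y_N$ beyond $cN^{1/12}$, where the trivial estimate on the original (un-truncated) kernel suffices because the target bound is then exponentially large. The bound on the segment integral defining $K_N^1$ in step (ii) is routine given the explicit Gaussian form of its integrand.
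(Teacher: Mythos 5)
Your overall strategy — deform the contours so that the $z$-contour stays strictly outside the unit circle and the $w$-contour strictly inside, and extract the exponential decay from the factors $z^{-x_N}$ and $w^{y_N}$ — is indeed the mechanism the paper exploits. But several of the details you propose are wrong in ways that matter.

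\textbf{The shift direction is backwards.} You propose moving $\Gamma_{-1}^+$ (the rescaled $\tilde z$-contour) to $\Gamma_{-1-\kappa\tilde x_N}^+$. For the problematic regime $\tilde x_N$ large and positive, this moves the tip \emph{further left}, so that on much of the contour $\Real\tilde z < 0$, which makes $|e^{-\sigma_q\tilde x_N\tilde z}|$ \emph{larger}, not smaller. To get decay in $\tilde x_N>0$ you must move the $\tilde z$-contour to the \emph{right} of the origin (and the $\tilde w$-contour to the left). The paper does exactly this, with a \emph{fixed} small separation $\delta_1$: in (\ref{S6SK1}) it works with $\Gamma_{\delta_1}^+\times\Gamma_{-\delta_1}^-$, so that $|1+N^{-1/3}\tilde z|\ge 1+\rho_3 N^{-1/3}$ and $|1+N^{-1/3}\tilde w|\le 1-\rho_2 N^{-1/3}$, and then the exponents $\sigma_q\tilde x_N N^{1/3}\log z$, $\sigma_q\tilde y_N N^{1/3}\log w$ produce $e^{-\rho_1'|\tilde x_N|-\rho_1'|\tilde y_N|}$ for free — see (\ref{KNUT2})--(\ref{KNUT3}). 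No $\tilde x_N$-dependent optimization of the shift is needed, and none would yield a stretched exponential here because the saddle is degenerate (cubic), so one only claims exponential decay.

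\textbf{The ``crude fallback'' in step (v) has a sign error.} You say that for $|\tilde x_N|$ or $|\tilde y_N|$ beyond $cN^{1/12}$ ``the trivial estimate suffices because the target bound is then exponentially large.'' Since $\tilde x_N,\tilde y_N\in[-L,\infty)$, the quantities $|\tilde x_N|,|\tilde y_N|$ grow only when $\tilde x_N,\tilde y_N\to+\infty$, in which case $R_1 e^{-r_1(|\tilde x_N|+|\tilde y_N|)}$ is exponentially \emph{small}, not large. This is precisely the regime where you need the strong bound, so a trivial estimate does not close the argument there. Relatedly, you cannot simply invoke Lemma \ref{LTrunc}: the constant $c_1$ in (\ref{Tr2}) depends on the \emph{sequences} $\tilde x_N,\tilde y_N$, so the lemma as stated does not give a bound uniform over $\tilde x_N,\tilde y_N\ge -L$. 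The paper re-derives the truncation estimate with explicit $\tilde x_N,\tilde y_N$-dependence in Step~2, arriving at (\ref{S6Tr7}), which already carries the factor $e^{-\rho_1|\tilde x_N|-\rho_1|\tilde y_N|}$.

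\textbf{No new residue is produced.} You describe picking up a residue that ``reassembles into $K_N^1$.'' In the paper's proof, the combination $K_N^1+K_N^3=K_N-K_N^2$ is rewritten at the outset (via the identity established in the proof of Lemma \ref{PrelimitKernel}, see (\ref{CKDefAlt2})--(\ref{CKDefAlt3})) as a single double integral over two concentric circles with $r_2<r_1$. Because $a_1^-=b_1^-=c^-=0$, the parameters' poles are macroscopically separated from $1$, and the circles can be deformed to the separated contours $\gamma^+_{1+\delta_1 N^{-1/3},N^{-1/12}}$ and $\gamma^-_{1-\delta_1 N^{-1/3},N^{-1/12}}$ \emph{without} crossing any pole. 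So there is no residue term to handle, and the single double integral in (\ref{S61SpecEq}) is what gets bounded. Your proposal instead starts from the $K_N^3$ contours (which cross, forcing the $K_N^1$ correction) and tries to push them apart afterwards, which is a workable idea but not what the paper does and, as written, not what you have accomplished — the residue bookkeeping you anticipate as the ``main obstacle'' is avoided entirely by choosing the starting contours correctly.

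To summarize, the paper's proof is simpler than your plan: a fixed $\delta_1 N^{-1/3}$-separation of the contours, no adaptive shifting, and no residue to control; the $\exp(-r_1|\tilde x_N|-r_1|\tilde y_N|)$ decay drops out of $|z|>1>|w|$ together with the $|z|^{-x_N}|w|^{y_N}$ factor and the bounds from Section \ref{Section5.4}.
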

\begin{proof} Combining (\ref{CKDefAlt}), (\ref{CKDefAlt2}) and (\ref{CKDefAlt3}), we see that for $N \geq N_0$ 
\begin{equation}\label{S61UT1}
K_N(u,x; v, y) - K^2_N(u,x;v,y) =  \frac{1}{(2\pi \im)^2} \oint_{C_{r_1}}dz \oint_{C_{r_2}}dw \frac{1}{z-w} \cdot\prod_{i = 1}^7H^N_i(z,w)  \cdot w^y z^{-x -1}.
\end{equation}
We recall that the functions $H_i^N$ are as in (\ref{PKF41}-\ref{PKF45}), and $C_{r_i}$ are positively oriented, zero-centered circles with radii that satisfy $\max_{1 \leq i \leq N} y_i^N = \overline{y} < r_2 < r_1 < \underline{x} = \min_{1 \leq j \leq M_m} (x_j^N)^{-1}$. 

For clarity we split the remainder of the proof into three steps, which we describe presently. In view of (\ref{S61UT1}) we need to find upper bounds for the double integral on the right side. In Step 1, we deform the contours $C_{r_1}$ and $C_{r_2}$ to contours $\gamma_w$ and $\gamma_z$ that are similar to the ones in Section \ref{Section4}, although they do not cross each other. Choosing separated contours is possible in the present setting due to our assumption that $a_1^- = b_1^- = c^- = 0$. We further split the contours $\gamma_{z}, \gamma_w$ into two pieces -- ones close to the critical point $1$, and ones well-separated from it. This allows us to rewrite the expression in (\ref{S61UT1}) as the sum of four terms -- see (\ref{sliceK3}). In Step 2 we investigate the terms in (\ref{sliceK3}) that involve integration over a contour that is well-separated from $1$. For these terms we can effectively use the bounds in Section \ref{Section5.3} to obtain the required estimate. In Step 3 we deal with the term in (\ref{sliceK3}) that involves integration close to $1$, and obtain estimates by utilizing the bounds in Section \ref{Section5.4}. We now turn to supplying the details.\\

{\bf \raggedleft Step 1.} Suppose that $\delta_1 > 0$ is sufficiently small so that 
\begin{equation}\label{S6Delta1}
1 + 2\delta_1 < q^{-1}, \hspace{3mm} 1 - 2\delta_1 \geq q, \hspace{3mm} \delta_1 < 1/2, \hspace{3mm} \delta_1 b_1^+ \sigma_q < 1/2, \hspace{3mm} \delta_1 a_1^+ \sigma_q < 1/2 ,\mbox{ and } \delta_1 c^+ \sigma_q < 1.
\end{equation} 
Throughout the proof we assume that $N$ is large so that $N \geq N_0$ and $ N^{-1/12} \in (\delta_1 \cdot N^{-1/3}, 1/2]$.

We deform $C_{r_1}$ and $C_{r_2}$ in (\ref{S61UT1}) to the contours $\gamma_z = \gamma^+_{1 + \delta_1 N^{-1/3}, N^{-1/12}}$ and $\gamma_w = \gamma^-_{1 - \delta_1 N^{-1/3}, N^{-1/12}}$, respectively, where we recall that $\gamma_{a,\delta}^{\pm}$ was defined in Definition \ref{finiteContours}. From (\ref{S6Delta1}) and the scaling in Definition \ref{ParScale} we see that
\begin{equation}\label{S6Parbound}
1+ \delta_1N^{-1/3} \leq \underline{x} - \delta_1N^{-1/3} \mbox{ and } 1 - \delta_1 N^{-1/3} \geq \delta_1N^{-1/3} + \overline{y}.
\end{equation}
In particular, we conclude that we do not cross any poles and by Cauchy's theorem we do not change the value of the integral in the process of the deformation, which gives
\begin{equation}\label{S61SpecEq}
K_N(u,x; v, y) - K^2_N(u,x;v,y) =  \frac{1}{(2\pi \im)^2} \oint_{\gamma_z}dz \oint_{\gamma_w}dw \frac{1}{z-w} \cdot\prod_{i = 1}^7H^N_i(z,w)  \cdot w^y z^{-x -1}.
\end{equation}
We next let 
$$\gamma_w^0 = \gamma^{-, \vert}_{1 - \delta_1 N^{-1/3}, N^{-1/12}}, \hspace{3mm}  \gamma_w^1 = \gamma^{-, \circ}_{1 - \delta_1 N^{-1/3}, N^{-1/12}},$$ 
$$\gamma_z^0 = \gamma^{+, \vert}_{1 + \delta_1 N^{-1/3}, N^{-1/12}}, \hspace{3mm} \gamma_z^1 = \gamma^{+, \circ}_{1 + \delta_1 N^{-1/3}, N^{-1/12}},$$
where we recall that $\gamma_{a,\delta}^{\pm, \vert}$ and $\gamma_{a, \delta}^{\pm, \circ}$ were defined in Definition \ref{finiteContours}, see also Figure \ref{S61}. 

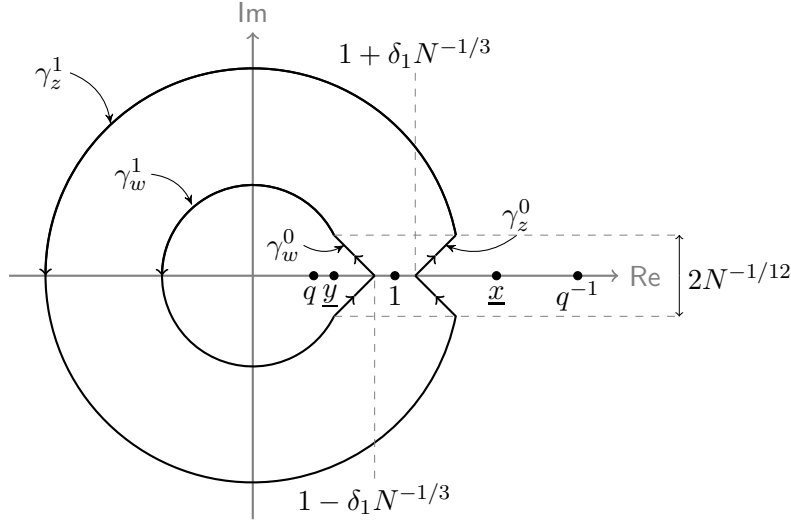
\begin{figure}[h]
    \centering
     \begin{tikzpicture}[scale=2.7]

        \def\tra{3} 
        \draw[->, thick, gray] (-1.2,0)--(1.8,0) node[right]{$\Real$};
        \draw[->, thick, gray] (0,-1.2)--(0,1.2) node[above]{$\Imag$};
        \def\radA{1.020} 
        \def\radB{0.447} 

        \draw[->,thick][black] (0.8,0) -- (0.9,0.1);
        \draw[-,thick][black] (0.9,0.1) -- (1,0.2);
        \draw[-,thick][black] (0.8,0) -- (0.9,-0.1);
        \draw[->,thick][black] (1,-0.2) -- (0.9,-0.1);
        \draw[->,thick][black] (1,0.2) arc (11.21:180:\radA);
        \draw[-,thick][black] (1,0.2) arc (11.21:360 - 11.21:\radA);

        \draw[->,thick][black] (0.6,0) -- (0.5,0.1);
        \draw[-,thick][black] (0.5,0.1) -- (0.4,0.2);
        \draw[-,thick][black] (0.6,0) -- (0.5,-0.1);
        \draw[->,thick][black] (0.4,-0.2) -- (0.5,-0.1);        
        \draw[->,thick][black] (0.4,0.2) arc (26.57:180:\radB);
        \draw[-,thick][black] (0.4,0.2) arc (26.57:360 - 26.57:\radB);

        \draw[black, fill = black] (0.7,0) circle (0.02);
        \draw (0.7,-0.1) node{$1$}; 
        \draw[black, fill = black] (0.4,0) circle (0.02);
        \draw (0.38,-0.1) node{$\underline{y}$};
        \draw[black, fill = black] (0.3,0) circle (0.02);
        \draw (0.28,-0.1) node{$q$};
        \draw[black, fill = black] (1.2,0) circle (0.02);
        \draw (1.2,-0.1) node{$\underline{x}$}; 
        \draw[black, fill = black] (1.6,0) circle (0.02);
        \draw (1.6,-0.1) node{$q^{-1}$};
       
        \draw (-1,1) node{$\gamma_{z}^{1}$};
        \draw[->, >=stealth'] ( - 0.9, 1)  to[bend left] ( - 0.70,0.75);
        \draw (1.3,0.3) node{$\gamma_{z}^{0}$};
        \draw[->, >=stealth'] ( 1.3, 0.2)  to[bend left] ( 0.95,0.15);
        \draw (-0.6,0.5) node{$\gamma_{w}^{1}$};
        \draw[->, >=stealth'] ( - 0.5, 0.5)  to[bend left] ( - 0.3,0.35);
        \draw (0.15,0.15) node{$\gamma_{w}^{0}$};
        \draw[->, >=stealth'] ( 0.25, 0.15)  to[bend right] ( 0.45,0.15);
     
        \draw (0.6,-1.1) node{$1 - \delta_1 N^{-1/3}$};
        \draw[-,very thin, dashed][gray] (0.6,-1) -- (0.6, 0);
        \draw (0.8,1.1) node{$1 + \delta_1 N^{-1/3}$};
        \draw[-,very thin, dashed][gray] (0.8,1) -- (0.8,0);

        \draw[-,very thin, dashed][gray] (0.4,0.2) -- (2.1,0.2);
        \draw[-,very thin, dashed][gray] (0.4,-0.2) -- (2.1,-0.2);
        \draw[->,very thin][black] (2.1,0) -- (2.1, -0.2);
        \draw[->,very thin][black] (2.1,0) -- (2.1, 0.2);
        \draw (2.4,0) node{$2N^{-1/12}$};

    \end{tikzpicture} 
    \caption{The figure depicts the contours $\gamma_z^i, \gamma_w^{i}$ for $i = 0,1$. We have $\gamma_z = \gamma_z^0 \cup \gamma_z^1$ and $\gamma_w = \gamma_w^0 \cup \gamma_w^1$.} 
    \label{S61}
\end{figure}

For $i,j \in \{0,1\}$ we define
\begin{equation}\label{sliceK2}
\begin{split}
&K^{i,j}_N(u,x_N; v, y_N) = \frac{1}{(2\pi \im)^2} \int_{\gamma^i_{z}}dz  \int_{\gamma^j_{w}}dw\frac{1}{z-w}\prod_{i = 1}^5 H_i^N(z,w) \cdot H^N_7(z,w) \cdot H^N_8(z,w) \\
& \exp \left(\log w \cdot \sigma_q \tilde{y}_N \cdot N^{1/3} -  \log z \cdot \sigma_q \tilde{x}_N \cdot N^{1/3}  \right) \cdot \exp \left(\tilde{N} \cdot [S(z) - S(w)] \right),
\end{split}
\end{equation}
where we recall that $H_8^N$ is as in (\ref{H8Def}), and $S(z)$ is as in (\ref{DefS}). From (\ref{S61SpecEq}) 
\begin{equation}\label{sliceK3}
K_N(u,x_N; v, y_N) - K^2_N(u,x_N; v, y_N)  = \sum_{i, j \in \{0,1\}} K^{i,j}_N(u,x_N; v, y_N).
\end{equation}
and so it suffices to analyze and bound the four kernels separately. \\

Before we proceed with the next step we isolate a quick estimate, which will be used in Section \ref{Section6.2}. Let us fix $N$ large enough so that $N \geq N_0$ and $N^{-1/12} \in (\delta_1 \cdot N^{-1/3}, 1/2]$. We claim that for any $t \in \mathbb{R}$ we can find $R(N,t), r(N,t) > 0$ that depend on $N$, $t$ and the parameters $q, t_1, \dots, t_m, c^+$ and sequences $\{a_i^+\}_{i \geq 1}$, $\{b_i^+\}_{i \geq 1}$ such that for $\tilde{x}_N, \tilde{y}_N \geq t$
\begin{equation}\label{S61SpecEq2}
\left|K_N(u,x_N; v, y_N) - K^2_N(u,x_N;v,y_N) \right| \leq R(N,t) \cdot \exp\left(-r(N,t) |\tilde{x}_N| - r(N,t) |\tilde{y}_N| \right).
\end{equation}
To see (\ref{S61SpecEq2}) we note that $H_i^N(z,w)$ for $i = 1, \dots, 7$ are bounded (they do not depend on $\tilde{x}_N, \tilde{y}_N$) as are the lengths of the contours $\gamma_z$ and $\gamma_w$. The latter, (\ref{ScaleXY}) and (\ref{S61SpecEq}) produce the bound
\begin{equation*}
\left|K_N(u,x_N; v, y_N) - K^2_N(u,x_N;v,y_N) \right| \leq \tilde{R}(N) \cdot \max_{z \in \gamma_z} |z|^{- \sigma_q N^{1/3} \tilde{x}_N} \cdot  \max_{w \in \gamma_w} |w|^{\sigma_q N^{1/3} y_N},
\end{equation*}
for some constant $\tilde{R}(N) > 0$ that depends on the parameters we listed before (\ref{S61SpecEq2}) except for $t$. The last equation gives (\ref{S61SpecEq2}) once we observe that $|w| \leq 1 - \delta_1 N^{-1/3}$ for $w \in \gamma_w$ and $|z| \geq 1 + \delta_1 N^{-1/3}$ for $z \in \gamma_z$.\\

{\bf \raggedleft Step 2.} In this step we estimate $K^{i,j}_N(u,x_N; v, y_N)$ when $i + j > 0$. Notice that if $z \in \gamma_z$ and $w\in \gamma_w$, we have $|z|, |w| \in [1/2,2]$. The latter follows from our assumption that $\delta_1 \in (0,1/2)$ in (\ref{S6Delta1}), and $N^{-1/12} \in (0,1/2]$. In addition, we have $(x_i^N)^{-1} \in [\underline{x}, q^{-1}]$ and $y_i^N \in [q , \underline{y}]$. In view of (\ref{S6Parbound}), we know that the segments $[\underline{x}, q^{-1}]$ and $[q , \underline{y}]$ are at least distance $(\delta_1/2)N^{-1/3}$ away from $\gamma_z \cup \gamma_w$. All of this implies that the conditions above (\ref{RatBound1}) are satisfied with $\tilde{\delta} = \delta_1/2$, which implies that $H^N_i(z,w)$ are products of $O(N^{1/12})$ terms, each being $O(N^{1/3})$ for $i = 1,2,5$. Since $a_1^- = b_1^- = c^- =0 $ by assumption we also have $H_3^N(z,w) = H_4^N(z,w) = 1$. The last few observations imply that we have the bound in (\ref{Tr1}) for some $c_0$ depending on $\delta_1$ and $q$ alone.

Directly from the definition of $H_8^N(z,w)$ in (\ref{H8Def}) we conclude the second line in (\ref{Tr2}) with a constant $c_2 > 0$, depending on $q$ and $t_1, \dots, t_m$. Since $\gamma_z$, $\gamma_w$ are bounded contours that are bounded away from $q^{-1}$, we also have the bound in (\ref{Tr3})
for some $c_3 > 0$ (depending on $q$ and $t_1, \dots, t_m$). In addition, we have that $\gamma_z$, $\gamma_w$ are at least distance $2\delta_1 N^{-1/3}$ away from each other, and so (\ref{Tr4}) holds with a constant $c_4$ that depends on $\delta_1$ alone. Arguing as in the derivation of (\ref{Tr5}), we also have for $(z,w) \in \gamma_z \times \gamma_w \setminus \gamma_z^0 \times \gamma_w^0$ that
\begin{equation}\label{S6Tr5}
\left|\exp \left(\tilde{N} \cdot [S(z) - S(w)] \right)\right| = \exp \left(\tilde{N}[\Real S(z)  - \Real S(w)] \right) \leq \exp \left(c_5 - (\sigma_q^3/24) \cdot  N^{3/4} \right),
\end{equation}
for some $c_5 > 0$ that depends on $q$. We finally investigate the terms
\begin{equation*}
\begin{split}
&\left| \exp \left(\log w \cdot \sigma_q \tilde{y}_N \cdot N^{1/3}  \right) \right| \mbox{ and } \left| \exp\left(-  \log z \cdot \sigma_q \tilde{x}_N \cdot N^{1/3}  \right)  \right|.
\end{split}
\end{equation*}

We observe that for $w \in \gamma_w$ we have $|w| \leq 1 - \delta_1 N^{-1/3}$ and for $z \in \gamma_z$ we have $|z| \geq 1 + \delta_1 N^{-1/3}$. The latter implies that we can find $\rho_1 > 0$, depending on $q$ and $\delta_1$, such that for all large $N$, $w \in \gamma_w$ and $z \in \gamma_z$ we have
$$N^{1/3} \cdot  \sigma_q \cdot \Real[\log w ] \leq - \rho_1 \mbox{ and } - N^{1/3} \cdot \sigma_q \cdot \Real[\log z ] \leq -\rho_1.$$
The latter gives the estimates
\begin{equation*}
    \begin{split}
    &\left| \exp \left(\log w \cdot \sigma_q \tilde{y}_N \cdot N^{1/3}  \right) \right| \leq \exp \left( - \rho_1 \tilde{y}_N \right) \mbox{ if } \tilde{y}_N \geq 0,\\
    & \left| \exp\left(-  \log z \cdot \sigma_q \tilde{x}_N \cdot N^{1/3}  \right)  \right|\leq \exp \left( - \rho_1 \tilde{x}_N \right) \mbox{ if } \tilde{x}_N \geq 0.
    \end{split}
\end{equation*}
On the other hand, we also have the trivial bounds
\begin{equation*}
    \begin{split}
    &\left| \exp \left(\log w \cdot \sigma_q \tilde{y}_N \cdot N^{1/3}  \right) \right| \leq \exp \left( c_1 N^{1/3} \right) \mbox{ if } \tilde{y}_N \in[-L,0],\\
    & \left| \exp\left(-  \log z \cdot \sigma_q \tilde{x}_N \cdot N^{1/3}  \right)  \right|\leq \exp \left( c_1 N^{1/3}\right) \mbox{ if } \tilde{x}_N \in[-L,0].
    \end{split},
\end{equation*}
where $c_1 > 0$ depends on $q$ and $L$ alone. Combining the last two inequalities we conclude
\begin{equation}\label{S6Tr6}
    \begin{split}
    &\left| \exp \left(\log w \cdot \sigma_q \tilde{y}_N \cdot N^{1/3}  \right) \right| \leq \exp \left( - \rho_1 |\tilde{y}_N| + \rho_1 L + c_1 N^{1/3}  \right) \mbox{ if } \tilde{y}_N \geq -L,\\
    & \left| \exp\left(-  \log z \cdot \sigma_q \tilde{x}_N \cdot N^{1/3}  \right)  \right|\leq \exp \left( - \rho_1 |\tilde{x}_N| + \rho_1 L + c_1 N^{1/3} \right) \mbox{ if } \tilde{x}_N \geq -L.
    \end{split}
\end{equation}
Combining (\ref{Tr1}), the second line in (\ref{Tr2}), (\ref{Tr3}), (\ref{Tr4}), (\ref{S6Tr5}), (\ref{S6Tr6}) and the boundedness of $\gamma_z$, $\gamma_w$ we conclude for $i + j > 0$
\begin{equation}\label{S6Tr7}
\left| K^{i,j}_N(u,x_N; v, y_N) \right| \leq \exp\left(CN^{2/3} -  (\sigma_q^3/24) \cdot  N^{3/4}  - \rho_1 |\tilde{x}_N| - \rho_1 |\tilde{y}_N| \right),
\end{equation}
for some sufficiently large constant $C$ (depending on $c_0-c_5, \rho_1, L, q ,\delta_1)$ and all large $N$.\\

{\bf \raggedleft Step 3.} In this step we estimate $K_N^{0,0}(u, x_N; v, y_N)$. We perform the change of variables $z = 1 + N^{-1/3} \tilde{z}$ and $w = 1 + N^{-1/3} \tilde{w}$ and get
\begin{equation}\label{S6SK1}
\begin{split}
& N^{1/3} \sigma_q K_N^{0,0}(u, x_N; v ,y_N) = \frac{\sigma_q}{(2\pi \im)^2} \int_{\Gamma_{\delta_1}^+}d\tilde{z}  \int_{\Gamma_{-\delta_1}^-}d \tilde{w} {\bf 1}\{ |\Imag ( \tilde{w})| \leq N^{1/4}, |\Imag (\tilde{z})| \leq N^{1/4}\} \\
& \times   \frac{1}{\tilde{z} - \tilde{w}}  \times \prod_{i = 1}^{8} \tilde{H}^N_i(\tilde{z}, \tilde{w}) ,
\end{split}
\end{equation}
where $\Gamma_{a}^{\pm}$ are as in Definition \ref{DefContInf}, and the functions $\tilde{H}^N_i(\tilde{z}, \tilde{w})$ are as in (\ref{NS2}-\ref{NS5}). We next proceed to discuss which bounds from Section \ref{Section5.4} go through for our new contours. 

By our choice of $\delta_1$ satisfying (\ref{S6Delta1}), we have for each $\tilde{w} \in \Gamma_{-\delta_1}^-$ with $|\Imag(\tilde{w})| \leq N^{1/4}$ that 
$$|1 + a_i^+ \sigma_q \tilde{w}| \geq 1/4 \mbox{ and } |\tilde{w}| \leq \sqrt{2} N^{1/4} + \delta_1 \leq 2N^{1/4}.$$ 
The latter satisfy the conditions prior to (\ref{H1B}) with $\tilde{\delta} = 1/4$ and so the bound for $\tilde{H}_1^N$ in (\ref{H1B}) holds. Similarly, for each $\tilde{z} \in \Gamma_{\delta_1}^+$ with $|\Imag(\tilde{z})| \leq N^{1/4}$ we have 
$$|1 - b_i^+ \sigma_q \tilde{z}| \geq 1/4 \mbox{ and } |\tilde{z}| \leq \sqrt{2} N^{1/4} + \delta_1 \leq 2N^{1/4}.$$
The latter satisfy the conditions prior to (\ref{H2B}) with $\tilde{\delta} = 1/4$ and so the bound for $\tilde{H}_2^N$ in (\ref{H2B}) holds. Since $a_1^- = b_1^- = c^- = 0$, we also have that 
\begin{equation}\label{S6H3B}
\tilde{H}_3^N(\tilde{z}, \tilde{w}) = \tilde{H}_4^N(\tilde{z}, \tilde{w}) = 1.
\end{equation}
Note that the conditions before (\ref{H5B}) are satisfied when $\tilde{z} \in \Gamma_{\delta_1}^+, |\Imag(\tilde{z})| \leq N^{1/4}$ and $\tilde{w} \in \Gamma_{-\delta_1}^-, |\Imag(\tilde{w})| \leq N^{1/4}$ with $\tilde{\delta} = 1/4$, and so the bound for $\tilde{H}_5^N$ in (\ref{H5B}) holds.

We next note that from (\ref{EqTayS}) the inequalities in (\ref{RealSB}) continue to hold for the contours $\gamma_z^0$, $\gamma_w^0$ in the present proposition, with a possibly different constant $c_5$. Using the latter, we see that the bound for $\tilde{H}_6^N$ in (\ref{H6B}) holds for $\tilde{z} \in \Gamma_{\delta_1}^+, |\Imag(\tilde{z})| \leq N^{1/4}$ and $\tilde{w} \in \Gamma_{-\delta_1}^-, |\Imag(\tilde{w})| \leq N^{1/4}$. Using that $|\tilde{z}| \leq 2N^{1/4}$ and $|\tilde{w}| \leq 2N^{1/4}$ we also conclude that the bound for $\tilde{H}_7^N$ in (\ref{H7B}) continues to hold. Since $\Gamma_{\delta_1}^+$ and $\Gamma_{-\delta_1}^-$ are at least $2\delta_1$ away from each other, we get the bound
\begin{equation}\label{S6RatB}
\left| \frac{1}{\tilde{z} - \tilde{w}} \right| \leq \frac{1}{2\delta_1}.
\end{equation}

We finally focus our attention on $\tilde{H}^N_8$, which is essentially the only term of the integrand in (\ref{S6SK1}) we need to treat differently than in Section \ref{Section5.4}. Using the bound in (\ref{TBO1}), which holds when $|\tilde{z}| \leq 2N^{1/4}$ and $|\tilde{w}| \leq 2N^{1/4}$, and (\ref{NS5}) we get for some large $A_8$ (depending on $t_1, \dots, t_m$ and $q$)
\begin{equation}\label{KNUT0}
    \begin{split}
\left| \tilde{H}^N_8(\tilde{z}, \tilde{w}) \right| \leq A_8 \cdot \left| \exp \left( \sigma_q \tilde{y}_N \cdot N^{1/3} \cdot \log (1 + N^{-1/3} \tilde{w}) -  \sigma_q \tilde{x}_N \cdot N^{1/3} \cdot \log (1 + N^{-1/3} \tilde{z}) \right) \right|.
    \end{split}
\end{equation}
Using that $|\log(1+z)| \leq 2|z|$ for all small $z$, we conclude
\begin{equation}\label{KNUT1}
    \begin{split}
    & \left| \exp \left( \sigma_q \tilde{y}_N \cdot N^{1/3} \cdot \log (1 + N^{-1/3} \tilde{w}) \right) \right| \leq \exp \left( a_8 |\tilde{w}| \right) \mbox{ if } \tilde{y}_N \in[-L,0],\\
    &\left| \exp \left(- \sigma_q \tilde{x}_N \cdot N^{1/3} \cdot \log (1 + N^{-1/3} \tilde{z}) \right) \right| \leq \exp \left( a_8 |\tilde{z}| \right) \mbox{ if } \tilde{x}_N \in[-L,0],
    \end{split}
\end{equation}
where the constant $a_8$ depends on $q$ and $L$ alone. 

Writing $\tilde{w} = -\delta_1 - x + \sigma x$ for $x \in [0, N^{1/4}]$ and $\sigma \in \{-1, 1\}$ we get 
\begin{equation}\label{KNUT2}
    \begin{split}
&2 \Real \log (1 + N^{-1/3} \tilde{w})  = \log \left( 1 - 2(\delta_1 + x)N^{-1/3} + (\delta_1^2 + 2x^2 + 2x \delta_1)N^{-2/3} \right)   \\
& \leq \log \left(1 - \delta_1 N^{-1/3} - x N^{-1/3} \right) \leq \log \left(1 - \delta_1 N^{-1/3} \right) \leq -\rho_2 N^{-1/3},
    \end{split}
\end{equation}
where the inequalities on the second line hold for all large $N$, and $\rho_2 > 0$ is a small positive constant depending on $\delta_1$ alone. One analogously shows that if $\tilde{z} = \delta_1 + x + \sigma x$ for $x \in [0, N^{1/4}]$ and $\sigma \in \{-1, 1\}$ we get for some $\delta_1$-dependent $\rho_3 > 0$ and all large $N$
$$2 \Real \log (1 + N^{-1/3} \tilde{z}) \geq \log \left(1 + \delta_1 N^{-1/3} \right) \geq \rho_3 N^{-1/3}.$$
The last two inequalities imply that for some $\rho'_1 > 0$, depending on $\delta_1$ and $q$, and all large $N$
\begin{equation}\label{KNUT3}
    \begin{split}
    & \left| \exp \left( \sigma_q \tilde{y}_N \cdot N^{1/3} \cdot \log (1 + N^{-1/3} \tilde{w}) \right) \right| \leq \exp \left( -\rho_1'  \tilde{y}_N \right) \mbox{ if } \tilde{y}_N \geq 0,\\
    &\left| \exp \left(- \sigma_q \tilde{x}_N \cdot N^{1/3} \cdot \log (1 + N^{-1/3} \tilde{z}) \right) \right| \leq \exp \left( -\rho_1'  \tilde{x}_N \right) \mbox{ if } \tilde{x}_N \geq 0,
    \end{split}
\end{equation}
Combining (\ref{KNUT1}) and (\ref{KNUT3}) we get for $\tilde{z} \in \Gamma_{\delta_1}^+, |\Imag(\tilde{z})| \leq N^{1/4}$ and $\tilde{w} \in \Gamma_{-\delta_1}^-, |\Imag(\tilde{w})| \leq N^{1/4}$
\begin{equation}\label{KNUT4}
    \begin{split}
    & \left| \exp \left( \sigma_q \tilde{y}_N \cdot N^{1/3} \cdot \log (1 + N^{-1/3} \tilde{w}) \right) \right| \leq \exp \left( a_8 |\tilde{w}| -\rho_1'  |\tilde{y}_N| + \rho_1' L \right) \mbox{ if } \tilde{y}_N \geq -L,\\
    &\left| \exp \left(- \sigma_q \tilde{x}_N \cdot N^{1/3} \cdot \log (1 + N^{-1/3} \tilde{z}) \right) \right| \leq \exp \left( a_8|\tilde{w}| -\rho_1'  |\tilde{x}_N|  + \rho_1' L\right) \mbox{ if } \tilde{x}_N \geq -L.
    \end{split}
\end{equation}
Combining (\ref{H1B}), (\ref{H2B}), (\ref{H5B}), (\ref{H6B}), (\ref{H7B}), (\ref{S6H3B}), (\ref{S6RatB}), (\ref{KNUT0}) and (\ref{KNUT4}) we conclude that for all large $N$, $\tilde{x}_N \geq - L$, $\tilde{y}_N \geq -L$
\begin{equation}\label{S6H9B}
\begin{split}
&\left| \frac{{\bf 1}\{ |\Imag ( \tilde{w})| \leq N^{1/4}, |\Imag (\tilde{z})| \leq N^{1/4}\}}{\tilde{z} - \tilde{w}}  \times \prod_{i = 1}^{8} \tilde{H}^N_i(\tilde{z}, \tilde{w}) \right| \leq \exp \left( -\rho_1'  |\tilde{x}_N| - \rho_1'  |\tilde{y}_N| \right) \\
&\times \exp \left(  D_0 + D_1 (|\tilde{z}| + |\tilde{w}|) + D_2 (|\tilde{z}|^2 + |\tilde{w}|^2) - (\sigma_q^3/24) |\tilde{z}|^3 - (\sigma_q^3/24)|\tilde{w}|^3  \right),
\end{split}
\end{equation}
where $D_0$, $D_1$, $D_2$ are constants that depend on the same parameters as in the statement of the proposition. Note that the function on the second line of (\ref{S6H9B}) is integrable over $(\tilde{z}, \tilde{w}) \in \Gamma_{\delta_1}^+ \times \Gamma_{-\delta_1}^-$ due to the cubic terms in the exponents. In particular, from (\ref{S6SK1}) and (\ref{S6H9B})  we conclude that there is a constant $R_1 > 0$ such that 
\begin{equation}\label{S6H10B}
\begin{split}
&\left| N^{1/3} \sigma_q K_N^{0,0}(u, x_N; v ,y_N) \right| \leq (R_1/2) \cdot \exp \left( -\rho_1'  |\tilde{x}_N| - \rho_1'  |\tilde{y}_N| \right).
\end{split}
\end{equation}
Equations (\ref{S6Tr7}) and (\ref{S6H10B}) together imply (\ref{EqUTE}) with $r_1 = \min(\rho_1, \rho_1')$ and the same choice of $R_1$ if $N$ is large enough.
\end{proof}

%
%
\subsection{Probability estimate}\label{Section6.2} We continue with the same notation and assumptions as in Proposition \ref{KUTE}. The goal of this section is to establish the following result.

\begin{proposition}\label{PVC} There exist constants $N_2 \geq N_0$, $R_2 > 0$, $r_2 > 0$, depending on $q$, $c^+$, $\{t_1, \dots, t_m\}$ and the sequences $\{a_i^+\}_{i \geq 1}$, $\{b_i^+\}_{i \geq 1}$, such that for each $u \in \{1, \dots, m\}$, $N \geq N_2$ and $a \geq 0$
\begin{equation}\label{ePVC}
\mathbb{P} \left( \sigma_q^{-1} N^{-1/3} \cdot \left( \lambda_1^{M_u(N)} - \frac{2q \tilde{N}}{1-q} - \frac{qt_u N^{2/3}}{1-q} - 1 \right) 
 > a \right) \leq R_2 \cdot \exp \left( - r_2 a \right).
\end{equation}
\end{proposition}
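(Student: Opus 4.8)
The plan is to express the probability in \eqref{ePVC} as a Fredholm expansion using the determinantal structure and then estimate it via the upper-tail kernel bound from Proposition \ref{KUTE}. First I would recall that by Lemma \ref{PrelimitKernel}, for $N \ge N_0$ the point process $\mathfrak{S}(\lambda)$ is determinantal on $\{1,\dots,m\}\times\mathbb{R}$ with correlation kernel $K_N$, and by Lemma \ref{LemmaSlice} its projection to the slice $\{u\}\times\mathbb{R}$ is a determinantal point process on $\mathbb{R}$ with kernel $K_N^u(x,y) = K_N(u,x;u,y)$ and reference measure the counting measure on $\mathbb{Z}$. The points of this slice process are exactly $\{\lambda_j^{M_u(N)} - j : j \ge 1\}$, and the event in \eqref{ePVC} is the event that the rightmost such point exceeds the threshold $t_N := \tfrac{2q\tilde N}{1-q} + \tfrac{qt_uN^{2/3}}{1-q} + 1 + \sigma_q N^{1/3} a$. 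Since the slice point process almost surely has finitely many points above any level (being locally finite and supported on a half-line bounded above, by the geometry of the partition), I would apply Proposition \ref{PropLast}: the complementary probability $\mathbb{P}(X_{\mathsf{max}} \le t)$ equals the Fredholm-type series $1 + \sum_{n\ge1}\frac{(-1)^n}{n!}\int_{(t,\infty)^n}\det[K_N^u(x_i,x_j)]\,d\mu^n$, so the probability in \eqref{ePVC} is bounded in absolute value by $\sum_{n\ge1}\frac{1}{n!}\sum_{x_1,\dots,x_n > t_N}\big|\det[K_N^u(x_i,x_j)]_{i,j=1}^n\big|$, where the $x_i$ range over integers exceeding $t_N$. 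I should first check the integrability hypothesis \eqref{IntegrableLast} holds for the slice kernel, which will follow from the same kind of bounds used below (and which is automatic once the tail bound is established).

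Next I would feed in the estimate from Proposition \ref{KUTE}. Writing a generic integer point above $t_N$ in the scaled form $x = \tfrac{2q\tilde N}{1-q} + \tfrac{qt_uN^{2/3}}{1-q} + \sigma_q N^{1/3}\tilde x$ with $\tilde x > a \ge 0$, Proposition \ref{KUTE} (applied with $L = 0$, so that the restriction $\tilde x_N,\tilde y_N \ge -L$ is satisfied here since $\tilde x > a \ge 0$) gives
\begin{equation*}
N^{1/3}\sigma_q\,\big|K_N(u,x;u,y) - K_N^2(u,x;u,y)\big| \le R_1\,e^{-r_1|\tilde x| - r_1|\tilde y|}.
\end{equation*}
Since $u = v$ we have $\mathbf{1}\{u > v\} = 0$, hence $K_N^2(u,x;u,y) = 0$ identically from \eqref{PKF2}; therefore the full slice kernel obeys $|K_N^u(x,y)| \le (N^{1/3}\sigma_q)^{-1} R_1 e^{-r_1\tilde x - r_1\tilde y}$ for $\tilde x,\tilde y \ge 0$. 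Then by Hadamard's inequality \eqref{Hadamard}, $\big|\det[K_N^u(x_i,x_j)]_{i,j=1}^n\big| \le n^{n/2}\prod_{i=1}^n \max_j |K_N^u(x_i,x_j)| \le n^{n/2}\prod_{i=1}^n (N^{1/3}\sigma_q)^{-1}R_1 e^{-r_1\tilde x_i}$. (Here I use $\sup_{\tilde y\ge 0} e^{-r_1\tilde y}=1$ for each column-maximum; one may instead keep a factor $e^{-r_1\tilde x_i/2}$ from both indices to be safe.) Summing the geometric-type series over integer points $x_i$ with $\tilde x_i > a$: the spacing between consecutive integer $x$-values corresponds to $\Delta\tilde x = (N^{1/3}\sigma_q)^{-1}$, so $\sum_{x > t_N} e^{-r_1\tilde x} \le C\,N^{1/3}\sigma_q \int_a^\infty e^{-r_1 s}\,ds = C'\,N^{1/3}\sigma_q\, e^{-r_1 a}$ for a constant $C'$ depending on $r_1$, valid for $N$ large. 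Combining, each term in the $n$-fold sum is bounded by $\frac{n^{n/2}}{n!}(C'' e^{-r_1 a})^n$ — crucially the factors of $N^{1/3}\sigma_q$ cancel between the Hadamard bound and the lattice sum — and $\sum_n \frac{n^{n/2}}{n!}z^n$ converges for all $z$, with value $O(z)$ as $z \to 0$. This gives \eqref{ePVC} with $r_2 = r_1$ and a suitable $R_2$, for $N \ge N_2$ where $N_2 \ge \max(N_0,N_1)$ is chosen large enough that all the ``large $N$'' provisos and the bound $C'' e^{-r_1 a} \le $ (some fixed small constant for $a\ge 0$) hold; one absorbs the $a=0$ case into the constant $R_2$.

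The main obstacle I anticipate is the careful bookkeeping of the lattice-versus-continuum normalization: the kernel bound in Proposition \ref{KUTE} carries the prefactor $N^{1/3}\sigma_q$ (because it is stated for the \emph{rescaled} kernel appropriate to Lebesgue reference measure), whereas Proposition \ref{PropLast} must be applied with the \emph{counting measure on $\mathbb{Z}$} as reference measure for the slice process, so $\lambda^n$ there is $n$-fold counting measure. I need to verify that the powers of $N^{1/3}\sigma_q$ coming from (i) the $n$ kernel entries in the Hadamard product and (ii) the $n$ lattice sums (each $\sim N^{1/3}\sigma_q$ times an integral) cancel exactly, which they do since each lattice sum over a spacing $(N^{1/3}\sigma_q)^{-1}$ contributes one factor of $N^{1/3}\sigma_q$ per kernel-row. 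A secondary technical point is justifying the applicability of Proposition \ref{PropLast} — i.e.\ checking \eqref{IntegrableLast} — but this follows a posteriori from the very tail bounds we derive (or can be gotten from the cruder estimate \eqref{S61SpecEq2} for fixed $N$), so it is not a real difficulty. Finally, one should note that Proposition \ref{KUTE} requires the standing assumption $a_1^- = b_1^- = c^- = 0$, which is exactly the hypothesis under which Proposition \ref{PVC} is stated (inherited via ``the same notation and assumptions as in Proposition \ref{KUTE}''), so there is no gap there.
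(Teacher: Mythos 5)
Your proof is correct and follows essentially the same route as the paper: pass to the slice process via Lemma \ref{LemmaSlice}, verify \eqref{IntegrableLast} via \eqref{S61SpecEq2}, apply Proposition \ref{PropLast}, feed in the $L=0$ bound from Proposition \ref{KUTE} (noting $K_N^2(u,\cdot;u,\cdot)\equiv 0$), and sum the Fredholm series using Hadamard's inequality. The only cosmetic difference is that the paper first rescales the point process to $\mathbb{R}$ via Proposition \ref{PropLem}(5)--(6) so that the $N^{1/3}\sigma_q$ factors sit explicitly in the kernel and reference measure, whereas you work on the integer lattice and track the cancellation by hand — both arrive at the same estimate.
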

\begin{proof} Let $M^u_N$ be the point process formed by $\left\{\lambda^{M_u(N)}_i - i\right\}_{i \geq 1}$. From Lemmas \ref{LemmaSlice} and \ref{PrelimitKernel} we conclude that $M_N^u$ is a determinantal point process with correlation kernel $K_N(u, \cdot; u, \cdot)$ and reference measure $\rho$ -- the counting measure on $\mathbb{Z}$. If we set 
\begin{equation}\label{S6PVC1}
X_i^N = \sigma_q^{-1} N^{-1/3} \cdot \left( \lambda_i^{M_u(N)} - \frac{2q \tilde{N} }{1-q} - \frac{qt_u N^{2/3}}{1-q} - i\right),
\end{equation}
then $\{X_i^N\}_{i \geq 1}$ forms a point process $M^{u,X}_N$ on $\mathbb{R}$ such that
$$M_N^{u,X} = M_N^u \phi_N^{-1}, \mbox{ where } \phi_N(y) = a_Ny + b_N$$
with
$$ a_N = \sigma_q^{-1} N^{-1/3} \mbox{ and } b_N = \sigma_q^{-1} N^{-1/3} \cdot \left( - \frac{2q \tilde{N}}{1-q} - \frac{qt_u N^{2/3}}{1-q} \right).$$
From Proposition \ref{PropLem}, parts (5) and (6), we conclude that $M_N^{u,X}$ is determinantal with correlation kernel $K^u_N$ 
\begin{equation}\label{S6PVC2}
K_N^u(\tilde{x}_N, \tilde{y}_N) = \sigma_q N^{1/3} \cdot K_N(u, x_N; u, y_N),
\end{equation}
and reference measure $\rho^{u}_N$, given by $\sigma_q^{-1} N^{-1/3}$ times the counting measure on $a_N \cdot \mathbb{Z} + b_N$.\\

Let $\delta_1 > 0$ be small enough, depending on $q, a_1^+, b_1^+, c^+$ so that (\ref{S6Delta1}) holds. Suppose further that $N$ is large so that $N \geq N_0$ and $ N^{-1/12} \in (\delta_1 \cdot N^{-1/3}, 1/2]$. From (\ref{S61SpecEq2}) and (\ref{S6PVC2}) we have for each $t \in \mathbb{R}$ that there are $R(N,t), r(N,t) > 0$ such that for $x, y \geq t$ 
$$\left| K_N^u(x,y) \right| \leq R(N,t) \cdot \exp \left( -r(N,t) |x| - r(N,t)|y| \right).$$
From Hadamard's inequality (\ref{Hadamard}), we conclude for $x_1, \dots, x_n \geq t$
\begin{equation*}
\left| \det\left[ K_N^u(x_i,x_j) \right]_{i,j = 1}^n \right| \leq R(N,t)^n \cdot n^{n/2} \cdot \prod_{i = 1}^n \exp \left( -r(N,t) |x_i| \right),
\end{equation*}
which implies 
\begin{equation*}
1 + \sum_{n \geq 1} \frac{1}{n!} \int_{(t, \infty)^n} \left| \det\left[ K_N^u(x_i,x_j) \right]_{i,j = 1}^n \right| (\rho_N^{u})^n(dx) < \infty.
\end{equation*}
We conclude that the conditions of Proposition \ref{PropLast} are satisfied and so for all $a \in \mathbb{R}$
\begin{equation}\label{S6PVC4}
\mathbb{P}(X_1^N > a) = 1 - \mathbb{P}(X_1^N \leq a) = \sum_{n = 1}^{\infty} \frac{(-1)^{n-1}}{n!}  \int_{(a, \infty)^n}\det\left[ K_N^u(x_i,x_j) \right]_{i,j = 1}^n(\rho_N^{u})^n(dx). 
\end{equation}

Let $N_1, R_1, r_1$ be as in Proposition \ref{KUTE} for $L = 0$. Using (\ref{EqUTE}), note that $K_N^2(u,x;u,y) = 0$, we have for $x, y \geq 0$ and all large $N$
\begin{equation}\label{S6PVC3}
\left| K_N^u(x,y) \right| \leq R_1 \cdot \exp \left( -r_1 |x| - r_1|y| \right).
\end{equation}
Using (\ref{S6PVC3}) and Hadamard's inequality (\ref{Hadamard}), we conclude for all $n \geq 1$, $x_i \geq 0$, and $N \geq N_1$
\begin{equation*}
\left| \det\left[ K_N^u(x_i,x_j) \right]_{i,j = 1}^n \right| \leq R_1^n \cdot n^{n/2} \cdot \prod_{i = 1}^n \exp \left( -r_1 x_i \right),
\end{equation*}
which implies for $a \geq 0$
\begin{equation*}
\begin{split}
&\int_{(a,\infty)^n} \left| \det\left[ K_N^u(x_i,x_j) \right]_{i,j = 1}^n \right|(\rho_N^{u})^n(dx) \leq R_1^n \cdot n^{n/2} \cdot  \int_{(a,\infty)^n} \prod_{i = 1}^n \exp \left( -r_1 x_i \right) (\rho_N^{u})^n(dx) \\
& \leq R_1^n \cdot n^{n/2} \cdot e^{n r_1 a_N} \cdot \left(\int_{(a,\infty)} e^{-r_1 x} dx \right)^n \leq C^n n^{n/2} e^{-n a r_1 }, 
\end{split}
\end{equation*}
where $dx$ is the Lebesgue measure, and $C$ is a constant that depends on $q, R_1,r_1$. Combining the last estimate with (\ref{S6PVC4}), we conclude for all $a \geq 0$ and large $N$ (depending on the parameters in the proposition)
$$\mathbb{P}(X_1^N > a) \leq \sum_{n =1}^{\infty} \frac{n^{n/2}}{n!} \cdot C^n e^{-na r_1/2} \leq e^{-a r_1} \sum_{n =1}^{\infty} \frac{n^{n/2}}{n!} \cdot C^n,  $$
which implies (\ref{ePVC}) with $r_2 = r_1$ and $R_2 = \sum_{n =1}^{\infty} \frac{n^{n/2}}{n!} \cdot C^n$.
\end{proof}

%
%
\section{Finite-dimensional convergence}\label{Section7} The goal of this section is to establish Theorem \ref{T2}. In Section \ref{Section7.1} we re-introduce the random elements $(X_i^{j,N}: i \geq 1, j = 1, \dots, m)$ from (\ref{S4PVC1}) and show that as long as $a_1^- = b_1^- = c^- = 0$, the resulting sequence (in $N$) is tight -- the precise statement is Proposition \ref{S7MP}. In the same section we prove Theorem \ref{T2} by combining the tightness from Proposition \ref{S7MP}, the point process convergence established in Section \ref{Section4.3}, and Proposition \ref{PropWC2}. The proof of Proposition \ref{S7MP} is given in Section \ref{Section7.2} and relies on the upper tail estimate from Proposition \ref{PVC}, the tightness criterion of Proposition \ref{TightnessCrit} and the monotone coupling of Proposition \ref{MonCoup}.

%
%
\subsection{Proof of Theorem \ref{T2}}\label{Section7.1} We start with a useful proposition, which will be used to establish Theorem \ref{T2}, and whose proof is given in Section \ref{Section7.2}.

\begin{proposition}\label{S7MP} Assume the same notation in Definitions \ref{DLP} and \ref{ParScale}, and suppose that $a_1^- = b_1^- = c^- = 0$. If $\lambda$ has law $\mathbb{P}_N$ as in Definitions \ref{ParScale}, we define as in (\ref{S4PVC1}) the variables
\begin{equation}\label{S7PVC1}
X_i^{j,N} = \sigma_q^{-1} N^{-1/3} \cdot \left( \lambda_i^{M_j(N)} - \frac{2q \tilde{N} }{1-q} - \frac{q t_j N^{2/3}}{1-q} - i\right) \mbox{ for $j \in \{1, \dots, m\}$ and $i \in \mathbb{N}$.}
\end{equation}
Then, for each $i \geq 1$ and $j \in \{1, \dots, m\}$ the sequence $\{X_i^{j,N}\}_{N \geq N_0}$ is tight.
\end{proposition}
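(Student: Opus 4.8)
\textbf{Proof plan for Proposition \ref{S7MP}.}

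The strategy is to verify the three hypotheses of the tightness criterion Proposition \ref{TightnessCrit}, projected onto a fixed column $j \in \{1, \dots, m\}$ via Lemma \ref{LemmaSlice}. Fix such a $j$. As computed in the proof of Theorem \ref{T1}, if $M_N^{j,X}$ is the point process on $\mathbb{R}$ formed by $\{X_i^{j,N}\}_{i \geq 1}$, then $M_N^{j,X} = M_N^j \phi_N^{-1}$ where $M_N^j$ is the determinantal point process formed by $\{\lambda_i^{M_j(N)} - i\}_{i\geq 1}$ and $\phi_N$ is the affine map appearing in (\ref{S4PVC1}); by parts (5) and (6) of Proposition \ref{PropLem} and Lemma \ref{PrelimitKernel}, $M_N^{j,X}$ is determinantal with correlation kernel $K_N^j(\tilde{x},\tilde{y}) = \sigma_q N^{1/3} K_N(j,x_N;j,y_N)$ (with $x_N,y_N$ scaled as in (\ref{ScaleXY})) and an explicit reference measure $\rho_N^j$ converging vaguely to Lebesgue measure. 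First I would record that $\{X_i^{j,N}\}_{N}$ tight for every $i$ is equivalent to $\{X^{j,N}\}_N$ tight in $\mathbb{R}^{\infty}$, so it suffices to invoke Proposition \ref{TightnessCrit} for the sequence $N \mapsto M_N^{j,X}$.

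\emph{Hypothesis (1): point process convergence.} This is already essentially in hand: combining Proposition \ref{LimitKernelProp} (whose limiting kernel is $K_{a,b,c}$ up to a gauge), the continuity statement of Lemma \ref{WellDefKer}, and Proposition \ref{PropWC1} gives that $M_N^{j,X}$ converges weakly to the projection $M^j$ of the determinantal point process $M$ from Theorem \ref{T1}; by Lemma \ref{LemmaSlice} this $M^j$ is itself determinantal on $\mathbb{R}$ with kernel the $(j,j)$ slice of $K_{a,b,c}$ (conjugated by the relevant gauge factor).

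\emph{Hypothesis (3): tightness from above.} This is exactly Proposition \ref{PVC} in the special case $a_1^- = b_1^- = c^- = 0$: it gives $\mathbb{P}(X_1^{j,N} > a) \leq R_2 e^{-r_2 a}$ for all $N \geq N_2$ and $a \geq 0$, which yields $\lim_{a\to\infty}\limsup_N \mathbb{P}(X_1^{j,N} \geq a) = 0$ directly.

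\emph{Hypothesis (2): the limit has infinitely many points a.s.} This is the main obstacle, since a priori I have no handle on the number of points of the DPP with kernel $K_{a,b,c}$ for general parameters (the traces are not controllable from below, ruling out a direct appeal to Soshnikov). Here is where Proposition \ref{MonCoup} enters. The plan is to compare with the zero-parameter model. Consider the parameters $(0,0,0)$: then $K_{a,b,c}$ is the (shifted, gauged) extended Airy kernel, and the corresponding projected point process is (a deterministic affine image of) the Airy point process, which has infinitely many points almost surely. One establishes hypothesis (2) for the $(0,0,0)$ model by applying Proposition \ref{TightnessCrit}'s \emph{inputs} (1) and (3) there — both available — together with the known infinitude of the Airy point process, obtaining finite-dimensional convergence and in particular a well-defined infinite limiting configuration. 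For the general model with $a_1^- = b_1^- = c^- = 0$, I would use the monotone coupling of Proposition \ref{MonCoup}: the discrete parameters $x_i^N, y_i^N$ of Definition \ref{ParScale} for the given $(a,b,c)$ satisfy $x_i^N y_j^N \leq \bar x_i^N \bar y_j^N$ where the barred quantities are those of the zero-parameter model (since positive $a^+,b^+,c^+$ parameters only \emph{decrease} the $x^N,y^N$ below $q$... — more precisely, one checks from (\ref{ABSeq})–(\ref{RemSeq}) that turning on non-negative $a^+_i,b^+_i,c^+$ with all minus parameters zero replaces some entries equal to $q$ by entries $\leq q$, hence the pairwise products only decrease). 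Thus (\ref{MonIneq}) gives, on the common probability space, $\sum_{i=1}^k \lambda_i^{M_j(N)}[0,0,0] \geq \sum_{i=1}^k \lambda_i^{M_j(N)}[a,b,c]$ for all $k$, i.e. the zero-parameter locations dominate the general ones in the sense of partial sums; in particular the zero-parameter $M_N^{j,X}$ stochastically dominates — in the appropriate coordinatewise-from-the-top sense — the general one. Since tightness from above transfers trivially and the weak limit of the general model is identified in Hypothesis (1), the only thing needed is: the limiting general point process has infinitely many points. I would argue this by the coupling: if the general limit had only finitely many points with positive probability, then (pushing the partial-sum domination to the limit via Skorohod, as in the proof of Proposition \ref{PropWC2}) the zero-parameter limit would fail to be bounded below on some compact-from-above region, contradicting its known infinitude. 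Once all three hypotheses of Proposition \ref{TightnessCrit} are verified for $N \mapsto M_N^{j,X}$, tightness of $\{X^{j,N}\}_N$ follows, completing the proof.

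The delicate points I expect to spend the most care on are: (i) checking the parameter monotonicity $x_i^N y_j^N \geq (x_i^N)'(y_j^N)'$ cleanly from Definition \ref{ParScale} when all minus parameters vanish — this requires matching up the blocks in (\ref{ABSeq})–(\ref{RemSeq}) between the two models and possibly noting that $A_N, B_N$ for the general model are $\leq$ those of... no, actually the general model has \emph{more} nonzero $a^+,b^+$, so one must be slightly careful that the extra near-$q$ entries are genuinely $\leq q$, which holds because $x_i^N = 1 - (N^{1/3}b_i^+\sigma_q)^{-1}$ is $< q$ precisely for the indices beyond $B_N$; and (ii) transferring ``infinitely many points'' through the coupling and the weak limit, which is where I would mimic the Skorohod-representation argument already used twice in Section \ref{Section2}. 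Everything else is bookkeeping assembled from results stated earlier in the excerpt.
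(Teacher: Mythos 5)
Your plan follows the same broad architecture as the paper — zero-parameter base case via infinitude of the Airy point process, then the monotone coupling of Proposition~\ref{MonCoup} to handle general $a^+,b^+,c^+$ — but there is a genuine and repeated error in the direction of the coupling. You assert that turning on positive $a^+_i,b^+_i,c^+$ parameters replaces entries equal to $q$ by entries $\leq q$, so that the zero-parameter model dominates. This is backwards. From Definition~\ref{ParScale}, the quantity $x_i^N = 1 - (N^{1/3}b_i^+\sigma_q)^{-1}$ is used \emph{only} for $i \leq B_N$, and $B_N$ is defined precisely as the largest index for which this formula is still $\geq q$; for $i$ past the cutoff, $x_i^N = q$. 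So every $x_i^N, y_j^N$ in the general model is $\geq q$ (part of the definition of $N_0$ guarantees this), with strict inequality where the $a^+,b^+,c^+$ are active, while the zero-parameter model has $x_i^N = y_j^N = q$ identically. Hence $x_i^N y_j^N$ is \emph{larger} for the general model, and Proposition~\ref{MonCoup} gives $\sum_{i\leq k}\lambda_i^j[\mathrm{general}] \geq \sum_{i\leq k}\tilde\lambda_i^j[\mathrm{zero}]$, the opposite of what you wrote. With your stated direction, the contradiction argument you sketch for transferring ``infinitely many points'' cannot run.

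Even after correcting the sign, your route is slightly longer than the paper's and the final transfer step needs more care than your sketch suggests. The paper does not verify hypothesis (2) of Proposition~\ref{TightnessCrit} for the general parameters at all. Instead, Step~2 of the paper's proof obtains tightness directly: the coupling gives $\sum_{i=1}^k X_i^{j,N} \geq \sum_{i=1}^k \tilde X_i^{j,N}$, the right side is tight by Step~1, and the left side is bounded above with high probability since $\sum_{i\leq k} X_i^{j,N} \leq k\, X_1^{j,N}$ and Proposition~\ref{PVC} controls $X_1^{j,N}$ from above; this makes $\sum_{i\leq k} X_i^{j,N}$ tight for every $k$, and then $X_u^{j,N}$ is tight as a difference of two consecutive partial sums. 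This avoids having to push ``the limit has infinitely many points'' through a Skorohod coupling for the general model. Your corrected version would work — if the general limit had a finite configuration then some partial sum would be $-\infty$, contradicting domination by a finite quantity — but the paper's argument is cleaner and sidesteps identifying the limiting process's support entirely. (A minor additional slip: in Step~1 the relevant convergence-of-DPP tool is Proposition~\ref{PropWC0} applied to the one-dimensional slice, not Proposition~\ref{PropWC1}.)
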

\begin{proof}[Proof of Theorem \ref{T2}] For clarity we split the proof into two steps. In the first step we construct the sequence of processes $\{Y_i\}_{i \geq 1}$ under the assumption that $a_1^- = b_1^- = c^- = 0$, i.e. we assume that $J_a^- = J_b^- = 0$. The idea is to use Proposition \ref{S7MP} to verify the conditions of Proposition \ref{PropWC2} and conclude that $X_i^{j,N}$ converge in the finite-dimensional sense. By Kolmogorov's extension theorem these finite dimensional limits can be lifted to processes, which after some modification and variable change will yield $\{Y_i\}_{i \geq 1}$ satisfying the conditions of the proposition. In the second step we deal with the general case when $J_a^- + J_b^- < \infty$. It turns out that we can pick appropriate parameters $\{ \tilde{a}_i^+ \}_{i \geq 1}, \{ \tilde{b}_i^+\}_{i \geq 1}, \tilde{c}^+$ and $\tilde{a}_1^- = \tilde{b}_1^- = \tilde{c}^- = 0$, and an appropriate translation $\Delta \in \mathbb{R}$, such that if $\{\tilde{Y}_i\}_{i \geq 1}$ satisfy the conditions of the theorem for the ``tilde'' parameters, then $\{\tilde{Y}_i (\cdot + \Delta) \}_{i \geq 1}$ satisfy them for the original ones. In words, we will construct processes for the case $J_a^- + J_b^- < \infty$ by translating the ones for $J_a^- = J_b^- = 0$, which we constructed in Step 1.\\

{\bf \raggedleft Step 1.} In this step we prove the theorem under the additional assumption that $a_1^- = b_1^- = c^- = 0$. Let us fix $m \in \mathbb{N}$, $s_1, \dots, s_m \in \mathbb{R}$ with $s_1 < \dots < s_m$ and set $\mathcal{A} = \{s_1, \dots, s_m\}$. We assume the same notation as in Definition \ref{ParScale} for $t_i = f_q^{-1} s_i$, and let $X_i^{j,N}$ be as in (\ref{S7PVC1}) for this choice of parameters. As was shown in the proof of Theorem \ref{T1} in Section \ref{Section4.3}, we have that if $M_X^N$ is the random measure on $\mathbb{R}^2$ formed by $\{(s_j, X_i^{j,N}): 1\leq j \leq m, i \geq 1\}$, then $M_X^N$ converges weakly to a determinantal point process $\tilde{M}$ on $\mathbb{R}^2$ with correlation kernel 
$$\tilde{K}(s,x; t,y) = e^{   (x s +  2s^3/3) - (yt +  2t^3/3) }  K_{a,b,c}(t, y + t^2; s ,x + s^2)$$
and reference measure $\mu_{\mathcal{A}} \times \lambda$. From Proposition \ref{S7MP} we further know that $\{X_i^{j,N}\}_{N \geq N_0}$ is tight. 

The last few observations show that the sequence $(X_i^{j,N}: i \geq 1, j \in \{1, \dots, m\})$ satisfies the conditions of Proposition \ref{PropWC2}. Let $X$ be as in that proposition, and define the random element $X^{\mathcal{A}} = (X_i^{\mathcal{A}}(s_j): i \geq 1, j \in \{1, \dots, m\})$ of $(\mathbb{R}^{\infty},\mathcal{R}^{\infty})^{\mathcal{A}}$ (this is the space of real-valued functions from $\mathcal{A} = \{s_1, \dots, s_m\}$ into $\mathbb{R}^{\infty}$, which is homeomorphic to $\mathbb{R}^{\infty}$) via 
$$X_i^{\mathcal{A}}(s_j, \omega) := X_i^j(\omega) \mbox{ for } i \geq 1, j \in \{1, \dots, m\}.$$
From Proposition \ref{PropWC2} we have that $(\{X_i^{1,N}\}_{i \geq 1}, \dots, \{X_i^{m,N}\}_{i \geq 1})$ converge in the finite-dimensional sense to $(\{X_i^{\mathcal{A}}(s_1)\}_{i \geq 1}, \dots, \{X_i^{\mathcal{A}}(s_m)\}_{i \geq 1})$. In addition, we have that 
$$X_i^{\mathcal{A}}(s_j, \omega) \geq X_{i + 1}^{\mathcal{A}}(s_j, \omega) \mbox{ for } i \geq 1, j \in \{1, \dots, m\},$$
and also the random measure 
$$M^{\mathcal{A}}(\omega, A) = \sum_{i \geq 1} \sum_{j = 1}^m {\bf 1} \{ (s_j, X_i^{\mathcal{A}}(\omega, s_j)) \in A \}$$
has the same distribution as $\tilde{M}$ as random elements in $(S, \mathcal{S})$ (as in Section \ref{Section2.1} for $k = 2$). 

As mentioned earlier, $\tilde{M}$ is a determinantal point process, and then so is $M^{\mathcal{A}}$, which from part (1) of Proposition \ref{PropLem} implies that $M^{\mathcal{A}}$ is a simple point process almost surely. We conclude that there is an event $E$ such that $\mathbb{P}(E) = 1$ and for $\omega \in E$ we have 
\begin{equation}\label{S7PVC2}
X_i^{\mathcal{A}}(s_j, \omega) > X_{i + 1}^{\mathcal{A}}(s_j, \omega) \mbox{ for } i \geq 1, j \in \{1, \dots, m\}.
\end{equation}
We now define the random elements $\tilde{X}^{\mathcal{A}} = (\tilde{X}_i^{\mathcal{A}}(s_j): i \geq 1, j \in \{1, \dots, m\})$ of $(\mathbb{R}^{\infty},\mathcal{R}^{\infty})^{\mathcal{A}}$ via
\begin{equation}\label{S7PVC3}
\tilde{X}_i^{\mathcal{A}}(s_j, \omega) = \begin{cases} \tilde{X}_i^{\mathcal{A}}(s_j, \omega) &\mbox{ if } \omega \in E \\ -i &\mbox{ if } \omega \in E^c . \end{cases}
\end{equation}
By construction we have that $\tilde{X}^{\mathcal{A}}$ is a random element in $(G,\mathcal{G})^{\mathcal{A}}$, where $G = \{(x_1, x_2, \dots) \in \mathbb{R}^{\infty}: x_1 > x_2 > \cdots \}$ and $\mathcal{G}$ is its Borel $\sigma$-algebra inherited from $\mathbb{R}^{\infty}$. Since $\mathbb{P}(E) = 1$, we also have that $\tilde{X}^{\mathcal{A}}$ is a modification of $X^{\mathcal{A}}$. We observe that the laws of $X^{\mathcal{A}}$ over varying sets $\mathcal{A}$ are consistent, as these elements are weak limits of consistent random elements. We conclude that the laws of $\tilde{X}^{\mathcal{A}}$ over varying sets $\mathcal{A}$ are consistent, which by Kolmogorov's extension theorem, see \cite[Theorem IV.4.18]{Cinlar}, implies that there is a probability space $(\Omega, \mathcal{F}, \mathbb{P})$ that carries a process $(\tilde{X}(t): t\in \mathbb{R})$ with $\tilde{X}(t,\omega) = \{\tilde{X}_i(t,\omega)\}_{i \geq 1} \in (G,\mathcal{G})$ such that for any real $s_1 < \cdots < s_m$ and $\mathcal{A} = \{s_1, \dots, s_m\}$ the random element $(\tilde{X}(s_1), \dots, \tilde{X}(s_m))$ has the same law as $(\tilde{X}^{\mathcal{A}}(s_1), \dots, \tilde{X}^{\mathcal{A}}(s_m))$. We mention that when applying \cite[Theorem IV.4.18]{Cinlar} we used that $(G, \mathcal{G})$ is a standard Borel space, which follows from the fact that $G$ is a $G_{\delta}$ set of the Polish space $(\mathbb{R}^{\infty}, \mathcal{R}^{\infty})$, see \cite[Theorem 2.1.22]{Durrett}.\\

We finally define the sequence of processes $\{Y_i\}_{i \geq 1}$ on $(\Omega, \mathcal{F}, \mathbb{P})$ via
\begin{equation}\label{S7PVC4}
Y_i(t, \omega) = \tilde{X}_i(t, \omega) + t^2.
\end{equation}
Since $\tilde{X}(t,\omega) \in G$ for each $t \in \mathbb{R}$ and $\omega \in \Omega$, we conclude that (\ref{T2E2}) is satisfied. In addition, we have that if $M$ is as in (\ref{T2E1}), then $M = \tilde{M}^{\mathcal{A}} \phi^{-1}$, where $\phi(s,x) = (s, x + s^2)$ and 
$$\tilde{M}^{\mathcal{A}}(\omega, A) = \sum_{i \geq 1} \sum_{j = 1}^m {\bf 1} \{ (s_j, \tilde{X}_i(\omega, s_j)) \in A \}.$$
Since $(\tilde{X}(s_1), \dots, \tilde{X}(s_m))$ has the same law as $(X^{\mathcal{A}}(s_1), \dots, X^{\mathcal{A}}(s_m))$, we conclude from Corollary \ref{CorWC2} that $\tilde{M}^{\mathcal{A}}$ has the same law as $\tilde{M}$. In particular, $\tilde{M}^{\mathcal{A}}$ is also a determinantal point process with kernel $\tilde{K}(s,x;t,y)$. From parts (4) and (5) of Proposition \ref{PropLem} we see that $M$ is a determinantal point process on $\mathbb{R}^2$ with correlation kernel $K_{a,b,c}(s, x; t ,y)$ and reference measure $\mu_{\mathcal{A}} \times \lambda$, which concludes the proof of the theorem in this case.\\

{\bf \raggedleft Step 2.} In this step we handle the general case of parameters as in Definition \ref{DLP} with $c^- = 0$ and $J_a^- + J_b^- < \infty$. Our first task is to construct a new set of parameters $\{\tilde{a}^{\pm}_i\}_{i \geq 1}$, $\{\tilde{b}^{\pm}_i\}_{i \geq 1}$, $\tilde{c}^{\pm}$ that satisfy the conditions Definition \ref{DLP} and such that $\tilde{a}_1^- = \tilde{b}_1^- = \tilde{c}^- = 0$ (i.e. all ``minus'' parameters are equal to zero).

Let $\{\hat{a}^+_i\}_{i \geq 1}$ be the decreasing sequence formed by the terms in $\{a_i^+\}_{i \geq 1}$ (counted with multiplicities) as well as the terms $1/a_1^-, \dots, 1/ a_{J_a^-}^-$. Similarly, we let $\{\hat{b}^+_i\}_{i \geq 1}$ be the decreasing sequence formed by the terms in $\{b_i^+\}_{i \geq 1}$ (counted with multiplicities) as well as the terms $1/b_1^-, \dots, 1/ b_{J_b^-}^-$. Finally, we set $\hat{c}^+ = c$, $\hat{c}^- = 0$ and $\hat{a}_i^- = \hat{b}_i^- = 0$ for all $i \geq 1$. Observe that 
\begin{equation*}
\sum_{i \geq 1} \hat{a}_i^+ = \sum_{i \geq 1} a_i^+ + \sum_{j = 1}^{J_a^-} 1/ a_j^- < \infty, \mbox{ and } \sum_{i \geq 1} \hat{b}_i^+ = \sum_{i \geq 1} b_i^+ + \sum_{j = 1}^{J_b^-} 1/ b_j^- < \infty,
\end{equation*}
since $J_a^-$ and $J_b^-$ are both finite by assumption. In particular, the parameters $\{\hat{a}^{\pm}_i\}_{i \geq 1}$, $\{\hat{b}^{\pm}_i\}_{i \geq 1}$, $\hat{c}^{\pm}$ satisfy the conditions of Definition \ref{DLP}. Using the identities
$$1 + b/z = b/z \cdot (1 + z/b) \mbox{ and } 1- a/z = a/z \cdot (1 - z/a), $$
we also see that 
\begin{equation}\label{S7Q2}
\frac{\Phi_{a,b,c}(z)}{\Phi_{a,b,c}(w )} = \left(\frac{z}{w} \right)^{J_a^- - J_b^-} \cdot \frac{\Phi_{\hat{a},\hat{b},\hat{c}}(z)}{\Phi_{\hat{a},\hat{b},\hat{c}}(w)}.
\end{equation}

We next fix $\Delta \in \mathbb{R}$ as follows. If $J_a^- = J_b^-$, we set $\Delta = 0$. If $J_a^- > J_b^-$, we let $\Delta > 0$ be close enough to zero so that $1 - \Delta a_1^+ > 0$. If $J_a^- < J_b^-$, we let $\Delta < 0$ be close enough to zero so that $1 + \Delta \hat{b}_1^+ >0$. We now define the parameters $\{\tilde{a}^{+}_i\}_{i \geq 1}$, $\{\tilde{b}^{+}_i\}_{i \geq 1}$, $\tilde{c}^{+}$ using $\{\hat{a}_i^+\}_{i \geq 1}$, $\{\hat{b}_i^+\}_{i \geq 1}$, $\hat{c}^+$ and $\Delta$. We set $\tilde{c}^+ = \hat{c}^+$. If $J_a^- = J_b^-$ we let $\tilde{a}_i^+ = \hat{a}_i^+$ and $\tilde{b}_i^+ = \hat{b}_i^+$ for $i \geq 1$. If $\tilde{J}_a^- > \tilde{J}_b^-$, we set $h = \tilde{J}_a^- - \tilde{J}_b^-$ and let $\{\tilde{b}_i^+\}_{i \geq 1}$ be the decreasing sequence formed by the terms in $\{ \hat{b}_i^+/ (1 + \Delta \hat{b}_i^+) \}_{i \geq 1}$ (counted with multiplicities) and $h$ copies of $\Delta^{-1}$, while $\tilde{a}^+_i = \hat{a}_i^+/ (1 - \Delta \hat{a}_i^+)$ for $i \geq 1$. If $\tilde{J}_b^- > \tilde{J}_a^-$, we set $h = \tilde{J}_b^- - \tilde{J}_a^-$ and let $\{\tilde{a}_i^+\}_{i \geq 1}$ be the decreasing sequence formed by the terms in $\{ \hat{a}_i^+/ (1 - \Delta \hat{a}_i^+) \}_{i \geq 1}$ (counted with multiplicities) and $h$ copies of $-\Delta^{-1}$, while $\tilde{b}^+_i = \hat{b}_i^+/ (1 + \Delta \hat{b}_i^+)$ for $i \geq 1$. We readily observe that $\{\tilde{a}^{\pm}_i\}_{i \geq 1}$, $\{\tilde{b}^{\pm}_i\}_{i \geq 1}$, $\tilde{c}^{\pm}$ satisfy the conditions Definition \ref{DLP}, since
\begin{equation*}
\sum_{i \geq 1} \tilde{a}_i^+ + \sum_{i \geq 1} \tilde{b}_i^+ = \sum_{i \geq 1} \frac{\hat{a}_i^+}{1 - \Delta \hat{a}_i^+} + \sum_{i \geq 1} \frac{\hat{b}_i^+}{1 + \Delta \hat{b}_i^+} + {\bf 1} \{J_a^- \neq J_b^-\} \cdot |J_a^- - J_b^-| \cdot |\Delta^{-1}| < \infty.
\end{equation*}
We recall that ``minus'' parameters are equal to zero.\\

The above work finishes our construction of the parameters $\{\tilde{a}^{\pm}_i\}_{i \geq 1}$, $\{\tilde{b}^{\pm}_i\}_{i \geq 1}$, $\tilde{c}^{\pm}$. We next observe using the identities
$$\frac{1 + b(z+s + \Delta )}{1 + b(w+t + \Delta)} = \frac{1 + \frac{b}{1 +b \Delta} (z+s)}{1 + \frac{b}{1 +b \Delta} (w+t)} \mbox{, }\frac{1 - a(w+t + \Delta )}{1 - a(z+s + \Delta)} = \frac{1 - \frac{a}{1 -a \Delta} (w+t)}{1 - \frac{a}{1 -a \Delta} (z+s)}, \mbox{ and } $$
$$\left(\frac{z + s + \Delta }{w + t + \Delta} \right)^{J_a^- - J_b^-} = \begin{cases} 1 &\mbox{ if } J_a^- = J_b^-, \\ \left( \frac{1 + \Delta^{-1} (z + s) }{1 + \Delta^{-1} (w + t)} \right)^{J_a^- - J_b^-}  &\mbox{ if } J_a^- > J_b^-, \\ \left( \frac{1 - [-\Delta^{-1}] (w + t) }{1 -[-\Delta^{-1}] (z + s)} \right)^{J_b^- - J_a^-} &\mbox{ if } J_{a}^- < J_b^{-}, \end{cases}$$
that we have the following equality
\begin{equation}\label{S7Q3}
 \left(\frac{z + s + \Delta}{w + t + \Delta} \right)^{J_a^- - J_b^-} \cdot \frac{\Phi_{\hat{a},\hat{b},\hat{c}}(z + s + \Delta)}{\Phi_{\hat{a},\hat{b},\hat{c}}(w + t + \Delta)} = \frac{\Phi_{\tilde{a},\tilde{b},\tilde{c}}(z + s)}{\Phi_{\tilde{a},\tilde{b},\tilde{c}}(w + t )}.
\end{equation}

Let $\{\tilde{Y}_i \}_{i \geq 1}$ be as in the statement of the theorem for the parameters $\{\tilde{a}^{\pm}_i\}_{i \geq 1}$, $\{\tilde{b}^{\pm}_i\}_{i \geq 1}$, $\tilde{c}^{\pm}$. Since by construction $\tilde{a}_1^- = \tilde{b}_1^- = \tilde{c}^- = 0$, we see that the existence of such processes is ensured by our work in Step 1. We define the processes 
\begin{equation}\label{S7Q4}
Y_i(t,\omega) = \tilde{Y}_i(t - \Delta, \omega),
\end{equation}
and claim that they satisfy the conditions of the theorem. Since $\{ \tilde{Y}_i \}_{i \geq 1}$ satisfy (\ref{T2E2}), we conclude the same for $\{Y_i\}_{i \geq 1}$. Let us now fix $m \in \mathbb{N}$, and $s_1, \dots, s_m \in \mathbb{R}$ with $s_1 < s_2 < \cdots < s_m$. If $M$ is as in (\ref{T2E1}) and 
$$\tilde{M}(\omega, A) = \sum_{i \geq 1} \sum_{j = 1}^m {\bf 1}\{ (s_j - \Delta, \tilde{Y}_i(s_j - \Delta, \omega)) \in A\},$$
we see from (\ref{S7Q4}) that $M = \tilde{M} \phi^{-1}$, where $\phi(s,x) = (s + \Delta, x)$. From our work in Step 1, we know that $\tilde{M}$ is a determinantal point process with correlation kernel $K_{\tilde{a}, \tilde{b}, \tilde{c}}(s,x; t, y)$ and reference measure $\mu_{\mathcal{B}} \times \lambda$, where $\mathcal{B} = \{s_1 - \Delta, \dots, s_m - \Delta\}$. From part (5) of Proposition \ref{PropLem} we conclude that $M$ is a determinantal point process with correlation kernel 
$$
\tilde{K}(s,x; t,y) = K_{\tilde{a}, \tilde{b}, \tilde{c}}(s - \Delta,x; t - \Delta, y)$$
and reference measure $\mu_{\mathcal{A}} \times \lambda$. In order to conclude the proof it remains to check that 
\begin{equation}\label{S7Q5}
K_{\tilde{a}, \tilde{b}, \tilde{c}}(s - \Delta,x; t - \Delta, y) = K_{a,b,c}(s, x; t ,y).
\end{equation}

We fix $\alpha, \beta \in \mathbb{R}$ such that $\alpha + s - \Delta < 0$, $\alpha + s < 0$, $\beta + t - \Delta > 0$ and $\beta + t > 0$. Using (\ref{3BPKer}), we see that to prove (\ref{S7Q5}) it suffices to show that
\begin{equation}\label{S7Q6}
K^i_{\tilde{a}, \tilde{b}, \tilde{c}}(s - \Delta,x; t - \Delta, y) = K^i_{a,b,c}(s, x; t ,y) \mbox{ for } i = 1,2,3,
\end{equation}
where we have taken the above $\alpha, \beta$ in (\ref{3BPKer}) for both sides of (\ref{S7Q6}). The equality in (\ref{S7Q5}) for $i = 2$ is immediate from (\ref{3BPKer}). We also have from (\ref{3BPKer})  
\begin{equation*}
    \begin{split}
        &K^3_{\tilde{a}, \tilde{b}, \tilde{c}}(s - \Delta,x; t - \Delta, y) = \frac{1}{(2\pi \im)^2} \int_{\Gamma_{\alpha }^+} d z \int_{\Gamma_{\beta}^-} dw \frac{e^{z^3/3 -x_1z - w^3/3 + x_2w}}{z + s - w - t} \cdot \frac{\Phi_{\tilde{a},\tilde{b},\tilde{c}}(z + s - \Delta) }{\Phi_{\tilde{a},\tilde{b},\tilde{c}} (w + t - \Delta)} \\
        & = \frac{1}{(2\pi \im)^2} \int_{\Gamma_{\alpha }^+} d z \int_{\Gamma_{\beta}^-} dw \frac{e^{z^3/3 -x_1z - w^3/3 + x_2w}}{z + s - w - t} \cdot \frac{\Phi_{a,b,c}(z + s) }{\Phi_{a,b,c} (w + t )} = K^3_{\tilde{a}, \tilde{b}, \tilde{c}}(s,x; t, y),
    \end{split}
\end{equation*}
where the middle equality used (\ref{S7Q2}) and (\ref{S7Q3}). This proves (\ref{S7Q6}) when $i = 3$. 

Finally, if we set $u_{\pm}$ to be the two intersection points of $\Gamma^+_{\alpha + s - \Delta}$ and $\Gamma^-_{\beta + t - \Delta}$, such that $\Imag(u_+) > 0$, and $\Imag(u_-) < 0$, we have that $\Gamma^+_{\alpha + s}$ and $\Gamma^-_{\beta + t }$ intersect at $u_+ + \Delta$ and $u_- + \Delta$. From (\ref{3BPKer}) we get  
\begin{equation*}
    \begin{split}
        &K^2_{\tilde{a}, \tilde{b}, \tilde{c}}(s - \Delta,x; t - \Delta, y) \\
        &=  \frac{1}{2\pi \im} \int_{u_-}^{u_+}d\tilde{w} \cdot  e^{(t - s)\tilde{w}^2 + ((s- \Delta)^2 - (t- \Delta)^2) \tilde{w} + \tilde{w} (y-x) + x (s- \Delta) - y (t- \Delta) - (s- \Delta)^3/3 + (t- \Delta)^3/3} \\
        & =  \frac{1}{2\pi \im} \int_{u_- + \Delta}^{u_+ + \Delta}dw \cdot  e^{(t - s)w^2 + (s^2 - t^2) w + w (y-x) + x s - y t - s^3/3 + t^3/3} = K^2_{a,b,c}(s, x; t ,y),
    \end{split}
\end{equation*}
where the middle equality is verified by changing variables $\tilde{w} = w - \Delta$. We conclude that (\ref{S7Q6}) holds for $i = 2$ as well, which concludes the proof of that equation and hence the theorem.
\end{proof}

%
%
\subsection{Establishing tightness}\label{Section7.2} In this section we present the proof of Proposition \ref{S7MP}. For clarity we split the proof into two steps. In the first step we prove the proposition under the assumption that $a_1^+ = b_1^+ = a_1^- = b_1^- = c^+ = c^- = 0$ in Definition \ref{DLP} by verifying the conditions of Proposition \ref{TightnessCrit}. In the second step we prove the full proposition by using the result from Step 1, and the monotone coupling in Proposition \ref{MonCoup}.\\

{\bf \raggedleft Step 1.} In this step we assume that $a_1^+ = b_1^+ = a_1^- = b_1^- = c^+ = c^- = 0$. Fix $j \in \{1, \dots, m\}$. As shown in Step 1 of the proof of Proposition \ref{PVC} we have that for all large $N$ the sequence $\{X_i^{j,N}\}_{i \geq 1}$ forms a determinantal point process $M^{j,X}_N$ on $\mathbb{R}$ with correlation kernel 
\begin{equation*}
K_N^j(\tilde{x}_N, \tilde{y}_N) = \sigma_q N^{1/3} \cdot K_N(j, x_N; j, y_N),
\end{equation*}
where $x_N, y_N$ are as in (\ref{ScaleXY}), and reference measure $\rho^j_N$ given by $\sigma_q^{-1} N^{-1/3}$ times the counting measure on $a_N \cdot \mathbb{Z} + b_N$ with
$$ a_N = \sigma_q^{-1} N^{-1/3} \mbox{ and } b_N = \sigma_q^{-1} N^{-1/3} \cdot \left( - \frac{2q \tilde{N}}{1-q} - \frac{qt_j N^{2/3}}{1-q} \right).$$
We also recall that $K_N(i,x;j,y)$ is as in (\ref{CKDefAlt}) with $\delta = N^{-1/12} \in (0, 1/2]$.

From (\ref{S4ScaledKer1}) and (\ref{S4VL1}) we conclude that $K_N^j(x, y)$ converges uniformly over compact sets in $\mathbb{R}^2$ to $e^{f_q t_j (x-y)} K_{a,b,c}(f_q t_j, y + f_q^2 t_j^2; f_qt_j, x + f_q^2 t_j^2)$. Using the continuity of $K_{a,b,c}(t, x; t, y)$ in $(x,y) \in \mathbb{R}^2$ from Lemma \ref{WellDefKer}, and the fact that $\rho^j_N$ converges vaguely to the Lebesgue measure on $\mathbb{R}$, we see that the conditions of Proposition \ref{PropWC0} are satisfied. We conclude that there exists a determinantal point process $M^j$ on $\mathbb{R}$ with correlation kernel
$$K^j(x,y) = e^{f_q t_j (x-y)} K_{a,b,c}(f_q t_j, y + f_q^2 t_j^2; f_qt_j, x + f_q^2 t_j^2),$$
and reference measure given by the Lebesgue measure on $\mathbb{R}$, and moreover $M^{j,X}_N$ converge weakly to $M^j$ as random elements in $(S,\mathcal{S})$. 

The above work shows that condition (1) in Proposition \ref{TightnessCrit} is satisfied by $(X_i^{j,N}: i \geq 1)$. In addition condition (3) is satisfied in view of Proposition \ref{PVC}. In order to conclude the tightness of $X^{j,N}_i$ we thus only need to verify the second condition in Proposition \ref{TightnessCrit}, i.e.
\begin{equation}\label{S72E1}
\mathbb{P}(M^j(\mathbb{R}) = \infty) = 1.
\end{equation}
From parts (5) and (6) in Proposition \ref{PropLem}, we see that $\tilde{M}^j = M^j \phi^{-1}$ with $\phi(x) = x - f_q^2 t_j^2$ is a determinantal point process on $\mathbb{R}$ with kernel 
$$\tilde{K}^j(x,y) = K_{a,b,c}(f_q t_j, y ; f_qt_j, x ),$$
which we recognize from (\ref{3BPKer}) as nothing but the Airy kernel. I.e. $\tilde{M}^j$ is the Airy point process $\mathcal{A}$ on $\mathbb{R}$. We would obtain (\ref{S72E1}) if we can show that 
\begin{equation}\label{S72E2}
\mathbb{P}(\mathcal{A}(\mathbb{R}) = \infty) = 1.
\end{equation}
It appears that (\ref{S72E2}) is well-known in the literature, but as we could not find a proof of it we include a short argument below based on Tracy and Widom's original paper \cite{TW94}.\\

For $s \in \mathbb{R}$ and $n \in \mathbb{Z}_{\geq 0}$ we define
$$E(n;s) = \mathbb{P}(\mathcal{A}(s,\infty) = n) \mbox{ and } r(n; s) = \frac{E(n;s)}{E(0;s)}.$$
Fix $k \in \mathbb{N}$ and note that for each $t > 0$
$$\mathbb{P}(\mathcal{A}(\mathbb{R}) \leq k) \leq \mathbb{P}(\mathcal{A}(-2t, \infty) \leq k) = \sum_{n = 0}^k E(n;-2t) = E(0;-2t) \cdot \sum_{n = 0}^k r(n;-2t). $$
Using \cite[(1.19) and (1.24)]{TW94} we conclude that we can find constants $C$ and $t_0 > 0$, depending on $k$, such that for all $t \geq t_0$ 
$$\mathbb{P}(\mathcal{A}(\mathbb{R}) \leq k) \leq C \cdot \exp \left( \frac{8 k t^{3/2}}{3} - \frac{2t^3}{3}\right).$$
Letting $t \rightarrow \infty$ establishes (\ref{S72E2}). \\

{\bf \raggedleft Step 2.} We suppose that $\lambda$ has law $\mathbb{P}_N$ for $N \geq N_0$ as in Definition \ref{ParScale} for the parameters as in the statement of the proposition and $X_i^{j,N}$ are as in (\ref{S7PVC1}). Since $X_1^{j,N} \geq X_u^{j,N}$ almost surely, we see 
$$\mathbb{P}( X_u^{j,N} \geq a)  \leq \mathbb{P}( X_1^{j,N} \geq a),$$
and so from Proposition \ref{PVC} we conclude for each $u \geq 1$ and $j \in \{1, \dots, m\}$
\begin{equation}\label{S72E3}
\begin{split}
&\lim_{a \rightarrow \infty} \limsup_{N \rightarrow \infty} \mathbb{P}( X_u^{j,N} \geq a) = 0.
\end{split}
\end{equation}

Suppose now that $N_0^0$ is as in Definition \ref{ParScale} for the same $t_1, \dots, t_m$ but with $a_1^+ = b_1^+ = a_1^- = b_1^- = c^+ = c^- = 0$. Note that in this case we have $A_N = B_N = C^+_N = C^-_N = D_N = 0$, $x_i^N = y_j^N = q$, and so in particular $\tilde{N} = N$. If $\tilde{\mathbb{P}}_N$ is the law in Definition \ref{ParScale} for this choice of parameters, we let $\tilde{\lambda}$ have law $\tilde{\mathbb{P}}_N$ for $N \geq N_0^0$. We also define for $j \in \{1, \dots, m\}$ and $i \in \mathbb{N}$
\begin{equation}\label{S72E4}
\begin{split}
\tilde{X}_i^{j,N} &= \sigma_q^{-1} N^{-1/3} \cdot \left( \tilde{\lambda}_i^{M_j(N)} - \frac{2q N }{1-q} - \frac{q t_j N^{2/3}}{1-q} - i\right) \\
& = \sigma_q^{-1} N^{-1/3} \cdot \left( \tilde{\lambda}_i^{M_j(N)} - \frac{2q \tilde{N} }{1-q} - \frac{q t_j N^{2/3}}{1-q} - i\right) - \frac{2q (N - \tilde{N})}{(1-q) \sigma_q N^{1/3}}.
\end{split}
\end{equation}

Notice that by construction the $x_i^N$, $y_j^N$ parameters in Definition \ref{ParScale} are at least $q$ for all $N \geq N_0$ (this is part of the definition of $N_0$). We conclude from Proposition \ref{MonCoup} that for all $N \geq \max(N_0, N_0^0)$, we can couple $\lambda$ and $\tilde{\lambda}$ on the same probability space so that almost surely
\begin{equation}\label{S72E5}
\sum_{i = 1}^k \lambda_i^{M_j(N)} \geq \sum_{i = 1}^k \tilde{\lambda}_i^{M_j(N)} \mbox{ for each } k \in \mathbb{N}, j \in \{1, \dots, m\}. 
\end{equation}
Since $N \geq \tilde{N}$, we conclude from (\ref{S7PVC1}), (\ref{S72E4}) and (\ref{S72E5}) that for $N \geq \max(N_0, N_0^0)$ we can couple $X_i^{j,N}$ and $\tilde{X}_i^{j,N}$ on the same probability space so that almost surely
\begin{equation}\label{S72E55}
\sum_{i = 1}^k X_i^{j,N} \geq \sum_{i = 1}^k \tilde{X}_i^{j,N} \mbox{ for each } k \in \mathbb{N}, j \in \{1, \dots, m\}. 
\end{equation}

Using that almost surely $X_1^{j,N} \geq X_2^{j,N} \geq \cdots$ and (\ref{S72E3}) we have for each $k \in \mathbb{N}$
$$\limsup_{a \rightarrow \infty} \limsup_{N \rightarrow \infty} \mathbb{P}\left( \sum_{i = 1}^k X_i^{j,N} \geq a \right) \leq \limsup_{a \rightarrow \infty} \limsup_{N \rightarrow \infty} \mathbb{P}(  X_1^{j,N} \geq a/k ) = 0.$$
On the other hand, from Step 1, we know that $\tilde{X}_i^{j,N}$ is a tight sequence (in $N$) for each fixed $i \geq 1$ and $j \in \{1, \dots, m\}$. The latter implies that $\sum_{i = 1}^k \tilde{X}_i^{j,N}$ is tight (in $N$) for each fixed $k \geq 1$ and $j \in \{1, \dots, m\}$. The latter and (\ref{S72E5}) shows
$$\limsup_{a \rightarrow \infty} \limsup_{N \rightarrow \infty} \mathbb{P}\left( \sum_{i = 1}^k X_i^{j,N} \leq -a \right) \leq \limsup_{a \rightarrow \infty} \limsup_{N \rightarrow \infty} \mathbb{P}\left(  \sum_{i = 1}^k \tilde{X}_i^{j,N} \leq -a \right) = 0.$$
The last two displayed equations show that $\sum_{i = 1}^k X_i^{j,N} $ is tight (in $N$) for each fixed $k \geq 1$ and $j \in \{1, \dots, m\}$, which implies that $X_u^{j,N}$ is tight (in $N$) for each fixed $u \geq 1$ and $j \in \{1, \dots, m\}$. This suffices for the proof.

%
%
\section{Construction of line ensembles}\label{Section8} Throughout this section we use freely the notions of a {\em line ensemble} as defined in \cite[Definition 2.1]{CorHamA} and the {\em Brownian Gibbs property} as defined in \cite[Definition 2.2]{CorHamA}, see also \cite[Section 2.1]{DEA21}. We will always assume that $\Sigma = \mathbb{N}$ and $\Lambda = \mathbb{R}$ in \cite[Definition 2.2]{CorHamA}.

%
%
\subsection{Proof of Theorem \ref{T3}}\label{Section8.1} In the course of the proof of Theorem \ref{T3} we will require the following lemma, whose proof is given in the next section. In the setup of the lemma we suppose that $\mathcal{L}^N = \{\mathcal{L}_i^N\}_{i \geq 1}$ is a sequence of $\mathbb{N}$-indexed line ensembles on $\mathbb{R}$, which satisfy the Brownian Gibbs property. We also suppose that each $\mathcal{L}^N$ is determinantal with a correlation kernel $K_N(x,t;y,s)$ in the following sense: for any $t_1, \dots, t_r \in \mathbb{R}$ with $t_1 < \cdots < t_r$ the random measure on $\mathbb{R}^2$
\begin{equation}\label{S81RM}
M^N(A) = \sum_{i \geq 1} \sum_{j = 1}^r {\bf 1} \{ (t_j, \sqrt{2} \cdot \mathcal{L}_i^N(t_j) + t_j^2) \in A\}.
\end{equation}
is a determinantal point process on $\mathbb{R}^2$ with correlation kernel $K_N(s,x; t,y)$ and reference measure $\mu_{\mathcal{T}} \times \lambda$, where $\mu_{\mathcal{T}}$ is the counting measure on $\mathcal{T} = \{t_1, \dots, t_r\}$ and $\lambda$ is the Lebesgue measure on $\mathbb{R}$.

\begin{lemma}\label{S8Lemma} Assume the same notation as in the preceding paragraph. Suppose that for fixed $s, t \in \mathbb{R}$ and sequences $x_N, y_N \in \mathbb{R}$ such that $\lim_N x_N = x$ and $\lim_N y_N = y$ we have
\begin{equation}\label{S81E1}
\lim_{N \rightarrow \infty} K_N(s, x_N; t, y_N) = K_{a,b,c}(s,x;t,y),
\end{equation}
where $K_{a,b,c}$ is as in Definition \ref{3BPKernelDef} for parameters as in Definition \ref{DLP} such that $c^- = 0$ and $J_a^- + J_b^- < \infty$. Suppose further that for each $t \in \mathbb{R}$ we have 
\begin{equation}\label{S81E2}
\lim_{ a \rightarrow \infty} \limsup_{N \rightarrow \infty} \mathbb{P}(\mathcal{L}^N_1(t) \geq a) = 0.
\end{equation}
Then, there exists a line ensemble $\mathcal{L}^{a,b,c}$ satisfying the conditions of Theorem \ref{T3}. Moreover $\mathcal{L}^N$ converge weakly to $\mathcal{L}^{a,b,c}$ as random elements in $C(\mathbb{N} \times \mathbb{R})$.
\end{lemma}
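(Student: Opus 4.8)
\textbf{Proof plan for Lemma \ref{S8Lemma}.}

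The plan is to build $\mathcal{L}^{a,b,c}$ as a weak limit of the $\mathcal{L}^N$ in $C(\mathbb{N}\times\mathbb{R})$, and then transfer the determinantal structure and Brownian Gibbs property to the limit. The scheme mirrors the strategy of \cite[Section 3]{CorHamA}: first I would check that the hypotheses needed to invoke the general tightness/limit theory for Brownian Gibbsian line ensembles are in place. Specifically, write $Y^N_i(t) = \sqrt{2}\,\mathcal{L}^N_i(t) + t^2$ and let $M^N$ be the point process in (\ref{S81RM}). I would first argue that $M^N$ converges weakly to a determinantal point process $M$ with kernel $K_{a,b,c}$ and reference measure $\mu_{\mathcal{T}}\times\lambda$: this follows from Proposition \ref{PropWC1} once one verifies uniform-on-compacts convergence of the kernels, which in turn comes from (\ref{S81E1}) upgraded to local uniformity — and here I would note that local uniformity is automatic, since (\ref{S81E1}) as stated already allows $x_N\to x$, $y_N\to y$, so a diagonal/subsequence argument gives the uniform statement on $[-A,A]^2$ for each $A$ and each fixed pair $s,t\in\mathcal{T}$. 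Continuity of $K_{a,b,c}(s,\cdot;t,\cdot)$ is supplied by Lemma \ref{WellDefKer}, and $\lambda_N=\lambda$ here so vague convergence is trivial. This gives a determinantal point process $M$ with kernel $K_{a,b,c}$; moreover, by Theorem \ref{T2} (whose hypotheses $c^-=0$, $J_a^-+J_b^-<\infty$ are exactly those assumed here), the point process with kernel $K_{a,b,c}$ has almost surely infinitely many points — this is the input needed to run the lower-tail tightness criterion.

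Next I would establish finite-dimensional tightness of the particle locations $X^{j,N}_i := Y^N_i(t_j)$, i.e. that $\{X^{j,N}_i\}_{N}$ is tight for each fixed $i$ and $j$. Tightness from above is (\ref{S81E2}) (rescaled: $\mathcal{L}^N_1(t)\ge a$ corresponds to $Y^N_1(t)$ large), together with the ordering $X^{j,N}_1\ge X^{j,N}_i$. Tightness from below is the delicate point and is handled by Proposition \ref{TightnessCrit}: its three hypotheses are exactly (1) weak convergence $M^N\Rightarrow M$ (just established, after slicing to a fixed $t_j$ via Lemma \ref{LemmaSlice}), (2) $\mathbb{P}(M(\mathbb{R})=\infty)=1$ (from Theorem \ref{T2} applied at the single time $t_j$, via Lemma \ref{LemmaSlice} again), and (3) the upper-tail bound from (\ref{S81E2}). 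With tightness of $\{X^{j,N}_i\}_N$ in hand, Proposition \ref{PropWC2} yields that the full array $(X^{j,N}_i: i\ge1, j=1,\dots,r)$ converges weakly in $(\mathbb{R}^\infty,\mathcal{R}^\infty)$, and that the limiting array forms the point process $M$. In other words, the finite-dimensional distributions of the $\mathcal{L}^N$ converge.

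With finite-dimensional convergence plus the Brownian Gibbs property for each $\mathcal{L}^N$, I would now invoke the tightness-and-limit machinery of \cite{CorHamA} (this is precisely the content of \cite[Definition 3.3]{CorHamA} and the surrounding results, as referenced in Section \ref{Section1.3} of the present paper): the upper bound (\ref{S81E2}) controls the top curve, and the Brownian Gibbs property then propagates tightness to every curve on every compact time interval in $C(\mathbb{N}\times\mathbb{R})$. Hence $\{\mathcal{L}^N\}$ is tight in $C(\mathbb{N}\times\mathbb{R})$; combined with the already-proven convergence of finite-dimensional distributions, it follows that $\mathcal{L}^N$ converges weakly to some line ensemble $\mathcal{L}^{a,b,c}$ in $C(\mathbb{N}\times\mathbb{R})$. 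By Skorohod's representation one realizes the convergence almost surely, and then the Brownian Gibbs property, being a closed condition under uniform-on-compacts convergence of line ensembles (again by the arguments of \cite{CorHamA}, as the resampling densities are continuous functionals), passes to $\mathcal{L}^{a,b,c}$. Finally, the equality (\ref{S1FDE}) of finite-dimensional distributions: by construction the law of $(\sqrt2\,\mathcal{L}^{a,b,c}_i(t_j)+t_j^2: i\ge1, j)$ forms the point process $M$, which has kernel $K_{a,b,c}$; by Theorem \ref{T2} and Corollary \ref{CorWC2} (uniqueness of ordered arrays giving a fixed point-process law) this array has the same finite-dimensional distributions as $(Y_i(t): i\ge1, t\in\mathbb{R})$. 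I expect the main obstacle to be the careful bookkeeping in invoking the \cite{CorHamA} tightness framework — matching the present normalization (the $\sqrt2$ and the $t^2$ parabolic shift) to the hypotheses of \cite[Definition 3.3]{CorHamA} and checking that the Brownian Gibbs property survives the limit — rather than any genuinely new estimate, since all the hard analytic inputs (kernel asymptotics, upper-tail bounds, infinitude of the limit) are imported from earlier sections.
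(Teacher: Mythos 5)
Your proposal is correct and follows essentially the same route as the paper: slice to a single time via Lemma \ref{LemmaSlice}, upgrade (\ref{S81E1}) to local uniform convergence (valid by continuity of $K_{a,b,c}$ from Lemma \ref{WellDefKer}), invoke Propositions \ref{PropWC0}/\ref{PropWC1} for point-process convergence, import the infinitude of the limit from Theorem \ref{T2}, feed this together with (\ref{S81E2}) into Proposition \ref{TightnessCrit} for tightness, then use Propositions \ref{PropWC2} and Corollary \ref{CorWC2} to get finite-dimensional convergence and identify the limit with $(Y_i)$, and finally verify \cite[Definition 3.3]{CorHamA} and invoke \cite[Theorem 3.8, Proposition 3.6]{CorHamA}. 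The paper's proof is identical in substance, just arranged with the single-time slicing done first (via Proposition \ref{PropWC0}) and the multi-time convergence (Proposition \ref{PropWC1}) done afterwards.
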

\begin{remark} As shown in \cite[Lemma 2.2]{DEA21} the space $C(\mathbb{N} \times \mathbb{R})$ is Polish and then the weak convergence in Lemma \ref{S8Lemma} is that of random elements in $C(\mathbb{N} \times \mathbb{R})$, cf. \cite[Section 3]{Billing}.
\end{remark}

With the above result in place we may proceed with the proof of Theorem \ref{T3}.

\begin{proof}[Proof of Theorem \ref{T3}] The starting point of the proof is to note that if $J_{a}^+ = J_b^+ = m \geq 0$ and $c^- = c^+ = J_a^- = J_b^- = 0$, then we may pick $\alpha, \beta$ in Definition \ref{3BPKer} such that $\alpha + t_1 < \beta + t_2$ and then $K_{a,b,c}$ agrees with $K^{\mathrm{Airy}}_{X,Y}$ from (\ref{S1EAKS}) for $x_i = (a_i^+)^{-1}$ and $y_i = - (b_i^+)^{-1}$ for $i = 1, \dots, m$. The existence of a line ensemble $\mathcal{L}^{a,b,c}$ as in the statement of the theorem is then ensured by \cite[Theorem 3.8 and Proposition 3.12]{CorHamA}. We mention that in \cite[Proposition 3.12]{CorHamA} there is a small typo and it should read ``$r_i \leq \sqrt{2} N$'' as opposed to ``$r_i \leq 2N$''. In addition, the parameters $p_i,q_i$ in that proposition are related to ours via $p_i = \sqrt{2} \cdot y_i = -\sqrt{2} \cdot (b_i^+)^{-1}$ and $q_i = -\sqrt{2} \cdot x_i = -\sqrt{2} \cdot (a_i^+)^{-1}$ for $i = 1, \dots, m$.

The goal in the remainder of the proof is to obtain $\mathcal{L}^{a,b,c}$ for general choices of parameters as in the statement of the theorem by taking appropriate limits of the ones we obtained above and applying Lemma \ref{S8Lemma}. For clarity, we split the remainder of the proof into several steps.\\

{\bf \raggedleft Step 1.} In this step we construct our ensembles when $c^- = c^+ = J_a^- = J_b^- = 0$ and $J_a^+ + J_b^+ < \infty$. Pick $m \geq \max(J_a^+, J_b^+)$ and consider an $N$-indexed sequence of ensembles $\mathcal{L}^{N}$ as in the beginning of the proof with $\tilde{c}^- = \tilde{c}^+ = J_{\tilde{a}}^- = J_{\tilde{b}}^- = 0$, and 
$$\tilde{a}_i^+ = a_i^+ \mbox{ for } i = 1, \dots, \min(m, J_a^+), \hspace{1mm} \tilde{a}_i^+ = 1/N \mbox{ for } i = \min(m, J_a^+) +1 , \dots, m, \hspace{1mm} \tilde{a}_i^+ = 0 \mbox{ for } i \geq m+1,$$
$$\tilde{b}_i^+ = b_i^+ \mbox{ for } i = 1, \dots, \min(m, J_b^+), \hspace{1mm} \tilde{b}_i^+ = 1/N \mbox{ for } i = \min(m, J_b^+) +1 , \dots, m , \hspace{1mm} \tilde{b}_i^+ = 0 \mbox{ for } i \geq m+1.$$
We mention that the tilde parameters depend on $N$, but we will suppress it from the notation. In the remainder of this step we prove that $\mathcal{L}^{N}$ satisfy the conditions of Lemma \ref{S8Lemma}, which if true would imply the statement of the theorem in the present case.\\

Observe first that by assumption we know that $\mathcal{L}^{N}$ satisfy the Brownian Gibbs property and the measure in (\ref{S81RM}) is determinantal with correlation kernel $K_N(s,x;t,y) = K_{\tilde{a}, \tilde{b}, \tilde{c}}(s,x;t,y)$ and reference measure $\mu_{\mathcal{T}} \times \lambda$. In Definition \ref{3BPKernelDef} we pick $\alpha, \beta$ so that $s+ \alpha > 0$, $t + \beta < 0$, $a_1^+(s + \alpha) < 1/2$, and $-b_1^+(t + \alpha) < 1/2$. We also assume that $N$ is large enough so that $(1/N)(s + \alpha) < 1/2$ and $-(1/N)(t + \alpha) < 1/2$, and then for $s,t \in \mathcal{T}$ and $x,y \in \mathbb{R}$
\begin{equation}\label{S81Q1}
\begin{split}
&K_N(s,x;t,y) = -  \frac{{\bf 1}\{ t > s\} }{\sqrt{4\pi (t - s)}} \cdot e^{ - \frac{(y - x)^2}{4(t - s)} - \frac{(t - s)(y + x)}{2} + \frac{(t - s)^3}{12} }\\
&+\frac{1}{(2\pi \im)^2} \int_{\Gamma_{\alpha }^+} d z \int_{\Gamma_{\beta}^-} dw \frac{e^{z^3/3 -xz - w^3/3 + yw}}{z + s - w - t} \cdot \prod_{i =1}^m \frac{(1 + \tilde{b}^+_i(z+s))(1 - \tilde{a}_i^+(w+t))}{(1 + \tilde{b}^+_i(w+t))(1 - \tilde{a}_i^+(z+s))}.
\end{split}
\end{equation}
We also have
\begin{equation}\label{S81Q2}
\begin{split}
&K_{a,b,c}(s,x;t,y) = -  \frac{{\bf 1}\{ t > s\} }{\sqrt{4\pi (t - s)}} \cdot e^{ - \frac{(y - x)^2}{4(t - s)} - \frac{(t - s)(y + x)}{2} + \frac{(t - s)^3}{12} }\\
&+\frac{1}{(2\pi \im)^2} \int_{\Gamma_{\alpha }^+} d z \int_{\Gamma_{\beta}^-} dw \frac{e^{z^3/3 -xz - w^3/3 + yw}}{z + s - w - t} \cdot \prod_{i =1}^{J_a^+} \frac{1 - a_i^+(w+t)}{1 - a_i^+(z+s)} \prod_{j =1}^{J_b^+}  \frac{1 + b^+_j(z+s)}{1 + b^+_j(w+t)}.
\end{split}
\end{equation}

We next note the following consequence of (\ref{RatBound}). Suppose that $\{\hat{a}_i\}_{i \geq 1}$ and $\{\hat{b}_i\}_{i \geq 1}$ are non-negative and $\sum_{i \geq 1} (\hat{a}_i^+ + \hat{b}_i^+) \in [0,A]$ for some $A > 0$. Suppose further that $s+ \alpha > 0$, $t + \beta < 0$, $\hat{a}_i^+(s + \alpha) < 1/2$, and $-\hat{b}_i^+(t + \alpha) < 1/2$. Then, for $z \in \Gamma_{\alpha}^+$ we have $|1 - a^+_i (z + s)| \geq 1/4$, and for $w \in \Gamma_{\beta}^+$ that $|1 - b_i^+(w+t)| \geq 1/4$. The latter and (\ref{RatBound}) show that there is a constant $c_A$ such that
\begin{equation}\label{S81Q3}
\begin{split}
\left|\prod_{i =1}^{\infty} \frac{1 - \hat{a}_i^+(w+t)}{1 - \hat{a}_i^+(z+s)} \prod_{j =1}^{\infty}  \frac{1 + \hat{b}^+_j(z+s)}{1 + \hat{b}^+_j(w+t)} \right| \leq \exp( c_A |w+t| + c_A |z+s|),
\end{split}
\end{equation}
provided that $z \in \Gamma_{\alpha}^+$ and $w \in \Gamma_{\beta}^-$. In addition, by analyzing the real part of $z^3/3$ and $w^3/3$, we see that we can find a constants $D_1 > 0$ (depending on $\alpha, \beta$) such that 
\begin{equation}\label{S81Q4}
\begin{split}
\left|e^{z^3/3 - w^3/3} \right| \leq D_1 \cdot \exp \left(-|z|^3/24 - |w|^3/24 \right).
\end{split}
\end{equation}
Since by construction $\Gamma_{\alpha + s}^+$ is well-separated from $\Gamma_{\beta + t}^-$, we also have for some $D_2 > 0$ (depending on $\alpha, \beta,s ,t$)
\begin{equation}\label{S81Q5}
\left|\frac{1}{z + s - w - t} \right| \leq D_2.
\end{equation}
Finally, for a sequence $x_N$, and $y_N$ converging to $x$ and $y$ we have that for all large $N$
\begin{equation}\label{S81Q6}
\left|e^{-x_N z  + y_Nw}\right| \leq \exp \left( (|x|+1)|z| + (|y|+1) |w| \right).
\end{equation}

We now have that the first line of (\ref{S81Q1}) for $K_N(s, x_N; t, y_N)$ converges to that of (\ref{S81Q2}) by continuity as $N \rightarrow \infty$. The integrands on the second line of (\ref{S81Q1}) converge pointwise to that on the second line of (\ref{S81Q2}). We thus conclude that 
$$\lim_{N \rightarrow \infty} K_N(s, x_N; t, y_N) = K_{a,b,c}(s,x;t,y)$$
from the dominated convergence theorem with a dominating function given by the product of the right sides of (\ref{S81Q3}), (\ref{S81Q4}), (\ref{S81Q5}) and (\ref{S81Q6}). This establishes (\ref{S81E1}). \\

In the remainder of this step we show that (\ref{S81E2}) holds. This is almost a consequence of Proposition \ref{PVC} but we provide the necessary details. In the sequel we fix $t \in \mathbb{R}$ and consider the point processes $M^N$ on $\mathbb{R}$ formed by $\{\sqrt{2} \cdot \mathcal{L}^N_i(t) + t^2\}_{i \geq 1}$. From Lemma \ref{LemmaSlice} and part (4) of Proposition \ref{PropLem} we conclude $M^N$ is a determinantal point process on $\mathbb{R}$ with reference measure $\lambda$ and correlation kernel
\begin{equation}\label{S81Q7}
\begin{split}
&K_N^t(x,y) = \frac{e^{t(y-x)}}{(2\pi \im)^2} \int_{\Gamma_{\alpha }^+} d z \int_{\Gamma_{\beta}^-} dw \frac{e^{z^3/3 -xz - w^3/3 + yw}}{z - w} \cdot \prod_{i =1}^m \frac{(1 + \tilde{b}^+_i(z+t))(1 - \tilde{a}_i^+(w+t))}{(1 + \tilde{b}^+_i(w+t))(1 - \tilde{a}_i^+(z+t))}.
\end{split}
\end{equation}
Using that $\Real(z+t) > 0$ for all $z \in \Gamma_{\alpha}^+$ and $\Imag(w+t) < 0$ for all $w \in \Gamma_{\beta}^-$, we see that there is a constant $d_1 > 0$ (depending on $\alpha, \beta, t$) such that for $z\in \Gamma_{\alpha}^+$ and $w \in \Gamma_{\beta}^-$ 
\begin{equation}\label{S81Q8}
\begin{split}
\left|e^{-x (z + t)} \right| \leq \begin{cases} e^{-d_1 x} &\mbox{ if } x \geq 0, \\ e^{|L||z + t|} &\mbox{ if } x \in [-L, 0], \end{cases} \hspace{3mm} \left|e^{(w + t)y} \right| \leq \begin{cases} e^{-d_1 y}, &\mbox{ if } y \geq 0 \\ e^{|L||w + t|} &\mbox{ if } y \in [-L, 0]. \end{cases}
\end{split}
\end{equation}
Combining (\ref{S81Q3}), (\ref{S81Q4}), (\ref{S81Q5}), (\ref{S81Q7}) and (\ref{S81Q8}) we see that we can find a constant $B_L$ (depending on $\alpha, \beta, t$,  the sum $\sum_{i \geq 1} (a_i^+ + b_i^+)$ and $L$) such that for all large $N$ and $x, y \geq -L$
\begin{equation}\label{S81Q9}
\begin{split}
&\left|K_N^t(x,y) \right| \leq B_L \cdot \exp\left( -d_1 |x| - d_1 |y| \right).
\end{split}
\end{equation}
From Hadamard's inequality (\ref{Hadamard}), we conclude for $x_1, \dots, x_n \geq -L$
\begin{equation*}
\left| \det\left[ K_N^t(x_i,x_j) \right]_{i,j = 1}^n \right| \leq B_L^n \cdot n^{n/2} \cdot \prod_{i = 1}^n \exp \left( - d_1 |x_i| \right),
\end{equation*}
which implies for any $a \in \mathbb{R}$
\begin{equation*}
1 + \sum_{n \geq 1} \frac{1}{n!} \int_{(a, \infty)^n} \left| \det\left[ K_N^t(x_i,x_j) \right]_{i,j = 1}^n \right| \lambda^n(dx) < \infty.
\end{equation*}
We conclude that the conditions of Proposition \ref{PropLast} are satisfied and so for all $a \in \mathbb{R}$
\begin{equation*}
\mathbb{P}\left( \sqrt{2} \cdot \mathcal{L}^N_1(t) + t^2 > a \right)  = \sum_{n = 1}^{\infty} \frac{(-1)^{n-1}}{n!}  \int_{(a, \infty)^n}\det\left[ K_N^t(x_i,x_j) \right]_{i,j = 1}^n\lambda^n(dx). 
\end{equation*}
Combining the estimates after and including (\ref{S81Q9}), we get for all $a \geq 0$
\begin{equation*}
\mathbb{P}\left( \sqrt{2} \cdot \mathcal{L}^N_1(t) + t^2 > a \right) \leq \sum_{n = 1}^{\infty} \frac{n^{n/2} B_0^n }{n!} \cdot \left(\int_{a}^{\infty} e^{-d_1 x} dx\right)^n \leq e^{-d_1a} \cdot \sum_{n = 1}^{\infty} \frac{n^{n/2} B_0^n }{n! \cdot d_1^n},
\end{equation*}
which proves (\ref{S81E2}).\\

{\bf \raggedleft Step 2.} In this step we construct $\mathcal{L}^{a,b,c}$ for general parameters. We start with the case when $c^-  = c^+ = J_{a}^- = J_b^-  = 0$. Consider an $N$-indexed sequence of ensembles $\mathcal{L}^{N}$ as constructed in Step 1 for parameters $\tilde{c}^- = \tilde{c}^+ = J_{\tilde{a}}^- = J_{\tilde{b}}^- = 0$, and 
$$\tilde{a}_i^+ = a_i^+ \mbox{ for } i = 1, \dots, \min(N, J_a^+), \hspace{3mm} \tilde{a}_i^+ = 0 \mbox{ for } i \geq \min(N, J_a^+)+1,$$
$$\tilde{b}_i^+ = b_i^+ \mbox{ for } i = 1, \dots, \min(N, J_b^+), \hspace{3mm} \tilde{b}_i^+ = 0 \mbox{ for } i \geq \min(N, J_a^+)+1.$$
At this time we can repeat verbatim the argument in Step 1, to conclude that $\mathcal{L}^N$ satisfy the conditions of Lemma \ref{S8Lemma}, which implies the statement of the theorem in the present case.\\

We next consider the case of $c^-  = J_{a}^- = J_b^-  = 0$ but arbitrary $c^+ \geq 0$. Consider an $N$-indexed sequence of line ensembles $\mathcal{L}^{N}$ of the form $\mathcal{L}^{\tilde{a},\tilde{b},\tilde{c}}$ such that $\tilde{c}^-  = \tilde{c}^+ = J_{a}^- = J_b^-  = 0$, $\{\tilde{a}_i^+\}_{i \geq 1}$ is the decreasing sequence formed by $\{{a}_i^+\}_{i \geq 1}$ (counted with multiplicities) and $N$ copies of $c^+/N$, while $\{\tilde{b}_i^+\}_{i \geq 1}$ is equal to $\{{b}_i^+\}_{i \geq 1}$. Our work so far shows that the measure in (\ref{S81RM}) is determinantal with correlation kernel $K_N(s,x;t,y) = K_{\tilde{a}, \tilde{b}, \tilde{c}}(s,x;t,y)$, given by
\begin{equation}\label{S81P1}
\begin{split}
&K_N(s,x;t,y) = -  \frac{{\bf 1}\{ t > s\} }{\sqrt{4\pi (t - s)}} \cdot e^{ - \frac{(y - x)^2}{4(t - s)} - \frac{(t - s)(y + x)}{2} + \frac{(t - s)^3}{12} }\\
&+\frac{1}{(2\pi \im)^2} \int_{\Gamma_{\alpha }^+} d z \int_{\Gamma_{\beta}^-} dw \frac{e^{z^3/3 -xz - w^3/3 + yw}}{z + s - w - t} \cdot \prod_{i =1}^\infty \frac{(1 + b^+_i(z+s))(1 - a_i^+(w+t))}{(1 + b^+_i(w+t))(1 - a_i^+(z+s))} \\
&\times  \left( \frac{(1 - c^+(w+t)/N)}{(1 - c^+(z+s)/N)} \right)^N,
\end{split}
\end{equation}
where we pick $\alpha, \beta$ so that $s+ \alpha > 0$, $t + \beta < 0$, $a_1^+(s + \alpha) < 1/2$, and $-b_1^+(t + \alpha) < 1/2$. We also suppose that $N$ is large enough so that $(c^+/N)(s + \alpha) < 1/2$ and $-(c^+/N)(t + \alpha) < 1/2$. As before, we can repeat verbatim the argument in Step 1, to conclude that $\mathcal{L}^N$ satisfy the conditions of Lemma \ref{S8Lemma}. The only minor differences are that in showing that the integrands of $K_N(s,x_N;t,y_N)$ converge pointwise to the integrand in $K_{a,b,c}(s,x;t,y)$ we need to use the simple limit
$$\lim_{N \rightarrow \infty} \left( \frac{(1 - c^+(w+t)/N)}{(1 - c^+(z+s)/N)} \right)^N = \exp \left( c^+(z+ s) - c^+(w+t) \right),$$
and the $B_L$ in (\ref{S81Q9}) depends on $c^+ + \sum_{i \geq 1} (a_i^+ + b_i^+)$ as opposed to just $\sum_{i \geq 1} (a_i^+ + b_i^+)$.\\

We finally suppose that $c^- = 0$ and $J_a^- + J_b^- < \infty$. Starting from our parameters, we let $\{\tilde{a}^{\pm}_i\}_{i \geq 1}$, $\{\tilde{b}^{\pm}_i\}_{i \geq 1}$, $\tilde{c}^{\pm}$ be the parameters we constructed in Step 2 of the proof of Theorem \ref{T2}, and also let $\Delta$ be as in that step. We recall that in this construction we have $\tilde{a}_1^- = \tilde{b}_1^- = \tilde{c}^- = 0$ (i.e. all ``minus'' parameters are equal to zero). In particular, our work so far shows that there exists a line ensemble $\mathcal{L}^{\tilde{a}, \tilde{b}, \tilde{c}}$ as in the statement of the theorem for this choice of parameters. From (\ref{S7Q4}) and (\ref{S7Q5}), we know that if $m \in \mathbb{N}$, $s_1 < s_2 < \cdots < s_m$ and $\mathcal{A} = \{s_1, \dots, s_m\}$, then
\begin{equation}\label{S81P2}
\tilde{M}(A) = \sum_{i \geq 1} \sum_{j = 1}^m {\bf 1} \{ (s_j - \Delta, \sqrt{2} \cdot \mathcal{L}_i^{\tilde{a}, \tilde{b}, \tilde{c}}(s_j- \Delta) + (s_j-\Delta)^2) \in A\}
\end{equation}
is a determinantal point process with correlation kernel $K_{a,b,c}$ and reference measure $\mu_{\mathcal{A}} \times \lambda$. From part (3) of Proposition \ref{PropLem} and Corollary \ref{CorWC2} we conclude that we have the following equality in the sense of finite-dimensional distributions
$$\left(\sqrt{2} \cdot \mathcal{L}_i^{\tilde{a}, \tilde{b}, \tilde{c}}(t - \Delta) + (t-\Delta)^2: i \geq 1, t \in \mathbb{R} \right)  = \left(Y_i(t): i \geq 1 , t \in \mathbb{R} \right),$$
where we recall that $\{Y_i\}_{i \geq 1}$ are as in Theorem \ref{T2}. In particular, if we define 
$$\mathcal{L}^{a,b,c}_i(t) = \mathcal{L}_i^{\tilde{a}, \tilde{b}, \tilde{c}}(t - \Delta)  + 2^{-1/2} \cdot \left( (t- \Delta)^2 - t^2 \right),$$
we see that $\mathcal{L}^{a,b,c}$ is a line ensemble, which satisfies (\ref{S1FDE}). Since $\mathcal{L}^{\tilde{a}, \tilde{b}, \tilde{c}}$ satisfies the Brownian Gibbs property and the latter is preserved under translations and deterministic shifts by functions of the form $at + b$, we conclude that $\mathcal{L}^{a,b,c}$ also satisfies the Brownian Gibbs property. 
\end{proof}

%
%
\subsection{Proof of Lemma \ref{S8Lemma}}\label{Section8.2} We start by verifying that the sequence $\mathcal{L}^N$ satisfies the hypotheses of \cite[Definition 3.3]{CorHamA} for each $k \geq 1$ and $T > 0$. The fact that hypothesis $(H1)_{k,T}$ is satisfied follows from our assumption that $\mathcal{L}^N$ satisfy the Brownian Gibbs property. We next check that $(H2)_{k,T}$ is satisfied, for which it suffices to show that 
\begin{equation}\label{S82E1}
\left( \mathcal{L}_i^N(t): i \geq 1, t \in \mathbb{R} \right) \Rightarrow \left( 2^{-1/2} Y_i(t) - 2^{-1/2} t^2 : i \geq 1, t \in \mathbb{R} \right),
\end{equation}
where in (\ref{S82E1}) we have finite-dimensional convergence and $\{Y_i\}_{i \geq 1}$ are as in Theorem \ref{T2} for parameters $a,b,c$ as in the statement of the present lemma. 

We first have from Lemma \ref{LemmaSlice} that for a fixed $t \in \mathbb{R}$ the variables $\{ \sqrt{2} \cdot \mathcal{L}_i^N(t) + t^2\}_{i \geq 1}$ form a determinantal point process $M^N$ on $\mathbb{R}$ with correlation kernel $K^t_N(x,y) = K_N(t,x;t,y)$ and reference measure $\lambda$. From (\ref{S81E1}) and Proposition \ref{PropWC0} we conclude that $M^N$ converge to a determinantal point process $M$ on $\mathbb{R}$ with correlation kernel $K(x,y) = K_{a,b,c}(t,x; t,y)$. From Theorem \ref{T2} we also know that $\mathbb{P}(M(\mathbb{R}) = \infty) = 1$. The latter few observations show that conditions (1) and (2) in Proposition \ref{TightnessCrit} are satisfied for the sequence $X^N = (\sqrt{2} \cdot \mathcal{L}_i^N(t) + t^2: i \geq 1)$, while condition (3) is satisfied in view of (\ref{S81E2}). We conclude from Proposition \ref{TightnessCrit} that $\{ \sqrt{2} \cdot \mathcal{L}_i^N(t) + t^2\}_{i \geq 1}$ is a tight sequence of random elements in $(\mathbb{R}^{\infty}, \mathcal{R}^{\infty})$ for each $t \in \mathbb{R}$.

From (\ref{S81E1}), we see that $K_N(s,x;t,y)$ satisfy the conditions of Proposition \ref{PropWC1}, which together with the tightness statement we established in the previous paragraph shows that the conditions of Proposition \ref{PropWC2} are satisfied. In particular, for each $m \in \mathbb{N}$, and real $t_1 < t_2 < \cdots < t_m$, we conclude that the random elements in $(\mathbb{R}^{\infty}, \mathcal{R}^{\infty})$, given by
$$\left( \sqrt{2} \cdot \mathcal{L}_i^N(t_j) + t_j^2: i \geq 1 \mbox{ and } j = 1,\dots, m \right)$$
converge in the finite-dimensional sense to some random element $\left(  X_i(t_j) : i \geq 1 \mbox{ and } j = 1,\dots, m \right)$. Moreover, (\ref{S81E1}) and Propositions \ref{PropWC1} and \ref{PropWC2} imply that 
$$\tilde{M}(\omega, A) = \sum_{i \geq 1} \sum_{j = 1}^r {\bf 1}\{(t_j, X_i(t_j, \omega)) \in A \} $$
is a determinantal point process on $\mathbb{R}^2$ with correlation kernel $K_{a,b,c}(s,x;t,y)$ and reference measure $\mu_{\mathcal{T}} \times \lambda$. Corollary \ref{CorWC2} shows that $\left(  X_i(t_j) : i \geq 1 \mbox{ and } j = 1,\dots, m \right)$ has the same finite-dimensional distribution as $\left(  Y_i(t_j) : i \geq 1 \mbox{ and } j = 1,\dots, m \right)$, which with our earlier statements shows (\ref{S82E1}). 

Finally, we have that $(H3)_{k,T}$ is satisfied from the finite-dimensional convergence in (\ref{S82E1}) and the fact that
$$Y_i(t, \omega) > Y_{i+1}(t, \omega) \mbox{ for each $t \in \mathbb{R}$ and $\omega \in \Omega$, }$$
as shown in (\ref{T2E2}). Our verification of \cite[Definition 3.3]{CorHamA} is now complete.\\

At this point most of the work is done and we just quickly verify the statements in the lemma. From \cite[Theorem 3.8]{CorHamA} and (\ref{S82E1}) we know that there exists a line ensemble $\mathcal{L}^{a,b,c}$, which satisfies the Brownian Gibbs property and satisfies (\ref{S1FDE}). The fact that $\mathcal{L}^N$ converges weakly to $\mathcal{L}^{a,b,c}$ as random elements of $C(\mathbb{N} \times \mathbb{R})$ follows from \cite[Proposition 3.6]{CorHamA}.

%
%
\begin{appendix}

%
%
\section{Proofs of results from Section \ref{Section2}} \label{AppendixA} In this section we present the proofs of various results from Section \ref{Section2} after establishing a few auxiliary results in Section \ref{AppendixA1}. We mention that the results in Section \ref{AppendixA1} are somewhat standard, but as we could not find them in the literature, we included them here for the sake of completeness.

%
%
\subsection{Auxiliary lemmas} \label{AppendixA1} In this section we derive some basic results about the factorial moment problem and dissecting semi-rings. We continue with the same notation as in Section \ref{Section2}.

\begin{lemma}\label{LemmaMGF} Fix $m \in \mathbb{N}$ and let $(X_1, \dots, X_m)$ and $(Y_1, \dots, Y_m)$ be random vectors taking values in $\mathbb{R}_{\geq 0}^m$. Suppose that there exists $a > 0$ such that for all $a_1, \dots, a_m \in [0, a]$ we have 
$$\mathbb{E}\left[ e^{a_1X_1 + \cdots + a_m X_m} \right] = \mathbb{E}\left[ e^{a_1Y_1 + \cdots + a_m Y_m} \right] < \infty.$$
Then, $(X_1,\dots, X_m)$ has the same distribution as $(Y_1, \dots, Y_m)$.
\end{lemma}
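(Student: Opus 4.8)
The plan is to reduce the multivariate uniqueness statement to the well-known fact that a single non-negative random variable whose moment generating function is finite in a neighbourhood of the origin is determined by that moment generating function, or equivalently by its moments. First I would observe that the hypothesis gives equality of the joint moment generating functions on the cube $[0,a]^m$, and in particular on a full-dimensional open neighbourhood of a point in the interior (say on $(0,a)^m$). The standard route is then to use analyticity: the function $\Phi(z_1,\dots,z_m) = \mathbb{E}[e^{z_1 X_1 + \cdots + z_m X_m}]$ extends to a holomorphic function of several complex variables on the tube domain $\{(z_1,\dots,z_m): \Real(z_i) < a \text{ for all } i\}$, because the $X_i$ are non-negative so $|e^{z_i X_i}| = e^{\Real(z_i) X_i} \le e^{a_i' X_i}$ is dominated uniformly for $\Real(z_i) \le a_i' < a$, which justifies differentiating under the expectation (Morera's theorem applied coordinate-wise, or a standard dominated-convergence argument). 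The same holds for the analogous function $\Psi$ built from $(Y_1,\dots,Y_m)$. Since $\Phi$ and $\Psi$ agree on the real cube $[0,a]^m$, which is a set with an accumulation point in each real direction inside the domain of holomorphy, the identity theorem for holomorphic functions of several variables forces $\Phi \equiv \Psi$ on the whole tube domain, and in particular on the purely imaginary axis $z_j = \mathsf{i} t_j$.

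From there the conclusion is immediate: restricting to $z_j = \mathsf{i} t_j$ gives $\mathbb{E}[e^{\mathsf{i}(t_1 X_1 + \cdots + t_m X_m)}] = \mathbb{E}[e^{\mathsf{i}(t_1 Y_1 + \cdots + t_m Y_m)}]$ for all $(t_1,\dots,t_m) \in \mathbb{R}^m$, i.e. the characteristic functions of the two random vectors coincide. By the uniqueness theorem for characteristic functions (e.g. the standard inversion/uniqueness result in any graduate probability text, such as the one the paper already cites from \cite{Billing} or \cite{Kall}), this implies $(X_1,\dots,X_m)$ and $(Y_1,\dots,Y_m)$ have the same distribution.

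An alternative, more elementary route that avoids several-variable complex analysis would be to expand $\mathbb{E}[e^{a_1 X_1 + \cdots + a_m X_m}]$ as a power series in $(a_1,\dots,a_m)$ — valid and absolutely convergent for $(a_1,\dots,a_m)$ in a neighbourhood of $0$ because of the finite-exponential-moment hypothesis — and match coefficients to deduce that all joint moments $\mathbb{E}[X_1^{k_1} \cdots X_m^{k_m}]$ equal $\mathbb{E}[Y_1^{k_1} \cdots Y_m^{k_m}]$. One then needs a multivariate moment-determinacy statement; the cleanest way to get it is to consider, for a fixed unit vector or fixed non-negative coefficients $(c_1,\dots,c_m)$, the single random variable $Z = c_1 X_1 + \cdots + c_m X_m \ge 0$, note it has a finite MGF near $0$, hence is moment-determinate (Hamburger/Stieltjes via the analytic criterion), conclude $c_1 X_1 + \cdots + c_m X_m$ and $c_1 Y_1 + \cdots + c_m Y_m$ are equal in law for every such choice of coefficients, and then invoke the Cramér–Wold device to pass to equality in law of the vectors. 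Either packaging works; I would present the complex-analytic version as the main line since it is shortest.

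The only genuinely delicate point is the justification that $\Phi$ is holomorphic on the tube domain and that $\Phi = \Psi$ there — specifically the application of the identity theorem, since one must be a little careful that agreement on a real cube (rather than on an open complex set) is enough. This is fine: one can argue one variable at a time, holding the other coordinates fixed at real values in $(0,a)$, using that a holomorphic function of one variable vanishing on a real interval vanishes identically, and then bootstrap. I expect this bookkeeping to be the main (mild) obstacle; everything else is routine. I would keep the write-up short by citing the one-variable identity theorem and the characteristic-function uniqueness theorem rather than reproving them.
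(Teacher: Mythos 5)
Your main line of argument — analytic continuation of the MGF one variable at a time to complex arguments, then identity theorem, then restriction to the imaginary axis to obtain equality of characteristic functions — is precisely the paper's proof, including the bootstrap/induction on the number of complexified coordinates with the remaining coordinates held real in $[0,a]$. The proposal is correct and essentially identical to the paper's argument; the moments/Cram\'er--Wold alternative you sketch is a valid different packaging, but since you present the complex-analytic version as primary, there is nothing substantive to flag.
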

\begin{proof} We prove by induction on $n$ that 
\begin{equation}\label{P3Red2}
\mathbb{E}\left[ \prod_{i = 1}^n e^{z_i X_i} \cdot \prod_{i = n+1}^m e^{a_i X_i} \right] = \mathbb{E}\left[ \prod_{i = 1}^n e^{z_i Y_i} \cdot \prod_{i = n+1}^m e^{a_i Y_i}  \right]
\end{equation}
whenever $z_1, \dots, z_n \in D_a := \{z \in \mathbb{C}: |\Real(z)|< a\}$ and $a_{n+1}, \dots, a_m \in [0, a]$. Part of the statement is that both sides of (\ref{P3Red2}) are well-defined and finite.

The base case $n = 0$ holds by assumption and we suppose that we have proved the result for $n = v \leq m-1$. Assume now that $n = v + 1$. We know that 
$$\left| \prod_{i = 1}^n e^{z_i X_i} \cdot \prod_{i = n+1}^m e^{a_i X_i}  \right| = \prod_{i = 1}^n e^{\Real(z_i) X_i} \cdot \prod_{i = n+1}^m e^{a_i X_i} \leq \prod_{i = 1}^n e^{a X_i} \prod_{i = n+1}^m e^{a_i X_i} ,$$
and so both sides of (\ref{P3Red2}) are well-defined and finite. We define for $z \in D_a$ 
$$f_X(z) = \mathbb{E} \left[ \prod_{i = 1}^{n-1} e^{z_i X_i} \cdot e^{z X_n} \cdot \prod_{i = n+1}^m e^{a_i X_i}  \right] \mbox{ and }f_Y(z) = \mathbb{E} \left[ \prod_{i = 1}^{n-1} e^{z_i Y_i} \cdot e^{z Y_n} \cdot \prod_{i = n+1}^m e^{a_i Y_i}  \right].$$
Fix $z_0 \in D_a$, and let $\epsilon \in (0, a)$ be such that $B(z_0,\epsilon) \subset D_a$ and $h_w \rightarrow 0$. Then, we have 
\begin{equation*}
\begin{split}
&\lim_{w \rightarrow \infty} \frac{\prod_{i = 1}^{n-1} e^{z_i X_i} \cdot e^{(z_0 + h_w) X_n} \cdot \prod_{i = n+1}^m e^{a_i X_i} - \prod_{i = 1}^{n-1} e^{z_i X_i} \cdot e^{z_0 X_n} \cdot \prod_{i = n+1}^m e^{a_i X_i}}{h_w} \\
& =  \prod_{i = 1}^{n-1} e^{z_i X_i} \cdot X_n e^{z_0 X_n} \cdot \prod_{i = n+1}^m e^{a_i X_i}.
\end{split}
\end{equation*}

We also have for $|h_w| \leq \epsilon/2$
$$\left| e^{z_0X_n} \cdot \frac{e^{h_w X_n} - 1}{h_w} \right| \leq e^{(a-\epsilon) X_n} \cdot \left|\frac{e^{h_w X_n} - 1}{h_w}  \right| \leq e^{(a-\epsilon) X_n} \cdot \frac{e^{|h_w| X_n} - 1}{|h_w|} \leq  e^{(a-\epsilon) X_n} \cdot X_n  e^{(\epsilon/2) X_n} \leq \frac{2 e^{a X_n} }{\epsilon} ,$$
where in the third inequality we used 
$$\frac{e^{|h_w| X_n} - 1}{|h_w|} = \sum_{r = 1}^{\infty} \frac{|h_w|^{r-1} X_n^r}{r!}  \leq X_n \cdot \sum_{r = 1}^{\infty} \frac{|h_w|^{r-1} X_n^{r-1}}{r!}  \leq  X_n \cdot \sum_{r = 1}^{\infty} \frac{ |h_w|^{r-1} X_n^{r-1}}{(r-1)!} \leq X_n e^{(\epsilon/2)X_n}.$$
The latter implies for $|h_w| \leq \epsilon/2$
\begin{equation*}
\begin{split}
&\left| \frac{\prod_{i = 1}^{n-1} e^{z_i X_i} \cdot e^{(z_0 + h_w) X_n} \cdot \prod_{i = n+1}^m e^{a_i X_i} - \prod_{i = 1}^{n-1} e^{z_i X_i} \cdot e^{z_0 X_n} \cdot \prod_{i = n+1}^m e^{a_i X_i}}{h_w} \right| \leq \frac{2}{\epsilon} \prod_{i = 1}^{m} e^{a X_i}.
\end{split}
\end{equation*}

By the dominated convergence theorem with dominating function $\frac{2}{\epsilon} \prod_{i = 1}^{m} e^{a X_i}$ we conclude that $f_X(z)$ is holomorphic at $z_0$ and has derivative 
$$\mathbb{E}\left[ \prod_{i = 1}^{n-1} e^{z_i X_i} \cdot X_n e^{z_0 X_n} \cdot \prod_{i = n+1}^m e^{a_i X_i}  \right].$$
Analogously, $f_Y(z)$ is holomorphic in $D_a$. Since $f_X(z)$ and $f_Y(z)$ are two holomorphic functions on $D_a$ (which is connected) that agree when $z \in (0,a)$ (by the induction hypothesis) we conclude that they agree on all of $D_a$, see \cite[Corollary 4.9 in Chapter 2]{Stein}. This proves (\ref{P3Red2}) when $n = v+1$ and the general result now follows by induction on $n$.\\

Since the characteristic function uniquely determines the distribution of a random vector, see \cite[Theorem 3.10.5]{Durrett}, it suffices to show that for each $t_1, \dots, t_m \in \mathbb{R}$ we have 
\begin{equation*}
\mathbb{E}\left[ e^{\sum_{i = 1}^m \im t_i X_i} \right] = \mathbb{E}\left[ e^{\sum_{i = 1}^m \im t_i Y_i} \right].
\end{equation*}
The latter is a special case of (\ref{P3Red2}) for $n = m$ and $z_i =  \im t_i$ for $i = 1, \dots, m$.
\end{proof}

\begin{lemma}\label{MomentProblem} Fix $m \in \mathbb{N}$ and let $(X_1, \dots, X_m)$ and $(Y_1, \dots, Y_m)$ be random vectors taking values in $\mathbb{Z}_{\geq 0}^m$. Suppose that there is a sequence $c_n \in [0,\infty)$ such that for all $n_1, \dots, n_m \in \mathbb{Z}_{\geq 0}$ with $n_1 + \cdots + n_m = n$ we have
$$\mathbb{E}\left[ \prod_{i = 1}^m \frac{X_i!}{(X_i-n_i)!} \right] = \mathbb{E}\left[ \prod_{i = 1}^m \frac{Y_i!}{(Y_i-n_i)!} \right] \leq c_n, \mbox{ where } \frac{1}{m!} = 0 \mbox{ for $m<0$ and }\sum_{n = 1}^{\infty} \frac{c_n}{n!} \cdot a^n < \infty $$
 for some $a > 0$. Then, $(X_1, \dots, X_m)$ has the same distribution as $(Y_1, \dots, Y_m)$.
\end{lemma}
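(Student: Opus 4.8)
The plan is to reduce the factorial-moment problem to the moment-generating-function characterization of Lemma \ref{LemmaMGF}, by passing from factorial moments of the integer-valued vectors to exponential moments. The key observation is the classical identity relating falling factorials to the evaluation of the MGF and its derivatives at a point, together with a generating-function manipulation. Concretely, for a single $\mathbb{Z}_{\geq 0}$-valued random variable $Z$ one has the formal identity $\mathbb{E}[s^Z] = \sum_{n \geq 0} \frac{(s-1)^n}{n!} \mathbb{E}\big[\frac{Z!}{(Z-n)!}\big]$, obtained by writing $s^Z = (1+(s-1))^Z = \sum_{n \geq 0} \binom{Z}{n}(s-1)^n$ and taking expectations. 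I would first establish the multivariate version of this: for $s_1, \dots, s_m$ in a suitable range,
\begin{equation*}
\mathbb{E}\left[ \prod_{i=1}^m s_i^{X_i} \right] = \sum_{n_1, \dots, n_m \geq 0} \prod_{i=1}^m \frac{(s_i-1)^{n_i}}{n_i!} \cdot \mathbb{E}\left[ \prod_{i=1}^m \frac{X_i!}{(X_i - n_i)!} \right],
\end{equation*}
with an identical identity for $Y$. The summability hypothesis $\sum_n \frac{c_n}{n!} a^n < \infty$ is exactly what is needed to justify interchanging expectation and summation (via Fubini/Tonelli, bounding $\prod_i \binom{X_i}{n_i}|s_i-1|^{n_i}$ by the non-negative terms indexed by $n = n_1 + \cdots + n_m$ and using $\sum_{n_1 + \cdots + n_m = n} \prod_i \frac{|s_i-1|^{n_i}}{n_i!} = \frac{(|s_1-1| + \cdots + |s_m-1|)^n}{n!}$) whenever $|s_1 - 1| + \cdots + |s_m - 1| < a$.

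Once this identity is in hand, the hypothesis that $X$ and $Y$ have matching falling-factorial moments immediately gives $\mathbb{E}\left[ \prod_{i=1}^m s_i^{X_i} \right] = \mathbb{E}\left[ \prod_{i=1}^m s_i^{Y_i} \right]$ for all real $s_i \geq 1$ with $\sum_i (s_i - 1)$ small enough, say $s_i \in [1, 1 + a/(2m)]$. Substituting $s_i = e^{b_i}$ with $b_i \in [0, \log(1 + a/(2m))]$ (so that $s_i - 1 \leq b_i \cdot e^{a/(2m)}$, keeping us in the region of absolute convergence if we shrink the interval slightly) converts this into $\mathbb{E}\left[ e^{b_1 X_1 + \cdots + b_m X_m} \right] = \mathbb{E}\left[ e^{b_1 Y_1 + \cdots + b_m Y_m} \right]$ for all $b_i$ in a common interval $[0, a']$ with $a' > 0$; finiteness of both sides follows from the convergence of the series. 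Since $X_i, Y_i \geq 0$, Lemma \ref{LemmaMGF} then applies verbatim and yields that $(X_1, \dots, X_m)$ and $(Y_1, \dots, Y_m)$ have the same distribution.

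The main obstacle I anticipate is the bookkeeping around the domain of validity and the interchange of sum and expectation: one must carefully check that the nonnegative-term bound $\mathbb{E}\big[\prod_i \binom{X_i}{n_i}\big] \leq \mathbb{E}\big[\prod_i \frac{X_i!}{(X_i-n_i)!}\big]/\prod_i n_i! \leq c_n / \prod_i n_i!$ combined with $\sum_n \frac{c_n}{n!}a^n < \infty$ really does control $\sum_{n_1, \dots, n_m} \prod_i \frac{|s_i - 1|^{n_i}}{n_i!} \mathbb{E}\big[\prod_i \frac{X_i!}{(X_i-n_i)!}\big]$ when $\sum_i |s_i - 1| < a$, which is where the multinomial identity $\sum_{n_1 + \cdots + n_m = n} \binom{n}{n_1, \dots, n_m} = m^n$ must \emph{not} be used carelessly — instead one uses $\sum_{n_1 + \cdots + n_m = n} \prod_i \frac{t_i^{n_i}}{n_i!} = \frac{(t_1 + \cdots + t_m)^n}{n!}$ directly. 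Everything else is routine: the binomial expansion $s^Z = \sum_n \binom{Z}{n}(s-1)^n$ is a finite sum for each fixed $\omega$ since $Z(\omega) \in \mathbb{Z}_{\geq 0}$, so there is no convergence issue pointwise, only after taking expectations. A small additional point is ensuring the MGF-finiteness clause of Lemma \ref{LemmaMGF}: this is automatic because the right-hand side of the displayed series identity is a convergent series of nonnegative reals.
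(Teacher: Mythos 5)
Your proposal is correct and follows essentially the same route as the paper: expand $\prod_i s_i^{X_i}$ via the binomial theorem to express the PGF as a factorial-moment generating series, use Tonelli with the multinomial identity $\sum_{n_1+\cdots+n_m=n}\prod_i t_i^{n_i}/n_i! = (t_1+\cdots+t_m)^n/n!$ and the hypothesis $\sum_n c_n a^n/n! < \infty$ to justify the interchange and conclude convergence, then substitute $s_i = e^{b_i}$ to pass to the MGF and invoke Lemma~\ref{LemmaMGF}. The only cosmetic difference is that the paper writes $s_i = 1 + a_i$ with $a_i \in [0, a/m]$ and then sets $c_i = \log(1+a_i)$, whereas you parametrize directly by $b_i$ — your aside about $s_i - 1 \leq b_i e^{a/(2m)}$ is unnecessary since $b_i \in [0,\log(1+a/(2m))]$ already forces $s_i - 1 \in [0, a/(2m)]$, but this does not affect correctness.
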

\begin{proof} Let $a_1, \dots, a_m \geq 0$. Using the Binomial theorem and Tonelli's theorem we get
\begin{equation} \label{P3Red3}
\mathbb{E} \left[ \prod_{i = 1}^m(1 + a_i)^{X_i} \right] = 1 + \sum_{n = 1}^{\infty} \frac{1}{n!} \sum_{n_1 + \cdots +n_m = n} \binom{n}{n_1, \dots, n_m} \prod_{j = 1}^m a_j^{n_j}\mathbb{E}\left[ \prod_{i = 1}^m \frac{X_i!}{(X_i-n_i)!} \right],
\end{equation}
where the latter series is allowed to be $+\infty$. By our assumption on the factorial moments we have 
\begin{equation*}
\begin{split}
& 1 + \sum_{n = 1}^{\infty} \frac{1}{n!} \sum_{n_1 + \cdots +n_m = n} \binom{n}{n_1, \dots, n_m} \prod_{j = 1}^m a_j^{n_j}\mathbb{E}\left[ \prod_{i = 1}^m \frac{X_i!}{(X_i-n_i)!} \right] \leq \\
&1 + \sum_{n = 1}^{\infty} \frac{c_n}{n!} \sum_{n_1 + \cdots +n_m = n} \binom{n}{n_1, \dots, n_m} \prod_{j = 1}^m a_j^{n_j} = 1 + \sum_{n = 1}^{\infty} \frac{c_n (a_1 + \cdots + a_m)^n}{n!}, 
\end{split}
\end{equation*}
and so we conclude that the series in (\ref{P3Red3}) is finite if $a_i \in [0, a/m]$. Arguing analogously for $(Y_1, \dots, Y_k)$, we get
$$ \mathbb{E} \left[ \prod_{i = 1}^m(1 + a_i)^{X_i} \right]  = \mathbb{E} \left[ \prod_{i = 1}^m(1 + a_i)^{Y_i} \right] < \infty \iff \mathbb{E} \left[ \prod_{i = 1}^me^{c_i X_i} \right]  = \mathbb{E} \left[ \prod_{i = 1}^me^{c_i Y_i} \right] < \infty,$$
provided that $a_i \in [0, a/m]$ or $c_i = \log (1 + a_i) \in [0, \log (1+a/m)]$ for $i = 1, \dots, m$. From Lemma \ref{LemmaMGF} we conclude that $(X_1, \dots, X_m)$ and $(Y_1, \dots, Y_m)$ have the same distribution.
\end{proof}

\begin{lemma}\label{P1L1} Let $\mathcal{I}$ be a semi-ring and $A_1, \dots, A_n \in \mathcal{I}$. We can find finitely many $B_1, \dots, B_m \in \mathcal{I}$ such that $B_i \cap B_j = \emptyset$ when $i \neq j$, $\cup_{i = 1}^n A_i = \cup_{j = 1}^m  B_j$ and $A_i = \cup_{j \in D_i} B_j $ where $D_i = \{j : B_j \subseteq A_i\}$.
\end{lemma}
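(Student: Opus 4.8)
\textbf{Proof proposal for Lemma \ref{P1L1}.}

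The plan is to argue by induction on $n$, the number of sets in $\mathcal{I}$ we are simultaneously dissecting. The base case $n = 1$ is trivial: take $m = 1$ and $B_1 = A_1$, with $D_1 = \{1\}$. For the inductive step, suppose the statement holds for $n-1$ sets, and we are given $A_1, \dots, A_n \in \mathcal{I}$. First I would apply the inductive hypothesis to $A_1, \dots, A_{n-1}$ to obtain pairwise disjoint $B_1, \dots, B_k \in \mathcal{I}$ with $\cup_{i=1}^{n-1} A_i = \cup_{j=1}^k B_j$ and $A_i = \cup_{j \in D_i} B_j$ for $i = 1, \dots, n-1$, where $D_i = \{j: B_j \subseteq A_i\}$. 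The task is then to incorporate $A_n$ into this dissection: I need to replace the $B_j$'s (and add new pieces) so that $A_n$ also becomes a union of cells and the cells stay pairwise disjoint and inside $\mathcal{I}$.

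The key step uses the two semi-ring axioms. For each $j \in \{1, \dots, k\}$, consider the pair $B_j$ and $A_n$. By the first axiom, $B_j \cap A_n \in \mathcal{I}$; call it $C_j$. By the second axiom, the proper difference $B_j \setminus A_n = B_j \setminus C_j$ is a finite disjoint union of sets in $\mathcal{I}$, say $B_j \setminus A_n = \bigsqcup_{\ell} E_{j,\ell}$ with $E_{j,\ell} \in \mathcal{I}$; note the $C_j$ together with the $E_{j,\ell}$ partition $B_j$ into cells of $\mathcal{I}$. I also need to handle the part of $A_n$ not covered by $\cup_j B_j = \cup_{i=1}^{n-1} A_i$: set $A_n' = A_n \setminus \cup_{j=1}^k B_j$. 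A short sub-argument (again iterating the two semi-ring axioms, subtracting the $B_j$ one at a time) shows $A_n'$ is a finite disjoint union of sets in $\mathcal{I}$, say $A_n' = \bigsqcup_p F_p$ with $F_p \in \mathcal{I}$; one must check these $F_p$ are disjoint from every $B_j$ and from every $C_j, E_{j,\ell}$, which holds because $A_n' \cap B_j = \emptyset$ by construction. The new family of cells is $\{C_j\}_j \cup \{E_{j,\ell}\}_{j,\ell} \cup \{F_p\}_p$; these are pairwise disjoint (within each $B_j$ they partition it, distinct $B_j$ were already disjoint, and the $F_p$ lie outside all $B_j$), all in $\mathcal{I}$, and their union is $(\cup_j B_j) \cup A_n' = \cup_{i=1}^n A_i$.

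It remains to verify that each $A_i$ is a union of new cells. For $i = n$: $A_n = (A_n \cap \cup_j B_j) \sqcup A_n' = \bigsqcup_j C_j \sqcup \bigsqcup_p F_p$, exactly a union of new cells. For $i < n$: we had $A_i = \bigsqcup_{j \in D_i} B_j$, and since each $B_j = C_j \sqcup \bigsqcup_\ell E_{j,\ell}$, we get $A_i = \bigsqcup_{j \in D_i}\left( C_j \sqcup \bigsqcup_\ell E_{j,\ell} \right)$, again a union of new cells. Finally I would confirm the indexing condition that $A_i$ equals the union of \emph{exactly} those new cells contained in it: a new cell $C_j, E_{j,\ell}$ or $F_p$ is contained in $A_i$ ($i < n$) if and only if $B_j \subseteq A_i$ (using disjointness of the original cells to rule out accidental containments, and that $F_p \cap A_i \subseteq F_p \cap \cup_{i'<n}A_{i'} = \emptyset$), and contained in $A_n$ if and only if it is one of the $C_j$ or $F_p$; so redefining $D_i$ relative to the new cell list is consistent. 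I expect the main obstacle to be purely bookkeeping: correctly showing $A_n' = A_n \setminus \cup_j B_j$ is a finite disjoint union of $\mathcal{I}$-sets (an iterated application of the proper-difference axiom, where one must be careful that $A_n \setminus B_1 \setminus \cdots \setminus B_j$ remains a finite disjoint union at each stage and that the axiom applies to \emph{proper} differences — handling the edge case where a difference is empty or not proper), and then tracking the disjointness and the containment relations through the refinement without sign errors. None of this is deep, but it needs to be written carefully.
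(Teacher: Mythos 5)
Your proposal is correct and follows essentially the same route as the paper: induction on $n$, refining the cells from the first $n-1$ sets by intersecting each with $A_n$ and decomposing $B_j \setminus A_n$ and $A_n \setminus \cup_j B_j$ via the proper-difference axiom, then collecting all the pieces. You are in fact slightly more explicit than the paper in verifying the indexing condition $A_i = \cup_{j \in D_i} B_j$ (which is automatic once $A_i$ is shown to be a union of pairwise disjoint cells of the partition, since any cell $\subseteq A_i$ must then coincide with one of the cells in that union), but the underlying argument is the same.
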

\begin{proof} We proceed by induction on $n$. When $n = 1$, we can just let $B_1 = A_1$. Suppose we have proved the lemma when $n = k$ and that $n = k +1$. By induction hypothesis we can find $B_1, \dots, B_a$ such that $B_i \cap B_j = \emptyset$ when $i \neq j$ and each $A_i$ is the union of finitely many $B_j$'s for $i = 1,\dots, k$. Note that since $\mathcal{I}$ is a semi-ring, $A_{k+1} \setminus B_1 = A_{k+1} \setminus (B_1 \cap A_{k+1})$ is a finite disjoint union of sets in $\mathcal{I}$. Iterating this, we see that $A_{k+1} \setminus (B_1 \cup \cdots \cup B_a) = \sqcup_{c_{a+1} = 1}^{C_{a+1}} I_{a+1, c_{a+1}}$ where $I_{a+1, c_{a+1}} \in \mathcal{I}$. We also have $B_i \setminus A_{k+1} = \sqcup_{c_i = 1}^{C_i} I_{i,c_i}$ for $i = 1, \dots, a$ where $I_{i,c_i} \in \mathcal{I}$. We now consider the sets $\{I_{i,c_i}\}$ for $i = 1,\dots a+1$ and $c_i = 0,\dots, C_i$, where $I_{i,0} = B_i \cap A_{k+1}$ for $i = 1, \dots, a$ and $I_{a+1, 0} = \emptyset$. One readily verifies that these sets are pairwise disjoint sets in $\mathcal{I}$ and that $B_i = \sqcup_{c_i = 0}^{C_i} I_{i,c_i}$ for $i = 1, \dots, a$ and $A_{k+1} = \sqcup_{c_{a+1} = 1}^{C_{a+1}} I_{a+1, c_{a+1}} \sqcup \sqcup_{i = 1}^a I_{i, 0}$. Consequently, the family $\{I_{i,c_i}\}$ satisfies the conditions of the lemma for $A_1, \dots, A_{k+1}$. The general result now follows by induction.
\end{proof}

\begin{lemma}\label{DSRLemma}  Suppose that $\mathcal{I}$ is a dissecting semi-ring in $(E, \mathcal{E})$. Then, for each $n \in \mathbb{N}$ the family $\mathcal{I}_n = \{A_1 \times \cdots \times A_n :A_i \in \mathcal{I}\}$ is a dissecting semi-ring in $(E^n, \mathcal{E}^{\otimes n})$. In addition, every $A \in \mathcal{I}_n$ can be written as a finite disjoint union of sets of the form $B_1 \times \cdots \times B_n \in \mathcal{I}_n$ such that $B_i = B_j$ or $B_i \cap B_j = \emptyset$ for each $1 \leq i\leq j \leq n$.
\end{lemma}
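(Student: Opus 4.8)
The plan is to establish Lemma \ref{DSRLemma} in two parts. First I would show that $\mathcal{I}_n$ is a dissecting semi-ring in $(E^n, \mathcal{E}^{\otimes n})$ by verifying the four defining properties directly. Second I would upgrade an arbitrary member of $\mathcal{I}_n$ to a finite disjoint union of ``rectangles with comparable or disjoint sides,'' using Lemma \ref{P1L1} applied to the one-dimensional factors.

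For the first part, I would proceed property by property. Closure under finite intersections is immediate: $(A_1 \times \cdots \times A_n) \cap (A_1' \times \cdots \times A_n') = (A_1 \cap A_1') \times \cdots \times (A_n \cap A_n')$, and each $A_i \cap A_i' \in \mathcal{I}$ since $\mathcal{I}$ is a semi-ring. For the proper-difference property, I would write $(A_1 \times \cdots \times A_n) \setminus (A_1' \times \cdots \times A_n')$ using the standard telescoping identity
\begin{equation*}
(A_1 \times \cdots \times A_n) \setminus (A_1' \times \cdots \times A_n') = \bigsqcup_{k=1}^n \left( (A_1 \cap A_1') \times \cdots \times (A_{k-1} \cap A_{k-1}') \times (A_k \setminus A_k') \times A_{k+1} \times \cdots \times A_n \right),
\end{equation*}
and then expand each $A_k \setminus A_k'$ as a finite disjoint union of sets in $\mathcal{I}$ (using that $\mathcal{I}$ is a semi-ring) and each $A_i \cap A_i'$ as a single set in $\mathcal{I}$; distributing the unions over the product shows the whole thing is a finite disjoint union of members of $\mathcal{I}_n$. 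For the dissecting property, I would note that every bounded Borel set in $E^n$ is contained in a product $B \times \cdots \times B$ with $B$ bounded Borel in $E$, hence covered by finitely many products of sets from $\mathcal{I}$ (using that $\mathcal{I}$ covers each bounded Borel set in $E$ by finitely many of its members, and taking the product cover). Finally, every open $G \subseteq E^n$ is a countable union of open boxes $U_1 \times \cdots \times U_n$ with $U_i$ open in $E$; each $U_i$ is a countable union of sets in $\mathcal{I}$, and distributing gives a countable union of members of $\mathcal{I}_n$ (a countable union of products of countable families is still a countable union of products). This establishes that $\mathcal{I}_n$ is a dissecting semi-ring.

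For the second part, given $A = A_1 \times \cdots \times A_n \in \mathcal{I}_n$, I would apply Lemma \ref{P1L1} to the finite collection $A_1, \dots, A_n \in \mathcal{I}$ (discarding repetitions does no harm) to obtain pairwise disjoint $B_1, \dots, B_m \in \mathcal{I}$ with each $A_i = \bigcup_{j \in D_i} B_j$ where $D_i = \{j : B_j \subseteq A_i\}$. Then
\begin{equation*}
A = A_1 \times \cdots \times A_n = \bigsqcup_{(j_1, \dots, j_n) \in D_1 \times \cdots \times D_n} B_{j_1} \times \cdots \times B_{j_n},
\end{equation*}
and this is a disjoint union because the $B_j$ are pairwise disjoint, so distinct index tuples give products disjoint in at least one coordinate. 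Each factor $B_{j_k}$ lies in $\mathcal{I}$, so each summand is in $\mathcal{I}_n$; and for any two coordinates $k, k'$, the factors $B_{j_k}$ and $B_{j_{k'}}$ are either equal (if $j_k = j_{k'}$) or disjoint (if $j_k \neq j_{k'}$, by pairwise disjointness of the $B_j$). This is exactly the required decomposition.

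I do not anticipate a serious obstacle here; the only point requiring mild care is the bookkeeping in the proper-difference identity and making sure the distributions of disjoint unions over products are genuinely disjoint, which is the standard measure-theoretic argument. The appeal to Lemma \ref{P1L1} is what makes the comparable-or-disjoint refinement clean, so the main thing is to set it up correctly (applying it to the sides, not the products).
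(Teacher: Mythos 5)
Your proof is correct, and the core decomposition (the second assertion of the lemma) is handled exactly as in the paper: apply Lemma \ref{P1L1} to the sides $A_1, \dots, A_n$ to get pairwise disjoint refinements $B_1,\dots,B_m \in \mathcal{I}$, then distribute over the product. The one difference is in the first assertion: the paper simply cites \cite[Lemma 1.9]{Kall} for the fact that $\mathcal{I}_n$ is a dissecting semi-ring, whereas you verify the four defining properties directly. Your verification is sound — the telescoping identity for $(A_1\times\cdots\times A_n)\setminus(A_1'\times\cdots\times A_n')$ is correct and genuinely disjoint (by minimality of the first index where membership in $A_k'$ fails), the reduction $A_k\setminus A_k' = A_k\setminus(A_k\cap A_k')$ correctly turns it into a proper difference within $\mathcal{I}$, and the covering and open-set arguments both go through since $E=\mathbb{R}^k$ so open boxes factor as products. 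Your version is more self-contained at the cost of a few extra lines; both are fine.
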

\begin{proof} The fact that $\mathcal{I}_n$ is dissecting follows from \cite[Lemma 1.9]{Kall}.  Let $A = A_1 \times \cdots \times A_n \in \mathcal{I}_n$. From Lemma \ref{P1L1} we can find finitely many $I_1, \dots, I_m \in \mathcal{I}$ that are pairwise disjoint and $A_i = \sqcup_{j \in D_i} I_j$ for $i = 1, \dots, n$, where $D_i = \{j: I_j \subseteq A_i\}$. We now consider the sets $\{I_{a_1} \times \cdots \times I_{a_n}: a_i \in D_i \mbox{ for } i = 1, \dots, n\}$. The latter sets are clearly pairwise disjoint elements of $\mathcal{I}_n$ and also their union is precisely $A$. This suffices for the proof.
\end{proof}

\begin{lemma}\label{VaguePairLim} Suppose that $\lambda_n$ is a sequence in $S$ that converges vaguely to $\lambda$. Suppose that $B \subset E$ is a bounded Borel set with $\lambda(\partial B) = 0$ and $f$ is a bounded continuous function on $B$. Then,
\begin{equation}\label{UI1}
\lim_{n \rightarrow \infty} \int_{B} f(x) \lambda_n(dx) = \int_{B} f(x) \lambda(dx).
\end{equation}   
\end{lemma}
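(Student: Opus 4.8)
\textbf{Proof plan for Lemma \ref{VaguePairLim}.} The statement is the standard fact that vague convergence of (locally finite) measures can be "upgraded" from integration of compactly supported continuous test functions to integration against bounded continuous functions over a bounded set whose boundary is null for the limit measure. The plan is to reduce \eqref{UI1} to the definition of vague convergence by a two-step approximation: first handle the indicator $\mathbf{1}_B$ (equivalently, the case $f\equiv 1$) to show $\lambda_n(B)\to\lambda(B)$, and then handle a general bounded continuous $f$ on $B$ by sandwiching between compactly supported continuous functions that agree with $f$ on a slightly shrunk/enlarged neighborhood of $B$.

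First I would fix a bounded open set $U\supseteq \overline{B}$ (possible since $B$ is bounded) and recall that by \cite[Lemma 4.1]{Kall} (or the portmanteau-type characterization of vague convergence in \cite[Chapter 4]{Kall}) vague convergence $\lambda_n\xrightarrow{v}\lambda$ implies $\limsup_n \lambda_n(K)\le \lambda(K)$ for every compact $K$ and $\liminf_n\lambda_n(G)\ge\lambda(G)$ for every open $G$ with $\overline{G}$ compact. Applying this with $K=\overline{B}$ and $G=B^{\circ}$ (the interior), together with the hypothesis $\lambda(\partial B)=0$ which forces $\lambda(\overline B)=\lambda(B^{\circ})=\lambda(B)$, yields $\lambda_n(B)\to\lambda(B)$; in particular $\sup_n\lambda_n(B)<\infty$. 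This boundedness is what makes the approximation in the second step uniform.

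Next, for a general bounded continuous $f$ on $B$ with $\|f\|_\infty=:C$, I would approximate. Given $\varepsilon>0$, pick a continuous compactly supported $g:E\to\mathbb R$ with $g\equiv 1$ on $\overline B$, $0\le g\le 1$, and support inside $U$ (Urysohn), and extend $f$ to a bounded continuous function $\tilde f$ on $U$ (Tietze); then $h:=\tilde f\cdot g$ is continuous, compactly supported, agrees with $f$ on $B$, and $\|h\|_\infty\le C$. Now write
\begin{equation*}
\left|\int_B f\,d\lambda_n-\int_B f\,d\lambda\right|
\le \left|\int h\,d\lambda_n-\int h\,d\lambda\right|
+\left|\int_{U\setminus B} h\,d\lambda_n\right|
+\left|\int_{U\setminus B} h\,d\lambda\right|.
\end{equation*}
The first term tends to $0$ by the definition of vague convergence. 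For the last two, bound $|h|\le C$ and estimate $\lambda_n(U\setminus B)$ and $\lambda(U\setminus B)$. Here I would exploit $\lambda(\partial B)=0$ again: choose $U$ shrinking to $\overline B$ along a sequence $U_k$ with $\lambda(U_k\setminus B)\to 0$ (possible because $\bigcap_k U_k$ can be arranged to be $\overline B$ and $\lambda(\overline B\setminus B)=\lambda(\partial B)=0$ by regularity of the locally finite measure $\lambda$ on the bounded set), and use $\limsup_n\lambda_n(U_k\setminus B)\le \limsup_n[\lambda_n(\overline{U_k})-\lambda_n(B)]\le \lambda(\overline{U_k})-\lambda(B)$, which can be made $<\varepsilon$ for $k$ large by the first step and outer regularity of $\lambda$. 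Combining, $\limsup_n|\int_B f\,d\lambda_n-\int_B f\,d\lambda|\le 2C\varepsilon$, and since $\varepsilon>0$ is arbitrary we get \eqref{UI1}.

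The main obstacle is purely the bookkeeping of the "mass near the boundary" terms: one must make sure that the compactly supported test function $h$ used in the genuine vague-convergence step does not leak an uncontrolled amount of mass onto the collar $U\setminus B$ under the $\lambda_n$'s, and this is exactly where the hypothesis $\lambda(\partial B)=0$ is used (twice: once to identify $\lambda(B)$ with $\lambda(\overline B)$, and once to squeeze the collars). Everything else is a routine application of Urysohn/Tietze and the portmanteau characterization of vague convergence from \cite[Chapter 4]{Kall}; I would present it crisply rather than belaboring these standard facts.
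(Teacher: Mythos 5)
Your overall approach---approximate by compactly supported continuous test functions and then control the ``collar'' terms---is genuinely different from the paper's, which normalizes the restricted measures $\lambda_n|_B$, $\lambda|_B$ to probability measures $\mu_n,\mu$ on the metric space $B$, deduces $\mu_n\Rightarrow\mu$ by matching \cite[Lemma 4.1 (iv)]{Kall} against the Portmanteau theorem \cite[Theorem 2.1]{Billing}, and then reads off (\ref{UI1}) as the defining property of weak convergence. Your first step ($\lambda_n(B)\to\lambda(B)$) is fine. The gap is in the second step, at the Tietze extension: you extend $f$ from $B$ to a bounded continuous $\tilde f$ on $U$, but the Tietze extension theorem requires the domain to be \emph{closed}, while here $B$ is only a bounded Borel set. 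A bounded continuous function on $B$ need not admit any continuous extension to $\overline B$ (e.g.\ $B=(0,1)\subset\mathbb R$ with $f(x)=\sin(1/x)$), so $h=\tilde f\cdot g$ is in general unavailable and the decomposition that feeds into the vague-convergence limit breaks down.

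The damage is repairable along your lines: use inner regularity together with $\lambda(\partial B)=0$ (hence $\lambda(B)=\lambda(B^\circ)$) to choose a compact $\lambda$-continuity set $K\subset B^\circ$ with $\lambda(B\setminus K)$ small, Tietze-extend $f|_K$ from the \emph{closed} set $K$, take the Urysohn bump $g$ supported in a $\lambda$-continuity neighborhood $\overline{K_\rho}\subset B^\circ$, and run your triangle inequality against $K$ rather than $B$, using $\lambda_n(K)\to\lambda(K)$ and $\lambda_n(B)\to\lambda(B)$ to bound the residual terms. That works, but the bookkeeping is noticeably heavier than the paper's route, which never has to extend $f$ at all because weak convergence of probability measures on the metric space $B$ is by definition tested against bounded continuous functions on $B$. (In the paper's actual applications in Propositions \ref{PropWC0} and \ref{PropWC1}, $f$ happens to be the restriction to $B$ of a function continuous on all of $E^n$, so the Tietze step would be vacuous there; but the lemma as stated is more general and your proof should cover that generality.)
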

\begin{proof} If $\lambda(B) = 0$, then we have from \cite[Lemma 4.1]{Kall}
$$\limsup_{n \rightarrow \infty} \left|\int_{B} f(x) \lambda_n(dx) \right| \leq \sup_{x \in B} |f(x)| \cdot \limsup_{n \rightarrow \infty} \lambda_n(B) = \sup_{x \in B} |f(x)| \cdot \lambda(B) = 0 = \int_{B} f(x) \lambda(dx).$$
We may thus assume that $A = \lambda(B) \in (0,\infty)$. 

Setting $A_n = \lambda_n(B)$ we have that $A_n \rightarrow A$ and by possibly passing to a subsequence we may assume that $A_n > 0$ for all $n \in \mathbb{N}$. Setting 
$$\mu_n (F)  = A_n^{-1} \cdot \lambda_n(F) \mbox{ and } \mu(F) = A^{-1} \cdot \lambda(F)$$
for Borel $F \subseteq B$, we see that $\mu_n$ is a sequence of probability measures that weakly converge to $\mu$. The quickest way to see the latter is to compare \cite[Lemma 4.1 (iv)]{Kall} with the Portmanteau theorem \cite[Theorem 2.1]{Billing}. From the weak convergence we conclude
$$\lim_{n \rightarrow \infty} \int_{B} f(x) \mu_n(dx) = \int_{B} f(x) \mu(dx),$$
which implies (\ref{UI1}).
\end{proof}

%
%
\subsection{Proof of Lemma \ref{FormL}}\label{AppendixA2} Let $A_1 = [-1,1]^k$, $I_1 = [0,1]$ and define inductively for $n \geq 2$ the sets $A_n = [-n,n]^k \setminus A_{n-1}$ and $I_n = [0,n] \setminus I_{n-1}$. In particular, we see that $E = \mathbb{R}^k = \sqcup_{n \geq 1} A_n$ and $[0,\infty) = \sqcup_{n \geq 1} I_n$. From the Borel isomorphism theorem, see \cite[Theorem 3.3.13]{Srivastava}, we can find a bijection $\phi_n: A_n \rightarrow I_n$ such that $\phi_n$ and $\phi_n^{-1}$ are both measurable (with respect to the $\sigma$-algebras of Borel subsets of $A_n$ and $I_n$). The latter implies that the map
$\phi: \overline{E} \rightarrow [0, \infty]$, defined by 
$$\phi(x) = \begin{cases} \phi_n(x) &\mbox{ if } x \in A_n \\ \infty &\mbox{ if } x = \partial, \end{cases}$$
defines a bijection between $\overline{E}$ and $[0, \infty]$, such that $\phi$ and $\phi^{-1}$ are both measurable. Note that $\phi(E) = [0, \infty)$ and $\phi([-n,n]^k) = [0,n]$ for all $n \geq 1$.

Let $M'$ be the push-forward measure of $M$ under $\phi$. I.e. for each Borel $A \subset [0, \infty)$ and $\omega \in \Omega$
$$M'(\omega, A) = M (\omega, \phi^{-1}(A)).$$
Since $M$ is a point process, we conclude the same for $M'$. From \cite[Chapter 6, Proposition 1.11]{Cinlar} and its proof we see that for $A \subset [0, \infty)$ we have
$$M'(\omega,A) = \sum_{n \geq 1} {\bf 1}\{T_n(\omega) \in A \},$$
where the (extended) random variables $T_n$ are given by 
$$T_n(\omega) = \inf\{ t \geq 0: M'(\omega, [0,t]) \geq n \},$$
with the usual convention that $\inf \emptyset = \infty$. The random elements $X_n(\omega):= \phi^{-1}(T_n(\omega))$ now clearly satisfy the conditions of the lemma.  

%
%
\subsection{Proof of Proposition \ref{PPConv}}\label{AppendixA3} For clarity we split the proof into four steps.\\

{\bf \raggedleft Step 1.} In this step we construct candidates for $M^{\infty}$ and $T^{\infty}$ as in the statement of the proposition. We will establish that they have the required properties in the next steps. 

Let $F$ be a bounded set in $E$. Then, we can find $B \in I_{T}$ such that $F \subseteq B$. From Chebyshev's inequality we conclude 
$$\lim_{r \rightarrow \infty} \sup_{N \geq 1} \mathbb{P}\left(M^N(F) \geq r \right) 
 \leq \lim_{r \rightarrow \infty} \sup_{N \geq 1} \mathbb{P}\left(M^N(B) \geq r \right) \leq \lim_{r \rightarrow \infty} \limsup_{N \rightarrow \infty} r^{-1} \cdot \mathbb{E}\left[M^N(B)  \right]= 0,$$
where in the last equality we used that $\limsup_{N \rightarrow \infty} \mathbb{E}[M^N(B)] < \infty$ as follows from (\ref{RM11}). From \cite[Theorem 4.10]{Kall} we conclude that $\{ M^N \}_{N \geq 1}$ is a tight sequence in $S$.

Let $M'$ be any subsequential limit of $M^N$, and let $\{a_w\}_{w \geq 1}$ be a sequence such that $M^{a_w} \Rightarrow M'$. By Skorohod's Representation Theorem, see \cite[Theorem 6.7]{Billing} we may assume that $M^{a_w}$ and $M'$ are all defined on the same probability space $(\Omega, \mathcal{F}, \mathbb{P})$ and for each $\omega \in \Omega$ we have $M^{a_w}(\omega) \xrightarrow{v} M'(\omega)$. We recall that $\xrightarrow{v}$ denotes convergence in the vague topology, and that the application of \cite[Theorem 6.7]{Billing} is allowed since $S$ is a Polish space. The random measure $M'$ is our candidate for $M^{\infty}$.\\

If $F \subset E$ is a bounded Borel set, $B \in \mathcal{I}_T$ is such that $F \subseteq B^{\circ}$ and $\mu'$ denotes the mean measure of $M'$ we have from \cite[Lemma 4.1]{Kall} and Fatou's lemma
$$\mu'(F) = \mathbb{E}[M'(F)] \leq  \mathbb{E}[M'(B^{\circ})] \leq \mathbb{E}\left[\liminf_{w \rightarrow \infty} M^{a_w}(B)\right] \leq \liminf_{w \rightarrow \infty}\mathbb{E}[M^{a_w}(B)] = C(1;B), $$
where in the last equality we used (\ref{RM11}). We conclude that $\mu'$ is locally finite.

For $i = 1, \dots, k$ we let $\pi_i: E \rightarrow \mathbb{R}$ be such that
$$\pi_i(x) = x_i \mbox{ for } x = (x_1, \dots, x_k) \in E,$$
and also
$$H_i^n = \left\{y \in \mathbb{R}: \mu'\left(\pi_i^{-1}(y) \cap [-n, n ]^k\right) > 1/n \right\}.$$
Since $\mu'([-n,n]^k)$ is finite for each $n$, we conclude that $H_i^n$ is finite for each $i = 1, \dots, k$ and $n \in \mathbb{N}$. We define 
$$T' = T \cup \cup_{i = 1}^k \cup_{n \geq 1} H_i^n,$$
which is a countable subset of $\mathbb{R}$. This is our candidate for $T^{\infty}$. We mention that if $t \not \in T'$, we have for each $n \in \mathbb{N}$ and $i = 1, \dots, k$ 
$$\mu'\left(\pi_i^{-1}(t) \cap [-n, n ]^k\right) \leq 1/n,$$
which implies that
\begin{equation}\label{AP0}
\mu'\left( \pi_i^{-1}(t) \right) = 0.
\end{equation}

{\bf \raggedleft Step 2.} Let $A_1, \dots, A_m \in \mathcal{I}_{T'}$ be pairwise disjoint, where $T'$ is as in Step 1. In this step we show that for each $n_1, \dots, n_m \in \mathbb{N}$
\begin{equation}\label{AP1}
\mathbb{E} \left[ \prod_{i = 1}^m \prod_{j = 1}^{n_i} (M'(A_i) - j + 1) \right] = C(n_1, \dots, n_m; A_1, \dots, A_m).
\end{equation}

From (\ref{RM11}) we have
\begin{equation}\label{AP2}
\lim_{w \rightarrow \infty} \mathbb{E} \left[ \prod_{i = 1}^m \prod_{j = 1}^{n_i} (M^{a_w}(A_i) - j + 1) \right] = C(n_1, \dots, n_m; A_1, \dots, A_m).
\end{equation}
Let $A = [c_1, d_1) \times \cdots [c_k, d_k) \in \mathcal{I}_{T'}$ so that $c_1, \dots, c_k, d_1, \dots, d_k \not \in T'$. From (\ref{AP0}) we conclude that 
\begin{equation}\label{AP3}
\mu'\left(\partial A \right) \leq \mu' \left( \cup_{i = 1}^k  \pi_i^{-1}(c_i) \cup  \cup_{i = 1}^k \pi_i^{-1}(d_i)  \right) = 0.
\end{equation}
The latter shows that 
\begin{equation}\label{AP31}
M'(\partial A_i) = 0 \mbox{ a.s. for $i = 1,  \dots, k $},
\end{equation}
which by \cite[Lemma 4.1]{Kall} and the fact that $M^{a_w} \xrightarrow{v} M'$ for each $\omega \in \Omega$ implies
\begin{equation}\label{AP32}
\lim_{w \rightarrow \infty} M^{a_w}(A_i) = M'(A_i) \mbox{ a.s. for $i = 1,  \dots, k $}.
\end{equation}
We conclude that we have the almost sure convergence
\begin{equation}\label{AP3}
\lim_{w \rightarrow \infty}  \prod_{i = 1}^m \prod_{j = 1}^{n_i} (M^{a_w}(A_i) - j + 1)  = \prod_{i = 1}^m \prod_{j = 1}^{n_i} (M'(A_i) - j + 1) .
\end{equation}
Equations (\ref{AP2}) and (\ref{AP3}) would imply (\ref{AP1}) if we show that the sequence $\prod_{i = 1}^m \prod_{j = 1}^{n_i} (M^{a_w}(A_i) - j + 1) $ is uniformly integrable, cf. \cite[Theorem 3.5]{Billing}.

Using the trivial inequalities 
$$0 \leq \prod_{i = 1}^m \prod_{j = 1}^{n_i} (M^{a_w}(A_i) - j + 1)  \leq \prod_{i = 1}^m (M^{a_w}(A_i))^{n_i} \leq \sum_{i = 1}^m (M^{a_w}(A_i))^{h},$$
where $h = n_1 + \cdots +n_m$, we see that it suffices to prove for each $i = 1, \dots, k$ that 
\begin{equation}\label{AP4}
\limsup_{w \rightarrow \infty} \mathbb{E} \left[ (M^{a_w}(A_i))^{2h} \right] < \infty,
\end{equation}
see \cite[(3.18)]{Billing}. Consider now the polynomials $P_0(x) = 1$ and for $r \geq 1$
$$P_r(x) = x (x-1) \cdots (x-r+1).$$
By opening the brackets we have for each $r$ that 
$$P_r(x) = x^r + \sum_{i = 0}^{r-1} \alpha_{i,r} x^i.$$
The latter shows that if $\vec{u}_r = [P_0(x), \dots,  P_r(x)]^T$ and $\vec{v}_r = [1, x,  \dots,  x^r]^T$, then 
$$\vec{u}_r = L_r \vec{v}_r,$$
where $L_r$ is an upper-triangular matrix with all entries on the diagonal being equal to $1$. This means that $L_r^{-1}$ is also upper-triangular and has all $1$'s on the diagonal. In particular, $x^r$ is a finite linear combination of $P_0, \dots, P_r$ for all $r \in \mathbb{Z}_{\geq 0}$. From (\ref{AP2}) we know that $\lim_{w \rightarrow \infty} \mathbb{E}[P_r(M^{a_w}(A_i))]$ exists and is finite for all $r$ and $i = 1, \dots, k$, and so we conclude by linearity that the same is true for $\lim_{w \rightarrow \infty} \mathbb{E}[(M^{a_w}(A_i))^r]$, which clearly implies (\ref{AP4}). \\

{\bf \raggedleft Step 3.} From \cite[Lemma 1.9]{Kall} we can find a countable subclass $\tilde{\mathcal{I}} \subset \mathcal{I}_{T'}$ such that $\tilde{\mathcal{I}}$ is also a dissecting semi-ring. From (\ref{AP31}) and (\ref{AP32}) we can find an event $U \in \mathcal{F}$ such that $\mathbb{P}(U) = 1$ and for all $\omega \in U$ and $A \in \tilde{\mathcal{I}}$ we have 
\begin{equation}\label{AP5}
\lim_{w \rightarrow \infty} M^{a_w}(\omega, A) = M'(\omega, A)
\end{equation}
We now let $T^{\infty} = T'$ as in Step 1, and $M^{\infty}$ be the random measure that equals $M'$ on $U$ and $M^{\infty}$ is the zero measure on $U^c$. The goal of this step is to show that $M^{\infty}$ and $T^{\infty}$ satisfy the conditions of the proposition. Essentially, what remains is to show that $M^{\infty}$ is a point process. Indeed, if the latter is true, then since $M^{\infty}$ is a.s. equal to $M'$ we see that (\ref{AP1}) implies (\ref{RM12}). In addition, by construction $T^{\infty}$ is countable.

Fix an $\omega \in \Omega$, and let as usual $M^{\infty}_\omega = M^{\infty}(\omega, \cdot)$ be the measure on $E$. We seek to show that for each bounded Borel set $B \subset E$ we have 
\begin{equation}\label{AP6}
M^{\infty}_\omega(B) \in \mathbb{Z}_{\geq 0}.
\end{equation}
From (\ref{AP5}) and the fact that $M^{a_w}(\omega, A) \in \mathbb{Z}_{\geq 0}$ for all $A \in \tilde{\mathcal{I}}$ and $\omega \in \Omega$, we conclude $M'(\omega, A) \in \mathbb{Z}_{\geq 0}$ for all $\omega \in U$ and $A \in \tilde{\mathcal{I}}$. Since by definition we have
$$M^{\infty}(\omega, A) = \begin{cases} M'(\omega, A) & \mbox{ if } \omega \in U, \\ 0  & \mbox{ if } \omega \in U^c, \end{cases}$$
we conclude that (\ref{AP6}) holds if $B \in \tilde{\mathcal{I}}$.

From Lemma \ref{P1L1} for any $I_1, \dots, I_R \in \tilde{\mathcal{I}}$ we can write $\cup_{r = 1}^RI_r $ as a {\em finite disjoint} union of sets in $\tilde{\mathcal{I}}$, which by finite additivity of $M^{\infty}_\omega$ allows us to conclude (\ref{AP6}) when $B$ is a finite union of sets in $\tilde{\mathcal{I}}$. Suppose next that $G \subset E$ is a bounded open set. By the dissecting property we can find a sequence $I_1, I_2, \dots$ in $\tilde{\mathcal{I}}$ such that $G = \cup_{r \geq 1}I_r$. By the monotone convergence theorem 
$$M_{\omega}^{\infty}(G) = \lim_{R \rightarrow \infty}M_{\omega}^{\infty}\left( \cup_{r = 1}^RI_r \right),$$
and since the terms on the right are in $\mathbb{Z}_{\geq 0}$ and the limit is finite, we conclude (\ref{AP6}) holds when $B$ is a bounded open set. Finally, if $B$ is any bounded Borel set, we have that 
\begin{equation}\label{AP7}
M_{\omega}^{\infty}(B) = \inf_{B \subset G} M_{\omega}^{\infty}(G),
\end{equation}
where the infimum is over bounded open sets $G$ that contain $B$. As the elements on the right of (\ref{AP7}) are in $\mathbb{Z}_{\geq 0}$, we conclude that (\ref{AP6}) holds for all bounded Borel sets. We mention that the equality in (\ref{AP7}) can be deduced by a straightforward adaptation of \cite[Theorem 1.1]{Billing}.\\

{\bf \raggedleft Step 4.} In this final step we show that $M^N$ converge weakly under the additional assumption that (\ref{RM13}) holds. From Step 1 we know that $\{M^N\}_{N \geq 1}$ is a tight sequence in $S$. Consequently, we only need to show that if $M''$ is any subsequential limit of $\{M^N\}_{N \geq 1}$, then $M''$ has the same distribution as $M^{\infty}$ from Step 3 as random elements in $S$.

Let $M''$ be any subsequential limit of $\{M^N\}_{N \geq 1}$. Our work from Steps 1,2 and 3 shows that $M''$ has a modification (which we continue to call $M''$) that is a point process, and there exists a countable set $T'' \subset \mathbb{R}$, containing $T$, such that (\ref{AP1}) holds for $M'$ replaced with $M''$ and pairwise disjoint $A_1, \dots, A_m \in \mathcal{I}_{T''}$.

In particular, we conclude that for any pairwise disjoint $A_1, \dots, A_m \in \mathcal{I}_{T^{\infty} \cup T''}$ we have 
\begin{equation}\label{AP8}
\mathbb{E} \left[ \prod_{i = 1}^m \frac{M^{\infty}(A_i)!}{(M^{\infty}(A_i) - n_i)!} \right] = \mathbb{E} \left[ \prod_{i = 1}^m \frac{M''(A_i)!}{(M''(A_i) - n_i)!} \right] = C(n_1, \dots, n_m; A_1, \dots, A_m).
\end{equation}
Fix $B \in \mathcal{I}_{T^{\infty} \cup T''}$ sufficiently large so that $\cup_{i = 1}^m A_i \subseteq B$. From (\ref{RM17}) and (\ref{AP8}) we get
\begin{equation}\label{AP9}
C(n_1, \dots, n_m; A_1, \dots, A_m) \leq \mathbb{E} \left[ \prod_{i = 1}^m \frac{M^{\infty}(B)!}{(M^{\infty}(B) - n)!} \right] = C(n; B).
\end{equation}
Equations (\ref{AP8}), (\ref{AP9}) and the summability condition in (\ref{RM13}) together imply that the conditions of Lemma \ref{MomentProblem} are satisfied for the random vectors $(X_1, \dots, X_m) = (M^{\infty}(A_1), \dots, M^{\infty}(A_m))$, $(Y_1, \dots, Y_m) = (M''(A_1), \dots, M''(A_m))$, $c_n = C(n; B)$ and $a = \epsilon$ as in (\ref{RM13}). We conclude that $(M^{\infty}(A_1), \dots, M^{\infty}(A_m))$ has the same distribution as $(M''(A_1), \dots, M''(A_m))$.

Suppose now that $A_1, \dots, A_n \in \mathcal{I}_{T^{\infty} \cup T''}$ are not necessarily pairwise disjoint, and let $B_1, \dots, B_m \in \mathcal{I}_{T^{\infty} \cup T''}$ be as in Lemma \ref{P1L1}. Our work in the previous paragraph shows that the random vector $(M^{\infty}(B_1), \dots, M^{\infty}(B_m))$ has the same distribution as $(M''(B_1), \dots, M''(B_m))$. Since $A_i$'s are disjoint unions of $B_j$'s we know by finite additivity that the random vectors $(M^{\infty}(A_1), \dots, M^{\infty}(A_n))$ and $(M''(A_1), \dots, M''(A_n))$ can be obtained by applying the same linear transformation to the random vectors $(M^{\infty}(B_1), \dots, M^{\infty}(B_m))$ and $(M''(B_1), \dots, M''(B_m))$, respectively. As linear mappings are continuous we conclude that $(M^{\infty}(A_1), \dots, M^{\infty}(A_n))$ and $(M''(A_1), \dots, M''(A_n))$ have the same distribution. The last statement, combined with \cite[Lemma 4.7]{Kall} and the $\pi-\lambda$ theorem imply that $M^{\infty}$ and $M''$ have the same distribution as random elements in $S$. 

%
%
\subsection{Proof of Lemma \ref{Cons}}\label{AppendixA4} Let $I_1, \dots, I_m \in \mathcal{I}$ be pairwise disjoint. From (\ref{RM3}), (\ref{RN3}), the symmetry of $\mu_n$ and $u_n$ we have
$$\mu_n( B_1 \times \cdots \times B_n) = \mathbb{E} \left[ \prod_{j = 1}^m \frac{M(I_j)!}{(M(I_j) - n_j)!} \right] = \int_{B_1 \times \cdots \times B_n}  u_n(x_1 ,\dots, x_n) \lambda^n(dx) < \infty,$$
where $B_1, \dots, B_n$ is any permutation of $n_1$ copies of $I_1$, $n_2$ copies of $I_2$ etc. From Lemma \ref{DSRLemma} any $A \in \mathcal{I}_n$ can be written as a finite disjoint union of sets of the form $B_1 \times \cdots \times B_n$ as above. By linearity we conclude 
\begin{equation}\label{asd2}
\mu_n(A) = \int_{A}   u_n(x_1, \dots, x_n) \lambda^n(dx) 
\end{equation}
for all $A \in \mathcal{I}_n$. Let $A_1, \dots, A_r \in \mathcal{I}_n$. From Lemma \ref{P1L1} we can find finitely many $B_1, \dots, B_m \in \mathcal{I}_n$ that are pairwise disjoint and 
$$\cup_{i = 1}^r A_i = \cup_{j = 1}^m B_j.$$
By additivity and (\ref{asd2}) applied to $B_j$'s we conclude for $B = \cup_{i = 1}^r A_i = \cup_{j = 1}^m B_j$ 
$$\mu_n(B) = \sum_{j = 1}^m \mu_n(B_j) = \sum_{j = 1}^m  \int_{B_j}   u_n(x_1, \dots, x_n) \lambda^n(dx)  =  \int_{B}   u_n(x_1, \dots, x_n) \lambda^n(dx).$$
This shows that (\ref{asd2}) holds for {\em finite} unions of elements in $\mathcal{I}_n$. Fix now any open set $G \subset E^n$. Since $\mathcal{I}_n$ is dissecting we can find $I_m \in \mathcal{I}_n$ such that $G = \cup_{m \geq 1}I_m$. This shows that 
\begin{equation}\label{fg}
\begin{split}
&\mu_n(G) = \mu_n\left( \cup_{m \geq 1} I_m \right) = \lim_{N \rightarrow \infty} \mu_n \left( \cup_{m = 1}^N I_m \right) = \lim_{N \rightarrow \infty}\int_{ \cup_{m = 1}^N I_m }   u_n(x_1, \dots, x_n) \lambda^n(dx)  \\
& = \int_{ \cup_{m = 1}^\infty I_m }   u_n(x_1, \dots, x_n) \lambda^n(dx) = \int_{G}   u_n(x_1, \dots, x_n) \lambda^n(dx),
\end{split}
\end{equation}
where in the second equality on the first line we used the monotone convergence theorem, in the third equality we used that (\ref{asd2}) holds for finite unions of elements in $\mathcal{I}_n$, and in the first equality on the second line we used that monotone convergence theorem again. 

We now fix a bounded open set $G$ in $E^n$ and consider the collection $\mathcal{A}_G$ of subsets of $G$ that satisfy (\ref{asd2}). One readily observes that $\mathcal{A}_G$ is a $\lambda$-system, which contains the $\pi$-system of sets in $\mathcal{I}_n$ that are subsets of $G$. By the $\pi -\lambda$ theorem we conclude that (\ref{asd2} holds for any Borel subset of $G$ and since $G$ was arbitrary we see that (\ref{asd2}) holds for all bounded Borel sets. Finally, if $A$ is any Borel set, then $A = \cup_{ m \geq 1} A_m$ where $A_m$ are bounded and Borel. One can repeat (\ref{fg}) with $G$ replaced with $A$ and $I_m$ replaced with $A_m$ to conclude that (\ref{asd2}) holds for $A$ as well. This proves that (\ref{asd2}) holds for any Borel set, which completes the proof.

%
%
\subsection{Proof of Proposition \ref{PropWC0}}\label{AppendixA45} For clarity we split the proof into two steps.\\

{\bf \raggedleft Step 1.} Since $\lambda$ is the vague limit of $\lambda_n \in S$, we conclude that $\lambda \in S$ too, i.e. $\lambda$ is locally bounded. For $i = 1, \dots, k$ we let $\pi_i: E \rightarrow \mathbb{R}$ be such that
$$\pi_i(x) = x_i \mbox{ for } x = (x_1, \dots, x_k) \in E,$$
and also
$$H_i^n = \left\{y \in \mathbb{R}: \lambda\left(\pi_i^{-1}(y) \cap [-n, n ]^k\right) > 1/n \right\}.$$
Since $\lambda([-n,n]^k) < \infty$, we conclude that $H_i^n$ is finite for each $i = 1, \dots, k$ and $n \in \mathbb{N}$. We define $T = \cup_{i = 1}^k \cup_{n \geq 1} H_i^n,$
which is a countable subset of $\mathbb{R}$. We mention that if $t \not \in T$, we have for each $n \in \mathbb{N}$ and $i = 1, \dots, k$ 
$$\lambda\left(\pi_i^{-1}(t) \cap [-n, n ]^k\right) \leq 1/n,$$
which implies that
\begin{equation}\label{A45AP0}
\lambda\left( \pi_i^{-1}(t) \right) = 0.
\end{equation}
Fix pairwise disjoint $A_1, \dots, A_m \in \mathcal{I}_T$, as in (\ref{rect}), and $n_1, \dots, n_m \in \mathbb{N}$ with $n = n_1 + \cdots + n_m$. We claim that 
\begin{equation}\label{A45BP1}
\lim_{N \rightarrow \infty}\mathbb{E} \left[ \prod_{i = 1}^m \frac{M^N(A_i)!}{(M^N(A_i) - n_i)!}\right] = \int_{A_1^{n_1} \times \cdots \times A_m^{n_m}} \det \left[ K(x_i,x_j) \right]_{i,j = 1}^n \lambda^n(dx).
\end{equation}
We prove (\ref{A45BP1}) in the second step. Here, we assume its validity and conclude the proof of the proposition.\\

From Hadamard's inequality (\ref{Hadamard}) and the local boundedness of $K$ and $\lambda$, for each $B \in \mathcal{I}_{T}$ we can find a constant $C \in (0, \infty)$ such that 
$$\left| \int_{B^n} \det \left[ K(x_i, x_j) \right]_{i,j = 1}^n \lambda^n(dx) \right| \leq C^n \cdot n^{n/2}.$$
The last inequality shows that (\ref{RM13}) holds for any $\epsilon > 0$ and so by Proposition \ref{PPConv} we conclude that there exists a countable $T^{\infty} \subset \mathbb{R}$ such that $T \subseteq T^{\infty}$, and a point process $M^{\infty}$ such that $M^N$ converge weakly to $M^{\infty}$ and for all pairwise disjoint $A_1, \dots, A_m \in \mathcal{I}_{T^{\infty}}$
\begin{equation}\label{A45BP2}
\mathbb{E} \left[ \prod_{i = 1}^m \frac{M^{\infty}(A_i)!}{(M^{\infty}(A_i) - n_i)!}\right]= \int_{A_1^{n_1} \times \cdots \times A_m^{n_m}} \det \left[ K(x_i, x_j) \right]_{i,j = 1}^n \lambda^n(dx).
\end{equation}
From Lemma \ref{Cons} and Definition \ref{DPP} we conclude that $M^{\infty}$ satisfies the conditions of the proposition.\\

{\bf \raggedleft Step 2.} We know that $M^N$ is determinantal with correlation kernel $K_N$ and reference measure $\lambda_N$. Using the latter and (\ref{RN2}) we see that to show (\ref{A45BP1}) it suffices to prove 
\begin{equation}\label{A45BP3}
\lim_{N \rightarrow \infty}\int_{A_1^{n_1} \times \cdots \times A_m^{n_m}} \det \left[ K_N( x_i, x_j) \right]_{i,j = 1}^n \lambda_N^n(dx) = \int_{A_1^{n_1} \times \cdots \times A_m^{n_m}} \det \left[ K(x_i,x_j) \right]_{i,j = 1}^n \lambda^n(dx).
\end{equation}
Since $K_N(x,y)$ converge uniformly on $A_1^{n_1} \times \cdots \times A_m^{n_m}$ to $K(x,y)$, while by \cite[Lemma 4.1]{Kall}
$$\limsup_{N \rightarrow \infty}\lambda_N^n(A_1^{n_1} \times \cdots \times A_m^{n_m}) \leq \prod_{i = 1}^m \lambda(\overline{A_i})^{n_i} < \infty ,$$
we conclude that to prove (\ref{A45BP3}) it suffices to show that 
\begin{equation}\label{A45BP4}
\lim_{N \rightarrow \infty}\int_{A_1^{n_1} \times \cdots \times A_m^{n_m}} \det \left[ K(x_i,x_j) \right]_{i,j = 1}^n \lambda_N^n(dx) = \int_{A_1^{n_1} \times \cdots \times A_m^{n_m}} \det \left[ K(x_i,x_j) \right]_{i,j = 1}^n \lambda^n(dx).
\end{equation}
Equation (\ref{A45BP4}) would follow from the continuity of $\det \left[ K(x_i, x_j) \right]_{i,j = 1}^n$ (which we have by the assumptions in the proposition), and Lemma \ref{VaguePairLim}, provided we can show 
\begin{equation}\label{A45BP5}
\lambda_N^n \xrightarrow{v} \lambda^n \mbox{ and } \lambda^n\left(\partial (A_1^{n_1} \times \cdots \times A_m^{n_m}) \right) = 0.
\end{equation}
In the remainder of the step we establish (\ref{A45BP5}).

Fix $B \in \mathcal{I}_T$. Writing $B = [a_1, b_1) \times \cdots [a_k, b_k)$, we see that 
$$\partial B \subset \cup_{i = 1}^k \left(\pi_i^{-1}(a_i) \cup \pi_i^{-1}(b_i) \right).$$
By assumption we know $a_i \not \in T$, $b_i \not \in T$, which from (\ref{A45AP0}) shows $\lambda(\partial B) = 0$. From \cite[Lemma 4.1]{Kall} we conclude for each $B \in \mathcal{I}_T$
$$\lim_{N \rightarrow \infty} \lambda_N(B) = \lambda(B),$$
which implies for each $B_1, \dots, B_n \in \mathcal{I}_T$ that 
$$\lim_{N \rightarrow \infty} \lambda^n_N(B_1 \times \cdots \times B_n) = \lambda(B_1 \times \cdots \times B_n).$$
From Lemma \ref{DSRLemma} we know that the family $\{B_1 \times \cdots \times B_n: B_i \in \mathcal{I}_{T}\}$ is a dissecting semi-ring and so applying \cite[Lemma 4.1]{Kall} a second time gives the left side of (\ref{A45BP5}).

If we now set $I = B_1 \times \cdots \times B_n$ with $B_i \in \mathcal{I}_{T}$, we observe 
$$\partial I \subset \cup_{i = 1}^n E^{i-1} \times \partial B_i \times E^{n-i},$$
which shows $\lambda^n(\partial I) = 0$. The latter implies the right side of (\ref{A45BP5}).

%
%
\subsection{Proof of Proposition \ref{PropWC1}}\label{AppendixA5} For clarity we split the proof into two steps.\\

{\bf \raggedleft Step 1.} Let $T = \{y \in \mathbb{R}: \nu_t(\{y\}) > 0 \mbox{ for some } t \in \mathcal{T} \}$. Since $\nu_t$ are locally finite, we know that $T$ is countable. Fix pairwise disjoint $A_1, \dots, A_m \in \mathcal{I}_T$, as in (\ref{rect}), and $n_1, \dots, n_m \in \mathbb{N}$ with $n = n_1 + \cdots + n_m$. We claim that 
\begin{equation}\label{BP1}
\lim_{N \rightarrow \infty}\mathbb{E} \left[ \prod_{i = 1}^m \frac{M^N(A_i)!}{(M^N(A_i) - n_i)!}\right] = \int_{A_1^{n_1} \times \cdots \times A_m^{n_m}} \det \left[ K(s_i, x_i; s_j, x_j) \right]_{i,j = 1}^n (\mu_{\mathcal{T}, \nu})^n(dsdx).
\end{equation}
We prove (\ref{BP1}) in the second step. Here, we assume its validity and conclude the proof of the proposition.\\

From Hadamard's inequality (\ref{Hadamard}) and the local boundedness of $K$ and $\nu_t$, for each $B \in \mathcal{I}_{T}$  we can find a constant $C \in (0, \infty)$ such that 
$$\left| \int_{B^n} \det \left[ K(s_i, x_i; s_j, x_j) \right]_{i,j = 1}^n (\mu_{\mathcal{T} ,\nu})^n(dsdx) \right| \leq C^n \cdot n^{n/2}.$$
The last inequality shows that (\ref{RM13}) holds for any $\epsilon > 0$ and so by Proposition \ref{PPConv} we conclude that there exists a countable $T^{\infty} \subset \mathbb{R}$ such that $T \subseteq T^{\infty}$, and a point process $M^{\infty}$ such that $M^N$ converge weakly to $M^{\infty}$ and for all pairwise disjoint $A_1, \dots, A_m \in \mathcal{I}_{T^{\infty}}$
\begin{equation}\label{BP2}
\mathbb{E} \left[ \prod_{i = 1}^m \frac{M^{\infty}(A_i)!}{(M^{\infty}(A_i) - n_i)!}\right]= \int_{A_1^{n_1} \times \cdots \times A_m^{n_m}} \det \left[ K(s_i, x_i; s_j, x_j) \right]_{i,j = 1}^n (\mu_{\mathcal{T} ,\nu} )^n(dsdx).
\end{equation}
From Lemma \ref{Cons} and Definition \ref{DPP} we conclude that $M^{\infty}$ satisfies the conditions of the proposition.\\

{\bf \raggedleft Step 2.} We know that $M^N$ is determinantal with correlation kernel $K_N$ and reference measure $\mu_{\mathcal{T},\nu^N}$. Using the latter and (\ref{RN2}) we see that to show (\ref{BP1}) it suffices to prove that for each $(s_1, \dots, s_n) \in \mathcal{T}^n$ we have 
\begin{equation}\label{BP3}
\lim_{N \rightarrow \infty}\int_{I} \det \left[ K_N(s_i, x_i; s_j, x_j) \right]_{i,j = 1}^n \prod_{i = 1}^n\nu^N_{s_i}(dx_i) = \int_{I} \det \left[ K(s_i, x_i; s_j, x_j) \right]_{i,j = 1}^n \prod_{i = 1}^n\nu_{s_i}(dx_i),
\end{equation}
where $I = (a_1, b_1] \times \cdots \times (a_n, b_n]$ with $a_i, b_i \not \in T$ for each $i = 1, \dots, n$. Since $K_N(s,x; t,y)$ converge uniformly on $I$ to $K(s,x; t,y)$, while by \cite[Lemma 4.1]{Kall}
$$\limsup_{N \rightarrow \infty}\prod_{i = 1}^n \nu^N_{s_i}((a_i, b_i])  \leq \prod_{i = 1}^n \nu_{s_i}([a_i, b_i]) < \infty ,$$
we conclude that to prove (\ref{BP3}) it suffices to show that 
\begin{equation}\label{BP4}
\lim_{N \rightarrow \infty}\int_{I} \det \left[ K(s_i, x_i; s_j, x_j) \right]_{i,j = 1}^n  \prod_{i = 1}^n\nu^N_{s_i}(dx_i)  = \int_{I} \det \left[ K(s_i, x_i; s_j, x_j) \right]_{i,j = 1}^n  \prod_{i = 1}^n\nu_{s_i}(dx_i) .
\end{equation}
Equation (\ref{BP4}) would follow from the continuity of $\det \left[ K(s_i, x_i; s_j, x_j) \right]_{i,j = 1}^n$ (which we have by the assumptions in the proposition), and Lemma \ref{VaguePairLim}, provided we can show 
\begin{equation}\label{BP5}
 \nu^N_{s_1} \times \cdots \times  \nu^N_{s_n}  \xrightarrow{v}  \nu_{s_1} \times \cdots \times  \nu_{s_n}  \mbox{ and } ( \nu_{s_1} \times \cdots \times  \nu_{s_n} ) (\partial I) = 0.
\end{equation}
In the remainder of the step we establish (\ref{BP5}).

Note that
$$\partial I \subset \cup_{i = 1}^n \mathbb{R}^{i-1} \times \{a_i, b_i\} \times \mathbb{R}^{n-i},$$
and since $a_i, b_i \not \in T$ we have $\nu_{s_i}(\{a_i\}) = \nu_{s_i}(\{b_i\}) = 0$. The latter implies the right side of (\ref{BP5}). Since $\nu^N_{s_i} \xrightarrow{v} \nu_{s_i}$, and $a_i, b_i \not \in T$, we have from \cite[Lemma 4.1]{Kall} that
$$\lim_{N \rightarrow \infty} \nu^N_{s_i}((a_i, b_i]) = \nu_{s_i}((a_i, b_i]) \mbox{ for } i = 1, \dots, n,$$
which implies
$$\lim_{N \rightarrow \infty} (\nu^N_{s_1} \times \cdots \times  \nu^N_{s_n} )((a_1, b_1] \times \cdots \times (a_n, b_n]) = (\nu_{s_1} \times \cdots \times  \nu_{s_n} )((a_1, b_1] \times \cdots \times (a_n, b_n]).$$
Applying \cite[Lemma 4.1]{Kall} a second time gives the left side of (\ref{BP5}).
\end{appendix}

\bibliographystyle{alpha}
\bibliography{PD}

\newcommand{\etalchar}[1]{$^{#1}$}
\begin{thebibliography}{HKPV09}

\bibitem[AFvM10]{AFM10}
M.~Adler, P.L. Ferrari, and P.~van Moerbeke.
\newblock Airy processes with wanderers and new universality classes.
\newblock {\em Ann. Probab.}, 38:714--769, 2010.

\bibitem[Agg15]{Agg15}
A.~Aggarwal.
\newblock Correlation functions of the {S}chur process through {M}acdonald
  difference operators.
\newblock {\em J. Comb. Theory Ser. A.}, 131:88--118, 2015.
\newblock Preprint: https://arxiv.org/abs/arXiv:1401.6979.

\bibitem[AH21]{AH21}
A.~Aggarwal and J.~Huang.
\newblock Edge statistics for lozenge tilings of polygons, {I}{I}: {A}iry line
  ensemble.
\newblock 2021.
\newblock Preprint: arXiv:2021:12874v3.

\bibitem[AH23]{AH23}
A.~Aggarwal and J.~Huang.
\newblock Strong characterization for the {A}iry line ensemble.
\newblock 2023.
\newblock Preprint: arXiv:2308.11908.

\bibitem[BBAP05]{BBP05}
J.~Baik, G.~Ben~Arous, and S.~P{\'e}ch{\'e}.
\newblock Phase transition of the largest eigenvalue for nonnull complex sample
  covariance matrices.
\newblock {\em Ann. Probab.}, 33:1643--1697, 2005.

\bibitem[BC14]{BorCor}
A.~Borodin and I.~Corwin.
\newblock Macdonald processes.
\newblock {\em Probab. Theory Relat. Fields}, 158:225--400, 2014.

\bibitem[BCD21]{BCD21}
G.~Barraquand, I.~Corwin, and E.~Dimitrov.
\newblock Fluctuations of the log-gamma polymer free energy with general
  parameters and slopes.
\newblock {\em Probab. Theory Relat. Fields}, 181:113--195, 2021.

\bibitem[BFS09]{BFS09}
A.~Borodin, P.L. Ferrari, and T.~Sasamoto.
\newblock Two speed {T}{A}{S}{E}{P}.
\newblock {\em J. Stat. Phys.}, 137:936--977, 2009.

\bibitem[Bil99]{Billing}
P.~Billingsley.
\newblock {\em Convergence of {P}robability {M}easures, 2nd ed}.
\newblock John Wiley and Sons, New York, 1999.

\bibitem[BK08]{BK08}
A.~Borodin and J.~Kuan.
\newblock Asymptotics of {P}lancherel measures for the infinite-dimensional
  unitary group.
\newblock {\em Adv. Math.}, 219:894--931, 2008.

\bibitem[Bor07]{Bor07}
A.~Borodin.
\newblock Schur process and cylidntric partitions.
\newblock {\em Duke Math. J.}, 140(3):391--468, 2007.

\bibitem[BP08]{BP08}
A.~Borodin and P{\'e}ch{\'e}.
\newblock Airy kernel with two sets of parameters in directed percolation and
  random matrix theory.
\newblock {\em J. Stat. Phys.}, 132:275--290, 2008.

\bibitem[BR05]{BR05}
A.~Borodin and E.M. Rains.
\newblock Eynard-{M}ehta theorem, {S}chur process, and their {P}faffian
  analogs.
\newblock {\em J. Stat. Phys.}, 121:291--317, 2005.

\bibitem[\c{C}11]{Cinlar}
E.~\c{C}inlar.
\newblock {\em Probability and Stochastics}.
\newblock Springer New York, 2011.
\newblock doi:10.1007/978-0-387-87859-1.

\bibitem[CH14]{CorHamA}
I.~Corwin and A.~Hammond.
\newblock Brownian {G}ibbs property for {A}iry line ensembles.
\newblock {\em Invent. Math.}, 195:441--508, 2014.

\bibitem[Cor12]{CU2}
I.~Corwin.
\newblock The {K}ardar-{P}arisi-{Z}hang equation and universality class.
\newblock {\em Random Matrices: Theory Appl.}, 1, 2012.

\bibitem[DFF{\etalchar{+}}21]{DEA21}
E.~Dimitrov, X.~Fang, L.~Fesser, C.~Serio, C.~Teitler, A.~Wang, and W.~Zhu.
\newblock Tightness of {B}ernoulli {G}ibbsian line ensembles.
\newblock {\em Electron. J. Probab.}, 26:1--91, 2021.
\newblock DOI: 10.1214/21-EJP698.

\bibitem[DM21]{DM21}
E.~Dimitrov and K.~Matetski.
\newblock Characterization of {B}rownian {G}ibbsian line ensembles.
\newblock {\em Ann. Probab.}, 49(5):2477--2529, 2021.

\bibitem[DNV23]{DNV19}
D.~Dauvergne, M.~Nica, and B.~Vir{\' a}g.
\newblock Uniform convergence to the {A}iry line ensemble.
\newblock 2023.
\newblock To appear in \emph{Ann. Inst. H. Poincar{\' e} Probab. Statist.}
  https://imstat.org/journals-and-publications/annales-de-linstitut-henri-poincare/annales-de-linstitut-henri-poincare-accepted-papers/.

\bibitem[DOV22]{DOV22}
D.~Dauvergne, J.~Ortmann, and B.~Vir{\'a}g.
\newblock The directed landscape.
\newblock {\em Acta Math.}, 229(2):201--285, 2022.

\bibitem[Dur10]{Durrett}
R.~Durrett.
\newblock {\em Probability: theory and examples, {F}ourth edition}.
\newblock Cambridge University Press, Cambridge, 2010.

\bibitem[Edr53]{Edrei53}
A.~Edrei.
\newblock On the generating function of a doublyinfinite, totally positive
  sequence.
\newblock {\em Trans. Amer. Math. Soc.}, 74(3):367--383, 1953.

\bibitem[FNH99]{FNH99}
P.~J. Forrester, T.~Nagao, and G.~Honner.
\newblock Correlations for the orthogonalunitary and symplectic-unitary
  transitions at the hard and soft edges.
\newblock {\em Nuclear Phys. B}, 53(3):601--643, 1999.

\bibitem[Ful97]{Fulton}
W.~Fulton.
\newblock {\em Young tableaux: with applications to representation theory and
  geometry}.
\newblock Number~35. Cambridge University Press, 1997.

\bibitem[HKPV09]{HKP}
J.B. Hough, M.~Krishnapur, Y.~Peres, and B.~Vir{\'a}g.
\newblock Zeros of {G}aussian analytic functions and determinantal point
  processes.
\newblock {\em Amer. Math. Soc.}, 51, 2009.

\bibitem[IS07]{IS07}
T.~Imamura and T.~Sasamoto.
\newblock Dynamics of a tagged particle in the asymmetric exclusion process
  with the step initial condition.
\newblock {\em J. Stat. Phys.}, 128:799--846, 2007.

\bibitem[Joh03]{J03}
K.~Johansson.
\newblock Discrete polynuclear growth and determinantal processes.
\newblock {\em Comm. Math. Phys.}, 242:277--329, 2003.

\bibitem[Joh06]{J06}
K.~Johansson.
\newblock Random matrices and determinantal processes.
\newblock 83:1--56, 2006.

\bibitem[Kal17]{Kall}
O.~Kallenberg.
\newblock {\em Random Measures, Theory and Applications}.
\newblock Springer International Publishing Switzerland, 2017.
\newblock 10.1007/978-3-319-41598-7.

\bibitem[Mac94]{Mac94}
A.M.S. Mac{\^e}do.
\newblock Universal parametric correlations at the soft edge of the spectrum of
  random matrix ensembles.
\newblock {\em Europhys. Lett.}, 26:641, 1994.

\bibitem[Mac95]{Mac}
I.~G. Macdonald.
\newblock {\em Symmetric functions and {H}all polynomials}.
\newblock Oxford University Press Inc., New York, 2 edition, 1995.

\bibitem[Mun03]{Munkres}
J.~Munkres.
\newblock {\em Topology, 2nd ed}.
\newblock Prentice Hall, Inc., Upper Saddle River, NJ, 2003.

\bibitem[OO98]{OO98}
A.~Okounkov and G.~Olshanski.
\newblock Asymptotics of {J}ack polynomials as the number of variables goes to
  infinity.
\newblock {\em Int. Math. Res. Not.}, 13:641--682, 1998.

\bibitem[OR03]{OR03}
A.~Okounkov and N.~Reshetikhin.
\newblock Correlation function of the {S}chur process with application to local
  geometry of random 3-dimensional {Y}oung diagram.
\newblock {\em J. Amer. Math. Soc.}, 16:581--603, 2003.

\bibitem[P{\'e}c06]{Peche06}
S.~P{\'e}ch{\'e}.
\newblock The largest eigenvalue of small rank perturbations of {H}ermitian
  random matrices.
\newblock {\em Probab. Theory Relat. Fields}, 134:127--173, 2006.

\bibitem[Pra94]{Prasolov}
V.~Prasolov.
\newblock {\em Problems and {T}heorems in {L}inear {A}lgebra}.
\newblock Amer. Math. Soc., Providence, RI, 1994.

\bibitem[PS02]{Spohn}
M.~Pr{\" a}hofer and H.~Spohn.
\newblock Scale invariance of the {P}{N}{G} {D}roplet and the {A}iry process.
\newblock {\em J. Stat. Phys.}, 108:1071--1106, 2002.

\bibitem[Sag01]{Sagan}
B.~Sagan.
\newblock {\em The Symmetric Group, Representations Combinatorial Algorithms,
  and Symmetric Functions, 2nd ed.}
\newblock Springer-Verlag New York, 2001.

\bibitem[Sod15]{Sod15}
S.~Sodin.
\newblock A limit theorem at the spectral edge for corners of time-dependent
  {W}igner matrices.
\newblock {\em Int. Math. Res. Not.}, 2015:7575--7607, 2015.

\bibitem[Sos00]{Sosh00}
A.~Soshnikov.
\newblock Determinantal random point fields.
\newblock {\em Uspekhi Mat. Nauk}, 55(5):107--160, 2000.

\bibitem[Sri98]{Srivastava}
S.M. Srivastava.
\newblock {\em A Course on Borel Sets}.
\newblock Springer-Verlag New York, Inc., 1998.
\newblock doi:10.1007/978-3-642-85473-6.

\bibitem[SS03]{Stein}
E.~Stein and R.~Shakarchi.
\newblock {\em Complex analysis}.
\newblock Princeton University Press, Princeton, 2003.

\bibitem[TV20]{TV20}
Z.~Talyig{\'a}s and B.~Vet{\~o}.
\newblock Borodin-{P}{\'e}ch{\'e} fluctuations of the free energy in directed
  random polymer models.
\newblock {\em J. Theor. Probab.}, 33:1426--1444, 2020.

\bibitem[TW94]{TW94}
C.A. Tracy and H.~Widom.
\newblock Level-spacing distributions and the {A}iry kernel.
\newblock {\em Commun. Math. Phys.}, 159:151--174, 1994.

\end{thebibliography}

\end{document}